\documentclass[11pt]{amsart}
\usepackage{amssymb,amsmath,amsthm}
\usepackage{a4wide}
\usepackage{graphicx}
\usepackage[inline]{asymptote}
\usepackage{xcolor}
\usepackage{epsfig}
\usepackage{dsfont}
\usepackage{longtable}
\usepackage{mathrsfs}
\usepackage[hidelinks]{hyperref}
\setlength{\parindent}{0cm}

\theoremstyle{plain}
\begingroup

\newtheorem*{theoremB'}{Theorem B'}
\newtheorem*{theoremB''}{Theorem B''}

\newtheorem{theorem}{Theorem}[section]

\newtheorem{lemma}[theorem]{Lemma}
\newtheorem{proposition}[theorem]{Proposition}
\newtheorem{corollary}[theorem]{Corollary}

\newtheorem{definition}[theorem]{Definition}
\newtheorem{rmk}[theorem]{Remark}

\endgroup

\numberwithin{equation}{section}

%

\newcommand\blfootnote[1]{%
  \begingroup
  \renewcommand\thefootnote{}\footnote{#1}%
  \addtocounter{footnote}{-1}%
  \endgroup
}

\theoremstyle{remark}
\begingroup
\endgroup

\mathsurround=1pt

\numberwithin{equation}{section}

\hyphenation{Lem-ma}
\hyphenation{para-metr-iza-tion}
\hyphenation{co-ef-fi-cients}

\newcommand{\Z}{\mathbb Z}
\newcommand{\N}{\mathbb N}
\newcommand{\R}{\mathbb R}

\newcommand{\ep}{\varepsilon}
\newcommand{\E}{\mathcal{E}} 
\newcommand{\VV}{W}


\title[Crystallization to the square lattice for a two-body potential]{Crystallization to the square lattice for a two-body potential}

\author{Laurent B\'etermin}
\address{Faculty of Mathematics, University of Vienna, Oskar-Morgenstern-Platz 1, 1090 Vienna, Austria}
\email{laurent.betermin@univie.ac.at}
\author{Lucia De Luca}
\address{Dipartimento di Matematica, Universit\`a di Pisa, Largo Bruno Pontecorvo 5, 56127 Pisa, Italy}
\email{lucia.deluca@unipi.it}
\author{Mircea Petrache}
\address{PUC Chile, Facultad de Matem\'aticas, Av. Vicuna Mackenna 4860, 6904441, Santiago, 
Chile}
\email{decostruttivismo@gmail.com}


\begin{document}
\begin{abstract}
We consider two-dimensional zero-temperature systems of $N$ particles to which we associate an energy of the form 
$$
\mathcal{E}[V](X):=\sum_{1\le i<j\le N}V(|X(i)-X(j)|),
$$
where $X(j)\in\R^2$ represents the position of the particle $j$ and $V(r)\in\R$ is the {pairwise interaction} energy potential of two particles placed at distance $r$. We show that under suitable assumptions on the single-well potential $V$, the ground state energy per particle converges to an explicit constant $\overline{\mathcal E}_{\mathrm{sq}}[V]$ which is the same as the energy per particle in the square lattice infinite configuration. We thus have
$$
N{\overline{\mathcal E}_{\mathrm{sq}}[V]}\le \min_{X:\{1,\ldots,N\}\to\R^2}\mathcal E[V](X)\le N{\overline{\mathcal E}_{\mathrm{sq}}[V]}+O(N^{\frac 1 2}).
$$
Moreover $\overline{\mathcal E}_{\mathrm{sq}}[V]$ is also re-expressed as the minimizer of a four point energy.

In particular, this happen{s} if the potential $V$ is such that $V(r)=+\infty$ for $r<1$, $V(r)=-1$ for $r\in [1,\sqrt{2}]$, $V(r)=0$ if $r>\sqrt{2}$, in which case ${\overline{\mathcal E}_{\mathrm{sq}}[V]}=-4$.

To the best of our knowledge, this is the first proof of crystallization to the square lattice for a two-body interaction energy.
\end{abstract}

\date{\today}
\maketitle \blfootnote{
{\bf Acknowledgments:}  
{LB acknowledges support by VILLUM FONDEN via the QMATH Centre of Excellence (grant no. 10059) during his stay at University of Copenhagen and by the WWTF research project ``Variational Modeling of Carbon Nanostructures" (no. MA14-009) at University of Vienna}. LDL is a member of the INdAM-GNAMPA group and wishes to thank the  Scuola Internazionale di Studi Superiori Avanzati where she worked in the early stage of this project. MP is supported by the Fondecyt Iniciaci\'on grant number 11170264 entitled ``Sharp asymptotics for large particle systems and topological singularities''.}
\tableofcontents
\section{Introduction}

\subsection{Our energy minimization problem} If $X_N:=\{x_1,\ldots,x_N\}$ ($N\in\N$) is a finite subset of $\mathbb R^2$, referred to as {\it configuration}, and $V:[0,+\infty)\to\mathbb R\cup\{+\infty\}$ is a function, referred to as {\it pairwise interaction potential}, the $V$-energy of $X_N$ is defined by
\begin{equation}\label{energy}
\mathcal E[V](X_N):=\frac{1}{2}\sum_{i\neq j}V(|x_i-x_j|).
\end{equation}
We are interested in the minimization of the energy $\mathcal E[V]$ amongst $N$-point configurations under {\it isotropic singular one-well potentials} $V$ which decay as $|x|\to\infty$ (this means that $\lim_{r\downarrow 0}V(r)=+\infty, \lim_{r\to\infty}V(r)=0$ and $r\mapsto V(r)$ is decreasing on $(0,r_0)$ and increasing on $(r_0,\infty)$, for some $r_0>0$). We will normalize $V$ below and assume that
\begin{equation}\label{minv-1}
 \min_{r>0}V(r)=-1.
\end{equation}
Since $\mathcal E[V]$ is invariant under isometries of $\mathbb R^2$, we study minimizers up to isometry, and we are interested in properties which hold for large $N$. We find here conditions (see Theorems \ref{thm1}, \ref{thm2} and \ref{thm3}) under which in three different situations the minimum energy problem for \eqref{energy} is asymptotically solved by a {\it square lattice $t\mathbb Z^2$}, for some $t>0$. This means that, setting 
\begin{equation}\label{defen}
 \mathcal E[V](N):=\min\{\mathcal E[V](X_N)\ :\ \sharp X_N= N\},
\end{equation}
it holds $\mathcal E[V](N)=N\overline{\mathcal E}_{\mathrm{sq}}[V]+\mathrm{O}(N^{1/2})$ as $N\to\infty$, where $\overline{\mathcal E}_{\mathrm{sq}}[V]$ is the minimum energy per point taken amongst all square lattices:
\begin{equation}\label{eppz2_intro}
\overline{\mathcal E}_{\mathrm{sq}}[V]:=\min_{t>0}\lim_{R\to\infty}\frac{\mathcal E[V](t\mathbb Z^2\cap B_R)}{\sharp(t\mathbb Z^2\cap B_R)}.
\end{equation}
Here and throughout the paper $B_R=B(0,R)$, where $B(x,\rho)$ denotes the open ball centered at $x$ and having radius equal to $\rho$.

We note that  for some pairwise interaction potentials $V$ the minimizer of the energy \eqref{energy} is a {\it triangular lattice}, i.e., a rescaled copy of $\mathsf A_2=(1,0)\mathbb Z+(1/2,\sqrt3/2)\mathbb Z$. We refer to Subsection \ref{sec_comparisontri} for a more detailed description of such results and for a general discussion on the optimization problems solved by $\mathsf A_2$. To the best of our knowledge this seems to be the first rigorous proof of crystallization to a square lattice for a two-body potential, a result suggested already in \cite[p. 212]{T} in 2006, and towards which more evidence appeared recently in \cite[Section 1.3]{BP}. We refer to Section \ref{sec:prevres-triang} for more details about results on the optimality of a square lattice. Our three main theorems are stated in Subsection \ref{sec_intro_12} below.

\subsection{Description of the main results}\label{sec_intro_12}
We highlight the basic geometric phenomenon at work in our result by considering a very simple $V$. Let 
\begin{equation}\label{hr_V}
V(r):=\left\{\begin{array}{ll}
              +\infty, & r\in[0,1),\\[3mm]
              -1,& r\in[1,r_{max}],\\[3mm]
              0,&r>r_{max}.
             \end{array}
 \right.
\end{equation}
This is a simple family including the Heitmann-Radin ``sticky disk'' potential \cite{HeRa} for $r_{max}=1$, a case in which the interval of ``favourable distances'' on which $V(r)=-1$ {is} reduced to the single point $\{1\}$, giving $\mathsf A_2$ as the asymptotical optimizer of $\mathcal E[V]$ . Our starting consideration was that for $r_{max}=\sqrt2$, asymptotically $\mathbb Z^2$ is instead the optimizer (further discussion of potentials including \eqref{hr_V} will be the aim of a separate paper \cite{P_family}). This follows from two elementary geometry considerations:
\begin{enumerate}
 \item[(a1)] If no points are allowed to get closer than distance $1$ then the maximum number of points $x_i\neq x_j$ from $X_N$ that are within $\sqrt2$-distance of a given $x_j$, needs to be at most $8$.
 \item[(a2)] If $x_i$ has precisely $8$ ``neighbors'' at distances lying in $[1,\sqrt2]$, and each one of these neighbors has precisely $8$ neighbors as well, then the neighbors of $x_i$ must form a perfect square, i.e. they form, together with $x_i$ itself, configuration isometric to $\{-1,0,1\}^2\subset\mathbb R^2$.
\end{enumerate}
These two ingredients give the basic rigidity result on which our paper is based. The main new idea that we exploit, compared to other energy-minimization problems, is to ``look beyond the next neighbors''. It is worth to mention an equivalent rigidity result, also useful later, which says that ``small energy quadrilaterals are squares'' (see Lemma \ref{rmk_squarerigid}):
\begin{enumerate}
 \item[(b)]  If a quadrilateral $Q$ has sidelengths $\ge 1$ and lengths of diagonals $\le \sqrt2$ then $Q$ is a square.
\end{enumerate}
The proof of (b) uses the same kind of methods as (a1)-(a2). {Lemma \ref{rmk_squarerigid}} also describes a way to obtain it as a corollary of (a1)-(a2) directly.

\medskip

This rigidity {argument} gives our first result (see Theorem \ref{thm:crystal_infinite} for a full statement).
\begin{theorem}\label{thm1}
Let $r_{max}=\sqrt{2}$ in \eqref{hr_V}. 
Then, with the notation \eqref{defen} and \eqref{eppz2_intro}, we have
 $$
N\overline{\mathcal E}_{\mathrm{sq}}[V] \le \E[V](N)\le N\overline{\mathcal E}_{\mathrm{sq}}[V] +O(N^{{1}/{2}})\qquad\textrm{as }N\to+\infty,
$$
where
$$
\overline{\mathcal E}_{\mathrm{sq}}[V]=-4.
$$
\end{theorem}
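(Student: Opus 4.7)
The plan is to split the bound into three parts: first verify the formula $\overline{\mathcal E}_{\mathrm{sq}}[V]=-4$, then prove the lower bound on $\mathcal E[V](N)$ via the rigidity fact (a1), and finally produce the upper bound by an explicit square-lattice construction.

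For the value of $\overline{\mathcal E}_{\mathrm{sq}}[V]$, I would compute the per-point energy of $t\mathbb Z^2$ directly from the definition. Since $V=+\infty$ on $[0,1)$ we must have $t\ge 1$. For $t>1$ the nearest-neighbor distance is $t>1$ but the diagonal distance $t\sqrt2$ exceeds $\sqrt2$, so only the $4$ axis-aligned neighbors contribute, giving per-point energy $-2$. At $t=1$ exactly, the $4$ axis-aligned neighbors (distance $1$) and $4$ diagonal neighbors (distance $\sqrt2$) all lie in the favourable interval $[1,\sqrt2]$, contributing $\tfrac12\cdot 8\cdot(-1)=-4$ per point. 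Minimizing over $t$ therefore gives $\overline{\mathcal E}_{\mathrm{sq}}[V]=-4$, achieved at $t=1$.

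For the lower bound I would invoke fact (a1). Fix a configuration $X_N$; we may assume all pairwise distances are $\ge 1$, otherwise $\mathcal E[V](X_N)=+\infty$ and there is nothing to prove. By (a1), for each $i$ at most $8$ indices $j\ne i$ satisfy $|x_i-x_j|\le\sqrt2$, and each such term contributes exactly $-1$ to $\sum_{j\ne i}V(|x_i-x_j|)$ while all other terms contribute $0$. Hence
\begin{equation*}
\sum_{j\ne i} V(|x_i-x_j|)\ \ge\ -8\qquad\text{for every }i,
\end{equation*}
and summing and dividing by $2$ gives $\mathcal E[V](X_N)\ge -4N=N\overline{\mathcal E}_{\mathrm{sq}}[V]$.

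For the upper bound I would construct an explicit competitor. Let $n=\lfloor\sqrt N\rfloor$ and write $N=n^2+r$ with $0\le r\le 2n$. Place $n^2$ points on the square grid $\{0,\ldots,n-1\}^2\subset\mathbb Z^2$ and arrange the remaining $r$ points in the next row at height $y=n$ at consecutive integer abscissae. A direct count for the $n\times n$ block gives $2n(n-1)$ unit edges and $2(n-1)^2$ diagonal edges of length $\sqrt2$, and the $r$ added points create at most $3r+1$ additional edges in the favourable interval. All these pairs contribute $-1$ to the energy and no pair is closer than $1$, so the resulting configuration $X_N$ satisfies
\begin{equation*}
\mathcal E[V](X_N)\ \le\ -\bigl[2n(n-1)+2(n-1)^2\bigr]\ =\ -4n^2+O(n)\ =\ -4N+O(N^{1/2}),
\end{equation*}
using $N-n^2=O(\sqrt N)$. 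Taking the minimum over $X_N$ yields $\mathcal E[V](N)\le N\overline{\mathcal E}_{\mathrm{sq}}[V]+O(N^{1/2})$.

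The only genuine geometric content of the proof is fact (a1), which is exactly the rigidity ingredient already singled out in the introduction and will be proved elsewhere in the paper. Once (a1) is in hand, both the matching lower bound and the explicit construction are essentially bookkeeping; the main risk is simply to state carefully that configurations with coincident or sub-unit pairs are excluded (by the $+\infty$ branch of $V$) before invoking (a1).
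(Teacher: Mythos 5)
Your proposal is correct and follows essentially the same route as the paper: the lower bound rests on the ``at most $8$ neighbors'' rigidity fact (a1), which the paper likewise defers to the angle estimate of Corollary \ref{toprove(i)} in Appendix \ref{proof_lem:combintometric}, and the upper bound comes from a square-lattice patch whose boundary has cardinality $O(N^{1/2})$ (your explicit edge count versus the paper's bound $\mathcal E[V](X_N)\le -4N+4\sharp\partial\mathcal G_0(X_N)$ is only a difference of bookkeeping). Your direct minimization over $t$ to get $\overline{\mathcal E}_{\mathrm{sq}}[V]=-4$ is slightly more explicit than the paper's, but equivalent.
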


In the above statement and throughout all the paper $O=O(T)$ denotes a continuous function on $\R^+$ such that $\lim_{T\to +\infty}\frac{|O(T)|}{T}$ is finite.

\medskip

The fact that rigidity is ensured once we look up to a large enough number of next-neighbors, is a natural idea, exploited successfully in the work by Hales on the best-packing in $3$ dimensions \cite{hales3d}. However, as shown in that work, it could lead to somewhat tedious case examinations, in the absence of a machinery which allows to streamline the bookkeeping of the energy during the optimization (a striking example of such machinery, in which all layers are studied at the same time via Fourier analysis, are the recent papers \cite{CElkies,CK,V8,CKMRVPacking24,CKMRV}).

\medskip

In order to deal with interaction potentials that are more regular than the one in \eqref{hr_V} we follow  the idea (b) above, introducing as a building block the $4$-point energy defined as
\begin{equation}\label{defEWintro}
 \mathcal E_4[V](x_1,x_2,x_3,x_4):= \frac12\sum_{j=1}^2\sum_{i=1}^4V(|x_i-x_{i+j}|),\quad \forall i\in\{1,2,3,4  \}, x_i\in \R^2,
\end{equation}
where indices are considered modulo $4$. 
Note that ``diagonal pairs'' $(x_1,x_3), (x_2,x_4)$ are counted twice in the above sum, while ``nearest neighbors'' are counted only once.

\medskip

Calling {\it elementary square} a $4$-ple of points whose vertices form a ``small'' deformation of a unit square (see Definition \ref{def:deform}), the use of $\E_4[V]$ is clear in view of the following considerations:
\begin{itemize}
\item Up to boundary contributions, the energy of a square-lattice configuration is the sum over all elementary squares of $\mathcal E_4[V]$ {(see Figure \ref{fig-resum})}. 

\item On the other hand, (as in point (b) above) if $V$ is a ``short-enough-range'' potential, a minimizing configuration is the one for which each elementary square separately optimizes $\mathcal E_4[V]$ (see Section \ref{sec_4ptmin}).
\end{itemize}
 \begin{figure}[h]
 \includegraphics[width=6cm]{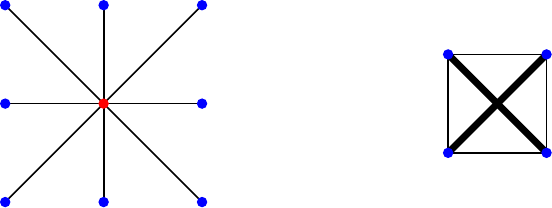}
\caption{We will use the fact that {energy contributions appearing in $\mathcal E_4[V]$} correspond to energy contributions of a single point as indicated in the above figure. This resummation trick will be used for regions of our configuration which are small {deformations} of regions in $\mathbb Z^2$.}\label{fig-resum}
\end{figure}
 
 \medskip

Since the 4-point energy functional $\E_4[V]$ plays a fundamental role in our anlysis, we focus on convexity and minimality properties of $\mathcal{E}_4[V]$; such an analysis has appeared in \cite{fritheil} in the case of a potential modeling elastic responses, and more general calculations of the type that we perform in Section \ref{sec_4ptmin} seem to have a long history, starting from Maxwell's work \cite{maxwell} (see also \cite{calladine}), and appear in the study of stability and oscillation modes of frameworks, see e.g. \cite{izmestiev} for a geometric introduction to this subject, and the references therein.

\medskip

Now we state our second result which holds for more regular finite-range potentials $V$. We assume that $V$ satisfies the following properties, for suitable $0<\alpha'<\alpha<\alpha''$, $\epsilon'>0$  (small) and $K>0$ (large):

 \begin{itemize}
\item[(A)] $\mathcal E_4[V]$ has a strict minimum at the unit square, and this is its unique minimum amongst all $4$-point configurations with interpoint distances in $E_{\alpha''}$.
This happens in particular if $V$ is piecewise-$C^2$ in $(0,\infty)$, {i.e. $V\in C^2_{pw}((0,\infty))$}, and satisfies the explicit derivative bounds $(1)$ and $(2)$ from Section \ref{sec_condV} below);
\item[(B)]   $-1=\min V\le V(r)\le -1+\epsilon'$, for $r\in E_{\alpha'}$,
where 
 \begin{equation}\label{eq_Ealpha}
  E_\beta:=E_\beta^1\cup E_\beta^2,\quad E_\beta^1:=(1-\beta,1+\beta)\textnormal{ and } E_\beta^2:=(\sqrt{2}-\beta,\sqrt{2}+\beta)\quad\textrm{for any } \beta>0;
\end{equation}
\item[(C)] $V(r)>-\frac{1}{2}$ if $r\notin(1-\alpha,\sqrt 2 +\alpha)$;
\item[(D)] $V(r)\ge K$ for $r\le1-\alpha$;
\item[(E)] $V(r)=0$ for $r  \ge\sqrt2+\alpha''$.
 \end{itemize}

\begin{figure}[h]
\includegraphics[width=7cm]{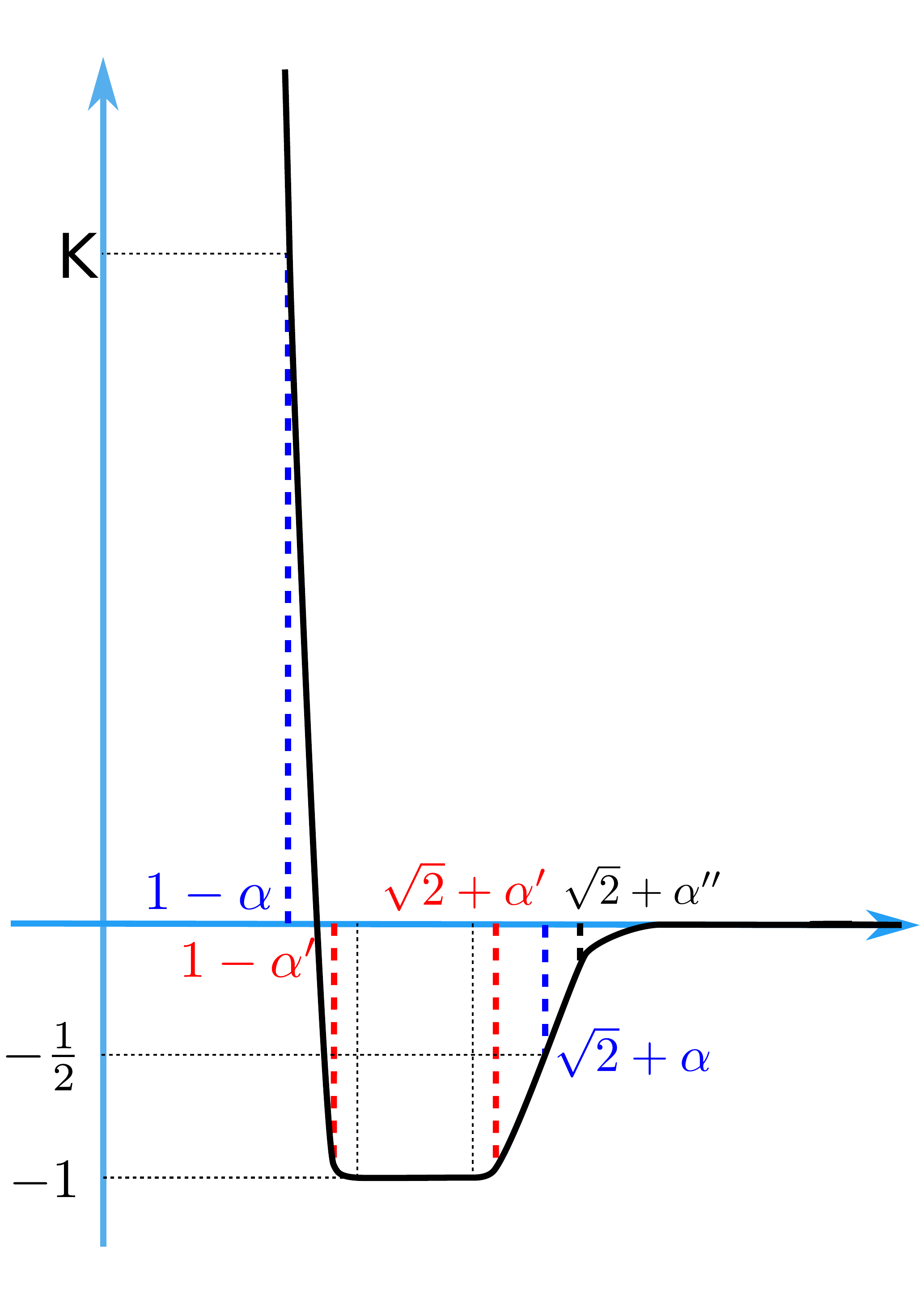}\qquad\includegraphics[width=7cm]{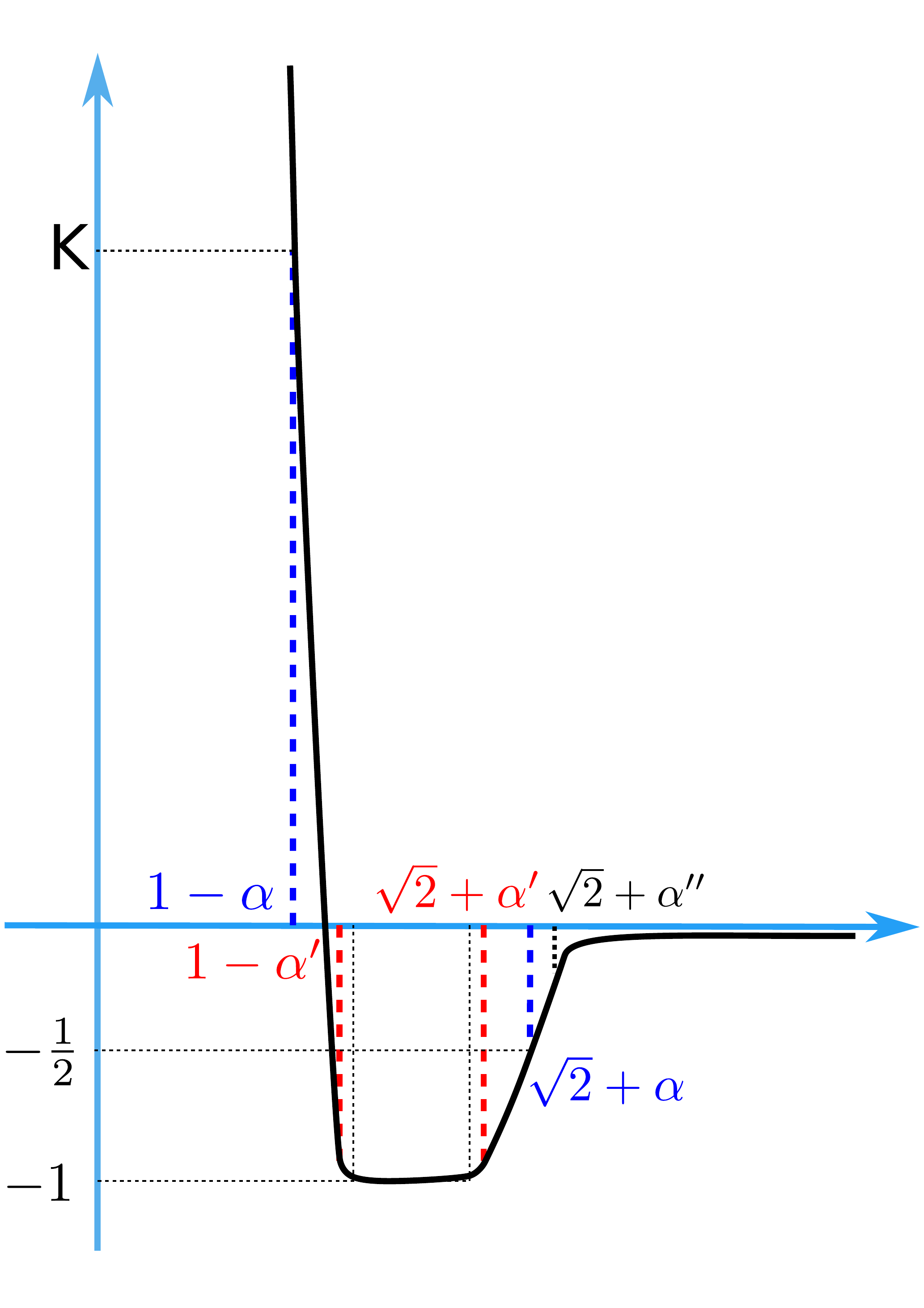}
\caption{A potential satisfying properties (A)-(E) required for Theorem \ref{thm2}, and one satisfying properties (A)-(E') needed for Theorem \ref{thm3} (note that the values of $\alpha,\alpha',\alpha''$ are exaggerated).}\label{V2}
\end{figure}

See Figure \ref{V2}(left) for a potential satisfying these properties. This control allows us to get a result similar to Theorem \ref{thm1} (see Theorem \ref{thm_fr_crystal} for a more complete statement).
\begin{theorem}\label{thm2}
There exists $\bar\alpha, \bar \epsilon>0$ such that for all $ \alpha''\in(0,\bar\alpha]$ and all $\epsilon'\in(0,\bar\epsilon]$ there exist $0<\alpha'<\alpha<\alpha''$ and $K=K(\alpha,\epsilon')$ such that if $V\in C^2_{pw}((0,\infty))$ satisfies above conditions (A)-(E) then
\begin{equation}
N\overline{\mathcal E}_{\mathrm{sq}}[V]\le \mathcal E[V](N)\le N\overline{\mathcal E}_{\mathrm{sq}}[V]+O(N^{1/2})\qquad\textrm{as $N\to +\infty$,}
\end{equation}
where
\begin{equation}\label{min_dens_intro_fr}
\overline{\mathcal E}_{\mathrm{sq}}[V]=\min \mathcal E_4[V].
\end{equation}
\end{theorem}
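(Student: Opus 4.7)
The approach parallels the proof of Theorem \ref{thm1}, replacing the exact geometric rigidity (a1)-(a2)-(b) by the quantitative $4$-point rigidity encoded in hypothesis (A). Three things must be established: the matching upper bound, the identification $\overline{\mathcal E}_{\mathrm{sq}}[V]=\min\mathcal E_4[V]$ in \eqref{min_dens_intro_fr}, and the matching lower bound.

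\textbf{Upper bound and identification of the constant.} First I would place $N$ points on a $\sqrt{N}\times\sqrt{N}$ patch of $\mathbb Z^2$. By the finite-range hypothesis (E), only the $O(N^{1/2})$ points within distance $\sqrt 2+\alpha''$ of the boundary deviate from the bulk per-particle energy, which equals $\tfrac12\bigl(4V(1)+4V(\sqrt2)\bigr)=2V(1)+2V(\sqrt2)$. A direct computation from \eqref{defEWintro} shows that $\mathcal E_4[V]$ on the unit square equals exactly this quantity, and hypothesis (A) identifies it with $\min\mathcal E_4[V]$. Comparing with $t\mathbb Z^2$ for all $t>0$ --- ruling out large $t$ via (E) (bulk energy $0$), small $t$ via (D) (bulk energy very positive), and intermediate $t$ via (B)-(C) --- shows that $t=1$ is optimal, thus $\overline{\mathcal E}_{\mathrm{sq}}[V]=\min\mathcal E_4[V]$ as claimed in \eqref{min_dens_intro_fr}.

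\textbf{Lower bound via quadrilateral resummation.} Given an arbitrary $X_N$, I would classify unordered pairs by distance into \emph{sides} in $E_{\alpha''}^1$, \emph{diagonals} in $E_{\alpha''}^2$, \emph{close-bad} in $(1-\alpha,\sqrt{2}+\alpha'')\setminus E_{\alpha''}$, \emph{hard-core} below $1-\alpha$, and \emph{out-of-range} above $\sqrt{2}+\alpha''$. Matching against the already-established upper bound forces a near-minimizer to contain only $O(N^{1/2})$ hard-core pairs (by (D), each costing at least $K$) and $O(N^{1/2})$ close-bad pairs (by (C), each costing at least $-1/2$ instead of $\approx -1$). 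After discarding these defects at bounded energy cost, the remaining bond graph should be partitioned into elementary quadrilaterals as in Figure \ref{fig-resum} so that each side-bond appears in two quadrilaterals and each diagonal-bond in one --- exactly the multiplicities appearing in $\mathcal E_4[V]$. Applying (A) quadrilateral-by-quadrilateral then yields $\mathcal E[V](X_N)\ge N\min\mathcal E_4[V]-O(N^{1/2})$.

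\textbf{Main obstacle.} The delicate step is the surgery needed to assemble side- and diagonal-edges into elementary quadrilaterals in the presence of local defects. In the Heitmann--Radin regime of Theorem \ref{thm1} this is automatic from (a1)-(a2); in the present regular setting I would exploit a local coercivity estimate of the form $\mathcal E_4[V](y_1,\ldots,y_4)\ge\min\mathcal E_4[V]+c\,\mathrm{dist}\bigl((y_1,\ldots,y_4),\text{unit squares}\bigr)^2$ --- which should follow from (A) together with the explicit derivative bounds of Section \ref{sec_condV} --- to prove that interior points of a near-minimizer have the $\mathbb Z^2$-like neighbor topology except on a defect set whose size is controlled by an isoperimetric-type argument bounded by the boundary length $O(N^{1/2})$. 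Calibrating the thresholds $\bar\alpha,\bar\epsilon$ so that the coercivity gap dominates the error terms allowed by (B)-(C) is where the technical subtlety lies.
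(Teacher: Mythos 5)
Your upper bound and the identification $\overline{\mathcal E}_{\mathrm{sq}}[V]=\min\mathcal E_4[V]$ follow essentially the paper's route (Theorem \ref{thm_fr_crystal}, first part): rule out small $t$ by (D), observe that for $t$ close to $1$ only the distances $t,\sqrt2 t$ contribute so the bulk energy per point is exactly $\mathcal E_4[V](\{0,t\}^2)$, and invoke (A). The lower bound, however, has two genuine gaps. First, the theorem asserts the \emph{exact} inequality $N\overline{\mathcal E}_{\mathrm{sq}}[V]\le\mathcal E[V](N)$, whereas your argument only delivers $\mathcal E[V](X_N)\ge N\min\mathcal E_4[V]-O(N^{1/2})$. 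The paper never compares a minimizer against the upper bound to count defects; instead it proves a \emph{local} statement: each point $p$ contributes at least $\mathcal E_4[V](\boxtimes)$ plus a ``lost weight'' which is nonnegative in total, because by (B) the bulk value $\mathcal E_4[V](\boxtimes)\le 4\sup_{E_{\alpha'}}V$ is so negative that any point adjacent to a defective or missing edge loses a definite amount relative to it (see \eqref{boundsEp2}--\eqref{lb3}). Your bootstrap is also circular as stated: to convert the $O(N^{1/2})$ energy surplus into an $O(N^{1/2})$ count of close-bad pairs you already need the per-particle penalization that each such pair forces a deficit below the bulk value $\approx-4$; close-bad pairs have negative energy, so they do not ``cost'' anything in absolute terms, and quantifying the relative deficit is exactly the accounting you defer.

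Second, the tool you propose for the ``main obstacle'' does not do the job. A coercivity estimate $\mathcal E_4[V](y)\ge\min\mathcal E_4[V]+c\,\mathrm{dist}(y,\text{squares})^2$ controls the metric deformation of a quadruple \emph{already known} to be an elementary square; it says nothing about which quadruples of $X_N$ should be declared elementary squares, nor why each side-bond is covered by exactly two of them and each diagonal by one. That combinatorial step is supplied in the paper by a separate elementary-geometry rigidity lemma (Lemma \ref{lem:combintometric}, proved by angle-counting in the appendix): if $p$ and all its neighbors have the maximal number $8$ of neighbors, then $\mathcal N_\alpha(p)$ is a $C_3\alpha$-deformation of $\{-1,0,1\}^2$, which is what makes the resummation of Figure \ref{fig-resum} legitimate away from $\partial\mathcal G_\alpha$. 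Without an argument of this type (which is not a consequence of hypothesis (A) or of any Hessian bound on $\mathcal E_4[V]$), the decomposition \eqref{resumq1} with the correct multiplicities is not available, and the quadrilateral-by-quadrilateral application of (A) cannot be set up.
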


The above theorem uses, besides the thorough study of $\mathcal E_4[V]$ which we already mentioned, also the quantitative version of the phenomena valid for \eqref{hr_V}, which are included in a geometric rigidity result (Lemma \ref{lem:combintometric}). This result, whose proof is based on an elementary study of configurations close to $\{-1,0,1\}^2\subset \mathbb R^2$, is another ingredient of the proofs which is new compared to the $\mathsf A_2$-crystallization results. It plays an important role, allowing to avoid losing the combinatorial order between neighbors lying in the ``interior'' of a configuration, i.e. in a ``neighborhood'' of a point having 8 ``nearest neighbors''. Special emphasis on the combinatorial setup is included in Section \ref{sec_combinsetup}: to keep track of this structure we use a differential-geometric language allowing to keep the model-space $\mathbb Z^2$ from the actual energy competitor at hand. Using geometric ideas for organizing energy contributions was an idea already present in \cite{T} and further developped e.g. in \cite{DLF1} and \cite{DLF2} in the $2$-dimensional setup adapted to the study of $\mathsf A_2$-crystallization (see also \cite{DNP} for an application of this setting to the study of polycrystals made by $\mathsf A_2$ grains). 

\medskip

The final result we prove is for long-range potentials $V$. The required assumptions on $V$ coincide with conditions (A)-(D) above, plus an assumption on the ``fast decay'' of the tail, i.e., condition (E) 
\medskip

We can now state our theorem, whose detailed statement is Theorem \ref{thm_lr_crystal}.
\begin{theorem}\label{thm3}

There exists $\bar{\bar\alpha}, \bar{\bar \epsilon},\bar{\bar \epsilon}_0>0$ such that for all $ \alpha''\in(0,\bar{\bar\alpha}]$, $\epsilon'\in(0,\bar{\bar\epsilon}]$  and $\epsilon\in [0,\bar{\bar \epsilon}_0]$ there exist $0<\alpha'<\alpha<\alpha''$ {and $K=K(\alpha,\epsilon')$} such that if $V\in C^2_{pw}((0,\infty))$ satisfies the above conditions (A)-(E') then
\begin{equation}
N\overline{\mathcal E}_{\mathrm{sq}}[V]\le \mathcal E[V](N)\le N\overline{\mathcal E}_{\mathrm{sq}}[V]+O(N^{1/2})\qquad\textrm{as $N\to +\infty$,}
\end{equation}
where
\begin{equation}\label{min_dens_intro}
\overline{\mathcal E}_{\mathrm{sq}}[V]=\min_{t>0}  \lim_{R\to\infty}\frac{\mathcal E[V](t\mathbb Z^2\cap B_R)}{\sharp(t\mathbb Z^2\cap B_R)}.
\end{equation}

\end{theorem}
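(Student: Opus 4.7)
The plan is to reduce the long-range problem to the finite-range setting of Theorem \ref{thm2} by truncation, and to treat the tail as a perturbation controlled by the fast-decay hypothesis (E'). I would split $V=V_{\mathrm{sr}}+V_{\mathrm{lr}}$, where $V_{\mathrm{sr}}$ coincides with $V$ on $[0,\sqrt{2}+\alpha'']$, is then smoothly cut off to $0$ beyond this range while preserving the $C^2_{pw}$ regularity and still satisfying (A)--(E) with the same constants, so that the remainder $V_{\mathrm{lr}}$ is supported in $[\sqrt{2}+\alpha'',+\infty)$ and decays at the rate prescribed by (E'). Correspondingly $\mathcal{E}[V]=\mathcal{E}[V_{\mathrm{sr}}]+\mathcal{E}[V_{\mathrm{lr}}]$, and the whole short-range machinery developed for Theorem \ref{thm2}---the convexity and minimality analysis of $\mathcal{E}_4[V_{\mathrm{sr}}]$ in Section \ref{sec_4ptmin}, the combinatorial setup in Section \ref{sec_combinsetup}, and the geometric rigidity Lemma \ref{lem:combintometric}---applies to the first piece verbatim.

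The upper bound is the routine direction: I would let $t^{*}>0$ realize the minimum in \eqref{min_dens_intro}, take a nearly-square patch $X_N^{t^{*}}\subset t^{*}\mathbb{Z}^2$ of $N$ points, and compute
\[
\mathcal{E}[V](X_N^{t^{*}}) \leq N\,\overline{\mathcal{E}}_{\mathrm{sq}}[V] + O(N^{1/2}),
\]
the defect coming from boundary contributions, of order the perimeter $N^{1/2}$ times the tail sum $\sum_{y\in t^{*}\mathbb{Z}^2\setminus\{0\}}|V(|y|)|$, which is finite by (E').

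The lower bound $\mathcal{E}[V](N)\geq N\overline{\mathcal{E}}_{\mathrm{sq}}[V]$ is the substantial part. For a minimizer $X_N$, I would rerun the proof of Theorem \ref{thm2} for $V_{\mathrm{sr}}$ in the presence of the extra term $\mathcal{E}[V_{\mathrm{lr}}](X_N)$. The key intermediate conclusion is that, up to a boundary layer of $O(N^{1/2})$ points, every elementary four-point configuration of $X_N$ is a small deformation (in the sense of Definition \ref{def:deform}) of a common rescaled unit square at some global scale $t$. Once this rigidity is in place, the resummation depicted in Figure \ref{fig-resum} converts the short-range energy into a sum of four-point energies $\mathcal{E}_4[V_{\mathrm{sr}}]$, while the long-range energy is evaluated by replacing each actual distance $|x_i-x_j|$ by the corresponding lattice distance $|y_i-y_j|$; the rigidity provides uniform smallness of the replacement error and (E') ensures summability. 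The fact that boundary points miss far-range contributions means that the boundary correction goes the favorable way, yielding the inequality without a negative $O(N^{1/2})$ error.

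The hard part will be to preserve the strict four-point rigidity of Theorem \ref{thm2} once $V_{\mathrm{lr}}$ is added: since the tail acts at all interpoint distances, it could in principle destroy the strict minimality of the unit-square configuration for $\mathcal{E}_4$, which is the combinatorial engine of the argument. The quantitative convexity gap of $\mathcal{E}_4[V_{\mathrm{sr}}]$ established in Section \ref{sec_4ptmin} must dominate, uniformly over admissible configurations, the residual contribution of $V_{\mathrm{lr}}$; this is the role of the smallness thresholds $\bar{\bar\alpha},\bar{\bar\epsilon},\bar{\bar\epsilon}_0$, which must be tuned so that this dominance survives the summation over all interaction scales. A secondary subtlety is that the globally optimal scale $t^{*}$ may drift slightly from $1$ under the long-range interaction, so one must show that $t\mapsto\lim_{R\to\infty}\mathcal{E}[V](t\mathbb{Z}^2\cap B_R)/\sharp(t\mathbb{Z}^2\cap B_R)$ is continuous on $(0,\infty)$ and attains its minimum, with $t^{*}$ kept close to $1$ by (E').
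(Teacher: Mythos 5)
Your skeleton (truncate $V=V_{\mathrm{sr}}+V_{\mathrm{lr}}$, treat the tail as a perturbation, reuse the four-point rigidity of Theorem \ref{thm2}) points in the right direction, and you correctly locate the danger in the interaction between the tail and the strict four-point minimality. But the mechanism you propose for the lower bound has a genuine gap. You want to evaluate $\mathcal E[V_{\mathrm{lr}}](X_N)$ by replacing each distance $|x_i-x_j|$ by its lattice value and claim the replacement error is ``uniformly small'' by rigidity. It is not: an edge of combinatorial length $r$ can be deformed by $O(r\alpha)$ (this is all that John's estimate, Lemma \ref{johnlemma}, gives), so the first-order error for such an edge is of size $|V'(r)|\cdot O(r\alpha)$, and summing over the $O(m(r))$ edges of length $r$ emanating from each point gives a per-point error of order $\epsilon\alpha$. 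This error is \emph{linear} in the local deformations $\delta_e$, and a linear term cannot be dominated by the quadratic convexity gap of $\mathcal E_4[V_{\mathrm{sr}}]$ without leaving a strictly negative residue of order $\epsilon^2 N$; the clean inequality $\mathcal E[V](N)\ge N\overline{\mathcal E}_{\mathrm{sq}}[V]$, with no $O(N)$ defect whatsoever, would be lost.

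What is missing is the resummation that makes the first-order terms cancel exactly. The paper partitions all edges of $\mathcal K_{\mathbb Z^2}$ into sides and diagonals of squares at scales $r\in\widetilde{\mathcal D}$ (Lemma \ref{lem_splitgraph}, which rests on the number-theoretic identity $m(r)=m(\sqrt2 r)$), writes the tail energy as a sum of four-point energies of $r$-squares, and Taylor-expands each around the perfect $r$-square: the first-order part is exactly an area increment (Proposition \ref{prop:taylor}), areas telescope across scales up to boundary terms (Proposition \ref{prop:card_bounds}), and only quadratic deformation errors survive, controlled by the distortion estimate of Proposition \ref{prop:distortion} and absorbed into the Hessian gap via the $\overline{\delta^2}$ correction. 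The outcome is a lower bound by $\sum_{Q_1}\mathcal E_4[V_*-C_{\mathrm{pot}}\epsilon\overline{\delta^2}](X(Q_1))$ for the \emph{resummed} potential $V_*(t)=\sum_{r\in\widetilde{\mathcal D}} m(r)V(tr)$, whose minimizer $\overline\boxtimes$ is a square of sidelength generally different from $1$ and whose minimal value equals $\overline{\mathcal E}_{\mathrm{sq}}[V]$ (Proposition \ref{squarelatticeasy}). Comparing to the minimizer of $\mathcal E_4[V_*]$ rather than of $\mathcal E_4[V_{\mathrm{sr}}]$ is precisely what kills the first-order terms; it also disposes of your ``secondary subtlety'' about the drift of the optimal scale. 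Your upper bound argument is fine.
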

\rm{Notice that condition (E') with $\epsilon=0$ gives exactly condition (E); therefore Theorem \ref{thm2} is implied by Theorem \ref{thm3}, once provided that, if (E) holds, then the right hand sides of \eqref{min_dens_intro} and of \eqref{min_dens_intro_fr} coincide. 
In this paper, we do not pursue this strategy.
Instead we find more instructive to give first the proof of Theorem \ref{thm2}, since in this case the scheme of the proof is somehow ``cleaner'' and does not require all the tools needed in the proof of Theorem \ref{thm3} to account for long-range interactions. 
} 

Roughly speaking, the starting point in the proof of Theorem \ref{thm3} consists in showing that $\overline{\E}_{\mathrm{sq}}[V]=\min\E_4[V_*]$, where $V_*$ is  a kind of ``long-range potential defined on the distances of $\Z^2$'' (see \eqref{defntildev}). The main phenomenon at work in the above result is that the tail of our potential $V$ is decaying so fast that actually $V_*$ is nothing but
a small enough perturbation of $V$ so that 
 the proof of Theorem \ref{thm3} can be reduced to the one of Theorem \ref{thm2} for $V_*$. The same method was also at the base of the main result of \cite{T} for the triangular lattice (see also \cite{ELi,FESBrenner} for applications of the same ideas to the honeycomb lattice with an additional three-body potential), however in our case new difficulties arise due to the fact that we have to account for energies coming from ``sides''  and ``diagonals'' of squares of different scales in our configurations. Thus we need new tools such as Lemma \ref{lem_splitgraph} which then {allow resummation methods that yield a lower bound of the energy $\E[V]$ via} the sum of $4$-point energies on $r$-squares.

\medskip

For the proof of Theorem \ref{thm3} we have at the same time done an exercise of simplifying and extending to our situation the methods of proofs from \cite{T}. The main technical improvements compared to \cite{T} are that:
\begin{itemize}
 \item we put a focus on separating the use of the combinatorial information, the metric information and the information about the embedding to $\mathbb R^2$ of our configurations;
 \item we avoid the use of the Friesecke-James-M\"uller rigidity estimate, employed in \cite{T}: instead, we use a rougher estimate based on John's earlier result (see Lemma \ref{johnlemma}), which makes the proof self-contained without changing the decay hypotheses needed on $V$;
 \item we make more explicit the method of proof started in \cite{T}, i.e. the idea of controlling long-range deformations via the Hessian of the microscale energy,  by separating the self-contained result of the existence of minimizers for small perturbations of the potential $V$ (Proposition \ref{prop_square}).
\end{itemize}

\subsection{Previous results on the optimality of the square lattice}\label{sec:prevres-triang}

Only few rigorous results exist about the crystallization on a square lattice, i.e. the fact that $\Z^2$ is a ground state of an interaction energy, with either one or several types of particles. Also note that, in $3$ dimensions, there is only one chemical element which has a simple cubic structure (i.e. Polonium) and the only ionic solid having a simple cubic basis is the Sodium Chloride NaCl (rock-salt structure). However, in dimensions $2$ and $3$, $\Z^2$ and $\Z^3$ are some of the very few lattices (together with the triangular, the BCC and the FCC lattices) that are ``density-stable", i.e. they can be critical points of the lattice energy per point associated with any absolutely summable interaction potential $V$ for densities in an open interval (see \cite{BMorse19} for a proof). It is then reasonable to think that they are  good candidates for ground states of energies such as $\mathcal E[V]$ for two-body isotropic one-well potentials $V$.

\medskip

The first rigorous result in this direction seems to be the work of Mainini, Piovano and Stefanelli \cite{MPS}, who proved the optimality of a subset of $\Z^2$ for a combination of (short-range) two-body and three-body angular potentials which favour right angles. Regarding the analogy with two-ion compounds, Friedrich and Kreutz \cite{FrKr} have shown the energy-optimality of a subset of $\Z^2$ composed of two types of particles under short-ranged repulsive/attractive interactions (modelling a rock-salt structure, in two dimensions).

\medskip

Several potentials have been designed for stabilizing a square or a cubic lattice. Exploring the different structures that can be obtained by using a decreasing convex potential, Marcotte, Stillinger and Torquato have defined in \cite[Section III.A]{Torquatosquare} an example of potential such that $\Z^2$ is a ground state at fixed density 1, the same being also done in \cite{BTheta16}. In \cite{Torquatocubic}, Rechtsman, Stillinger and Torquato proposed the potential $V(r)=r^{-12}-2.7509 e^{-32.2844(r-\sqrt{2})}$ that has (numerically) $\Z^3$ as the ground state of the pairwise energy.

\medskip

Concerning the search of ground states amongst periodic configurations, it has been numerically shown in \cite{BLocal18} that the square lattice is the ground state of the Lennard-Jones potential $V(r)=r^{-12}-2r^{-6}$ among Bravais lattices of fixed density belonging to $(0.79,0.87)$. This was conjectured to still hold true for general differences of completely monotone functions in \cite{BLocal18} and was investigated for the Morse potential in \cite{BMorse19}. For the 3-block copolymer case, Luo, Ren and Wei \cite{LRW} proved in $2$-dimensions the optimality among Bravais lattices of a square lattice of alternating types of species (two kinds with different sizes, a third one being considered as a background) under Coulomb interactions, under the condition that the parameter $b$ of their system -- depending on the size and a weight associated to each species --  belongs to a certain explicit interval. Finally, two of the authors of this paper have constructed in \cite{BP} several examples of two-body one-well potentials $V$ such that a square lattice has lower $V$-energy per point than a triangular one.

\subsection{Previous literature on the triangular lattice and comparison with our results}
\label{sec_comparisontri}
Many {two}-dimensional optimization problems give as a (proved or conjectured) minimizer $\mathsf A_2$. {These include the} best-packing problem \cite{FTpacking2}, optimal-transport type problems \cite{BPT}, the best-covering problem \cite{kershner} and the quantizer problem \cite{thothie} (see also \cite{Gruberhex} for more examples). Furthermore $\mathsf A_2$ is conjectured to be asymptotically minimizing for \eqref{energy} when $V$ is any Lennard-Jones type potential (i.e. a difference of inverse power laws), see \cite{BLY,BTheta16,BLocal18,BP}, as well as for the Morse potential \cite{BMorse19}. It was recently conjectured in \cite{CK} that in fact $\mathsf A_2$ is {\it universally optimal}, i.e. it optimizes among fixed-density configurations all energies for which $W$ defined such that $V(r)=W(r^2)$ has nonnegative Laplace transform ($W$ is called completely monotone), a result so far known only for algebraically simpler to treat lattices in $8$ and $24$ dimensions \cite{CKMRV}. {A related conjecture is} the {\it Abrikosov conjecture}, which again postulates that $\mathsf A_2$ is optimal at any fixed density for the renormalized energy, i.e. under potentials $V$ with heavy tails such as the Coulomb potential from Electrostatics \cite{Abrikosov,SSVortex,RSCoulombGas,PSRiesz,BS}. {Recently such a conjecture was shown to be equivalent to the Cohn-Kumar conjecture in \cite{PS2} for Coulomb potentials and some Riesz potentials, and further extension to all Riesz potentials may be possible.}

\medskip

{Concerning} crystallization, the first rigorous proof of crystallization in two dimensions under a one-well isotropic potential seems to be the one by Heitmann and Radin \cite{HeRa} of 1980, who consider the potential from \eqref{hr_V} with $r_{max}=1$, called the ``sticky disk'' potential. Such a result is actually a finite crystallization result, i.e. Heitmann and Radin proved that for every $N\in\N$ all the minimizers of the sticky disc energy lie, up to rotations and translations, on $\mathsf{A}_2$, using the minimal value of the energy, which was found  in turn  by Harborth \cite{Har} (see \cite{DLF1} for a more transparent proof).

\medskip

Later, Radin \cite{R} considered a slightly different version of the potential from \cite{HeRa}
\[
 V_{Rad}(r):=\left\{
 \begin{array}{ll}
+\infty,& r<1,\\
24r-25,& 1\le r<25/24,\\
0,&r>25/24.\end{array}\right.
\]
In this case {it is shown} that all nearest-neighbors of the minimizers are at distance precisely $1$ and then apply the basic rigidity principle that selects the triangular lattice ground state configuration, however the techniques are not sufficient for allowing smoother $V$. Our Theorems \ref{thm1} and \ref{thm2} can be considered {as} the asymptotic versions of the results \cite{HeRa} and \cite{R} in the square lattice case. Actually, the basic rigidity principle on which such proofs are based, is analogous to points (a1)-(a2) and (b) in Section \ref{sec_intro_12} and seems to be older. Indeed, it appears in the solution of the 2 dimensional packing problem, appearing e.g. in the paper \cite{FTpacking2} by Fejes T{\'o}th from 1943.

\medskip

Finally, the result about $\mathsf A_2$ which is perhaps closer in spirit to our Theorems \ref{thm2} and \ref{thm3} is the 2006 paper \cite{T} by Theil, in which the main theorem assumes that $V\in C^2((1-\alpha,\infty))$, for some $\alpha>0$ sufficiently small, satisfies the following conditions:
\begin{itemize}
\item[(i)] $V''(r)\ge 1$ for $r\in(1-\alpha,1+\alpha)$; 
\item[(ii)]  $V(r)\ge -\alpha$ for $r\in[1+\alpha,4/3]$;
\item[(iii)] $V(r)\ge \frac 1 \alpha $ for $r\le 1-\alpha$;
\item[(iv)] $|V''(r)|\le \alpha r^{-7}$ for $r>4/3$;
\item[(v)] The minimal energy per point $\min_{r>0} \frac{1}{2}\sum_{p\in \Z^2\backslash \{0\}} V( r |p|)$ is achieved for $r=1$.
\end{itemize}

Under these conditions, the conclusion of the main theorem in \cite{T} gives crystallization to $\mathsf A_2$ in exactly the same sense as expressed in the conclusions of our Theorem \ref{thm3} for the square lattice.

\medskip

We note that the above conditions (ii), (iii) and (iv) above are triangular lattice equivalents of conditions (C), (D), (E') respectively. 
Here, as in \cite{T}, the role of these conditions is to suitably normalize $V$ and to allow to apply the basic rigidity principles as appearing in e.g. \cite{FTpacking2} for the triangular lattice, and the apparently new ones (a1)-(a2), (b) for our new result on the square lattice. Furthermore, assumption (v) is here to force the minimizer to be exactly $\mathsf{A}_2$, which is not the case in our case where $t\Z^2$ ($t$ given by \eqref{min_dens_intro}) is an asymptotic minimizer of our energy.

\medskip

On the other hand, assumption (i) above, similarly to condition (A), has the main role of allowing precise Hessian bounds. For a comparison to \cite{T}, note that the Hessian of $\mathcal E_4[V]$ used here would correspond to the one of the $2$-point energy $\mathcal E_2[V](\{x,y\}):=V(|x-y|)$ in the triangular lattice setup \cite{T}, in which case it is sufficient to use the quantity $V''$ instead.

\medskip

Finally, condition (B) from Theorem \ref{thm3} is still related to (i) above, and it appears due to the fact that we need to get coercivity control at interpoint distances lying in a whole interval $[1,\sqrt2]$ and not just near a minimum point of $V$ as in the study of $\mathsf A_2$.

\subsection{Summary of hypotheses on $V$ used throughout the paper}\label{sec_condV}
We include here, and briefly discuss, several requirements on $V$ that will be useful during the proofs. Firstly, we will use the change of variables
\begin{equation}\label{VforW}
 W(s):=W(r^2):=V(r), \quad s:=r^2,\quad r>0,
\end{equation}
which allows slightly more elegant Hessian computations.

\medskip

Furthermore, note that in the rest of the paper we will use three small deformation parameters which will satisfy
\[
 0<\alpha'<\alpha<\alpha''<\frac{2-\sqrt2}{4},
\]
and whose use will be the following:
\begin{enumerate}
 \item[(a)] The parameter $\alpha$ will be used to measure the deformation of distances from a configuration $X$, with respect to the distances in the model space $\mathbb Z^2$.
 \item[(b)] The parameter $\alpha'$ will measure the small neighborhood $E_{\alpha'}\subset E_\alpha$ on which the potential under consideration only takes values very close to its absolute minimum.
 \item[(c)] The parameter $\alpha''$ will give us a larger neighborhood $E_{\alpha''}\supset E_\alpha$ on which we have convexity bounds on $V$ giving good growth control, and allowing to say that perturbing the distances to stay in $E_{\alpha'}$, decreases the energy.
 \end{enumerate}
 We are now ready to enumerate the various conditions which we will impose on $V,W$ in order to get the results in Theorems \ref{thm2} and \ref{thm3} (which correspond to the more precise statements in Theorems \ref{thm_fr_crystal} and \ref{thm_lr_crystal}). {We assume that $V,W\in C^2_{pw}((0,\infty))$ are related by \eqref{VforW} and satisfy: }
 {
\begin{enumerate}
\item[(0)] $\min_{s>0}W(s)=-1$.

\noindent
It is just a renormalization, that actually does not affect our results.

\item[(1)] $V$ is convex in $E_{\alpha''}$ and $V$ satisfies $\displaystyle \inf_{r\in E_{\alpha''}\setminus [1,\sqrt 2]}V''_{\pm}(r)\ge c$.

\noindent
Such a condition  provides good quantified convexity bounds on $V$, ensuring that $\E_4[V]$ and $\E_4[V_*]$ admit at most one global minimizer among the configurations whose interpoint distances lie in $E_{\alpha''}$, whenever $V_*$ is a $C^2$-small enough perturbation of $V$.
Condition (1) appears for the first time in Lemma \ref{lem:onlysquare} and then in Lemma \ref{lemmorobusto}.

\item[(2)] $\VV$ satisfies
\begin{equation}\label{cond_crit_intro}
 \VV'(1)+2\VV'(2)=0
\end{equation}
and there exists $c'>0$ such that
\begin{equation}\label{conditions_intro}
\begin{array}{ll}
\VV_-''(1)+\VV_+''(1)+2\VV'(1)>C_4c', &\qquad\VV_-''(2)+\VV_+''(2)>C_4c',\\[3mm]
\VV_-''(1)+\VV_+''(1)>C_4c', &\qquad \VV_\pm''(1)+4\VV_\pm''(2)>C_4c',
\end{array}
\end{equation}
where $C_4$ is a constant depending only on the dimension.

\noindent Condition \eqref{conditions_intro} ensures that the configuration formed by the vertices of a unit square, from now on denoted by $\boxtimes$, is a strict local minimum - up to rotations and translations - of $\E_4[V]$; in particular, \eqref{cond_crit_intro} guarantees that $\boxtimes$ is a critical point for $\E_4[V]$, whereas \eqref{conditions_intro} 
reduces to the requirement of Hessian eigenvalues being strictly larger than $C_4c'$ if $V$ is smooth, but extends to piecewise-$C^2$ potentials $V$ which seem easier to construct explicitly;
more precisely \eqref{conditions_intro} guarantees that $\nabla \E_4[V]$ is $c'$-monotone at $\boxtimes$.
The considerations above give precisely the content of Lemma \ref{lem_minsquare}, where the existence of the constant $C_4$ is proven.
Condition (2) appears also in Proposition \ref{newlucia}, Lemma \ref{lemmorobusto}, Proposition \ref{prop_square}.

\item[(3)] $\displaystyle \sup_{r\in E_{\alpha'}}V(r)<-\frac{15}{16}-c''$, for some constant $c''\in [0,\frac{1}{16})$.

\noindent
Condition (3) requires $V$ not to be much higher than its negative minimum in $E_{\alpha'}$.
Such a condition appears for the first time in Proposition \ref{prop_square}. Loosely speaking, the combination of (1) and (3) {implies} that the well of the potential is ``large enough''.

\item[(4)] $V(r)>-\frac12$ if  $r\notin (1-\alpha,\sqrt2+\alpha)$.

\noindent
Also this condition appears for the first time in Proposition \ref{prop_square}. The combination of conditions (3) and (4) implies that $V$ ``increases'' passing from $E_{\alpha'}$ to $\R^+\setminus (1-\alpha,\sqrt2+\alpha)$.  

\item[(5)] $V(r)\ge K$ if $0<r\le 1-\alpha$, for some suitable constant $K>0$.

\noindent
This assumption allows to say that the distance between two points of a minimal configuration is strictly larger than $1-\alpha$. The value of the constant $K$ is determined in Lemma \ref{lem:mindistance}; it depends on $\alpha''$ and on the constants $\epsilon$ and $p$ of assumption (6') below.

\item[(6)] $V(r)=0$ if $r\ge \sqrt 2+\alpha''$.

\noindent
This is just a short-range assumption.

\item[(6')] $V(r)\le 0$ if $r\ge 1$ and
$|V(r)|,\,r|V'(r)|,\,r^2|V''(r)|<\epsilon r^{-p}$ if $r\ge \sqrt{2}+{\alpha''}$, for some $\epsilon>0$ small enough and $p>4$.

\noindent
This assumption is the long-range version of (6). It ensures that the tail of the potential $V$ goes fast enough to 0. Notice that, up to changing $\epsilon$ by a constant factor, it is equivalent to require
$$
|V''(r)|\le \epsilon r^{-p-2} \mbox{ for }r\ge\sqrt 2 +\alpha'',\quad V(r)\to 0\quad (r\to\infty).
$$
\end{enumerate} 

Finally, we note that a one-well potential $V(r)=W(r^2)$ which satisfies the above (0)-(5) and (6') can be given by the following formulas, for $1\le r_1\le r_2\le r_3\le \sqrt2$ and parameters $q>0$, $p>4$, $a_i>0$ and $C>0$ chosen in such a way that $W$ is $C^1$. 
\begin{equation}\label{exV3}
W(s)=\left\{\begin{array}{ll}
a_1s^{-q/2}&\mbox{ for }s\in\left(0,(1-\alpha'')^2\right],\\[3mm]
-C+a_2(s - r_1^2)^2&\mbox{ for }s\in\left[(1-\alpha'')^2, r_1^2\right],\\[3mm]
-C&\mbox{ for }s\in\left[r_1^2,r_3^2\right],\\[3mm]
-C+a_3(s-r_3^2)^2&\mbox{ for }s\in\left[r_3^2,(\sqrt2+\alpha'')^2\right],\\[3mm]
a_4(s-r_2^2)^{-p/2}&\mbox{ for }s>(\sqrt2+\alpha'')^2.\\[3mm]
\end{array}
\right. 
\end{equation}
In particular, one can verify through a tedious verification that conditions (0)-(5) and (6') hold for suitable choices of the parameters, and can be achieved even for $r_1=1,r_3=\sqrt2$ yielding $W\in C_{pw}^2$, whereas if we leave the parameters $r_1,r_3$ a bit more free we can achieve $W\in C^2$ as well.

\section{Proof of Theorem \ref{thm1}}\label{sec:proofthm1}
In this section we prove Theorem \ref{thm1}. Therefore the interaction potential $V$ is the one defined in  \eqref{hr_V} with $r_{max}=\sqrt 2$.

For every $N\in\N\cup\{+\infty\}$ we denote by 
\begin{equation}\label{config_all}
 \mathcal X_N(\mathbb R^2):=\{X\subset\mathbb R^2:\ \sharp X=N\},     
\end{equation}
the set of $N$-point configurations.
Notice that if $X_N=\{x_1,\ldots,x_N\}\in  \mathcal X_N(\mathbb R^2)$ with $\mathcal E[V](X_N)<+\infty$, then $|x_i-x_j|\ge 1$ for every $i\neq j$.

\begin{subequations}\label{config1}
Therefore we define the families of configurations having locally finite energy as      
\begin{equation}\label{config_gen}
\mathcal C:=\left\{X\subset\mathbb R^2:\, \inf_{x\neq x'\in X}|x-x'|\ge 1\right\},\qquad\mathcal C_N:={\mathcal{C}\cap  \mathcal X_N(\mathbb R^2).}
\end{equation}
We define square-lattice configurations of locally finite energy as follows: 
\begin{equation}\label{config_gen_Z2}
\mathcal C^{\Z^2}:=\left\{X\subset\mathbb Z^2:\, \inf_{x\neq x'\in X}|x-x'|\ge 1\right\}, \qquad\mathcal C_N^{\Z^2}:={\mathcal{C}_N\cap\mathcal{C}^{\Z^2}}={\mathcal C^{\Z^2}\cap \mathcal X_N(\mathbb R^2)}.
\end{equation}
\end{subequations}
We define $\mathcal E[V](N)$ as in \eqref{defen} and
\begin{equation}\label{min_e}
\mathcal E^{\Z^2}[V](N):=\min_{X_N\in\mathcal C_N^{\Z^2}}\mathcal E[V](X_N).
\end{equation}
Then clearly we have $\mathcal E[V](N)\le \mathcal E^{\Z^2}[V](N)$. In order to prove Theorem \ref{thm1},  we introduce the graph associated to a configuration in $\mathcal C$. For every $X\in\mathcal C$, we set 
$$
 \mathcal{S}_{0}(X):=\{\{x,y\}\,:\, x,y\in X,\, |x-y|\in[1,\sqrt 2]\}\,
$$
and we denote by $\mathcal{G}_0(X)$ the graph $(X, \mathcal{S}_0(X))$ whose sets of nodes and edges are given by $X$ and $\mathcal{S}_0(X)$ respectively. We say that the points $x,y\in X$ are nearest neighbors if they are connected by an edge. 
Moreover, we denote by
\begin{equation}\label{bdry0}
\partial\mathcal{G}_0(X):=\{x\in X:\ x\mbox{ has less than 8 nearest neighbors }\}.
\end{equation}
Our first result states that to leading order $\mathcal E[V](N)$ and $\mathcal E^{\Z^2}[V](N)$ have {the same asymptotics equal to $-4N+o(N)$}, and that an infinite configuration is locally minimal if and only if it is an isometric copy of $\mathbb Z^2$.
\begin{theorem}\label{thm:crystal_infinite}
Let $V$ be as in \eqref{hr_V} with $r_{max}=\sqrt2$.
\begin{enumerate}
\item[(i)] It holds
\begin{subequations}
\begin{equation}\label{limit_sq0}
-4N\le \mathcal{E}[V](N)\le -4N+O(N^{\frac 1 2})
\end{equation}
where
\begin{equation}\label{limit_sq}
-4=\lim_{N\to\infty}\frac{\mathcal E^{\Z^2}[V](N)}{N}=\lim_{R\to\infty}\frac{\mathcal E[V](\mathbb Z^2\cap B_R)}{\sharp(\mathbb Z^2\cap B_R)}.
\end{equation}
\end{subequations}
\item[(ii)] If $X\in \mathcal C$ and if a point $x\in X$ has $8$ {nearest} neighbors in $\mathcal{G}_0(X)$, each of which in turn has $8$ {nearest} neighbors in $\mathcal{G}_0(X)$, then $\overline{B}(x,\sqrt 2)\cap X$ equals up to rotation and translation $\overline{B}(0,\sqrt 2)\cap \Z^2=\{-1,0,1\}^2$.
\end{enumerate}
\end{theorem}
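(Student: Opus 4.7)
For the lower bound in (i), note that any configuration $X_N$ with finite energy has all pairwise distances $\ge 1$, so only pairs at distance in $[1,\sqrt 2]$ contribute to $\mathcal E[V](X_N)$, each with value $-1$. Thus $\mathcal E[V](X_N)=-\#\mathcal S_0(X_N)=-\frac12\sum_{x\in X_N}\deg_{\mathcal G_0}(x)$, and it suffices to prove the geometric claim (a1), that every vertex has degree at most $8$. Ordering the neighbors of $x$ cyclically as $y_1,\dots,y_k$ with $r_i=|y_i-x|\in[1,\sqrt2]$ and $\theta_i=\angle y_ixy_{i+1}$, the law of cosines combined with $|y_i-y_{i+1}|\ge 1$ gives
\[
\cos\theta_i \;\le\; \frac{r_i^2+r_{i+1}^2-1}{2r_ir_{i+1}} \;\le\; \tfrac34,
\]
the rightmost bound being attained at $r_i=r_{i+1}=\sqrt 2$. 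Hence $\theta_i\ge\arccos(3/4)$, and since $9\arccos(3/4)>2\pi$ we must have $k\le 8$. Summing over $x\in X_N$ yields $\mathcal E[V](X_N)\ge -4N$.

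For the upper bound and the limits in (i), I would test with $X_N=\mathbb Z^2\cap Q_R$ where $Q_R$ is a square of side $\lceil\sqrt N\rceil$. Each interior point of such an $X_N$ has exactly 8 neighbors in $\mathcal G_0$ (four axial at distance $1$ and four diagonal at distance $\sqrt 2$), while the number of ``boundary'' points --- those within distance $\sqrt 2$ of $\partial Q_R$ --- is $O(\sqrt N)$, so $\mathcal E[V](X_N)=-4N+O(\sqrt N)$. Combined with the lower bound, and since $\mathcal E[V](N)\le\mathcal E^{\mathbb Z^2}[V](N)\le\mathcal E[V](X_N)$, this proves \eqref{limit_sq0}, and the analogous computation on $\mathbb Z^2\cap B_R$ gives the second equality in \eqref{limit_sq}.

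For part (ii), the hypothesis yields 8 cyclically ordered neighbors $y_1,\dots,y_8$ of $x$ with $r_i\in[1,\sqrt 2]$ and $\theta_i\ge\arccos(3/4)$ summing to $2\pi$, together with the same type of angular inequalities centered at each $y_i$ (which is itself required to have 8 neighbors). The strategy is to combine these two families of constraints: first exclude any two consecutive $r_i=r_{i+1}=\sqrt 2$ or any two consecutive $r_i=r_{i+1}=1$ by showing that the point $x$ together with the $y_{i\pm1}$ would then occupy angular wedges at $y_i$ leaving insufficient room for the remaining five neighbors of $y_i$; this forces the alternation $r_i\in\{1,\sqrt 2\}$ around $x$. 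Once alternation is established, the law-of-cosines equalities $|y_i-y_{i+1}|\ge 1$ with $(r_i,r_{i+1})=(1,\sqrt 2)$ give $\cos\theta_i\le 1/\sqrt 2$, and summing $\sum\theta_i=2\pi$ forces $\theta_i=\pi/4$ for every $i$, so that $\{y_1,\dots,y_8\}$ is the configuration $\{-1,0,1\}^2\setminus\{0\}$ up to rigid motion.

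The main obstacle is the rigidity step in (ii): the angular bounds at $x$ alone leave nontrivial slack (the eight angles must only sum to $2\pi\approx 8\cdot 45^\circ$, each being $\ge\arccos(3/4)\approx 41.4^\circ$), so extracting the exact square configuration requires the careful propagation of the symmetric 8-neighbor constraints at each $y_i$. Part (i), by contrast, follows almost immediately from (a1) and the explicit construction on $\mathbb Z^2\cap Q_R$.
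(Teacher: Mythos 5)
Your treatment of part (i) is correct and is essentially the paper's argument: the degree bound $\deg_{\mathcal G_0}(x)\le 8$ via $\cos\theta_i\le \tfrac{r_i^2+r_{i+1}^2-1}{2r_ir_{i+1}}\le\tfrac34$ is exactly Corollary \ref{toprove(i)}, and the upper bound by testing on a patch of $\Z^2$ with $O(\sqrt N)$ boundary is what the paper does (modulo the routine adjustment to get exactly $N$ points).

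Part (ii), however, has a genuine gap, and it sits precisely where you locate ``the main obstacle.'' Your plan is: exclude consecutive pairs with $r_i=r_{i+1}=\sqrt2$ and consecutive pairs with $r_i=r_{i+1}=1$, deduce that the $r_i$ alternate between $1$ and $\sqrt2$, then use $\theta_i\ge 45^\circ$ and $\sum\theta_i=2\pi$ to force equality. The deduction in the middle is invalid: the hypotheses only give $r_i\in[1,\sqrt2]$, so ruling out consecutive \emph{extreme} pairs says nothing about intermediate values (e.g.\ all $r_i=1.2$ survives both exclusions). Worse, the final equality argument does not close even if all $\theta_i=45^\circ$ were known: equality in $\cos\theta_i=\tfrac{r_i^2+r_{i+1}^2-1}{2r_ir_{i+1}}=\tfrac1{\sqrt2}$ holds on the whole curve $r_i^2+r_{i+1}^2-\sqrt2\,r_ir_{i+1}=1$, which contains $(1,\sqrt2)$ but also, for instance, $r_i=r_{i+1}=\sqrt{2/(2-\sqrt2)}\approx 1.307$, so the configuration is not pinned to $\{-1,0,1\}^2$ by angle-counting at $x$ alone. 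The paper's proof (Corollary \ref{toprovethm}, obtained from the $\alpha=0$ case of Lemma \ref{lem:combintometric} in Appendix \ref{proof_lem:combintometric}) takes a different route: it never tries to classify the individual distances $r_i$; instead it locates, one at a time (Claims 1--7), four $4$-cliques of $\mathcal G_0$ among $\{x,y_1,\dots,y_8\}$, using Lemma \ref{qualemma} (any four points with all six mutual distances in $[1,\sqrt2]$ form a unit square) as the rigidity engine, and uses the $8$-neighbor hypothesis at the $y_i$ only through octagon-angle counting to produce the contradictions that force each successive clique to exist. To repair your argument you would need either to reproduce that clique-by-clique construction or to find a genuinely new mechanism that excludes intermediate values of $r_i$; the exclusions you propose do not suffice.
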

 In the theorem above and throughout the paper $\overline{B}(x,\rho)$ denotes the closed ball centered at $x$ and having radius $\rho$. Although we prove more general results which imply the above theorem below, we give a direct proof of (i),
 whereas we refer the reader to Corollary \ref{toprovethm} in Appendix  \ref{proof_lem:combintometric} for the proof of (ii). The proof uses some elementary geometry arguments developed in Appendix \ref{proof_lem:combintometric}.

\begin{proof}
We first prove \eqref{limit_sq0}.

Let $N\in\N$.  Trivially, it is enough to prove the first inequality only for configurations in $\mathcal{C}_N$ and the second inequality only for configurations in $\mathcal{C}^{\Z^2}_N$. 

Let $X_N\in \mathcal{C}_N$. Notice that every $x\in X_N$ has at most $8$ neighbors in $\mathcal{G}_0(X_N)$. Indeed, if there were $x\in X$ and $9$ points $x_0,\ldots,x_8\in X_N\setminus \{x\}$ such that $|x-x_i|\in [1,\sqrt 2]$ for all $i\in\{0,\ldots,8\}\simeq\Z/9\Z$ then, assuming that the points are ordered such that the angular coordinate centered at $x$ is increasing and indices are taken modulo $9$, then there exists $i\in\Z/9\Z$ such that $\widehat{x_ixx_{i+1}}$ is smaller than $360^\circ/9=40^\circ$, thus contradicting Corollary \ref{toprove(i)} in the Appendix \ref{proof_lem:combintometric}. 
As a consequence, for every $N\in\N$ and for every $X_N\in\mathcal{X}_N(\R^2)$, it holds  
\begin{equation}\label{per_bound0} 
\mathcal E[V](X_N)\ge -4N,
\end{equation}
i.e. the first inequality in \eqref{limit_sq0}.

\medskip

Let $X_N\in \mathcal{C}^{\Z^2}_N$.  We first note that each point in the neighbor graph $\mathcal{G}_0(\Z^2)$ has precisely $8$ neighbors. Thus we have, by \eqref{per_bound0}, with notation \eqref{bdry0}, for any $X_N\subset \Z^2$,
\begin{equation}\label{per_bound}
-4N\le \mathcal E[V](X_N)\le-4\sharp\left(X_N -\partial\mathcal G_0(X_N)\right) =-4N+ 4\sharp \partial\mathcal G_0(X_N),
\end{equation}
and since we may find a sequence $X_N\subset\Z^2$ such that  $\sharp(\partial \mathcal G_0(X_N))=O(N^{\frac 1 2})$ as $N\to\infty$, the second equality in \eqref{limit_sq0} follows. This concludes the proof of \eqref{limit_sq0} and shows the first equality in \eqref{limit_sq}.

{For proving the second} equality \eqref{limit_sq}, it is enough to notice that $\mathcal E[V](\Z^2\cap B_R)=-4\sharp (\Z^2\cap B_R)+O(R)$.
\end{proof}

The content of the following lemma, whose proof is obtained directly by Theorem \ref{thm:crystal_infinite}(ii), is nothing but property (b) in Subsection \ref{sec_intro_12}.

\begin{lemma}\label{rmk_squarerigid}
Let $\{x_1,x_2,x_3,x_4\equiv x_0\}\in \mathcal{C}_4$ be such that $[x_{i-1},x_i]$ are the sides of a quadrilateral $Q$ for $i=1,\ldots,4$. Assume moreover that $\{x_1,x_3\}, \{x_2,x_4\},\{x_{i-1},x_i\}\in \mathcal S_0$ for every $i=1,\ldots,4$. Then $Q$ is a square with sidelength equal to one.
\end{lemma}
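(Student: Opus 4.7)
The strategy is an elementary angle-chasing argument, in the same spirit as the reasoning behind Theorem~\ref{thm:crystal_infinite}(ii): since by hypothesis all six pairwise distances $|x_i-x_j|$ lie in the narrow window $[1,\sqrt 2]$, the configuration of four points is already essentially rigid.

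First I would bound the geometric angle $\theta_i\in(0,\pi)$ between the two sides of $Q$ incident at each vertex $x_i$. In the triangle $x_{i-1}x_ix_{i+1}$, the segment $[x_{i-1},x_{i+1}]$ is one of the two diagonals of $Q$ and hence by hypothesis has length $\le\sqrt 2$, while the sides $|x_{i-1}-x_i|,|x_i-x_{i+1}|$ are $\ge 1$. The law of cosines then yields
\[
2\ \ge\ |x_{i-1}-x_{i+1}|^2 \ =\ |x_{i-1}-x_i|^2 + |x_i-x_{i+1}|^2 - 2|x_{i-1}-x_i|\,|x_i-x_{i+1}|\cos\theta_i\ \ge\ 2-2\cos\theta_i,
\]
so $\cos\theta_i\ge 0$ and therefore $\theta_i\le \pi/2$ for every $i\in\{1,2,3,4\}$.

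Next I would argue that $Q$ must be convex. If, on the contrary, $Q$ were a simple quadrilateral with a reflex vertex, say $x_1$, then $x_1$ would lie in the interior of the triangle $x_2x_3x_4$, so the three angles $\widehat{x_2x_1x_3},\,\widehat{x_3x_1x_4},\,\widehat{x_4x_1x_2}$ would partition $2\pi$. By the pigeonhole principle at least one of them, say $\widehat{x_ix_1x_j}$, would be $\ge 2\pi/3$. Applying the law of cosines to the triangle $x_1x_ix_j$, using $|x_1-x_i|,|x_1-x_j|\ge 1$, gives
\[
|x_i-x_j|^2\ \ge\ 1+1-2\cos(2\pi/3)\ =\ 3,
\]
which contradicts $|x_i-x_j|\le\sqrt 2$ (the pair $\{x_i,x_j\}$ is either a side or a diagonal of $Q$, hence belongs to $\mathcal{S}_0$). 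Thus $Q$ is convex and $\sum_{i=1}^4\theta_i=2\pi$.

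The rigidity step is now immediate: four nonnegative numbers each $\le\pi/2$ summing to $2\pi$ must all equal $\pi/2$, and equality throughout the chain of inequalities in the first step forces $|x_{i-1}-x_i|=|x_i-x_{i+1}|=1$ and $|x_{i-1}-x_{i+1}|=\sqrt 2$ for every $i$. Hence $Q$ has four unit sides meeting at right angles, i.e.\ it is a unit square, as claimed.

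The main obstacle is the convexity step: excluding a non-convex simple quadrilateral is where one genuinely imports the rigidity of the packing constraint, and the pigeonhole-plus-law-of-cosines argument used there is the four-point specialization of the mechanism that the paper employs in the appendix (Corollary~\ref{toprove(i)}) and in the proof of Theorem~\ref{thm:crystal_infinite}(i)--(ii) to forbid too many neighbors in a ball of radius $\sqrt 2$. Everything beyond that step reduces to saturating the law of cosines.
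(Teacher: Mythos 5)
Your proof is correct, and it takes a genuinely different route from the paper's. The paper proves this lemma as a corollary of the eight-neighbor rigidity statement, Theorem~\ref{thm:crystal_infinite}(ii): after observing convexity it tiles the plane by midpoint-reflected copies of $Q$ and $-Q$, notes that every vertex of the resulting tessellation has eight neighbors at distances in $[1,\sqrt2]$ each of which again has eight such neighbors, and then invokes the appendix machinery (Lemma~\ref{lem:combintometric} in the case $\alpha=0$) to conclude that the tessellation is $\Z^2$ up to isometry. Your argument instead stays entirely inside the four-point configuration: the law of cosines applied to each triangle $x_{i-1}x_ix_{i+1}$ bounds every vertex angle by $\pi/2$, convexity is forced by the pigeonhole-plus-cosine estimate at a putative reflex vertex, and the angle sum $2\pi$ then saturates all inequalities, yielding unit sides and $\sqrt2$ diagonals. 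Your convexity step is exactly the detail the paper leaves implicit ("as can be seen by applying the law of cosines"), and your overall argument is the direct proof of property (b) that the introduction alludes to when it says (b) "uses the same kind of methods as (a1)--(a2)". What your approach buys is self-containedness and brevity for this specific lemma, avoiding any appeal to the appendix; what the paper's approach buys is economy of tools, since it recycles a rigidity result it must prove anyway for the main theorems. One minor point worth making explicit in a final write-up: the reduction to the convex and reflex cases tacitly uses that $Q$ is a simple quadrilateral (which is what "sides of a quadrilateral" means in the statement), and degenerate collinear configurations are automatically excluded by the constraints $|x_i-x_j|\in[1,\sqrt2]$.
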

\medskip

\begin{proof}
We first note that $Q$ is convex, as can be seen by applying the law of cosines. Therefore tiles congruent to $Q,-Q$ can tile the plane (to find the neighbors of $Q$, apply a reflection with respect to the midpoint of each side, and using the fact that the internal angles of $Q$ sum to $360^\circ$ obtain that this procedure can be iterated without generating overlaps). Let $X$ denote the vertices of such tessellation and let $x\in X$. By construction, we have that there exist eight points $x_1,\ldots,x_8$ such that $|x-x_i|\in \mathcal{S}_{0}$ for every $i=1,\ldots,8$. Moreover, for the same reason for every $i=1,\ldots,8$ there are eight points $x_{i1},\ldots,x_{i8}$ in $X$ with $|x_i-x_{ij}|\in \mathcal{S}_{0}$ for every $j=1,\ldots,8$. By Theorem \ref{thm:crystal_infinite}(ii), we get that $B(x,\sqrt 2)\cap X$ equals up to a rotation and a translation $B(0,\sqrt 2)\cap\Z^2$, so that the original $Q$ was a unitary square.
\end{proof}

\section{Smoothed potentials and proof of Theorem \ref{thm2}}
The goal of this section is to prove the crystallization in the sense of the thermodynamic limit for a perturbation of \eqref{hr_V}.
\subsection{Minimum distance between points for minimizers}\label{sec_mindist}
\begin{lemma}\label{lem:mindistance}
For every $C_1> 0,C_2>0$, $r_{min}>0$, $r_0>r_{min}$, and $p>2$,  there exists $K>0$, depending on $C_1,C_2,r_{min},r_0,p$ such that if
\begin{equation}\label{hyp_minsepar}
\left\{\begin{array}{ll}V(r)\geq K&\textrm{for }0<r \le r_{min}\,,\\
V(r)\geq-C_1r^{-p} &\textrm{for }r \ge r_0\,,\\
V(r)\geq-C_2& \textrm{for }r>0,\\
\displaystyle \lim_{r\to\infty}V(r)=0,&
\end{array}\right.
\end{equation}
then for every $N\in\N$ all the minimizers $X_N=\{x_1,\ldots,x_N\}$ of $\mathcal{E}[V]$ in $\mathcal{X}_N(\R^2)$ satisfy 
\begin{equation}\label{mindist}
\min_{i\neq j} |x_i-x_j|>r_{min}.
\end{equation}

Moreover, there exists a constant $K'>0$, depending on $C_2,r_{min},r_0$ such that if
\begin{equation}\label{hyp_minsepar0}
\left\{\begin{array}{ll}
V(r)\ge K'&\textrm{for }0<r \le r_{min}\,,\\
V(r)\ge -C_2& \textrm{for }r>0\,,\\
V(r)=0&\mbox{for }r \ge r_0,
\end{array}\right.
\end{equation}
then 
for every $N\in\N$ all the minimizers $X_N=\{x_1,\ldots,x_N\}$ of $\mathcal{E}[V]$ in $\mathcal{X}_N(\R^2)$ satisfy \eqref{mindist}.
\end{lemma}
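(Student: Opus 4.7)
The plan is to argue by contradiction: assume some minimizer $X_N\in\mathcal{X}_N(\R^2)$ of $\mathcal{E}[V]$ contains a pair at distance $\delta:=|x_1-x_2|\le r_{\min}$ and produce a competitor of strictly smaller energy. The natural competitor is $\tilde X_N:=(X_N\setminus\{x_1\})\cup\{y\}$ with $y$ placed at distance $\ge R$ from every other $x_j$, where $R$ is chosen so large that $\sum_{j\ne 1}V(|y-x_j|)\le \varepsilon_0$ for any preset $\varepsilon_0>0$ (possible since $V(r)\to 0$; for the second part of the lemma the choice $R\ge r_0$ makes the sum vanish identically). Non-improvement under this move gives
\begin{equation*}
V(\delta)+\sum_{j\ne 1,2}V(|x_1-x_j|)=\sum_{j\ne 1}V(|x_1-x_j|)\le \varepsilon_0,
\end{equation*}
and combining with $V(\delta)\ge K$ forces $\sum_{j\ne 1,2}V(|x_1-x_j|)\le \varepsilon_0-K$, i.e.\ this remaining sum must be extremely negative for $K$ large.

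Next I would bound this remaining sum from below using the hypotheses $V\ge -C_2$ on $(0,r_0]$ and $V\ge -C_1 r^{-p}$ on $[r_0,\infty)$, together with a disk-packing estimate (all interpoint distances in $X_N$ exceed $\delta$). Writing $N_\circ(x_1):=\#\{j\ne 1,2:|x_1-x_j|\le r_0\}$ and $T_\circ(x_1):=\sum_{j:|x_1-x_j|>r_0}|x_1-x_j|^{-p}$, this yields
\begin{equation*}
\sum_{j\ne 1,2}V(|x_1-x_j|)\ge -C_2\,N_\circ(x_1)-C_1\,T_\circ(x_1),
\end{equation*}
with $T_\circ(x_1)$ convergent thanks to $p>2$ via a dyadic-annulus packing argument. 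The crucial step is then to upgrade this naive estimate, in which $N_\circ(x_1)$ and $T_\circ(x_1)$ are a priori $\delta$-dependent (of order $\delta^{-2}$), to uniform bounds $N_\circ(x_1)\le M^*$ and $T_\circ(x_1)\le S^*$ depending only on $C_1,C_2,r_0,r_{\min},p$. This is where minimality is used in its full strength: any effective local density significantly exceeding the $r_{\min}$-packing density would generate additional super-close pairs, each producing a $+K$ contribution which cannot be compensated by the $-C_2$ contributions of surrounding medium pairs.

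With the uniform bounds in hand, the displayed inequality becomes $K\le \varepsilon_0+C_2 M^*+C_1 S^*$. Choosing $K$ larger than the resulting explicit constant $K_0$ yields the sought contradiction, giving $\min_{i\ne j}|x_i-x_j|>r_{\min}$. The second part of the lemma (compactly-supported $V$) is a simplification: $T_\circ\equiv 0$ and only the $\delta$-independent bound on $N_\circ$ is needed, giving the smaller constant $K'=K'(C_2,r_0,r_{\min})$. The main obstacle is precisely the upgrade of the packing estimate to a $\delta$-independent one: the naive $\delta^{-2}$ bound from volume packing is not enough, and one must self-consistently exploit the large positive value of $V$ on $(0,r_{\min}]$ to rule out the dense local clusters that would otherwise defeat the energy comparison.
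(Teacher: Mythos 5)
Your setup is the right one in spirit (argue by contradiction, send a point to infinity, use $V\to 0$ to make the reinsertion cost at most $\varepsilon_0$, then bound the removed interactions from below), and you correctly isolate the sticking point: the number of points of $X_N$ within distance $r_0$ of $x_1$ carries no a priori $r_{min}$-packing bound, so the naive lower bound on $\sum_{j\neq 1,2}V(|x_1-x_j|)$ is configuration-dependent. But you do not close this gap. The sentence ``any effective local density significantly exceeding the $r_{min}$-packing density would generate additional super-close pairs, each producing a $+K$ contribution'' restates the difficulty rather than resolving it: points in the annulus $r_{min}<|x-x_1|\le r_0$ can cluster tightly among themselves while none of them is within $r_{min}$ of $x_1$, so their $+K$ contributions live in \emph{other} points' energy sums, not in the inequality $\sum_{j\neq 1}V(|x_1-x_j|)\le\varepsilon_0$ that you derived, and there is no direct way to feed them back into it. As written, the uniform constants $M^*$ and $S^*$ are asserted, not obtained.

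The paper (following Theil) breaks this circularity with a maximality device absent from your proposal. One sets $M:=\max\sharp\bigl(X_N\cap B(y,r_{min}/2)\bigr)$, the maximum taken over all minimizers and all centers $y$, and works with a minimizer and ball $B$ realizing it; one then removes \emph{all} $M$ points of $B$ at once. The interactions internal to $B$ contribute at least $KM(M-1)$, while the interactions of these $M$ points with the exterior are bounded below by $-CM^2$ with $C=C(C_1,C_2,r_{min},r_0,p)$ independent of $K$: this is exactly where extremality enters, since every translate of $B$ contains at most $M$ points, so the annulus $A_k$ at distance $\sim k\,r_{min}/2$ contains at most $CMk$ points and the resulting series converges because $p>2$. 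Comparing the quadratic-in-$M$ gain with the quadratic-in-$M$ loss yields $K\le C'$ whenever $M\ge 2$, hence $M=1$ once $K>C'$. (A one-point removal also works, but only if the removed point is taken from the \emph{maximal} ball, giving gain $K(M-1)$ against loss $O(M)$.) Without introducing $M$ and exploiting its maximality, your ``uniform bounds'' step has no proof, and the argument does not go through.
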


\begin{rmk}
 {\rm 
 Note that assumptions (0) and (6') in Subsection \ref{sec_condV} give exactly \eqref{hyp_minsepar} with $r_{min}=1-\alpha$, $r_0=\sqrt 2+\alpha''$, $C_1=\epsilon$ and $C_2=1$, whereas conditions (0) and (6) are the same as \eqref{hyp_minsepar0} for the same choice of parameters.
 }
\end{rmk}

\begin{proof}
We prove the claim only in the case $C_1>0$ whereas the proof for $C_1=0$ is left to the reader. For simplicity, we will denote in the below by $C$ any constant depending only on $C_1,C_2$ from the theorem, which may change from line to line.

\medskip

We follow along the lines of \cite[Lemma 2.2]{T}, but for the benefit of the reader we include the proof in self-contained form. For every $N\in\N$ we set
\[M=M(r_{min}, N):=\max\sharp\left\{X_N\cap B\left(y,\frac{r_{min}}{2}\right):\ y\in \mathbb R^2,\ \left.\begin{array}{l}X_N\mbox{ is a minimizer of }\mathcal{E}[V]\\ \mbox{in } \mathcal{X}_N(\R^2)\end{array}\right.\right\}.
\]
For the remainder of the proof we fix $N\in\N$ and a minimizer $X_N=\{x_1,\ldots,x_N\}\subset\mathbb R^2$ of $\mathcal{E}[V]$ in $\mathcal{X}_N(\R^2)$ which achieves the above maximum $M$. By translation invariance, we may assume that $y=0$  and we write $B= B\left(0,\frac{r_{min}}{2}\right)$.

\medskip

We need to show that $M=1$ for $K$ large enough.

\medskip

Let $I\subset\{1,\ldots,N\}$ be the indices such that $x_i\in B$, so that $\sharp I=M$. As $V(r)\geq K$ on $(0,r_{min})$, we have
\begin{equation}\label{eq-lowerboundball}
\sum_{\substack{i,j\in I\\ i\neq j}}V(|x_i-x_j|)\geq K\ M(M-1).
\end{equation}
We now claim that
\begin{equation}\label{eq-argumentXY}
\sum_{\substack{i\in I\\j\notin I}} V(|x_i-x_j|)+\frac12\sum_{ \substack{i,j\in I\\ i\neq j}} V(|x_i-x_j|)\leq 0.
\end{equation}
Since $X_N$ is a minimizer of $\mathcal{E}[V]$ in $\mathcal{X}_N(\R^2)$, for every $Y_N=\{y_1,\ldots,y_N\}\in\mathcal{X}_N(\R^2)$ we have
\begin{align}\label{v_split}
&\mathcal E[V](X_N)=\frac12\sum_{i\neq j}  V(|x_i-x_j|)\nonumber\\
&=\frac12\sum_{\substack{i,j\in I\\i\neq j}} V(|x_i-x_j|)+\sum_{\substack{i\in I\\j\notin I}} V(|x_i-x_j|)+\frac12\sum_{ \substack{i,j\notin I\\i\neq j}} V(|x_i-x_j|)\\
&\leq \mathcal E[V](Y_N)=\frac12\sum_{\substack{i,j\in I\\i\neq j}} V(|y_i-y_j|) + \sum_{i\in I}\sum_{j\notin I}V(|y_i-y_j|).\nonumber
\end{align}
In particular we can construct configurations $Y_N$ from $X_N$ by keeping $y_j=x_j$ if $j\notin I$ while for $i\in I$ we can move $y_i$ towards infinity and away from each other, so that the quantity
\[
 \min_{\substack{i\in I\\j\neq i}}|y_i-y_j|
\]
gets arbitrarily large. Since by hypothesis $V(r)\to 0$ as $r\to+\infty$, we obtain from \eqref{v_split}
\[
 \frac12\sum_{i\neq j}V(|x_i-x_j|)\le \frac12\sum_{\substack {i,j\notin I\\i\neq j}} V(|x_i-x_j|)+ M\ (N-1)\ \lim_{r\to\infty}V(r)= \frac12\sum_{\substack {i,j\notin I\\i\neq j}} V(|x_i-x_j|),
\]
which yields \eqref{eq-argumentXY}.

\medskip

Combining \eqref{eq-argumentXY} and \eqref{eq-lowerboundball}, we get
\begin{equation}\label{eq-upperboundball}
\sum_{\substack{i\in I\\ j\notin I}} V(|x_i-x_j|)\leq -K\frac{M(M-1)}{2}.
\end{equation}
We now rewrite $\R^2\setminus B=\bigcup_{k=1}^\infty A_k$ where $A_k:=\left\{x\in \R^2:\ |x|\in\left(k\frac{r_{min}}{2},(k+1)\frac{r_{min}}{2}\right]\right\}$ for every $k\in\N$. It follows that
$$
\sum_{\substack{i\in I\\ j\notin I}} V(|x_i-x_j|)=\sum_{i\in I}\sum_{k=1}^\infty \sum_{j: x_j\in A_k} V(|x_i-x_j|).
$$
By the third condition of \eqref{hyp_minsepar}, for every $k\in \N$ it holds
\begin{subequations}\label{boundsv}
\begin{equation}
\sum_{\substack{i\in I\\ x_j\in A_k}} V(|x_i-x_j|)\geq-CM \sharp (A_k\cap X_N).
\end{equation}
Let now $k_0$ be such that $\mathrm{dist}(B,A_k)=\frac{r_{min}(k-1)}{2}\ge r_0$ for $k\geq k_0$ with $r_0$ as in \eqref{hyp_minsepar}.
By the second condition of \eqref{hyp_minsepar}, for every $k\ge k_0$,  we have
\begin{equation}
\sum_{\substack{i\in I\\ x_j\in A_k}} V(|x_i-x_j|)\geq - \frac{C\ M\ \sharp(A_k\cap X_N)}{\mathrm{dist}(B, A_k)^p}=-\frac{2^p\ C\ M\ \sharp (A_k\cap X_N)}{r_{min}^p(k-1)^p}.
\end{equation}
\end{subequations}
Moreover, by covering $A_k$ by copies of $B$ and using the maximality property of $B$, one can easily check that $\sharp (A_k\cap X_N)\leq C M k$, for some geometric constant $C>0$, independent of $k$. Thus, by appropriately summing the bounds \eqref{boundsv} and inserting into \eqref{eq-upperboundball}, we have
\begin{eqnarray}\label{compare_eq}
-K\frac{M(M-1)}{2}&\ge&\sum_{\substack{i\in I\\ j\notin I}}V(|x_i-x_j|)\nonumber\\
&=&\sum_{i\in I}\left(\sum_{k=1}^{k_0-1}\sum_{j: x_j\in A_k}V(|x_i-x_j|)+\sum_{k=k_0}^{+\infty}\sum_{j:x_j\in A_k} V(|x_i-x_j|)\right)\\
&\ge& -C\ M\left(M\frac{k_0(k_0-1)}{2} +  \frac{2^p M^2}{r_{min}^p}\sum_{k=k_0}^\infty \frac{k}{(k-1)^p}\right).\nonumber
\end{eqnarray}
Notice that if $M\ge 2$, then for $K\to +\infty$  the left-hand-side in \eqref{compare_eq} tends to $-\infty$ whereas the  right-hand-side remains finite since $p>2$; therefore, there 
exists $K=K(C_1,C_2,r_{min},r_0,p)>0$ large enough such that $M=1$.
\end{proof}
\subsection{Combinatorial setup}\label{sec_combinsetup}
From now on we slightly change notations, in order to be able to think of our configurations optimizing the energy as \emph{discrete manifolds}. 

\medskip

We have three types of data: labels of points, combinatiorial information (graphs, edges, boundaries, etc.) and metric information (distances, angles, etc.). To keep track of this we use the following notation conventions:

\begin{itemize}
\item Sets of labels, with no further structure useful to us, will be indicated by greek capital letters like $\Xi, \Lambda,\ldots$. 
\item Sets of which we are interested in the \emph{combinatorial} structure will be indicated by capital calligraphic letters like $\mathcal G, \mathcal Z_\boxtimes, \mathcal S,\ldots$. 
\item Sets of which we are interested in the \emph{metric} structure will be indicated by capital letters like $X, U,\ldots$.
\end{itemize}

\medskip

The combinatorial model-space will be
\begin{equation}\label{zboxtimes}
\mathcal Z_\boxtimes=(\Z^2,\{\{a,b\}:\ a,b\in\mathbb Z^2,\ |a-b|\in\{1,\sqrt2\}\}).
\end{equation}
In general, the notation $\mathcal{G}=(\Xi,\mathcal S)$ will be used to denote a graph with vertex set $\Xi$ and edge set $\mathcal S$.

\medskip

The first notations we introduce are
\begin{itemize}
 \item $\Xi$ are the labels of our configurations. Till now we had $\Xi=\{1,\ldots,N\}$, but putting an order structure on our labels could be confusing and we avoid it. We write $\Xi_N$ when we want to stress that $\Xi$ is a set of $N$ labels.
 \item $X\subset\mathbb R^2$ will be a finite metric subspace. We also denote by $X$ injective maps $\Xi\to \mathbb R^2$, whenever only the image $X(\Xi)$ is of interest to us
 and we use the notation $x_p:=X(p)$ for every $p\in\Xi$.  Till now we had $X_N=X(\Xi)=X(\{1,\ldots,N\})$ and $x_i=X(i)$.

\end{itemize}
We next introduce some notations reminiscent of the ones of \cite{T} adapted to our setting {(see also Figure \ref{fig-graph})}. Below $X$ and $\Xi$ are as above, and $p$ denotes a point in $\Xi$:
\begin{subequations}\label{not_graph}
\begin{eqnarray}
\mathcal S_{\alpha}=\mathcal{S}_{\alpha}(X)&:=&\left\{ \{p,q\}:\ p,q\in \Xi,\ |x_p-x_q|\in (1-\alpha,\sqrt{2}+\alpha) \right\},\\
\mathcal G_{\alpha}=\mathcal G_{\alpha}(X)&:=&(\Xi,\mathcal S_{\alpha}(X)),\\
\mathcal N_\alpha(p)=\mathcal N_\alpha(X,p)&:=&\{ q\in \Xi: \{p,q\}\in \mathcal S_{\alpha}(X)\}\cup \{p\},\\
\partial \mathcal G_{\alpha}&:=&\{ p\in \Xi: \mathcal{N}_\alpha(p)\neq 9\},\\
\mathcal G_{\alpha}|_\Lambda=\mathcal G_{\alpha}(X)|_\Lambda&:=&\big(\Lambda\ ,\ \{\{p,q\}: p,q\in \Lambda\}\cap \mathcal S_{\alpha}(X)\big).
\end{eqnarray}
\end{subequations}
\begin{figure}
\includegraphics[width=5cm]{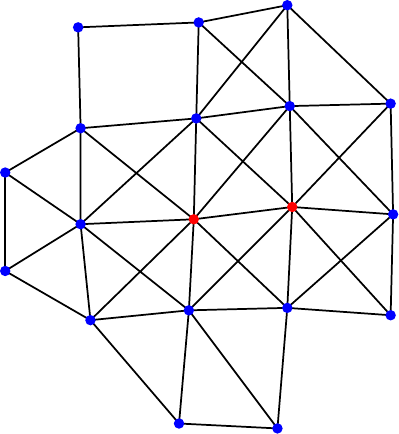}
\caption{A configuration with $19$ points, blue vertices correspond to $\partial\mathcal G_\alpha$ and red points are interior ones (with $\alpha=0.3$ here).}\label{fig-graph}
\end{figure}

Notice that $p\in \mathcal{N}(p)$ by definition. Lemma \ref{lem:mindistance}, applied with $r_{\min}=1-\alpha$ ensures that for energy-minimizing configurations there holds $|x_p-x_q|>1-\alpha$ for $p\neq q\in \Xi$, thus the energy of any minimizer can be written as follows
\begin{equation}\label{eq:energysplit1}
\mathcal{E}[V](X)=\sum_{\{p,q\}\in \mathcal{S_\alpha}(X)}V(|x_p-x_q|) + \sum_{\substack{\{p,q\}\notin\mathcal{S_\alpha}(X)\\ p,q\in\Xi,p\neq q}}V(|x_p-x_q|).
\end{equation}
Notice that the notation in \eqref{not_graph} is coherent with the one introduced in Section \ref{sec:proofthm1} for $\alpha=0$, since 
$$
\mathcal S_0=\bigcap_{\alpha>0}\mathcal{S}_\alpha=\lim_{\alpha\to 0^+}\mathcal S_\alpha.
$$
Anyway, wherever not specified, all the results of the remainder of the paper refer to the case $\alpha>0$. Whenever it is clear from the context, the dependence on $\alpha$ is omitted in the notations.
\begin{lemma}\label{lem_lowerbound}
There exists $\bar\alpha\in (0,1)$ such that for every $\alpha\in[0,\bar\alpha)$ the following holds: If $X$ satisfies \eqref{mindist} with $r_{min}=1-\alpha$, i.e.
\begin{equation}\label{mindistnew}
\min_{\substack{p,q\in\Xi\\ p\neq q}}|x_p-x_q|>1-\alpha,
\end{equation}
then $\sharp\mathcal{N_\alpha}(p)\leq 9$ for every $p\in\Xi$.
\end{lemma}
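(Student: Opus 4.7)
The plan is to argue by contradiction using exactly the sector-pigeonhole idea that already appeared in the proof of Theorem \ref{thm:crystal_infinite}(i), but now carried out with the $\alpha$-perturbed annulus. Suppose some $p\in\Xi$ has $\sharp \mathcal{N}_\alpha(p) \ge 10$, i.e. there exist $9$ distinct points $q_1,\dots,q_9 \in \Xi\setminus\{p\}$ with $r_i:=|x_p-x_{q_i}|\in(1-\alpha,\sqrt{2}+\alpha)$. Label them in increasing angular order around $x_p$, with indices taken modulo~$9$. Then at least one pair of consecutive indices satisfies
\[
\theta_i := \widehat{x_{q_i}\,x_p\,x_{q_{i+1}}} \le \frac{360^\circ}{9}=40^\circ.
\]

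Next I would apply the law of cosines to produce a quantitative upper bound on $|x_{q_i}-x_{q_{i+1}}|$. Writing
\[
|x_{q_i}-x_{q_{i+1}}|^2 = r_i^2+r_{i+1}^2 - 2 r_i r_{i+1}\cos\theta_i,
\]
and noting that the right-hand side is increasing in $\theta_i$ on $[0,90^\circ]$, it suffices to bound it when $\theta_i=40^\circ$ and $(r_i,r_{i+1})$ runs over $[1-\alpha,\sqrt{2}+\alpha]^2$. On this square the only interior critical point of $(a,b)\mapsto a^2+b^2-2ab\cos 40^\circ$ is $(0,0)$, and on each edge it is a strictly convex quadratic in the free variable whose minimum lies inside the edge (because $\cos 40^\circ<1$); hence the maximum is attained at a corner, and a direct comparison of the four corners shows that the maximum occurs at $(\sqrt{2}+\alpha,\sqrt{2}+\alpha)$. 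This gives
\[
|x_{q_i}-x_{q_{i+1}}|^2 \;\le\; 2(\sqrt{2}+\alpha)^2(1-\cos 40^\circ).
\]

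The last step is a continuity/numerical check: at $\alpha=0$ the right-hand side equals $4(1-\cos 40^\circ)\approx 0.936 < 1$, so the continuous function
\[
\phi(\alpha) := 2(\sqrt{2}+\alpha)^2(1-\cos 40^\circ) - (1-\alpha)^2
\]
satisfies $\phi(0)<0$, and therefore there exists $\bar\alpha\in(0,1)$ such that $\phi(\alpha)<0$ for all $\alpha\in[0,\bar\alpha)$. For such $\alpha$ we obtain $|x_{q_i}-x_{q_{i+1}}| < 1-\alpha$, contradicting the minimum-distance assumption \eqref{mindistnew}. Consequently at most $8$ elements of $\Xi\setminus\{p\}$ lie in $\mathcal{N}_\alpha(X,p)$, and since $p\in\mathcal{N}_\alpha(p)$ by definition we conclude $\sharp\mathcal{N}_\alpha(p)\le 9$.

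I don't expect any serious obstacle here: the only slightly delicate point is the boundary analysis of the quadratic, but that is a routine one-line optimization on a square. Conceptually this lemma is the metric version of the combinatorial fact (a1) from Subsection \ref{sec_intro_12}, adapted to allow both edge lengths and angles to be perturbed by an amount controlled by $\alpha$, and the $40^\circ$ threshold (strictly less than $60^\circ$) provides exactly the slack needed to absorb this perturbation.
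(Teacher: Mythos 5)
Your proposal is correct and follows the same route as the paper's proof: pigeonhole on the nine angular gaps around $x_p$ to find $\theta_i\le 40^\circ$, then the law of cosines plus continuity in $\alpha$ to force two neighbors closer than $1-\alpha$. The one point worth flagging is that your maximization of $a^2+b^2-2ab\cos 40^\circ$ over $[1-\alpha,\sqrt2+\alpha]^2$ is carried out correctly: the maximum is at the corner $a=b=\sqrt2+\alpha$ and equals $2(\sqrt2+\alpha)^2(1-\cos 40^\circ)\approx 0.936<1$ at $\alpha=0$, which is exactly the margin needed. The paper instead bounds the same quantity by $(\sqrt2+\alpha)^2-2(1-\alpha)^2\cos 40^\circ$, which does not actually dominate $a^2+b^2-2ab\cos\theta$ on the whole square (it fails at $a=b=\sqrt2+\alpha$), so your version of the estimate is the one that should be kept; the conclusion and the order of magnitude of $\bar\alpha$ are unaffected.
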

\begin{proof}
Let $p\in \Xi$ such that $\sharp \mathcal{N}_\alpha(p)\geq 10$, then there exists two points $q,q'\in \mathcal{N}_\alpha(p)\backslash \{p\} $ such that $\theta:=\widehat{x_qx_px_{q'}}\leq 40^\circ$. Therefore, by the cosine law and by the definition of $\mathcal{N}_\alpha(p)$, we deduce that
\begin{align*}
|x_q-x_{q'}|^2&=|x_p-x_{q}|^2 + |x_p-x_{q'}|^2 -2|x_p-x_{q}||x_p-x_{q'}|\cos \theta\\
&\leq (\sqrt{2}+\alpha)^2 -2(1-\alpha)^2\cos 40^\circ.
\end{align*}
We now claim that, for $\alpha$ small enough,
$$
(\sqrt{2}+\alpha)^2 -2(1-\alpha)^2\cos 40^\circ\leq (1-\alpha)^2.
$$
It is indeed straightforward to rewrite the inequality above as
$$
2 \cos40^\circ \alpha^2-\left(2\sqrt{2}+2+4\cos40^\circ\right)\alpha +2\cos40^\circ-1\ge 0
$$
and to show that this inequality is true if and only if
$$
\alpha \in [0, \overline\alpha]\cup [\tilde\alpha,\infty),
$$
where, in particular, 
$$
\overline\alpha=\frac{\sqrt{2}+1+2\cos40^\circ-\sqrt{(\sqrt{2}+1+2\cos 40^\circ)^2-2\cos 40^\circ(2\cos40^\circ-1)}}{ 2\cos 40^\circ}\approx 0.068.
$$
and $\tilde\alpha>5$.
\medskip

Thus, for $\alpha\in[0,\overline\alpha)$, we have $|x_{q}-x_{q'}|< 1-\alpha$ which contradicts \eqref{mindist} and thus proves the {lemma}.
\end{proof}
\subsection{Local rigidity of configurations}\label{sec_rigidloc}
To proceed, we next include a definition, which will help us to track the deformations of our model configurations:

\begin{definition}[$\alpha$-deformed distances]\label{def:deform}
{\rm Let $\alpha\in [0,1)$ be a constant and $(X_1,d_1)$, $(X_2,d_2)$ be two metric spaces. We say that $(X_1,d_1)$ is an \emph{$\alpha$-deformation} of  $(X_2,d_2)$ and we write
\begin{equation}\label{alphadef}
 X_1\sim_\alpha X_2,
\end{equation}
if there exists a bijection $\phi:X_1\to X_2$, called the \emph{$\alpha$-deformation map} such that
\[
 \forall x, y\in X_1,\quad (1-\alpha)d_1(x,y)\le d_2(\phi(x),\phi(y))\le (1+\alpha)d_1(x,y).
\]
If $x,y\in X$ and $\phi$ is given, we denote the \emph{$\phi$-deformation of $\{x,y\}$} by 
\[
\delta_\phi(x,y):= |d_2(\phi(x),\phi(y)) - d_1(x,y)|.
\]
If $\phi$ is clear from the context, we omit it in the notation.}
\end{definition}

We also say that $A,B\subset\mathbb R^2$ are \emph{congruent} and we write $A\simeq B$ if there exists an isometry $T:\mathbb R^2\to\mathbb R^2$ such that $T$(A)$=B$. This corresponds to the $A\sim_\alpha B$ in the case $\alpha=0$, with the notation of Definition \ref{def:deform}.
\medskip

The next result tells us that whenever we have the combinatorial structure of a square lattice in $\mathcal G$ near a point, the metric structure is not much deformed. This will be our main tool for ``transforming'' combinatorial information to metric information.

\medskip

In the following $O=O(\alpha)$ is a continuous function defined in a right neighborhood of the origin such that $\displaystyle \limsup_{\alpha\to 0^+}\frac{|O(\alpha)|}{\alpha}$ is finite.

\begin{lemma}[combinatorics links to metric control]\label{lem:combintometric}
There exists $\alpha_0\in (0,1)$ such that for all $\alpha\in (0,\alpha_0)$ the following fact holds true: For every $X$ satisfying \eqref{mindistnew} 
and for every $p\in \Xi$ with $\mathcal N_\alpha(p)\cap \partial\mathcal G_\alpha=\emptyset$ there exists a bijection $\phi:\mathcal{N}_\alpha(p)\to \{-1,0,1\}^2$ such that $\phi(p)=(0,0)$ and
\begin{equation}\label{deform}
\begin{aligned}
 &\phi(p_1)&=&(\phantom{-}1,0),&   &\phi(p_2)&=&(\phantom{-}1,\phantom{-}1),& &\phi(p_3)&=&(0,\phantom{-}1),& &\phi(p_4)&=&(-1,\phantom{-}1),&\\
 &\phi(p_5)&=&(-1,0),& &\phi(p_6)&=&(-1,-1),& &\phi(p_7)&=&(0,-1),& &\phi(p_8)&=&(\phantom{-}1,-1),&
\end{aligned}
\end{equation}
where $p_1,\ldots,p_8$ are the nearest neighbors of $p$ in $\mathcal{G}_\alpha$ ordered counterclockwise around $p$ and such that $|X(p_1)-X(p)|=\min\{|X(p_j)-X(p)|\,:\,j=1\ldots,8\}$.
Moreover,
\begin{equation}\label{deform00}
\begin{aligned}
1-\alpha\le|X(p_{j+1})-X(p_j)|= 1+O(\alpha)& \textrm{\quad for every } j=1,\ldots,7,\\
1-\alpha\le |X(p_{2j+1})-X(p)|= 1+O(\alpha)& \textrm{\quad for every }j=0,\ldots,3,\\
|X(p_{2j})-X(p)|= \sqrt{2}+O(\alpha)& \textrm{\quad for every }j=1,\ldots,4,\\
\sqrt{2}+O(\alpha)\le |X(p_{2j-1})-X(p_{2j+1})|\le \sqrt 2+\alpha &\textrm{\quad for every }j=1,\ldots,4, 
\end{aligned}
\end{equation}
with the convention that $p_9\equiv p_1$.

In particular, there exists $C_3=C_3(\alpha_0)\in [1,\frac{1}{\alpha_0})$ such that
 \begin{equation}\label{deform0}
  X(\mathcal N_{ \alpha}(p)) \sim_{C_3\alpha} \{-1,0,1\}^2\ ,
 \end{equation}
 where $ \{-1,0,1\}^2$ is endowed with the induced metric from $\mathbb R^2$.
\end{lemma}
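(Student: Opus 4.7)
My plan is to prove the lemma as a quantitative perturbation of Theorem \ref{thm:crystal_infinite}(ii), via elementary planar geometry. The setup is as follows: by Lemma \ref{lem_lowerbound} together with the hypothesis $\mathcal N_\alpha(p)\cap \partial \mathcal G_\alpha=\emptyset$, every point in $\mathcal N_\alpha(p)$ has exactly $8$ $\mathcal G_\alpha$-neighbors; I label the neighbors of $p$ as $p_1,\ldots,p_8$ ordered counterclockwise around $x_p$, set $r_i:=|x_{p_i}-x_p|$ and $\theta_i:=\widehat{x_{p_i}x_p x_{p_{i+1}}}$ (indices mod $8$), and note that $r_i\in(1-\alpha,\sqrt 2+\alpha)$ and $|x_{p_i}-x_{p_j}|>1-\alpha$ for $i\ne j$ by \eqref{mindistnew}.

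The core of the argument is to establish $|\theta_i-\pi/4|=O(\alpha)$ together with a radial dichotomy: each $r_i$ lies in $(1-\alpha,1+C\alpha)$ (``close'') or in $(\sqrt 2-C\alpha,\sqrt 2+\alpha)$ (``far''), and these alternate as $i$ increases. For the angular bound I would apply the cosine law to the triangle $x_{p_i}x_p x_{p_{i+1}}$ together with $|x_{p_i}-x_{p_{i+1}}|>1-\alpha$, extracting the explicit lower bound $\theta_i\ge \arccos(3/4)+O(\alpha)$ from the worst case $r_i=r_{i+1}=\sqrt 2+\alpha$. This bound combined with $\sum_{i=1}^{8}\theta_i=2\pi$ and the same estimate re-applied at each neighbor $p_i$ (whose $8$-neighbor constraint provides complementary upper bounds on the angular gaps seen from $p_i$) sandwiches each $\theta_i$ within $O(\alpha)$ of $\pi/4$. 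For the alternating pattern, two consecutive close neighbors with $\theta_i\approx \pi/4$ would give $|x_{p_i}-x_{p_{i+1}}|^2\approx 2-\sqrt 2<(1-\alpha)^2$, contradicting \eqref{mindistnew}; two consecutive far neighbors, as well as an intermediate radius $r_i\in(1+C\alpha,\sqrt 2-C\alpha)$, are instead excluded by invoking the $8$-neighbor condition at the relevant $p_i$, which would otherwise have to accommodate either a ninth neighbor of $p$ or a ``broken'' local square structure incompatible with Theorem \ref{thm:crystal_infinite}(ii). I expect this combinatorial rigidity step to be the main technical obstacle, since the bookkeeping of which forced neighbor appears in each bad case requires a case-by-case inspection close in spirit to the proof of part (ii) of Theorem \ref{thm:crystal_infinite}.

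Once the alternating pattern is established, the bijection $\phi$ is defined by taking $p_1$ to be the close neighbor minimizing $r_i$ (well-defined up to an $O(\alpha)$ perturbation thanks to the strict radial dichotomy) and sending $p_1,\ldots,p_8$ counterclockwise to $(1,0),(1,1),(0,1),(-1,1),(-1,0),(-1,-1),(0,-1),(1,-1)$ as in \eqref{deform}. The explicit bounds in \eqref{deform00} then follow from cosine-law computations with the angular and radial estimates: each chord $|x_{p_{j+1}}-x_{p_j}|$ connects a close to a far neighbor at angle $\pi/4+O(\alpha)$, yielding $1+O(\alpha)$; each $|x_{p_{2j-1}}-x_{p_{2j+1}}|$ connects two close neighbors at angle $\pi/2+O(\alpha)$, yielding $\sqrt 2+O(\alpha)$. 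Finally, the deformation bound \eqref{deform0} is a direct consequence of \eqref{deform00}: one checks that all $\binom{9}{2}=36$ pairwise distances in $X(\mathcal N_\alpha(p))$ deviate from their counterparts in $\{-1,0,1\}^2$ by $O(\alpha)$, producing a uniform constant $C_3=C_3(\alpha_0)\in[1,1/\alpha_0)$ determined by the worst-case coefficient across these finitely many estimates.
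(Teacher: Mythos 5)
The toolkit you propose (law of cosines, the $\arccos(3/4)$ angle bound, angle sums around $p$ and around each $p_i$, the $8$-neighbor condition at every point of $\mathcal N_\alpha(p)$) is the same as the paper's, but the central step of your argument is asserted rather than proved, and it is precisely where all the work lies. From $\theta_i\ge\arccos(3/4)-O(\alpha)\approx 41.4^\circ$ and $\sum_{i=1}^8\theta_i=360^\circ$ you only get $\theta_i\le 360^\circ-7\arccos(3/4)+O(\alpha)\approx 70.2^\circ$, which is nowhere near ``$\theta_i=45^\circ+O(\alpha)$''. The ``complementary upper bounds on the angular gaps seen from $p_i$'' that you invoke to close this gap are not automatic: to know how many of the $8$ neighbors of $p_i$ must fit into the sector at $x_{p_i}$ complementary to $\widehat{x_{p_{i-1}}x_{p_i}x_{p_{i+1}}}$, you first need to know which pairs $\{p_i,p_j\}$ are edges of $\mathcal G_\alpha$ --- i.e.\ the very combinatorial structure you are trying to establish. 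Likewise, two consecutive ``far'' neighbors at angle $\approx 45^\circ$ sit at mutual distance $\sqrt{4-2\sqrt2}\approx 1.08$, which violates nothing, so the alternation of radii cannot be extracted from the minimal-distance constraint plus the angle sum alone. The paper fills exactly this hole with a seven-claim case analysis in Appendix \ref{proof_lem:combintometric}: it first isolates $\ep$-squares (Lemma \ref{qualemma}) and then shows, one at a time, that $\mathcal N_\alpha(p)$ must contain four adjacent ones; only after that do the angle and radius estimates \eqref{deform00} drop out.

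There is also a structural problem with your fallback of excluding bad cases as ``incompatible with Theorem \ref{thm:crystal_infinite}(ii)'': in the paper that statement is Corollary \ref{toprovethm}, which is itself deduced from the proof of Lemma \ref{lem:combintometric} specialized to $\alpha=0$, so using it here is circular. Even granting it as an independent fact, it concerns exact ($\alpha=0$) configurations and cannot by itself rule out $\alpha>0$ deformations; nor would a soft perturbation or compactness argument upgrade it to the quantitative $O(\alpha)$ rates required in \eqref{deform00} and \eqref{deform0}. You would need to redo the rigidity argument quantitatively, which is what the appendix does.
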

The proof of Lemma \ref{lem:combintometric} is quite long and is postponed in the Appendix \ref{proof_lem:combintometric}.
\begin{rmk}\label{nring}
{\rm 
In view of \eqref{deform}, it immediately follows that, writing $\phi_p=\phi$, it holds: $\phi_p(p)=(0,0)$, $\phi_p(p_{2j})=\phi_{p}(p_{2j-1})+\phi_p(p_{2j+1})$ and $\phi_p(p_{2j+1})=\frac{1}{2}(\phi_p(p_{2j})+\phi_{p}(p_{2j+2}))$, with the usual convention that  the numbering of the $p_i$'s is cyclic.}
\end{rmk}
\begin{rmk}\label{remun}
{\rm The bijection $\phi$ constructed in Lemma \ref{lem:combintometric} is unique up to a composition of a graph endomorphism of $\mathcal{Z}_{\boxtimes}$.}
\end{rmk}
The following result l is a generalization of Lemma \ref{rmk_squarerigid} to the case $\alpha>0$. It can be proved using the same tools as for Lemma \ref{lem:combintometric}, so that  also its proof is postponed to Appendix \ref{proof_lem:combintometric}.
\begin{lemma}\label{cor_squarerigid}
 There exist $\alpha_0'>0, C_3'>1$ with $C_3'\alpha_0'<1$ such that for all $\alpha\in (0,\alpha_0')$, if $\mathcal G_\alpha$ is isomorphic to the complete graph over $4$ vertices, then 
\begin{equation}
\label{deform0bis}
X(\Xi)\sim_{C_3'\alpha}\{0,1\}^2\subset \mathbb R^2.
\end{equation}
\end{lemma}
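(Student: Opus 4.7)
The plan is to carry out a direct quantitative analogue of the $\alpha=0$ argument of Lemma~\ref{rmk_squarerigid}, using only the elementary-geometry toolkit employed in Lemma~\ref{lem:combintometric}: the law of cosines, the angle-sum identity for convex quadrilaterals, and monotonicity of $\cos$. First I would show that for $\alpha<\alpha_0'$ the four points $X(\Xi)=\{x_1,x_2,x_3,x_4\}$ are in convex position. Indeed, if some $x_k$ sat in the interior of the triangle spanned by the other three, one of the three angles at $x_k$ would be at least $2\pi/3$, so by the law of cosines and $|x_i-x_k|>1-\alpha$ the opposite side would satisfy
\[
|x_i - x_j|^2 \ge 3(1-\alpha)^2,
\]
contradicting $|x_i-x_j|<\sqrt{2}+\alpha$ once $\alpha<(\sqrt3-\sqrt2)/(\sqrt3+1)$. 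After a cyclic relabeling we may therefore treat $Q:=x_1x_2x_3x_4$ as a convex quadrilateral with sides $a,b,c,d$ and diagonals $p,q$ all in $(1-\alpha,\sqrt2+\alpha)$, and interior angles $\theta_1,\dots,\theta_4$ obeying $\theta_1+\theta_2+\theta_3+\theta_4 = 2\pi$.

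Second, I would show $\theta_i = \pi/2 + O(\alpha)$ for each $i$. For the upper bound, the law of cosines in the triangle cut off by the diagonal $p$ gives
\[
\cos\theta_2 \;=\; \frac{a^2+b^2-p^2}{2ab},
\]
so if $\theta_2 \ge \pi/2 + \delta$ for some $\delta>0$, then $p^2 \ge a^2+b^2+2ab\sin\delta \ge 2(1-\alpha)^2(1+\sin\delta)$, which is incompatible with $p<\sqrt2+\alpha$ unless $\sin\delta = O(\alpha)$. The same estimate in each of the four diagonal-cut triangles yields $\theta_i \le \pi/2+C\alpha$ for every $i$; the matching lower bound $\theta_i \ge \pi/2-3C\alpha$ then follows from $\sum_j\theta_j=2\pi$, since a single $\theta_i$ below $\pi/2-\delta$ would force some other $\theta_j$ above $\pi/2+\delta/3$.

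Third, once all four angles are $\pi/2+O(\alpha)$, plugging back into the law of cosines yields $p^2 = a^2+b^2+O(\alpha)$, and combined with $a,b>1-\alpha$ and $p<\sqrt2+\alpha$ this pinches $a,b = 1+O(\alpha)$ and $p=\sqrt 2+O(\alpha)$ (and symmetrically $c,d=1+O(\alpha)$, $q=\sqrt 2+O(\alpha)$). I would then define $\phi:\Xi\to\{0,1\}^2$ by sending the $i$-th cyclic vertex of $Q$ to the $i$-th vertex of the unit square; the pairwise bounds just obtained imply $|\phi(x_i)-\phi(x_j)| = |x_i-x_j|(1+O(\alpha))$ on every pair, so choosing $C_3'$ equal to (the maximum of $1$ and) the uniform constant in these $O(\alpha)$ estimates, and $\alpha_0'>0$ with $C_3'\alpha_0'<1$, delivers the desired $X(\Xi)\sim_{C_3'\alpha}\{0,1\}^2$.

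The main obstacle is the lower angular bound in the second step: while the upper bound $\theta_i\le\pi/2+C\alpha$ is essentially a one-triangle computation, obtaining the reverse inequality $\theta_i\ge\pi/2-C'\alpha$ requires transferring the upper bounds on the other three angles through the identity $\theta_1+\theta_2+\theta_3+\theta_4=2\pi$. Once the four angles are pinched to $\pi/2+O(\alpha)$ on both sides, the deduction of the side- and diagonal-length bounds and of the final deformation estimate is essentially mechanical, which is consistent with the author's remark that this lemma admits a proof using the same tools as Lemma~\ref{lem:combintometric}.
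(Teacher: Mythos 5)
Your proposal is correct and follows essentially the same route as the paper, which proves this lemma by invoking Lemma \ref{qualemma} (the ``$\ep$-square'' lemma), itself established with exactly the toolkit you use: convex position, the law of cosines on the two triangles cut off by each diagonal, and the $360^\circ$ angle-sum to convert the one-sided angle bounds into two-sided pinching. The only differences are cosmetic — you pin the angles first and deduce the side and diagonal lengths afterwards, whereas the paper pins the maximal side length first by an extremal-configuration argument — and you make explicit the convex-position step that the paper leaves implicit in its ``up to relabeling'' reduction.
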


\subsection{Minima of $4$-point energy and perturbed potentials}\label{sec_4ptmin}
Now we study the $4$-point energy problem. The goal is to formulate sufficiently general conditions on potential $V$ under which the square is the unique minimizer. For the computations below it will be simpler to re-express as already mentioned in \eqref{VforW},
\begin{equation}\label{WfromV}
W(s):=V(\sqrt{s}),\quad\mbox{for}\quad s>0,
\end{equation}
and to perform the computations using the formula $W(|x-y|^2)$ rather than $V(|x-y|)$ for the pairwise interactions.

\medskip

We assume that $V,W\in C^2_{pw}((0,\infty))$. In view of \eqref{WfromV} we can rewrite the four-point energy $\E_4[V]:(\mathbb R^2)^4\to\mathbb R$ defined in \eqref{defEWintro} as
 \begin{equation}\label{defEW}
 \mathcal E_4[V](x_1,x_2,x_3,x_4):=\frac12\sum_{i=1}^4\VV(|x_i-x_{i+1}|^2) + \VV(|x_1-x_3|^2) + \VV(|x_2-x_4|^2),
 \end{equation}
where we identify indices up to equivalence modulo $4$.
The vectors in $(\R^2)^4$ will be denoted by $\vec h=(h_1,h_2,h_3,h_4)$ and $\vec k=(k_1,k_2,k_3,k_4)\in(\mathbb R^2)^4$.
Moreover we denote by $\{e_1,e_2\}$ the canonical orthonormal basis of $\R^2$ and we set
 \begin{equation}\label{boxtver}
\vec q:=(q_1,q_2,q_3,q_4)=\frac 1 2 ((-1,-1),(-1,1),{(1,1),(1,-1)}).
 \end{equation}

\begin{lemma}[Taylor expansion of energy close to a square]\label{lemma_taylorsquare} 
For every $\VV\in C^2((0,\infty))$ we have 
\begin{equation}
\label{der_e4h}
\partial_{\vec h}\mathcal E_4[V](\vec q)= \left(\VV'(1)+2\VV'(2)\right)\left(\langle h_3-h_1,e_1+e_2\rangle +\langle h_2-h_4, e_2-e_1\rangle\right),\\
\end{equation}
\begin{equation}\label{hes_e4h}
\begin{aligned}
\partial^2_{\vec h, \vec k}\mathcal E_4[V](\vec q)=&\sum_{i=1}^4\left(\sum_{j=1,2}\VV'(j)\langle h_i-h_{i+j},k_i-k_{i+j}\rangle\right)\\
&+2\sum_{i=1}^4\left(\sum_{j=1,2}\VV''(j)\langle h_i-h_{i+j}, q_i-q_{i+j} \rangle\langle k_i-k_{i+j}, q_i-q_{i+j} \rangle\right).
\end{aligned} 
 \end{equation}
 If $\VV\in C^2_{pw}((0,\infty))$, then \eqref{der_e4h} holds true whereas \eqref{hes_e4h} is replaced by
\begin{equation}\label{hes_e4hpw}
\begin{aligned}
\partial^2_{\vec h, \vec k}\mathcal E_4[V](\vec q)=&\sum_{i=1}^4\left(\sum_{j=1,2}\VV'(j)\langle h_i-h_{i+j},k_i-k_{i+j}\rangle\right)\\
&+\sum_{i=1}^4\left(\sum_{j=1,2}\VV_{\pm}''(j)\langle h_i-h_{i+j}, q_i-q_{i+j} \rangle\langle k_i-k_{i+j}, q_i-q_{i+j} \rangle\right),
\end{aligned} 
 \end{equation} 
 where  $\VV_+'', \VV_-''$ are the  second derivatives of $\VV$ taken from the right and the left  respectively and $\pm$ are chosen to match the sign of $\langle h_l-h_m,q_l-q_m\rangle$.

\medskip

If $\VV\in C^2((0,\infty))$ then $\mathrm{Hess}\ \mathcal E_4[V](\vec q)$ has the following eigenvectors and eigenvalues:
\begin{itemize}
 \item $(v,v,v,v), v\in\mathbb R^2$ with eigenvalue $0$ (corresponding to infinitesimal translations),
 \item $(v,-v,v,-v), v\in\mathbb R^2$ with eigenvalue $4(\VV'(1)+\VV''(1))$ (corresponding to translating diagonals in opposite directions),
 \item $\vec q=(q_1,q_2,q_3,q_4)$, with eigenvalue $2\VV'(1)+4\VV'(2)+4\VV''(1)+16\VV''(2)$ (corresponding to infinitesimal dilations),
 \item $(q_2,q_3,q_4,q_1)$ with eigenvalue $2\VV'(1)+4\VV'(2)$ (corresponding to infinitesimal rotations),
 \item $(q_4,q_3,q_2,q_1)$ with eigenvalue $2\VV'(1)+4\VV'(2) + 4\VV''(1)$ (infinitesimal deformation which rotates the diagonals with respect to each other),
 \item $(q_1,q_4,q_3,q_2)$ with eigenvalue $2\VV'(1)+4\VV'(2) + 16\VV''(2)$ (infinitesimal deformation which squeezes one diagonal and dilates the other while keeping sidelengths constant).
\end{itemize}
\end{lemma}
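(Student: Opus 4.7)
The plan is to use the substitution $\VV(s) = V(\sqrt{s})$ to rewrite each pairwise interaction as $\VV(s_{ij}(\vec x))$ with $s_{ij}(\vec x) := |x_i - x_j|^2$, which is polynomial in the coordinates. I would first record the elementary identities at $\vec q$,
\begin{equation*}
\partial_{\vec h}\, s_{ij}(\vec q) = 2\langle q_i - q_j,\, h_i - h_j\rangle,\qquad \partial^2_{\vec h,\vec k}\, s_{ij}(\vec q) = 2\langle h_i - h_j,\, k_i - k_j\rangle,
\end{equation*}
and apply the chain rule to $\VV(s_{ij})$, using $s_{ij}(\vec q) = 1$ for side pairs of $\vec q$ and $s_{ij}(\vec q) = 2$ for diagonal pairs. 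In the convention $\sum_{j=1,2}\sum_{i=1}^4$ from \eqref{defEWintro}, each side of $\vec q$ is visited once while each diagonal is visited twice; this accounts for the asymmetric coefficients $\VV'(1)$ vs.\ $2\VV'(2)$ (and analogously for $\VV''$) appearing throughout \eqref{der_e4h}--\eqref{hes_e4hpw}.

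For \eqref{der_e4h}, a direct computation using the explicit coordinates of the $q_i$ shows that both the sum of side contributions $\sum_{i=1}^4 \langle q_i - q_{i+1},\, h_i - h_{i+1}\rangle$ and the once-counted diagonal contribution $\sum_{i=1}^{2}\langle q_i - q_{i+2},\, h_i - h_{i+2}\rangle$ collapse to the same expression $\langle h_3 - h_1,\, e_1 + e_2\rangle + \langle h_2 - h_4,\, e_2 - e_1\rangle$, whence the common factor $\VV'(1) + 2\VV'(2)$. The second-derivative formula \eqref{hes_e4h} follows by the same bookkeeping; the factor $2$ in front of the $\VV''$ sum arises from $\partial_{\vec h} s_{ij}\cdot\partial_{\vec k} s_{ij} = 4\langle q_i - q_j, h_i - h_j\rangle\langle q_i - q_j, k_i - k_j\rangle$ combined with the $\tfrac{1}{2}$ prefactor in $\E_4[V]$. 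For the piecewise-$C^2$ version \eqref{hes_e4hpw} the only adjustment is that when $s_{ij}(\vec q)\in\{1,2\}$ is a jump point of $\VV''$, the appropriate one-sided value $\VV''_\pm$ is selected by the direction in which $s_{ij}$ is perturbed, i.e.\ by the sign of $\langle h_i - h_j,\, q_i - q_j\rangle$.

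The six eigenvector families listed in the statement are mutually orthogonal and together span $(\R^2)^4$, so it suffices to verify that each is preserved by $\mathrm{Hess}\,\E_4[V](\vec q)$ with the claimed eigenvalue. The translation mode $(v,v,v,v)$ is immediate since every $h_i - h_{i+j}$ vanishes, giving eigenvalue $0$. For each of the remaining five modes I would substitute the candidate into \eqref{hes_e4h} with $\vec k = \vec h$ and compare $\mathrm{Hess}(\vec h, \vec h)$ to $\lambda\,|\vec h|^2$. The $D_4$-symmetry of $\vec q$ predicts the qualitative meaning of each mode (uniform dilation, infinitesimal rotation, antidiagonal translation, diagonal shears) and dictates which of $\VV'(1), \VV'(2), \VV''(1), \VV''(2)$ survive in each eigenvalue formula. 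The main obstacle is not conceptual but one of careful bookkeeping -- of signs and of the side-versus-diagonal multiplicities across the eight pairwise terms; once the geometric action of each mode is identified, the algebra reduces to a handful of inner products, echoing the computations of the previous paragraph.
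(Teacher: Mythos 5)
Your setup is exactly the computation the paper has in mind (the paper omits the proof, calling it ``a direct computation''): pass to $\VV(s_{ij})$ with $s_{ij}=|x_i-x_j|^2$, use $\partial_{\vec h}s_{ij}(\vec q)=2\langle q_i-q_j,h_i-h_j\rangle$ and $\partial^2_{\vec h,\vec k}s_{ij}=2\langle h_i-h_j,k_i-k_j\rangle$, and keep track of the side/diagonal multiplicities in \eqref{defEWintro}. Your bookkeeping is right: with the $\tfrac12$ prefactor each side carries weight $\tfrac12$ and each diagonal weight $1$, and both the side sum and the once-counted diagonal sum collapse to $\langle h_3-h_1,e_1+e_2\rangle+\langle h_2-h_4,e_2-e_1\rangle$, giving \eqref{der_e4h}; the coefficients in \eqref{hes_e4h} and the selection rule for $\VV''_\pm$ in \eqref{hes_e4hpw} come out as you describe.

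The one genuine gap is in the eigenvalue verification. You correctly state that it suffices to check that each candidate is \emph{preserved} by the Hessian, but the computation you then propose --- substituting $\vec k=\vec h$ and comparing $\mathrm{Hess}(\vec h,\vec h)$ with $\lambda|\vec h|^2$ --- does not establish this. An orthogonal spanning family on which the quadratic form equals $\lambda_i|\vec h_i|^2$ need not consist of eigenvectors (e.g.\ the standard basis for $\left(\begin{smallmatrix}0&1\\1&0\end{smallmatrix}\right)$ has vanishing quadratic form but is not an eigenbasis): the diagonal entries of a symmetric matrix in an orthogonal basis do not determine it. The fix costs nothing extra: either keep $\vec k$ arbitrary in \eqref{hes_e4h} and read off that $\partial^2_{\vec h,\vec k}\mathcal E_4[V](\vec q)=\lambda\langle\vec h,\vec k\rangle$ for all $\vec k$ (the bilinear form is already in closed form, so this is the same amount of algebra), or supplement your diagonal check by verifying that $\mathrm{Hess}(\vec h_i,\vec h_j)=0$ for the distinct candidates, which together with symmetry of the Hessian and the spanning property yields the claimed eigendecomposition. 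With that amendment the argument is complete. (Your invocation of the $D_4$-symmetry is only heuristic as written; since two of the one-dimensional modes could a priori lie in isomorphic isotypic components, it does not by itself replace the off-diagonal check.)
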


The basic tool to prove Lemma \ref{lemma_taylorsquare} comes from the expansion of the $N=4$ energy for configurations close to $\{0,1\}^2\subset \mathbb R^2$. The proof is omitted because it is a direct computation. The lemma slightly generalizes the result of \cite[Lemma 6.1]{fritheil}, where only a special choice of potential modeling elastic springs was considered instead.

\medskip

If $\VV\in C^2_{pw}((0,\infty))$ then we may still apply formula \eqref{hes_e4hpw} 
 to compute the second-order variations of $\mathcal E_4[V]$ along vectors $\vec h$ expressed in the above basis of infinitesimal deformations. We cannot call these vectors ``eigenvectors'' anymore, but we can use the geometric decomposition of the above basis in order to understand, for the case of $\VV\in C^2_{pw}$, what conditions ensure that $\vec q$ is a strict local minimum. The result of this computation is stated in Lemma \ref{lem_minsquare} below.

\medskip

We first introduce some notations. By abuse of notation we write $\mathcal E_4[V]$ also for the induced functional on $4$-ples of points defined up to rotations and translations, thus we write 
\begin{equation}\label{x4}
\mathcal E_4[V]:\mathcal X_4(\mathbb R^2)/\mathrm{Isom}(\mathbb R^2)=\{X_4\subset\mathbb R^2:\ \sharp X_4=4\}/\mathrm{Isom}(\mathbb R^2)\to \R.
\end{equation}
We remark that this is possible since $\mathcal E_4[V]$ is invariant under permutations and under isometries of $\mathbb R^2$. Note that the above space $\mathcal X_4(\mathbb R^2)/\mathrm{Isom}(\mathbb R^2)$ is a manifold of dimension $5$, because $\mathcal X_4(\mathbb R^2)$ has dimension $8$ and it is quotiented by a free action of a group of dimension $3$. The scalar product of $\mathbb R^8=(\mathbb R^2)^4$ is also invariant thus induces a natural Riemannian manifold structure on $\mathcal X_4(\mathbb R^2)/\mathrm{Isom}(\mathbb R^2)$.

\medskip

Recalling the definition of $\vec q$ in \eqref{boxtver}, we denote by $\boxtimes$ the equivalence class of $\vec q$ with respect to the isometries of $\R^2$, i.e. $\boxtimes$ is the undeformed square of sidelength $1$ in $\mathcal X_4(\mathbb R^2)/\mathrm{Isom}(\mathbb R^2)$.

\medskip

Recall that if $T_pM$ is the tangent space at $p\in M$ where $(M,g)$ is a Riemannian manifold, for each $v\in T_pM$ there exists a unique geodesic $\gamma_v:[0,1]\to M$ such that $\gamma_v(0)=p$ and $\gamma_v'(0)=v$. This allows to locally define the \emph{exponential map}
$\mathrm{exp}_p:B_r(0)\subset T_pM\to U\subset M$ by $\mathrm{exp}_p(v):=\gamma_v(1)$. For $r>0$ small enough this map is bijective and is called an \emph{exponential chart}. 
We use this terminology for $M:=U$ where $U$ is a neighborhood of $\boxtimes$ in $\mathcal X_4(\mathbb R^2)/\mathrm{Isom}(\mathbb R^2)$. 
Furthermore $\nabla \mathcal E_4[V]$ calculated at $\widetilde \boxtimes$ takes values into $T_{\widetilde \boxtimes}U$, 
and $\mathrm{exp}_{\boxtimes}^*\nabla \mathcal E_4[V]$ uses the differential of $\mathrm{exp}_\boxtimes$ to go back to corresponding vectors 
in $T_{\mathrm{exp}_\boxtimes^{-1}(\widetilde \boxtimes)}\widetilde U\simeq \mathbb R^5$.

\medskip

With the above notation, we will say that the gradient $\nabla \mathcal E_4[V]:U\to \mathbb R^5\simeq T\mathcal X_4(\mathbb R^2)/\mathrm{Isom}(\mathbb R^2)$ is \emph{$c$-monotone at $\boxtimes$}, if there exist $c\in \R$ and a small neighborhood $\widetilde U\ni \boxtimes$ such that the exponential chart $\mathrm{exp}_\boxtimes : \widetilde U \subset T_\boxtimes \mathcal X_4(\mathbb R^2)/\mathrm{Isom}(\mathbb R^2)\to U$ satisfies 
\begin{equation}\label{eq:cmonotone}
\langle \mathrm{exp}_\boxtimes^*\nabla \mathcal E_4[V]( \eta')-\mathrm{exp}_\boxtimes^*\nabla \mathcal E_4[V](\eta''),  \eta'- \eta''\rangle>c\qquad \textrm{ for all }  \eta',\eta''\in \widetilde U\textrm{ with } \eta'\neq \eta''.
\end{equation}
We say that $\nabla\mathcal E_4[V]$ is \emph{strictly monotone} if the above is satisfied with $c >0$. 

\begin{rmk}{\rm 
The above terminology, is usual in convex analysis or optimal transport theory. See \cite{DontchRock} for more details.}
\end{rmk}

The next lemma has as hypothesis condition (2) of Subsection \ref{sec_condV}.

\begin{lemma}[Local minimum at the square]\label{lem_minsquare}
Let $V,\VV\in C^2_{pw}((0,\infty))$ be related by \eqref{WfromV}. The undeformed square $\boxtimes$ is a critical point of $\mathcal E_4[V]$ if and only if
\begin{equation}\label{cond_crit}
 \VV'(1)+2\VV'(2)=0.
\end{equation}
There exists $C_4>0$ depending only on the dimension such that $\nabla\mathcal E_4[V]$ is $c'$-monotone at $\boxtimes$ if
\begin{equation}\label{conditions}
\begin{array}{ll}
\VV_-''(1)+\VV_+''(1)+2\VV'(1)>C_4c', &\qquad\VV_-''(2)+\VV_+''(2)>C_4c',\\[3mm]
\VV_-''(1)+\VV_+''(1)>C_4c', &\qquad \VV_\pm''(1)+4\VV_\pm''(2)>C_4c'.
\end{array}
\end{equation}
In particular in this case $\boxtimes$ is a strict local minimum of $\mathcal E_4[V]$ up to rotations and translations.
\end{lemma}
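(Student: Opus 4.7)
The plan is to read both assertions directly off the derivative and Hessian formulas already provided in Lemma \ref{lemma_taylorsquare}, with careful attention to the quotient by $\mathrm{Isom}(\mathbb{R}^2)$ in the first part and to the sign conventions of \eqref{hes_e4hpw} in the second.

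\textbf{Criticality.} I would start from \eqref{der_e4h}, which is a linear form in $\vec h \in (\mathbb{R}^2)^4$ with scalar coefficient $W'(1)+2W'(2)$. Since $\mathcal{E}_4[V]$ descends to the quotient $\mathcal{X}_4(\mathbb{R}^2)/\mathrm{Isom}(\mathbb{R}^2)$, criticality of $\boxtimes$ amounts to the vanishing of \eqref{der_e4h} on a complement of the tangent space to the isometry orbit. That tangent space is spanned by the two translations $(v,v,v,v)$ and the infinitesimal rotation $(q_2,q_3,q_4,q_1)$ singled out in Lemma \ref{lemma_taylorsquare}; a direct substitution shows \eqref{der_e4h} annihilates all three regardless of $W$. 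On the $5$-dimensional complement, testing against $\vec h = \vec q$ yields $\partial_{\vec q}\mathcal{E}_4[V](\vec q) = 4(W'(1)+2W'(2))$, which is non-zero unless \eqref{cond_crit} holds. Hence $\boxtimes$ is critical in the quotient iff $W'(1)+2W'(2)=0$.

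\textbf{Local minimality and $c'$-monotonicity in the $C^2$ case.} Assume \eqref{cond_crit}. In the smooth case I would read the eigenvalues from Lemma \ref{lemma_taylorsquare} along the five non-isometry directions: they simplify, using \eqref{cond_crit}, to $4(W'(1)+W''(1))$ along $(v,-v,v,-v)$, to $4W''(1)+16W''(2)$ along the dilation $\vec q$, to $4W''(1)$ along $(q_4,q_3,q_2,q_1)$, and to $16W''(2)$ along $(q_1,q_4,q_3,q_2)$, while the rotation eigenvalue $2W'(1)+4W'(2)$ also vanishes and is quotiented out. Strict positivity of the four listed quantities is precisely what \eqref{conditions} asserts in the $C^2$ case, once the $\ell^2$-normalisations of the basis vectors and the combinatorial factors $\{2,4,16\}$ appearing in \eqref{hes_e4hpw} are collected into a single dimensional constant $C_4$.

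\textbf{Extension to $C^2_{pw}$, and main obstacle.} For $W \in C^2_{pw}$, I would rerun the computation with \eqref{hes_e4hpw} instead of \eqref{hes_e4h}. The key technical step is to verify that, expanded in the same orthogonal five-vector basis, the bilinear form $(\vec h,\vec k)\mapsto \langle h_i-h_{i+j},q_i-q_{i+j}\rangle\langle k_i-k_{i+j},q_i-q_{i+j}\rangle$ remains block-diagonal, so that only diagonal contributions survive; this is the part of the argument I expect to require the most care, since the $\pm$ subscripts on $W''_\pm(j)$ in \eqref{hes_e4hpw} depend entry-wise on the sign of $\langle h_l-h_m,q_l-q_m\rangle$ and must be tracked explicitly. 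Conditions \eqref{conditions} are phrased with both $\pm$ variants exactly to cover every possible sign combination; consequently each diagonal coefficient is bounded below by the corresponding left-hand side of \eqref{conditions}, yielding a uniform coercive lower bound on the Hessian quadratic form on the quotient tangent space. A standard mean-value argument applied to $\mathrm{exp}_\boxtimes^*\nabla\mathcal{E}_4[V]$ along radial geodesics in a sufficiently small exponential chart $\widetilde U$ then translates the Hessian lower bound into the $c'$-monotonicity estimate \eqref{eq:cmonotone}, and in particular $\boxtimes$ is a strict local minimum of $\mathcal{E}_4[V]$ modulo isometries. The dimensional constant $C_4$ is the universal ratio between the largest normalisation/combinatorial factor arising in this reduction and the lower bounds in \eqref{conditions}.
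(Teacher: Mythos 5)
Your proposal is correct and follows essentially the same route as the paper's own (sketched) proof: read criticality and the Hessian off Lemma \ref{lemma_taylorsquare}, quotient out translations and rotations using \eqref{cond_crit}, and check positivity of the one-sided second variations along the five distinguished deformation directions, with $C_4$ absorbing the normalisation and exponential-chart factors. The only step you flag but do not execute — tracking the $\pm$ signs for the mixed direction $(v,-v,v,-v)$ — is precisely the computation the paper carries out explicitly via the quantity $I(v)=(\VV_+''(1)+\VV_-''(1))|v|^2$, which is where the symmetric combination $\VV_-''(1)+\VV_+''(1)+2\VV'(1)$ in \eqref{conditions} originates.
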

\begin{proof}[Sketch of proof:]
It is sufficient to verify that the directional one-sided double derivatives of $\mathcal E_4[V]$ along any direction are strictly positive.

\medskip

If in a neighborhood of $\{1,2\}$ the function $\VV$ happens to be $C^2$ then the statement follows directly from Lemma \ref{lemma_taylorsquare}. If not, note that the formulas \eqref{hes_e4h} still hold separately for the cases that $h_j$ infinitesimally increase/decrease the lengths of the sides of the square $\{-1/2,1/2\}^2$ which we consider. We next discuss what happens along the infinitesimal deformations distinguished in the lemma, not coming from translations or rotations. Below, the constant $C_4$ accounts for deformations coming from bounds on $\mathrm{exp}_\boxtimes$ in $U$ and from applying the chain rule, and thus ultimately depends only on the dimension.
\begin{itemize}
 \item The infinitesimal dilations $\vec h=\lambda\vec q$ either contemporarily increase or contemporarily decrease all lengths of all sides and diagonals, thus we get the strict local minimum conditions $\VV_+''(1)+4\VV_+''(2)>C_4{c'}$ and $\VV_-''(1)+4\VV_-''(2)>C_4{c'}$.
 \item The infinitesimal perturbations along $(q_4,q_3,q_2,q_1)$ infinitesimally preserve lengths of diagonals, squeeze two sides and dilate the other two, thus we get the condition $\VV_-''(1)+\VV_+''(1)>C_4{c'}$.
 \item The infinitesimal perturbations along $(q_1,q_4,q_3,q_2)$ infinitesimally preserve lengths of sides, squeeze one diagonal and dilate the other, and give the strict local minimum condition $\VV_-''(2)+\VV_+''(2)>C_4{c'}$.
 \item Finally, the case $\vec h=(v,-v,v,-v)$ does not alter the lengths of diagonals, and thus contributes by only the term with $\VV_\pm''(1)$ in \eqref{hes_e4h}, which gives the contribution proportional to $I(v)$, where with notation $v:=(v_1,v_2)$ and $\sigma_j:=\mathrm{sign}v_j$ we have
 \[
 \begin{aligned}
  I(v)&:=\VV_{\sigma_2}''(1)|v_2|^2 + \VV_{-\sigma_1}''(1)|v_1|^2 + \VV_{-\sigma_2}''(1)|v_2|^2+\VV_{\sigma_1}''(1)|v_1|^2\\
  &\phantom{:}=(\VV_+''(1)+\VV_-''(1))|v|^2,
\end{aligned}
 \]
where in the first above sum, the contributions of indices $(i,j)=(1,2),(2,3),(3,4),(4,1)$ are summed in this order. Summing the above to the $\VV'(1)$-term, we get the condition $\VV_+''(1)+\VV_-''(1)+2\VV'(1)>C_4{c'}$.
\end{itemize}
The fact that $C_4>0$ as in the statement can be encountered follows by standard Taylor-type approximation of $\mathcal E_4[V]$.
\end{proof}

\subsubsection{Perturbations of $\mathcal E_4[V]$ near its minimum}\label{sec_perturb}

We recall that $E_\alpha, E_\alpha^1$ and $E_\alpha^2$ are defined by \eqref{eq_Ealpha}. Taking indices $1,\ldots,4$ modulo $4$, we set
\begin{equation}\label{defsqu}
\begin{aligned}
\mathscr{Q}_\alpha:=\{\eta&=\{\eta_1,\eta_2,\eta_3,\eta_4\equiv\eta_0\}\in\mathcal X_4(\R^2)\,:\,
 |\eta_{i}-\eta_{i+1}|\in E^1_\alpha\quad\textrm{for all }i=1,\ldots,4,\\
 &|\eta_1-\eta_3|, |\eta_2-\eta_4|\in E^2_\alpha\},\qquad 
  \overline{\mathscr{Q}}_{\alpha}:=\mathscr{Q}_{\alpha}/\mathrm{Isom}(\R^2),
\end{aligned}
\end{equation}
and 
\begin{equation}\label{defsqu2}
\begin{aligned}
\mathscr{S}_\alpha:=\{&\eta=\{\eta_1,\eta_2,\eta_3,\eta_4\equiv\eta_0\}\in \mathscr{Q}_\alpha\,:\, |\eta_{i}-\eta_{i+1}|=l \quad\textrm{for all }i=1,\ldots,4,\\
&|\eta_1-\eta_3|, |\eta_2-\eta_4|=\sqrt2 l\textrm{ for some }l\in E_\alpha^1 \cap \frac1{\sqrt2}E_\alpha^2\}, \quad \overline{\mathscr{S}}_{\alpha}:=\mathscr{S}_{\alpha}/\mathrm{Isom}(\R^2).
\end{aligned}
\end{equation}
 With a little abuse of notations, we will call quadrilaterals the $4$-ples $\eta$ of $\mathscr{Q}_\alpha$; moreover, for every $\eta\in \mathscr{Q}_\alpha$ we refer to  the quantities $|\eta_i-\eta_{i+1}|$  as side-lengths of $\eta$ and to the quantities $|\eta_1-\eta_3|$ and $|\eta_2-\eta_4|$ as diagonal-lengths.

The following lemma {holds} under the hypothesis from condition $(1)$ in Subsection \ref{sec_condV}.
\begin{lemma}
\label{lem:onlysquare}
 Let $V\in C^2_{pw}((0,\infty))$ and let $\alpha''\in (0,\alpha_0')$ (with $\alpha_0'$ given by Lemma \ref{cor_squarerigid}) be such that 
 \begin{enumerate}
 \item $V$ is convex in $E_{\alpha''}$,
 \item $V$ is strictly convex in $ E_{\alpha''}\setminus[1,\sqrt 2]$.
 \end{enumerate}
Then there exists at most one global minimizer $\tilde Q$ of $\mathcal E_4[V]$ in  $\overline{\mathscr{Q}}_{\alpha''}$.
Moreover, if such a minimizer exists, then it belongs to  $\overline{\mathscr{S}}_{\alpha''}$.
Furthermore, for every $\alpha'\in (0,\alpha'')$ there are {no} minimizers of $\mathcal E_4[V]$ in $\overline{\mathscr{Q}}_{\alpha'}\setminus\{\tilde Q\}$. 
\end{lemma}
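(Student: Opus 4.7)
I would prove the lemma in two stages: first, uniqueness of the minimizer among squares; second, the fact that any global minimizer must already be a square. The $\alpha'$-statement is then immediate.

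\textbf{Uniqueness among squares.} On $\overline{\mathscr S}_{\alpha''}$, parametrized by the side-length $l\in I:=E^1_{\alpha''}\cap \tfrac{1}{\sqrt 2}E^2_{\alpha''}=(1-\alpha''/\sqrt 2,\,1+\alpha''/\sqrt 2)$, the energy collapses to the one-variable function $g(l)=\mathcal E_4[V](S_l)=2V(l)+2V(\sqrt 2\,l)$. By hypothesis (1), $g$ is convex on $I$. To promote this to strict convexity, fix $l_1<l_2$ in $I$ and split cases: if $l_2\le 1$ then $[l_1,l_2]\subset (1-\alpha'',1]$ lies in the strict-convexity region of $V$ (by (2)), giving strict Jensen for $2V(\cdot)$; if $l_1\ge 1$ then $\sqrt 2\,[l_1,l_2]\subset[\sqrt 2,\sqrt 2+\alpha'']$ lies in the strict-convexity region of $V$, giving strict Jensen for $2V(\sqrt 2\,\cdot)$; in the straddling case $l_1<1<l_2$, a direct chord-versus-graph comparison for the first summand works — the chord of $2V$ over $[l_1,l_2]$ has slope negative (by strict convexity of $V$ on $(1-\alpha'',1)$ we get $V(l_1)>V(1)$) and value strictly above the graph of $2V$ at every interior point, as can be verified by the relation to the auxiliary chord from $(l_1,V(l_1))$ to $(1,V(1))$. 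Hence $g$ is strictly convex on $I$, so at most one minimizer among squares.

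\textbf{Every minimizer is a square.} Let $\tilde Q\in \overline{\mathscr Q}_{\alpha''}$ be a global minimizer with sides $a_1,\ldots,a_4\in E^1_{\alpha''}$ and diagonals $d_1,d_2\in E^2_{\alpha''}$. By Lemma~\ref{cor_squarerigid}, these six distances lie in a $C_3'\alpha''$-neighborhood of $1$ and $\sqrt 2$ respectively, so all are in $E_{\alpha''}$ where $V$ is convex. Applying Jensen's inequality separately to sides and to diagonals,
\[
\mathcal E_4[V](\tilde Q)=\tfrac12\sum_{i=1}^4 V(a_i)+V(d_1)+V(d_2)\ \ge\ 2V(\bar a)+2V(\bar d),\qquad \bar a=\tfrac14\sum_i a_i,\ \bar d=\tfrac12(d_1+d_2).
\]
I would then combine this with the planarity identity
\[
\sum_{i=1}^4 a_i^2\ =\ d_1^2+d_2^2+4|m|^2,
\]
(where $m$ is the displacement between the midpoints of the two diagonals, vanishing exactly when $\tilde Q$ is a parallelogram) and the strict convexity of $V$ on $E_{\alpha''}\setminus[1,\sqrt 2]$. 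The argument is: if $\tilde Q$ has $m\ne 0$, or unequal sides, or $d_1\ne d_2$, then the Lemma~\ref{cor_squarerigid} rigidity forces at least one of the deformed distances to enter the strict-convexity region $(1-\alpha'',1)\cup(\sqrt 2,\sqrt 2+\alpha'')$, where a symmetry-restoring perturbation (moving $m$ toward $0$, then averaging opposite sides, then equalizing the diagonals) strictly lowers the energy via strict Jensen, contradicting minimality. The end-point of this symmetrization is a rhombus ($a_i$ all equal) with $d_1=d_2$, i.e., a square; thus $\tilde Q\in \overline{\mathscr S}_{\alpha''}$, which is the unique $S_l$ from the first stage. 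The $\alpha'$-statement now follows at once since $\overline{\mathscr Q}_{\alpha'}\subset \overline{\mathscr Q}_{\alpha''}$, so any minimizer over the smaller set is a fortiori a minimizer over the larger set and coincides with $\tilde Q$.

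\textbf{Main obstacle.} The delicate step is in the second stage. The Jensen bound $2V(\bar a)+2V(\bar d)$ is only literally the energy of a real square when the side-diagonal compatibility $\bar d=\sqrt 2\,\bar a$ holds — a constraint not satisfied for generic quadrilaterals (e.g. non-square rhombi or rectangles). Bridging this requires exploiting the planarity identity above together with the quantitative rigidity of Lemma~\ref{cor_squarerigid} in order to realize a symmetry-restoring deformation of $\tilde Q$ inside $\overline{\mathscr Q}_{\alpha''}$ whose strict energy decrease is controlled by strict convexity of $V$ on $E_{\alpha''}\setminus[1,\sqrt 2]$ — paying particular attention to the possibly flat region of $V$ on $[1,\sqrt 2]$, where the rigidity of Lemma~\ref{cor_squarerigid} is essential to guarantee that the deformed distances leave the flat region.
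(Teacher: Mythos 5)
Your first stage (uniqueness among squares via strict convexity of $l\mapsto 2V(l)+2V(\sqrt2\,l)$) is sound and close in spirit to the paper's uniqueness paragraph. The problem is the second stage, and you have in effect diagnosed it yourself in your ``main obstacle'' paragraph without resolving it: the Jensen bound $\tfrac12\sum_iV(a_i)+V(d_1)+V(d_2)\ge 2V(\bar a)+2V(\bar d)$ compares $\mathcal E_4[V](\tilde Q)$ with a number that is not the energy of any configuration in $\overline{\mathscr Q}_{\alpha''}$ (the averaged lengths $\bar a,\bar d$ need not satisfy the Cayley--Menger/planarity constraint, and $\bar d\ne\sqrt2\,\bar a$ in general), so it yields no contradiction with minimality. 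The proposed repair --- a chain of ``symmetry-restoring'' deformations (send $m\to0$, average opposite sides, equalize diagonals) whose energy is monotone --- is exactly the missing content: you never construct these deformations, never check they stay inside $\overline{\mathscr Q}_{\alpha''}$ and remain planar, and never verify that the energy is non-increasing along each of them. As written, the second stage is a plan, not a proof.

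For comparison, the paper closes this gap with a purely first-order argument in the $5$-dimensional shape space of planar $4$-point configurations: there one can always find an infinitesimal perturbation that increases the shorter diagonal and decreases the longer one while keeping all four sides fixed, and another that keeps both diagonals fixed while rebalancing the sides towards equality; convexity of $V$ on $E^1_{\alpha''}$ and $E^2_{\alpha''}$ makes each such perturbation non-increasing, and strictness is then extracted from Lemma \ref{cor_squarerigid}, which guarantees that a non-square competitor has some distance in $(1-\alpha'',1]\cup[\sqrt2,\sqrt2+\alpha'')$ where $V$ is strictly convex. If you want to keep your symmetrization picture, you must replace the global averaging by such admissible infinitesimal moves and prove their existence. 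One further small slip: your ``a fortiori'' derivation of the $\alpha'$-statement is not valid as stated (a minimizer over a subset need not minimize over the superset); what saves the claim is that the constraints defining $\mathscr Q_{\alpha'}$ are open, so every point admits the same energy-decreasing perturbations and the first-order argument applies verbatim.
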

\begin{proof}
 Let $\eta:=\{x_1,x_2,x_3,x_4\}\in\mathscr{Q}_{\alpha''}$ and let $a\le b\le c\le d$ denote the sidelengths of $\eta$.

{\textbf{Case 1.} \emph{$V$ is strictly convex in $E_{\alpha''}$.}}
We will show that if $\eta\notin\mathscr{S}_{\alpha''}$, then the energy $\mathcal E_4[V](\eta)$ can be decreased to first order by infinitesimal perturbations.

 \begin{itemize}
  \item If $|x_1-x_3|<|x_2-x_4|$ then there exists an infinitesimal perturbation which increases $|x_1-x_3|$ and decreases $|x_2-x_4|$ while keeping the remaining distances fixed. The convexity of $V$ for in $E^2_{\alpha''}$ shows that under this deformation $\mathcal E_4[V]$ decreases. Thus $\eta$ has equal length diagonals.
  \item If $a<d$, then there exists an infinitesimal perturbation which preserves the length of the diagonals, preserves the ordering of sidelengths and increases $a,b$ while diminishing $c,d$, and such that at least one of them increases/decreases is strict. The convexity of $V$ in $E^1_{\alpha''}$ shows that $V(a)+V(d)$ and $V(b)+V(c)$ strictly decrease under this perturbation. Thus $\eta$ has equal sidelengths.
 \end{itemize}
The above two points show that $\eta\in\mathscr{S}_{\alpha''}$.

\medskip

If there were two minima $\eta,\eta'$
with sidelengths $r<r'\in E^1_{\alpha''}$ then we would have $2V(r)+2V(\sqrt2 r)=2V(r')+2V(\sqrt2 r')$ and by strict convexity of $V$ in the intervals $(r,r')$ and $(\sqrt2 r, \sqrt2 r')$ we would have that the square of any intermediate sidelength would have smaller energy $\mathcal E_4[V]$, thus giving a contradiction to the minimality of $\eta,\eta'$. This completes the proof of the uniqueness result.
 Finally, since the constraints defining $E_\alpha$ are open {and $V$ is continuous}, the same reasoning proves also the last sentence of the statement.
\medskip

\textbf{Case 2.} \emph{ General case.} To adapt the proof from case 1 to a proof under more general hypotheses (1),(2), we proceed in {two} steps. 
\begin{enumerate}
\item[-] First note that a contradiction is reached if we know that $V$ is strictly convex at \emph{just a single} point of the subset $[a,d]\cup [|x_1-x_3|, |x_2-x_4|]$, while being convex on the whole subset.
\item[-] Finally, as a consequence of Lemma \ref{cor_squarerigid}, if $\{x_1,x_2,x_3,x_4\}$ were not a unit square, then $[a,d]\cup [|x_1-x_3|, |x_2-x_4|]$ would intersect $(1-\alpha'',1]\cup[\sqrt2,\sqrt2+\alpha'')$, and then the strict convexity hypothesis as described in the first item allows to find a contradiction, as desired.
\end{enumerate}
\end{proof}

By combining Lemma \ref{lem_minsquare}and Lemma \ref{lem:onlysquare}, {the first results insuring the undeformed square $\boxtimes$ to be a strict local minimizer of $\mathcal{E}_4[V]$ in $\overline{\mathscr{Q}}_{\alpha_0''}$ and the second one giving its uniqueness,} we have the following result.
\begin{proposition}\label{newlucia}
Let $V,W\in C^2_{pw}((0,\infty))$ be related as in \eqref{VforW}. Assume that $V$ satisfies the assumptions of Lemma \ref{lem:onlysquare} and $W$ satisfies \eqref{cond_crit} and \eqref{conditions}. Then, there exists $\alpha_0''\in (0,\alpha_0')$ (with $\alpha_0'$ given by Lemma \ref{cor_squarerigid}) such that $\boxtimes$ is the only minimizer of $\E_4[V]$ in $\overline{\mathscr{Q}}_{\alpha_0''}$. Moreover, for every $\alpha''\in (0,\alpha_0'')$ there exists no minimizer of $\E_4[V]$ in $\overline{\mathscr{Q}}_{\alpha''}\setminus\{\boxtimes\}$.
\end{proposition}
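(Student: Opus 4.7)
The plan is to combine the strict local minimum property of $\boxtimes$ guaranteed by Lemma \ref{lem_minsquare} with the fact, delivered by Lemma \ref{cor_squarerigid}, that $\overline{\mathscr{Q}}_{\alpha''}$ shrinks onto $\boxtimes$ as $\alpha''\to 0$, so that for $\alpha_0''$ small enough the strict local minimum becomes a strict global minimum on the whole of $\overline{\mathscr{Q}}_{\alpha_0''}$. Lemma \ref{lem:onlysquare} will enter as the complementary ``uniqueness'' input.

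First I would apply Lemma \ref{lem_minsquare}: assumption \eqref{cond_crit} yields $\nabla\E_4[V](\boxtimes)=0$, while \eqref{conditions} gives that $\nabla\E_4[V]$ is $c'$-monotone at $\boxtimes$ for some $c'>0$. Unpacking the definition of $c'$-monotonicity given just before Lemma \ref{lem_minsquare}, this produces a neighborhood $U\subset\mathcal X_4(\R^2)/\mathrm{Isom}(\R^2)$ of $\boxtimes$ on which
\[
\E_4[V](\eta)>\E_4[V](\boxtimes)\qquad\textrm{for every }\eta\in U\setminus\{\boxtimes\}.
\]
The size of $U$ is controlled by $c'$ and the $C^2_{pw}$-modulus of $V$ near the distances $1$ and $\sqrt 2$, and is in particular independent of $\alpha''$.

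Next I would use Lemma \ref{cor_squarerigid}: every $\eta\in\overline{\mathscr{Q}}_{\alpha''}$ is a $C_3'\alpha''$-deformation of the unit square, so $\overline{\mathscr{Q}}_{\alpha''}$ sits inside a ball of radius $O(\alpha'')$ around $\boxtimes$ in the quotient. Choosing $\alpha_0''\in(0,\alpha_0')$ small enough that this ball is contained in $U$ gives $\overline{\mathscr{Q}}_{\alpha_0''}\subset U$, so by the previous display $\boxtimes$ is the unique minimizer of $\E_4[V]$ on $\overline{\mathscr{Q}}_{\alpha_0''}$; the inclusion $\overline{\mathscr{Q}}_{\alpha''}\subset\overline{\mathscr{Q}}_{\alpha_0''}$ for $\alpha''\in(0,\alpha_0'')$ then yields the second assertion. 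As an independent cross-check I could also invoke Lemma \ref{lem:onlysquare} to reduce any candidate minimizer to a square of sidelength $l$ in the interval $(1-\alpha''/\sqrt 2,\,1+\alpha''/\sqrt 2)$, upon which the one-variable energy $f(l):=2V(l)+2V(\sqrt 2\,l)$ is convex by (1), satisfies $f'(1)=0$ by \eqref{cond_crit} (via $V(r)=W(r^2)$), and has $l=1$ as its unique minimum thanks to the strict convexity provided by either (2) or by the $f''_\pm(1)>0$ consequence of \eqref{conditions}.

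The main obstacle I anticipate is the quantitative calibration of $\alpha_0''$: the neighborhood $U$ from Lemma \ref{lem_minsquare} is defined intrinsically via the exponential chart of the $5$-dimensional Riemannian quotient $\mathcal X_4(\R^2)/\mathrm{Isom}(\R^2)$, whereas $\overline{\mathscr{Q}}_{\alpha''}$ is defined through the six pairwise distances of four points. Passing between the two parametrizations near the non-degenerate configuration $\boxtimes$ is a routine bi-Lipschitz comparison, but requires some care because $V\in C^2_{pw}$ and one must work with the one-sided Hessians appearing in \eqref{conditions} rather than a genuine $C^2$ Hessian.
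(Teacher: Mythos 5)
Your argument is correct, but your primary route is not the one the paper takes. The paper proves the proposition by combining Lemma \ref{lem:onlysquare} with Lemma \ref{lem_minsquare}: the convexity hypotheses force any minimizer in $\overline{\mathscr{Q}}_{\alpha''}$ to be an equal-sided square, reducing the problem to the one-variable function $f(l)=2V(l)+2V(\sqrt 2\,l)$, which is convex on the admissible interval, has $f'(1)=0$ by \eqref{cond_crit}, and has $f''_{\pm}(1)>0$ by \eqref{conditions}; this is exactly your ``cross-check'', and it is the actual proof. Your main line of reasoning --- strict local minimality from $c'$-monotonicity plus the observation that $\overline{\mathscr{Q}}_{\alpha''}$ collapses onto $\boxtimes$ in the quotient as $\alpha''\to 0$ --- is also valid and proves the literal statement, modulo the bi-Lipschitz comparison you flag (note that Lemma \ref{cor_squarerigid} only controls the six pairwise distances, which for elements of $\mathscr{Q}_{\alpha''}$ is essentially already given by the definition; what you actually need is the local rigidity of the square framework, i.e.\ that distances near $(1,1,1,1,\sqrt2,\sqrt2)$ determine the configuration modulo isometry up to an $O(\alpha'')$ error, which is true but is a separate elementary fact). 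The substantive difference between the two routes is what they buy: the convexity route yields a threshold $\alpha_0''$ depending only on $\alpha_0'$ and the structural constants, valid on the whole region where $V$ is convex, whereas your neighborhood $U$ depends on the potential itself (through the modulus of continuity of $W''_{\pm}$ near $1$ and $2$), so your $\alpha_0''$ is $V$-dependent. That is enough for the proposition as stated, but Theorem \ref{thm_fr_crystal} fixes $\alpha''<\alpha_0''/C_3$ \emph{before} introducing $V$, so the downstream application really needs the uniform version that only the convexity argument provides.
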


The {following} results Lemma \ref{lemmorobusto} and Proposition \ref{prop_square} below are generalizations of Proposition \ref{newlucia} to perturbations of potentials $W$ satisfying \eqref{cond_crit} and \eqref{conditions}. Such results are used in the proof of Theorem \ref{thm_lr_crystal} which deals with asymptotic crystallization for {long-range} potentials, but we decide to include them in this subsection since they concern the minimization of a $4$-point energy.

For every $\alpha>0$, we set 
\begin{equation}\label{ealphasq}
E_\alpha^{sq}:=((1-\alpha)^2,(1+\alpha)^2)\cup ((\sqrt{2}-\alpha)^2,(\sqrt{2}+\alpha)^2).
\end{equation}

\begin{lemma}[general perturbations of potentials]\label{lemmorobusto}
Let $c', \alpha'>0$. Let $\VV\in C^{2}_{pw}((0,\infty))$ satisfy \eqref{cond_crit} and \eqref{conditions}.  
Then there exists $c_5=c_5( c',\alpha',W)>0$ such that if $\VV_*$ is a perturbation of $\VV$ with $\|\VV'-\VV_*'\|_{C^1( E^{sq}_{\alpha'})}<c_5$,
then the $4$-point energy $\mathcal E_4[V_*]$ has precisely one local minimum $\overline\boxtimes$ in $\overline{\mathscr{Q}}_{\frac{\alpha'}{2}}$ and $\VV_*$ satisfies \eqref{conditions} with $c=\frac{c'}{2}$.
\end{lemma}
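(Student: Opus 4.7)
My overall plan is to exploit the fact that, by Lemma \ref{lem_minsquare}, the hypotheses on $W$ imply that $\nabla\mathcal E_4[V]$ is $c'$-monotone in an exponential neighborhood $\widetilde U$ of $\boxtimes$, at which it vanishes by \eqref{cond_crit}. Both of these features are stable under small $C^1$ perturbations of $W'$ on $E^{sq}_{\alpha'}$, so a unique critical point $\overline\boxtimes$ close to $\boxtimes$ will persist for $\mathcal E_4[V_*]$. The proof will split into three parts: preservation of \eqref{conditions} for $W_*$, preservation of the monotonicity of $\nabla\mathcal E_4[V_*]$ near $\boxtimes$, and existence and uniqueness of $\overline\boxtimes$ in $\overline{\mathscr Q}_{\alpha'/2}$.

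For the first part, since $1,2\in E^{sq}_{\alpha'}$, the assumption $\|W'-W'_*\|_{C^1(E^{sq}_{\alpha'})}<c_5$ provides $|W'(j)-W'_*(j)|<c_5$ and $|W''_{\pm}(j)-W''_{*\pm}(j)|<c_5$ for $j\in\{1,2\}$, so each of the four linear combinations appearing on the left-hand side of \eqref{conditions} changes by at most a fixed multiple of $c_5$; choosing $c_5$ small in terms of $c'$ and $C_4$ retains \eqref{conditions} for $W_*$ with constant $c'/2$. For the second part, I would re-read the piecewise-$C^2$ Hessian formula \eqref{hes_e4hpw} as a multilinear expression in $\{W'(1),W'(2),W''_{\pm}(1),W''_{\pm}(2)\}$; together with the first step, this implies that on an exponential neighborhood $\widetilde U$ of $\boxtimes$ small enough that all squared interpoint distances of configurations $\eta\in\widetilde U$ lie in $E^{sq}_{\alpha'}$, the directional one-sided Hessians of $\mathcal E_4[V_*]$ remain bounded below by $C_4c'/2$, and integrating this along radial geodesics converts the bound into $(c'/2)$-monotonicity of $\mathrm{exp}_\boxtimes^*\nabla\mathcal E_4[V_*]$ on $\widetilde U$.

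For the third part, since $\nabla\mathcal E_4[V](\boxtimes)=0$ and the gradient depends linearly on the values of $W'$ evaluated at the squared distances of the configuration, the perturbation estimate yields $|\nabla\mathcal E_4[V_*](\boxtimes)|\le M c_5$ for a purely geometric constant $M$. Integrating the $(c'/2)$-monotonicity along the radial segment from $\boxtimes$ to $\eta\in\widetilde U$ produces
\[
\mathcal E_4[V_*](\eta)-\mathcal E_4[V_*](\boxtimes)\ge \tfrac{c'}{4}|\eta-\boxtimes|^2-Mc_5|\eta-\boxtimes|,
\]
so that on any closed ball $\overline B_r(\boxtimes)\subset\widetilde U$ with $8Mc_5/c'<r<\alpha'/2$ the restriction of $\mathcal E_4[V_*]$ attains its minimum at an interior point $\overline\boxtimes$, which is thus critical; strict monotonicity then gives uniqueness in $\widetilde U$. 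To preclude further critical points (hence local minima) in $\overline{\mathscr Q}_{\alpha'/2}\setminus\widetilde U$, one invokes Proposition \ref{newlucia} (valid for $\alpha'$ in the range where the lemma is later applied, ensuring $\alpha'/2<\alpha_0''$): it gives $|\nabla\mathcal E_4[V]|\ge \delta>0$ on the relative closure of that set in $\overline{\mathscr Q}_{\alpha'}$, and $C^1$-stability transfers the gap to $\mathcal E_4[V_*]$ provided $c_5\ll\delta$.

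The main obstacle is this last uniqueness step: the gradient gap on $\overline{\mathscr Q}_{\alpha'/2}\setminus\widetilde U$ requires that this set becomes relatively compact inside $\overline{\mathscr Q}_{\alpha'}$ once isometries are quotiented out, together with the continuity of $\nabla\mathcal E_4[V]$ (which holds because $V\in C^1$ even when $V\in C^2_{pw}$). The cushion between $\alpha'$ and $\alpha'/2$ appearing in the statement is what guarantees both this compactness and the fact that all squared distances for $\eta$ in the exponential neighborhood $\widetilde U$ remain inside $E^{sq}_{\alpha'}$, where we have the perturbation control on $W'$ and $W''_\pm$.
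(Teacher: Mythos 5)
Your first three steps are sound and track the paper's argument closely: the preservation of \eqref{conditions} with constant $c'/2$ and of the monotonicity of $\nabla\mathcal E_4[\cdot]$ near $\boxtimes$ under the $C^1(E^{sq}_{\alpha'})$-perturbation of $W'$ is exactly what the paper uses, and your quantitative existence argument (bounding $|\nabla\mathcal E_4[V_*](\boxtimes)|$ by $Mc_5$ and integrating the monotonicity along radial segments) is a legitimate, more self-contained substitute for the paper's appeal to the implicit function theorem for strictly monotone maps from \cite{DontchRock}.

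The gap is in your final uniqueness step. Proposition \ref{newlucia} is not available here: it requires $V$ to satisfy the convexity hypotheses of Lemma \ref{lem:onlysquare} (convexity on $E_{\alpha''}$, strict convexity off $[1,\sqrt 2]$), which are not among the hypotheses of Lemma \ref{lemmorobusto} --- the lemma assumes only \eqref{cond_crit} and \eqref{conditions}. In the paper's architecture those convexity hypotheses enter only later, in Proposition \ref{prop_square}, where Lemma \ref{lem:onlysquare} is invoked separately for $V_*$ to control the region away from the square; Lemma \ref{lemmorobusto} itself must stand without them. Moreover, even granting the extra hypotheses, Proposition \ref{newlucia} asserts uniqueness of the \emph{minimizer} of $\mathcal E_4[V]$, not a lower bound $|\nabla\mathcal E_4[V]|\ge\delta$ on $\overline{\mathscr{Q}}_{\alpha'/2}\setminus\widetilde U$: it does not exclude non-minimizing critical points there, and a degenerate critical point of $\mathcal E_4[V]$ can become a local minimum of $\mathcal E_4[V_*]$ under an arbitrarily $C^1$-small perturbation. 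The mechanism the paper relies on is that the strict-monotonicity neighborhood $\widetilde U$ furnished by Lemma \ref{lem_minsquare} already contains $\overline{\mathscr{Q}}_{\alpha'/2}$ (this is the role of $\alpha'$ as the \emph{small} parameter, and of the cushion between $\alpha'/2$ and $\alpha'$ that you correctly identify), so that strict monotonicity of $\nabla\mathcal E_4[V_*]$ on all of $\overline{\mathscr{Q}}_{\alpha'/2}$ directly forbids a second critical point; the region $\overline{\mathscr{Q}}_{\alpha'/2}\setminus\widetilde U$ you try to handle should be empty, and no gradient-gap argument is needed. You should either make that containment explicit or restate the conclusion for a neighborhood on which monotonicity is actually established.
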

\begin{proof}
Due to the assumption that \eqref{cond_crit} holds, it follows from Lemma \ref{lem_minsquare} that $\mathcal E_4[V]$ has a critical point at $\boxtimes$. 
 
\medskip
 
We can then apply the implicit function theorem for strictly monotone functions (see for instance \cite[Thm. 1H.3]{DontchRock}, whose proof directly extends to the case of $F(w, x):=\nabla \mathcal E_4[v](\boxtimes + x)$, where $w$ is a perturbation of $\VV$ in $C^2$-norm, $v(r)=w(r^2)$, and $\mathcal E_4[v]$ is defined using formula \eqref{defEW} with $w$ instead of $\VV$). This allows to verify that there exists $c_5=c_5(c',\alpha', W)>0$ such that for each $\VV_*$ such that $\VV_*'$ is closer than $c_5$ to $\VV'$ in $C^1$-norm, there exists a unique zero of $\nabla \mathcal E_4[V_*]$ in $\overline{\mathscr{Q}}_{\frac{\alpha'}{2}}$. Up to restricting $c_5$, the bounds \eqref{conditions} hold also for $\VV_*$ with $c=\frac{c'}{2}$ if $\|\VV'_*-\VV'\|_{C^1(E^{sq}_{\alpha'})}<c_5$. Thus the monotonicity conditions continue to hold for $\nabla \mathcal E_4[V_*]$ in $\overline{\mathscr{Q}}_{\frac{\alpha'}{2}}$ as well, allowing to verify the uniqueness and strict minimality claim of the lemma.
\end{proof}

By combining the above lemmas we have the following result, which applies to potentials $V$ as in Figure \ref{V2}.
We recall that $\alpha'_0>0$ and $C_3'\in[1,\frac{1}{\alpha'_0})$ are the constants found in Lemma \ref{cor_squarerigid}. 

\begin{proposition}[unique square minimum robust under $C^2$ perturbations]\label{prop_square}
Let $\alpha''\in (0,\alpha'_0)$ and $0<\alpha'\le \alpha\le\frac{\alpha''}{C'_3}$. 
Let $V,\VV\in C^2_{pw}((0,\infty))$ be related by \eqref{WfromV}. Assume that there exists $c, c', c''>0$ such that: 
 \begin{itemize}
\item[(0)] $\min_{s>0}W(s)=-1$;
\item[(1)] $V$ is convex in $E_{\alpha''}$ and $V$ satisfies $\inf_{r\in E_{\alpha''}\setminus [1,\sqrt 2]}V''_{\pm}(r)\ge c$;
\item[(2)] $\VV$ satisfies \eqref{cond_crit} and \eqref{conditions} with constant $c'$;
 \item[(3)] $\sup_{r\in E_{\alpha'}}V(r)<-\frac{15}{16}-c''$;
 \item[(4)] $V(r)>-\frac12$ if  $r\notin (1-\alpha,\sqrt2+\alpha)$.
 \end{itemize}
 
Recalling that $c_5$ is the constant defined in {Lemma \ref{lemmorobusto}} and setting $c''':=\min\{c_5(c',\alpha' ,W),c''/2, c/2\}$, for every perturbation $V_*\in C^2_{pw}((0,\infty))$ {of $V$} with 
\begin{equation}\label{perturbhp}
\|\VV-\VV_*\|_{C^2((0,\infty))}<c''',
\end{equation}
 we have that the $4$-point energy $\mathcal E_4[V_*]$ has exactly 
 one global minimizer in $\mathcal X_4(\R^2)/\mathrm{Isom}(\R^2)$ and such a minimizer, denoted by $\overline\boxtimes$, lies actually in $\mathscr{S}_{\frac{\alpha'}{2}}$.
\medskip

If $V_*\in C^2_{pw}((0,\infty))$ satisfies only
\begin{equation}\label{perturbhpused}
\|\VV - \VV_*\|_{C^2(((1-\alpha'')^2,+\infty))}<c''',
\end{equation}
then the energy $\mathcal E_4[V_*]$ has exactly one minimum with sidelengths in the interval $(1-\alpha,+\infty)$, which is a square. 
\end{proposition}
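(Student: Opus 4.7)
My plan is to combine the local uniqueness of Lemma \ref{lemmorobusto} with a global energy comparison based on (3) and (4) and the rigidity from Lemma \ref{cor_squarerigid}. First I apply Lemma \ref{lemmorobusto}: since $c'''\le c_5(c',\alpha',W)$ by construction and $\|W'-W_*'\|_{C^1(E_{\alpha'}^{sq})}\le\|W-W_*\|_{C^2}<c'''$, the lemma's hypotheses hold, producing a unique local minimum $\overline\boxtimes\in\overline{\mathscr Q}_{\alpha'/2}$ of $\mathcal E_4[V_*]$ and guaranteeing that $W_*$ satisfies \eqref{conditions} with constant $c'/2$. To upgrade $\overline\boxtimes$ to $\mathscr S_{\alpha'/2}$, I use condition (1) together with $\|V-V_*\|_{C^2}<c'''\le c/2$ to get $V_{*,\pm}''\ge c/2$ on $E_{\alpha''}\setminus[1,\sqrt 2]$, and then transport the second step of the proof of Lemma \ref{lem:onlysquare} to $V_*$: if $\overline\boxtimes$ were not a scaled square, Lemma \ref{cor_squarerigid} (applicable since $\alpha'/2<\alpha_0'$) would force the joint interval $[a,d]\cup[|x_1-x_3|,|x_2-x_4|]$ swept by sides and diagonals to meet the strict-convexity region $(1-\alpha'',1)\cup(\sqrt 2,\sqrt 2+\alpha'')$, so the averaging perturbation described there would strictly decrease $\mathcal E_4[V_*]$, contradicting minimality.

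\textbf{Global comparison.} By condition (3) and $c'''\le c''/2$, each of the six distances of $\overline\boxtimes\in\mathscr S_{\alpha'/2}\subset\mathscr Q_{\alpha'}$ satisfies $V_*(r)<-\tfrac{15}{16}-\tfrac{c''}{2}$, so with the total weight $4$ of $\mathcal E_4$,
\[ \mathcal E_4[V_*](\overline\boxtimes)\,<\,-\tfrac{15}{4}-2c''. \]
For any competitor $\eta\ne\overline\boxtimes$ having some distance $d\notin(1-\alpha,\sqrt 2+\alpha)$, condition (4) yields the strict bound $V_*(d)>-\tfrac{1}{2}-c'''$, while $V_*(r)\ge -1-c'''$ holds at all other distances; weighting by $1/2$ at sides and $1$ at diagonals then gives
\[ \mathcal E_4[V_*](\eta)\,>\,-\tfrac{15}{4}-4c'''\,\ge\,-\tfrac{15}{4}-2c''\,>\,\mathcal E_4[V_*](\overline\boxtimes). \]
If instead all distances of $\eta$ lie in $(1-\alpha,\sqrt 2+\alpha)$, the smallness imposed by $\alpha\le\alpha''/C_3'$ together with Lemma \ref{cor_squarerigid} forces $\eta$, after cyclic relabeling, to lie in $\overline{\mathscr Q}_\alpha$; an iterated monotonicity argument exploiting the strict convexity of $V_*$ on $E_{\alpha''}\setminus[1,\sqrt 2]$ and the deep well provided by (3) on $E_{\alpha'}$ either contradicts minimality directly or pushes $\eta$ into $\overline{\mathscr Q}_{\alpha'/2}$, where the uniqueness from the local step concludes.

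\textbf{Second statement and main obstacle.} Under the weaker hypothesis $\|W-W_*\|_{C^2(((1-\alpha'')^2,+\infty))}<c'''$, the argument above runs verbatim provided we restrict to configurations with all four sidelengths $>1-\alpha>1-\alpha''$; with a convex-quadrilateral cyclic ordering, all six distances appearing in $\mathcal E_4[V_*]$ then lie in $(1-\alpha'',+\infty)$, the range on which $W$ and $W_*$ are $C^2$-close, and the previous proof applies unchanged. The most delicate step is the last case of the global comparison, specifically ruling out near-rhombic competitors with sides in $E_\alpha^1$ but diagonals possibly in $E_\alpha^2\setminus E_{\alpha'/2}^2$: here one must exploit that, for $\alpha$ sufficiently small (as imposed by $\alpha\le\alpha''/C_3'<1/(C_3')^2$), any rhombus with unit sides and angles far enough from $90^\circ$ has at least one diagonal outside $(1-\alpha,\sqrt 2+\alpha)$, thereby routing the exclusion back through condition (4).
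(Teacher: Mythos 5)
Your proof is correct and uses essentially the same ingredients as the paper's, merely in reverse order: the paper first runs the global comparison (testing against $\boxtimes$ via (3), and bounding competitors with an out-of-range distance via (0) and (4)), then applies Lemma \ref{cor_squarerigid} to place any minimizer in $\overline{\mathscr{Q}}_{\alpha''}$, then Lemma \ref{lem:onlysquare} for $V_*$ to get uniqueness and squareness, and only at the end Lemma \ref{lemmorobusto} for existence. The one vaguely phrased passage in your write-up, the ``iterated monotonicity argument'' for competitors in $\overline{\mathscr{Q}}_\alpha\setminus\overline{\mathscr{Q}}_{\alpha'/2}$, is exactly where the paper simply invokes the uniqueness statement and the final clause of Lemma \ref{lem:onlysquare} applied to $V_*$ (whose hypotheses you have already verified), so no new argument is needed there.
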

\begin{proof}
We will only prove the first part of the statement, as the proof for the second part is analogous, restricting the domain to $(1-\alpha'',+\infty)$ instead.

\medskip

We first show that all the points in a minimizer of $\mathcal E_4[V_*]$ stay at a distance in $(1-\alpha,\sqrt2+\alpha)$ from each other. Indeed, suppose we have a minimizing configuration $Q$ which has two vertices $x,x'$ whose distance is outside this interval. By using in order of appearance, \eqref{perturbhp}, (3), and the fact that {$c'''\leq \frac{c''}{2}$}, we get 
\begin{equation}\label{unrobusto}
\mathcal E_4[V_*](\boxtimes)\le 4\sup_{r\in E_{\alpha'}} V(r) + 4c'''<-\frac{15}{4}-4c''+4c'''\le-\frac{15}{4}-2c'';
\end{equation}
moreover, by assumptions (0) and (4), \eqref{perturbhp}, and using again the fact that $c'''<\frac{c''}{2}$, we obtain 
\begin{equation}\label{deuxrobusto}
\E_4[V_*](Q)\ge\frac72\min_{\rho>0}V_*(\rho)+\frac12V_*(|x-x'|)\ge- \frac72-\frac14 -4c''' \geq -\frac{15}{4}-2c'',
\end{equation}
which combined with \eqref{unrobusto} contradicts the assumption on the minimality of $Q$.

Since $\alpha<\alpha_0'$, by Lemma \ref{cor_squarerigid}, the edge lengths of any minimizer are at most $C_3'\alpha$-deformed compared to the edge lengths of a unit square, which by the upper bound on $\alpha$ implies that all minimizers have edge lengths in $E_{\alpha''}$, as desired. 

It follows that every minimizer of $\E_4[V_*]$ lies in $\mathscr{Q}_{\alpha''}$. Therefore, by assumption (1) and \eqref{perturbhp} we have that $V_*$ still satisfies the assumptions of Lemma \ref{lem:onlysquare}.
Therefore, there exists at most one minimizer of $\E_4[V_*]$ in $\mathcal{X}_4(\R^2)/\mathrm{Isom}(\R^2)$ and that such minimizer, if exists, lies in $\overline{\mathscr{S}}_{\frac{\alpha'}{2}}$.

Finally, by assumption (2) and by \eqref{perturbhp}, we can apply  Lemma \ref{lemmorobusto} thus obtaining that a unique minimizer of $\E_4[V_*]$ exists in  $\overline{\mathscr{Q}}_{\frac{\alpha'}{2}}$; this fact together with the last part of Lemma \ref{lem:onlysquare} implies the full claim.

\end{proof}

\subsection{Crystallization under smooth one-well potentials with finite range}
By using the tools from Sections \ref{sec_mindist}, \ref{sec_combinsetup} and \ref{sec_rigidloc} we can prove our second main theorem, stated below in detail. 

\medskip

Recall that the constant $\overline{\alpha}$ is given by Lemma \ref{lem_lowerbound},
 the constants $\alpha_0>0$, $C_3>1$ are provided by Lemma \ref{lem:combintometric}, $\alpha'_0>0$ is given by Lemma \ref{cor_squarerigid} , and $\alpha_0''$ is given by Proposition \ref{newlucia}.
\begin{theorem}[crystallization under finite-range smooth potentials]\label{thm_fr_crystal}\hfill
Let $\alpha,\alpha',\alpha''>0$ be such that $\alpha'<\alpha<\frac{1}{C_3}\alpha''<\frac{1}{C_3}\min\{(2-\sqrt2)/4,\overline{\alpha}, \alpha_0,\alpha'_0, \alpha_0''\}$.

Set $r_{min}:=1-\alpha$, $C_2:=1$ and $r_0:=\sqrt{2}+\alpha''$, and let $K'>0$ be as in Lemma \ref{lem:mindistance}.

There exist constants $c', K''>0$ such that the following holds true: 
if  $V,W\in C^2_{pw}((0,\infty))$ are related by \eqref{WfromV} and satisfy
\begin{itemize}
\item[(0)] $\min_{s>0}W(s)=-1$,
\item[(1)] $V$ is convex in $E_{\alpha''}$ and strictly convex in $E_{\alpha''}\setminus[1,\sqrt 2]$,
\item[(2)] $\VV$ satisfies \eqref{cond_crit} and \eqref{conditions} with constant $c'$,
 \item[(3)] $\sup_{r\in E_{\alpha'}}V(r)<-\frac{15}{16}$;
 \item[(4)] $V(r)>-\frac12$ if  $r\notin (1-\alpha,\sqrt2+\alpha)$,
\item[(5)] $V(r)>\max\{K',K''\}$ for $r\le 1-\alpha$,
\item[(6)] $V(r)=0$ for $r\ge \sqrt{2}+ \alpha''$,
\end{itemize}
then
\begin{equation}\label{energy_fr}
N
\overline{\mathcal E}_{\mathrm{sq}}[V]\le \mathcal E[V](N)\le N
\overline{\mathcal E}_{\mathrm{sq}}[V]+O(N^{1/2})\qquad\textrm{as }N\to+\infty,
\end{equation}
and furthermore
\begin{equation}\label{energy_fr2}
\min\mathcal E_4[V] = \overline{\mathcal E}_{\mathrm{sq}}[V].
\end{equation}
\end{theorem}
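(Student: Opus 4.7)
The argument proceeds in three steps: the identification \eqref{energy_fr2} of the asymptotic energy with $\min\mathcal E_4[V]$, the upper bound via a test configuration in $\mathbb Z^2$, and the matching lower bound via a resummation into $4$-point energies. To establish \eqref{energy_fr2}, I first compute directly that for the lattice $t\mathbb Z^2$ with $t$ such that $t\sqrt 2+\alpha''<2t$ (so that only nearest-neighbor and diagonal pairs fall in the support of $V$), the energy per point equals $2V(t)+2V(t\sqrt 2)$, which coincides with $\mathcal E_4[V]$ evaluated on a square of sidelength $t$. Optimizing in $t$ thus gives $\overline{\mathcal E}_{\mathrm{sq}}[V]$ equal to the minimum of $\mathcal E_4[V]$ over squares. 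By Proposition \ref{newlucia}, conditions (0)--(2) force this minimum to be attained at $\boxtimes$, while conditions (3)--(4) rule out minimizers outside $\overline{\mathscr Q}_\alpha$: any $4$-ple with some interpoint distance outside $E_\alpha$ has energy at least $-15/4$, using that such a distance contributes at least $-1/2$ by (4), whereas $\mathcal E_4[V](\boxtimes)=2V(1)+2V(\sqrt 2)<-15/4$ by (3).

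The upper bound is obtained by taking $X_N$ to be an approximately square patch of $\mathbb Z^2$. The resummation trick of Figure \ref{fig-resum}, based on the fact that each interior nearest-neighbor edge belongs to exactly two elementary squares (and carries weight $\tfrac12$ in $\mathcal E_4[V]$) while each interior diagonal belongs to exactly one elementary square (carrying weight $1$), identifies $\mathcal E[V](X_N)$ with the sum of $\mathcal E_4[V](\boxtimes)$ over the $N-O(N^{1/2})$ interior elementary squares of $X_N$, up to boundary corrections of size $O(N^{1/2})$. This yields the upper bound in \eqref{energy_fr}.

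For the lower bound, let $X_N$ be a minimizer of $\mathcal E[V]$ on $\mathcal X_N(\mathbb R^2)$. Lemma \ref{lem:mindistance}, applied with $r_{\min}=1-\alpha$, $r_0=\sqrt 2+\alpha''$ and $C_2=1$, using (0), (5), (6), yields the minimum separation \eqref{mindistnew} provided $K''$ is chosen large enough. Lemma \ref{lem_lowerbound} then bounds $\sharp\mathcal N_\alpha(p)\leq 9$ for every $p\in\Xi$. At each interior point $p$ with $\mathcal N_\alpha(p)\cap\partial\mathcal G_\alpha=\emptyset$, Lemma \ref{lem:combintometric} produces a bijection $\phi_p:\mathcal N_\alpha(p)\to\{-1,0,1\}^2$ and guarantees that each of the four elementary $4$-tuples surrounding $p$ is a $C_3\alpha$-deformation of the unit square, hence lies in $\mathscr{Q}_{\alpha''}$. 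Applying the same resummation identity as in the upper bound yields
\begin{equation*}
\mathcal E[V](X_N)\;=\;\sum_{\eta}\mathcal E_4[V](\eta)\;+\;R(X_N),
\end{equation*}
where the sum runs over the interior elementary $4$-tuples of $X_N$. By Proposition \ref{newlucia}, each summand is at least $\mathcal E_4[V](\boxtimes)=\overline{\mathcal E}_{\mathrm{sq}}[V]$.

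The main obstacle is controlling $R(X_N)$ by $O(N^{1/2})$. This requires showing $\sharp\partial\mathcal G_\alpha(X_N)=O(N^{1/2})$: a configuration with too many boundary points misses too much of the favourable pairwise contribution $\approx -1$ and therefore cannot match the $\mathbb Z^2$-patch used in the upper bound. Combining the already established upper bound with an isoperimetric-type count (and exploiting the lower bound $K''$ from (5) to rule out anomalous close pairs that would invalidate this counting), one obtains the desired $O(N^{1/2})$ control, completing the proof.
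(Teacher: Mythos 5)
Your overall architecture (identify $\overline{\mathcal E}_{\mathrm{sq}}[V]$ with $\min\mathcal E_4[V]$, upper bound by a $\mathbb Z^2$-patch, lower bound by resummation into elementary squares) matches the paper, but the lower bound as you set it up has a genuine gap, and would in any case prove a weaker statement than \eqref{energy_fr}. You write $\mathcal E[V](X_N)=\sum_\eta\mathcal E_4[V](\eta)+R(X_N)$ and propose to control $R(X_N)$ by $O(N^{1/2})$ via an estimate $\sharp\partial\mathcal G_\alpha(X_N)=O(N^{1/2})$. First, that cardinality estimate is not proved: the assertion that a configuration with many boundary points ``misses too much of the favourable contribution'' is precisely the quantitative content one needs to establish, and deferring it to an unspecified ``isoperimetric-type count'' leaves the core of the lower bound open (for the triangular lattice, such boundary estimates are the subject of separate nontrivial work). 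Second, even granting it, $R(X_N)$ consists of uncovered pairs each contributing up to $-1$, so your route yields at best $\mathcal E[V](N)\ge N\overline{\mathcal E}_{\mathrm{sq}}[V]-O(N^{1/2})$, whereas \eqref{energy_fr} asserts the clean inequality $N\overline{\mathcal E}_{\mathrm{sq}}[V]\le\mathcal E[V](N)$ with no error term. The paper avoids any boundary-cardinality estimate altogether: it redistributes the energy pointwise into contributions $\mathcal E^p[V](X_N)$, enriches the graph with ``missing edges'', assigns lost weights to defective edges, and uses assumption (3) (the well is deep enough, $\sup_{E_{\alpha'}}V<-\tfrac{15}{16}$, so that $-4m_{\alpha',V}-4+\tfrac1{16}>0$) to show that every defective point carries a \emph{nonnegative} excess. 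This gives $\mathcal E[V](X_N)\ge N\mathcal E_4[V](\boxtimes)$ exactly. Your proposal never uses (3) for this purpose, which is its essential role in the theorem.

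A secondary issue concerns \eqref{energy_fr2}: you compute the energy per point of $t\mathbb Z^2$ only for $t$ with $\sqrt2\,t+\alpha''<2t$, but the minimization in \eqref{eppz2_intro} ranges over all $t>0$, and one must rule out that the optimal $\underline t$ is so small that second-and-further neighbors enter the range of $V$. This is the actual purpose of the constant $K''$ in hypothesis (5): the paper shows via a packing argument that $\underline t\le 1-\alpha$ would force $\overline{\mathcal E}_{\mathrm{sq}}[V]>0$, contradicting $\overline{\mathcal E}_{\mathrm{sq}}[V]\le 0$. In your write-up $K''$ is instead invoked for the minimum-separation lemma (which is the role of $K'$), so this step is missing.
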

\begin{rmk}
Assumption (3) corresponds to assumption (B) of the introduction with $\epsilon'=\frac{1}{16}$. Furthermore, the constant $K''$ is here to ensure that the minimal energy per point of a square $\overline{\mathcal E}_{\mathrm{sq}}[V]$ is achieved for some $\underline t>1-\alpha$. The number $K>0$ appearing in Theorem \ref{thm2} and Figure \ref{V2}(left) is then $\max\{K',K''\}$.
\end{rmk}
\begin{proof}
We first prove \eqref{energy_fr2}. Let $\underline t>0$ be the value at which the minimum from \eqref{eppz2_intro} in the definition of $\overline{\mathcal E}_{\mathrm{sq}}[V]$ is achieved.

\medskip

We first show that there exists $K''>0$, independent of the remaining parameters, so that $\underline t>1-\alpha$. This will fix the choice of $K''$, and we use a strategy reminiscent of Lemma \ref{lem:mindistance}. Note first that $\overline{\mathcal E}_{\mathrm{sq}}[V]\le 0$, by considering condition (6) and testing the minimization over $t>0$ with $t=1$.
Next, using assumptions (0), (4), (5) and (6), we write
\begin{eqnarray}\label{try3}
\E[V](\underline t\Z^2\cap B_R)& =& \frac{1}{2}\sum_{\substack{x\neq y\in\underline t\Z^2\cap B_R\\ |x-y|\le 1-\alpha}}V(|x-y|)+\frac{1}{2}\sum_{\substack{x, y\in\underline t\Z^2\cap B_R\\ |x-y|\in (1-\alpha, \sqrt2 +\alpha'')}}V(|x-y|)\label{initial}\\
&\ge&  \frac12 K''\sharp\{x\in \underline t\Z^2\cap B_{R-1-\alpha}\}\left(\sharp\{x\in\underline t\Z^2:\ |x|\le 1-\alpha\}-1\right)\label{part1}\\
&&-\frac12\sharp\{x\in \underline t\Z^2\cap B_{R+\sqrt2+\alpha''}\}\sharp\{x\in\underline t\Z^2:\ |x|\le \sqrt2+\alpha''\}\label{part2}.
\end{eqnarray}
To obtain \eqref{part1} we used (5) and estimated the first sum in \eqref{initial} from below by noting that for each $p\in \underline t\Z^2\cap B_{R-1-\alpha}$, all points $x\in\underline t\Z^2\cap \overline B_{1-\alpha}(p)$ contribute at least an energy of $K''$, and we double-count contributions at most twice. Similarly, for \eqref{part2} we used (0) and perform a similar bound from above, in which we do not account for the double-counting anymore. Now it remains to note that the first factors in \eqref{part1}, \eqref{part2} are both of the form
\[
 \pi\left(\frac{R}{\underline t}\right)^2 +O\left(\frac{R}{\underline t}\right) \quad \mbox{ as }R\to\infty.
\]
Finally, by a very rough packing bound, if $\underline t<1-\alpha$ 
(condition required for the term \eqref{part1} to be nonzero) the second factors in \eqref{part1}, \eqref{part2} both lie in the interval $[c/\underline t^2, C/\underline t^2]$, where $0<c<C$ are constants which can be chosen independently of the choices of $\alpha,\alpha''$ in the interval $(0,(2-\sqrt2)/4)$. These considerations allow to continue the estimate \eqref{initial} and obtain
\begin{equation}
\E[V](\underline t\Z^2\cap B_R)\ge  \left[\frac12\pi\left(\frac{R}{\underline t}\right)^2 +O\left(\frac{R}{\underline t}\right)\right] (cK''- C), 
\end{equation}
which for $K''>C/c$ contradicts our former conclusion $\overline{\mathcal E}_{\mathrm{sq}}[V]\le 0$, as desired. It follows that $\underline t>1-\alpha$. 
Since $\alpha<\frac{\alpha''}{C_3}<\alpha''$ and $0<\alpha''<\frac{2-\sqrt 2}{4}$, we get
$$
2\underline t>2(1-\alpha'')>2-4\alpha''+\alpha''>\sqrt 2 +\alpha'',
$$
from which we deduce that only distances $\underline t, \underline{t}\sqrt{2}$ can participate to the computation of the energy and thus
\begin{equation}\label{reexprmin}
\begin{aligned}
 \mathcal E[V]\left(\underline t\mathbb Z^2\cap B_R\right)&= \sharp\left(\underline t\mathbb Z^2\cap B_R\right)\mathcal E_4[V](\{0,\underline t\}^2) + O(R)\\
 &\ge \sharp\left(\underline t\mathbb Z^2\cap B_R\right)\min\mathcal E_4[V]+ O(R).
 \end{aligned}
\end{equation}
Therefore, by dividing both terms by $\sharp\left(\underline t\mathbb Z^2\cap B_R\right)\sim O(R^2)$ and sending $R\to \infty$, we get the inequality ``$\le$'' in \eqref{energy_fr2}. On the other hand, by assumptions (1) and (2), we can apply Proposition \ref{newlucia} thus obtaining $\min \mathcal E_4[V]=\mathcal E_4[V] (\boxtimes)$. Then, doing the same computation as in the first line of \eqref{reexprmin} with $\underline t$ replaced by $1$ we get the inequality ``$\ge$'' in \eqref{energy_fr2}.
\medskip

We now prove the first inequality in \eqref{energy_fr}. Let then $\mathcal S_\alpha$ be as in \eqref{not_graph}. By assumptions (0), (5) and (6), we can apply Lemma \ref{lem:mindistance}, thus getting that 
\begin{equation}\label{mindistalfa}
\min_{\substack{p,q\in\Xi_N\\p\neq q}}|x_p-x_q|>1-\alpha\qquad\textrm{ for any minimizer $X_N=X(\Xi_N)$ in $\mathcal{X}_N(\R^2)$.}
\end{equation}
Setting
$$
\mathcal S_{\alpha'',\alpha}:=\{\{p,q\}\,:\, p,q\in \Xi_N\,,\sqrt 2+\alpha\le |x_p-x_q|<\sqrt2+\alpha''\},
$$
by \eqref{mindistalfa} and by the fact that $V(r)=0$ for $r>\sqrt 2+ \alpha''$, we deduce that
\[
 \mathcal E[V](X_N)=\sum_{\{p,q\}\in\mathcal S_\alpha\cup\mathcal S_{\alpha'',\alpha}}V(|x_p-x_q|).
\]
Then by Lemma \ref{lem:combintometric} (in particular by \eqref{deform00}) following from the fact that $\alpha''<\alpha_0$, we find that for every $p\in\{1,\ldots,N\}$ with $\mathcal{N_\alpha}(p)\cap\partial \mathcal G_\alpha=\emptyset$, the neighborhood $\mathcal N_{\alpha}(p)$ of $p$ is locally a $C_3\alpha$-deformation of $\{-1,0,1\}^2$. We next write $\mathcal E[V](X_N)$ as a sum over $C_3\alpha$-deformations of $\{0,1\}^2$. We will use the following notation:
\begin{equation}\label{defq1}
 \mathcal Q_1:=\left\{Q\subset\Xi_N:\ X(Q)\sim_{C_3\alpha}\{0,1\}^2\right\}.
\end{equation}
We then rewrite
\begin{equation}\label{resumq1}
 \mathcal E[V](X_N)=\sum_{Q\in\mathcal Q_1}\mathcal E_4[V](Q) + \frac12\sum_{\{x,y\}\in\mathcal{NC}^{(1)}}V(|x-y|)+ \sum_{\{x,y\}\in\mathcal{NC}^{(2)}}V(|x-y|),
\end{equation}
in which $\mathcal{NC}^{(j)}$ is the collection of all pairs $\{x,y\}$ which appear in the first sum on the right with multiplicity $2-j$, for $j=1,2$. We will also denote $\mathcal{NC}:=\mathcal{NC}^{(1)}\cup\mathcal{NC}^{(2)}$.

\medskip

Since $C_3\alpha<\alpha'' <\alpha''_0$, we get that every $Q\in\mathcal Q_1$ satisfies $X(Q)\in \overline{\mathscr{S}}_{\alpha_0''}$, which in view of Proposition \ref{newlucia} implies
\begin{equation}\label{resumq1-2}
 \mathcal E_4[V](Q)\ge \min\mathcal E_4[V]=\mathcal E_4[V]({\boxtimes}) \qquad \forall Q\in\mathcal Q_1,
\end{equation}
and 
\begin{equation}\label{resumq1-3}
\mathcal E_4[V]({\boxtimes})\le 4  \sup_{\rho\in E_{\alpha'}} V(\rho):=4m_{\alpha',V}.
\end{equation}
\medskip

Note that if $\{p,q\}\in\mathcal{NC}$ then we have $\mathcal N_\alpha(p)\cap\partial \mathcal G_\alpha\neq \emptyset$ or $\mathcal N_\alpha(q)\cap\partial \mathcal G_\alpha\neq \emptyset$ or $\{p,q\}\in \mathcal S_{\alpha'',\alpha}$. We will use below without mention the consequences that all such edges belong to $\mathcal S_{\alpha''}$. Now we denote 
\[
 \mathcal E^p[V](X_N):=\frac12\sum_{\substack{q\in \Xi_N\\ q\neq p}}V(|x_p-x_q|)=
 \frac12\sum_{\substack{q\in \mathcal{N}_{\alpha''}(p)\\q\neq p}}V(|x_p-x_q|),
\]
and we note that $\mathcal E[V](X_N)$ is the sum of $\mathcal E^p[V](X_N)$ over all labels $p$. Furthermore we have the following bound, which uses assumptions (0) and (4):
\begin{eqnarray}\label{boundsEp}
 \mathcal E^p[V](X_N)
 &\ge&\frac18\mathcal E_4[V]( \boxtimes)\ \sharp\{q\in \mathcal N_\alpha(p)\setminus\{p\}:\ \{p,q\}\notin\mathcal{NC}\}\nonumber\\
 &&-\frac12\sharp\{q\in\mathcal N_\alpha(p)\setminus\{p\}:\ \{p,q\}\in\mathcal{NC}\}\nonumber\\
 &&-\frac14\sharp\{q\in\mathcal N_{\alpha''}(p)\setminus\mathcal N_\alpha(p)\}.
\end{eqnarray}
We concentrate first on $p$ such that $\mathcal N_\alpha(p)\cap\partial\mathcal G_\alpha=\emptyset$. Note that due to the hypothesis $\alpha''<\overline{\alpha}$, by Lemma \ref{lem_lowerbound} we have $\sharp \mathcal N_{\alpha''}(p)\le 9$, and since $p\notin\partial \mathcal G_\alpha$, we have $\sharp \mathcal N_\alpha(p)=9$, so that all $8$ edges containing $p$ in $\mathcal G_{\alpha''}$ are actually in $\mathcal G_\alpha$. By the hypothesis $\mathcal N_\alpha(p)\cap\partial\mathcal G_\alpha=\emptyset$ all such edges are covered with full multiplicity in the first sum on the right of \eqref{resumq1}, so they don't belong to $\mathcal{NC}$. Thus, the lower bound in \eqref{boundsEp} is in this case just $\mathcal E_4[V]( \boxtimes)$.

\medskip

Now consider the remaining points, for which we used the rough bounds corresponding to the last two lines in \eqref{boundsEp}: either they have at least one ``long'' edge, with ``good'' energy bound  $-1/4$ or this never happens and we have the ``nasty'' bound $-1/2$ for some edge. In the latter case, by the previous paragraph we also must have the ``good'' property $\mathcal N_\alpha(p)\cap\partial\mathcal G_\alpha\neq\emptyset$, and some $q\in\mathcal N_\alpha(p)$ has $\mathcal N_\alpha(q)$ of lower than maximum cardinality.

\medskip

To make the reasoning of the previous paragraph rigorous, we enrich the graph $\mathcal G_\alpha$ by adding (i) the edges from $\mathcal S_{\alpha'',\alpha}$ and (ii) a set of $9-\sharp \mathcal N_{\alpha''}(p)$ new vertices denoted $\bar q_p$, and together with each of them we add what we call a ``missing edge'' of the form $\{p,\bar q_p\}$. Let $\overline{\mathcal G}$ be the new graph and $\overline{\mathcal N}(p)$ the neighborhood of $p\in \Xi$ in $\overline{\mathcal G}$. Then \eqref{boundsEp} can be re-expressed as
\begin{subequations}
\begin{equation}\label{boundsEp2}
 \mathcal E^p[V](X_N)\ge \mathcal E_4[V]( \boxtimes)+\sum_{q\in\overline{\mathcal N}(p)\setminus\{p\}}\overline w(\{p,q\}),
\end{equation}
where in order to recover \eqref{boundsEp} we define
\begin{equation}\label{defbarw}
\overline w(\{p,q\}):=\left\{\begin{array}{ll}
 0&\mbox{ if }\{p,q\}\in\mathcal S_\alpha\setminus\mathcal{NC},\\
 -\frac{\mathcal E_4[V](\boxtimes)}{2}-\frac12&\mbox{ if }\{p,q\}\in\mathcal{NC}\cap \mathcal S_\alpha,\\
 -\frac{\mathcal E_4[V](\boxtimes)}{2}-\frac14&\mbox{ if }\{p,q\}\in\mathcal S_{\alpha'',\alpha},\\
 -\frac{\mathcal E_4[V](\boxtimes)}{2}&\mbox{ if }\{p,q\}\mbox{ missing edge}.
 \end{array}
\right.
\end{equation}
\end{subequations}
We concentrate on edges from $\mathcal{NC}$ only, and note that these edges only satisfy the last three cases in \eqref{defbarw}. In these three cases we use the bound 
\begin{equation}\label{barwbound}
-\frac{\mathcal E_4[V](\boxtimes)}{2}-\frac12\ge-\frac{m_{\alpha',V}}{2}-\frac12,
\end{equation}
following from \eqref{resumq1-3} and assumption (3) that implies the fact that $m_{\alpha',V}<0$. The bound ensuing from \eqref{defbarw} via \eqref{barwbound} for edges in $\mathcal{NC}\cap\mathcal S_\alpha$ is the most negative. Differences in the weight $\overline w$ compared to this value will be called ``lost weight'' and denoted $\mathrm{lw}(\{p,q\})$. In other words, edges in $\mathcal S_{\alpha'',\alpha}$ have lost weight $\frac14$ and missing edges have lost weight $\frac12$. The other edges from $\mathcal{NC}$ will have by definition zero lost weight, and the edges not contained in $\mathcal{NC}$.

\medskip

With this terminology, we bound the total lost weight, amongst points $p$ such that $\mathcal N_\alpha(p)\cap\partial\mathcal G_\alpha\neq\emptyset$ or $\mathcal N_{\alpha''}(p)\setminus\mathcal N_\alpha(p) \neq\emptyset$ (which, due to the previous discusson, include all the vertices participating to $\mathcal{NC}$), as follows:
\begin{multline}\label{lb2}
\sum_{\{p,q\}\in\mathcal{NC}}\mathrm{lw}(\{p,q\}) + \sum_{\{p,\bar q_p\}\mbox{ missing edge}}\mathrm{lw}(\{p,\bar q_p\})\\
\geq \frac{1}{16}\sharp\{p:\mathcal N_\alpha(p)\cap \partial\mathcal{G}_\alpha\neq \emptyset\mbox{ or }\mathcal N_{\alpha''}(p)\setminus\mathcal N_\alpha(p)\neq\emptyset\}.
\end{multline}
Let $w$ be the map assigning to every $p$ the total lost weight summed over edges containing $p$. In particular, $w(p)=0$ if and only if $p\notin\partial\mathcal G_\alpha$, whereas $w(p)\ge \frac12$, whenever $p\in\partial\mathcal G_\alpha$ and $w(p)\ge \frac14$ whenever $\mathcal N_{\alpha''}(p)\setminus\mathcal N_\alpha(p)\neq\emptyset$.
Then, moving, for every $p\in\partial\mathcal{G}_\alpha$, a weight of $1/16$ from $p$ along each edge from $\mathcal G_\alpha$ containing it, will also leave at least a weight $1/16$ at $p$ itself, since there are at most $7$ such edges. We leave the weight at points having ``long'' edges fixed instead. If we do this operation contemporarily for each $p$ we end up with a total weight of at least $1/16$ at each one of the points appearing on the left in \eqref{lb2}. This proves \eqref{lb2}.

\medskip

Now, summing \eqref{boundsEp2} over $p\in\Xi_N$ and using \eqref{defbarw} and \eqref{lb2} together with the previous observation that $\mathcal{NC}\cup\mathcal S_{\alpha'',\alpha}$ has at most $8$ edges per vertex, we have 
\begin{eqnarray}\label{lb3}
\mathcal E[V](X_N)&\ge& N\mathcal E_4[V](\boxtimes) \nonumber\\
&&+ \sharp\{p:\mathcal N_\alpha(p)\cap \partial\mathcal{G}_\alpha\neq \emptyset\mbox{ or }\mathcal N_{\alpha''}(p)\setminus\mathcal N_\alpha(p)\neq\emptyset\}\ \left(-4 m_{\alpha',V} -4+\frac{1}{16}\right)\nonumber\\
&\ge& N\mathcal E_4[V](\boxtimes) =N\min\mathcal E_4[V], 
\end{eqnarray}
where now we used assumption (3). This, together with \eqref{energy_fr2}, proves the first inequality in \eqref{energy_fr}.

\medskip

To show the second inequality in \eqref{energy}, let $\underline t$ be the value which realizes the minimum in the definition of $\overline{\mathcal E}_{\mathrm{sq}}[V]$. It suffices to construct a competitor to the minimization problem solved by $X_N$, in which $\sharp \partial \mathcal{G}_\alpha=O(N^{1/2})$, by considering configurations given by a subset of $\underline t\mathbb Z^2$ of cardinality $N$. To see that such subset exists, simply note that there exists $\widetilde X_N\subset\underline t\mathbb Z^2$ of cardinality $N$ such that
\[
\{\underline t x:\ x\in \mathbb Z^2,\ |x|\le \sqrt N-\sqrt2\}\subset \widetilde X_N\subset \{\underline t x:\ x\in \mathbb Z^2,\ |x|\le \sqrt N+\sqrt2\},
\]
for which all labels $p$ such that $\mathcal N_\alpha(p)\cap \partial \mathcal{G}_\alpha\neq \emptyset$ are assigned to points in a - at most - $5\sqrt2$-neighborhood of $\partial B(0,\sqrt N)$ for $N$ large enough  (note that we are considering here a metric neighborhood in  $\R^2$). Such $\widetilde X_N$ forms a subset of $\underline t\mathbb Z^2$ of cardinality $O(N^{1/2})$. This allows to prove the second inequality in \eqref{energy} and to conclude the proof.
\end{proof}

Actually, we can obtain an easier proof of Theorem \ref{thm_fr_crystal} if we assume that 
\begin{equation}\label{easier}
W(1)=W(2)=-1=\min_{r>0}W(r),
\end{equation}
as shown by the following result.

\begin{proposition}
Let $0<\alpha<\min\{\overline{\alpha},\frac{2-\sqrt 2}{2}\}$, with $\overline\alpha$ given by Lemma \ref{lem_lowerbound}. Set $r_{min}:=1-\alpha$, $C_2:=1$, $r_0:=\sqrt 2+\alpha$ and let $K'$ be given by Lemma \ref{lem:mindistance}.
If $W:(0,\infty)\to\R$ {satisfies} \eqref{easier} and \eqref{hyp_minsepar0},
then  \eqref{energy_fr} holds true with 
\begin{equation}\label{triviafr2}
\overline{\E}_{\mathrm{sq}}[V]=\min\E_4[V]=\E_4[V](\boxtimes)=-4.
\end{equation}
\end{proposition}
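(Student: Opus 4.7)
The plan is to bypass the elaborate $4$-point minimization analysis of Section 3 and exploit the fact that under \eqref{easier} both $V(1)=-1$ and $V(\sqrt 2)=-1$ simultaneously attain the universal lower bound $V \geq \min V = -1$. Under this symmetry there is no need to identify the square as the \emph{unique} minimizer of $\E_4[V]$; it suffices to saturate the pointwise bound $V\geq -1$ on \emph{every} contributing pair.

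First I would evaluate $\E_4[V](\boxtimes)$ directly from \eqref{defEW}: the unit square has four sides of length $1$ and two diagonals of length $\sqrt 2$, so
\[
\E_4[V](\boxtimes) \;=\; \tfrac12\cdot 4\, V(1)\,+\,2\, V(\sqrt 2)\;=\;-4.
\]
Since each of the six summands in \eqref{defEW} is bounded below by $-1$ with weights $\tfrac12,\tfrac12,\tfrac12,\tfrac12,1,1$ summing to $4$, one has $\E_4[V]\geq -4$ pointwise, and combining with the previous line gives $\min \E_4[V] = \E_4[V](\boxtimes) = -4$.

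Second I would prove the lower bound $\E[V](N)\geq -4N$ as in Theorem \ref{thm:crystal_infinite}(i). Because $V$ satisfies \eqref{hyp_minsepar0} with the parameters indicated in the statement, Lemma \ref{lem:mindistance} yields $|x_p-x_q|>1-\alpha$ for any minimizer; since $\alpha<\overline\alpha$, Lemma \ref{lem_lowerbound} then gives $\sharp \mathcal N_\alpha(p)\leq 9$, i.e.\ at most $8$ neighbors per point. As $V\equiv 0$ on $[\sqrt 2+\alpha,\infty)$, only pairs $\{p,q\}\in\mathcal S_\alpha$ contribute, each by at least $-1$; with at most $4N$ such pairs the total is $\geq -4N$. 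Applying the same combinatorial-plus-pointwise bound to $X=t\Z^2\cap B_R$ for $t>1-\alpha$ and letting $R\to\infty$ produces $\overline{\E}_{\mathrm{sq}}[V]\geq -4$, while for $t\leq 1-\alpha$ the bound $V\geq K'$ on $(0,1-\alpha]$ from Lemma \ref{lem:mindistance} forces the lattice energy per point to be strictly positive; hence $\overline{\E}_{\mathrm{sq}}[V]=-4$ exactly.

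For the matching upper bound I would choose $\widetilde X_N\subset\Z^2$ of cardinality $N$ sandwiched between two concentric disks of radii $\sqrt{N/\pi}\pm C$, so that $\sharp \partial\mathcal G_\alpha(\widetilde X_N)=O(N^{1/2})$. Each interior point has exactly eight neighbors, four at distance $1$ and four at distance $\sqrt 2$, contributing $\tfrac12\cdot 8\cdot(-1)=-4$; the $O(N^{1/2})$ boundary points contribute $O(1)$ each. Thus $\E[V](\widetilde X_N)=-4N+O(N^{1/2})$, which combined with the lower bound establishes \eqref{energy_fr} and \eqref{triviafr2}. There is no substantive technical obstacle: the convexity/monotonicity conditions \eqref{cond_crit}--\eqref{conditions} on $W$, the uniqueness results of Lemma \ref{lem:onlysquare} and Proposition \ref{newlucia}, and the resummation into $4$-point energies are all unnecessary here, because the universal pointwise bound $V\geq -1$ applied to the at-most-$8$ neighbors per point is already sharp to leading order.
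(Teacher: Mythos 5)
Your argument is correct and follows the paper's proof essentially verbatim: Lemma \ref{lem:mindistance} gives the $(1-\alpha)$-separation of minimizers, Lemma \ref{lem_lowerbound} caps the number of neighbors at $8$, and the pointwise bound $V\ge -1$ on the at most $4N$ contributing pairs yields $\E[V](N)\ge -4N$, while the upper bound and \eqref{triviafr2} are exactly the explicit $\Z^2$ computation (using $\sqrt2+\alpha<2$) that the paper leaves to the reader. The one soft spot is your claim that for $t\le 1-\alpha$ the energy per point of $t\Z^2$ is strictly positive — the constant $K'$ from Lemma \ref{lem:mindistance} is not guaranteed to be large enough to beat the $O(t^{-2})$ negative contributions from the annulus $(1-\alpha,\sqrt2+\alpha)$ — but this is harmless, since $\E[V](t\Z^2\cap B_R)\ge \E[V](N_R)\ge -4N_R$ with $N_R:=\sharp(t\Z^2\cap B_R)$ already gives the required bound $\ge -4$ per point for every $t>0$.
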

\begin{proof}
The proof of \eqref{triviafr2}, as well as the one of the second inequality in \eqref{energy_fr}, is a triviality {following from the fact that $\sqrt{2}<\sqrt{2}+\alpha<2$} and is left to the reader.

We just prove the first inequality in \eqref{energy_fr}. 
Let $N\in\N$ and let $X_N\in\mathcal{X}_N(\R^2)$ be a minimizer of $\E[V]$ in $\mathcal{X}_N(\R^2)$, i.e.,
\begin{equation}\label{zerofin}
\E[V](N)=\E[V](X_N).
\end{equation}
Then,  by \eqref{hyp_minsepar0}  and by Lemma \ref{lem:mindistance}, we have
\begin{equation}\label{unfin}
\E[V](X_N)=\sum_{\{p,q\}\in\mathcal S_\alpha}V(|x_p-x_q|).
\end{equation}
Since $\alpha<\overline{\alpha}$, by Lemma \ref{lem_lowerbound}, we deduce that {$\sharp \mathcal N_\alpha(p)\le 9$}, so that
\begin{equation}\label{deuxfin}
\sharp\mathcal{S}_\alpha=\frac{1}{2}\sum_{a\in\Xi_N}\left(\sharp\mathcal{N}_\alpha(a)-1\right)
\le 4N.
\end{equation}

By \eqref{unfin}, \eqref{deuxfin}, and {\eqref{triviafr2}} we get
$$
\E[V](N)=\E[V](X_N)\ge-\sharp\mathcal{S}_\alpha\ge -4N=N\E_4[V]=N\E_{\mathrm{sq}}[V].
$$
\end{proof}

\section{Long-range potentials and proof of Theorem \ref{thm3}}

 This section is devoted to the proof of the crystallization result in the thermodynamic limit for long-range potentials.

\subsection{Distortion estimates at larger scales}
We  first  organize the information from $\mathcal{G}_\alpha$ in order to be able to compare it to $\mathcal Z_\boxtimes$. 
\begin{definition}[combinatorial embedding]\label{def:discreteemb}
{\rm Let $\mathcal{G}_\alpha'\subset \mathcal{G}_\alpha$ be a subgraph, with vertex labels $\Lambda\subset\Xi$ and edges $\mathcal S_\alpha'\subset \mathcal S_\alpha$. A map $\Phi:\Lambda\to \mathbb Z^2$ gives a \emph{combinatorial embedding} if it is a graph isomorphism between $\mathcal{G}_\alpha'$ and a subgraph of $\mathcal Z_\boxtimes$, i.e. it is injective and $\{p,q\}\in \mathcal S_\alpha'$ if and only if $\{\Phi(p),\Phi(q)\}$ is an edge of $\mathcal Z_\boxtimes$.}
\end{definition}
Note that in \cite{T} ``discrete embeddings'' were defined differently, with a slightly more complicated definition involving directly $X$ and not only graphs. However, the graph-only Definition \ref{def:discreteemb} does still allow to recover all the information relevant to our problem, and in our view makes the structure of the argument clearer.

\medskip

We also need the following classical combinatorial definitions.
\begin{definition}[elementary topology in $\mathcal{G}_\alpha$]\label{def:pathconn}\hfill
{\rm \begin{enumerate}\item A \emph{path in $\mathcal{G}_\alpha$} is an sequence of edges of the form 
\begin{equation}\label{path}
\mathcal P=(\{p_0,p_1\},\{p_1,p_2\},\{p_2,p_3\},\ldots,\{p_{n-1},p_n\})\in (\mathcal S_\alpha )^n, n\in\mathbb N.
\end{equation}
\item Let $\Lambda\subset \Xi$. A path $\mathcal P$ as above is a path \emph{through $\Lambda$} if $p_0,p_1,\ldots,p_n\in\Lambda$. Equivalently $\mathcal P$ is also a path in $\mathcal{G}_\alpha|_\Lambda$.
 \item We say that a subset of vertices $\Lambda\subset\Xi$ is \emph{path-connected} if for each $p\neq q\in\Lambda$ there exists a path $\mathcal P$ through $\Lambda$ such that $p_0=p, p_n=q$. 
 \item An \emph{elementary move in $\mathcal{G}_\alpha$} consists in replacing successive edges $\left\{ \{p,q\},\{q,r\} \right\}\rightarrow\{p,r\}$ or viceversa $\{p,r\}\mapsto\left\{\{p,q\},\{q,r\}\right\}$, provided $\{p,q,r\}$ forms a triangle  (i.e. all pairs of points give an edge) in $\mathcal{G}_\alpha$.
 \item A \emph{discrete homotopy} between two paths $\mathcal P,\mathcal Q$ in $\mathcal{G}_\alpha$ is a sequence of paths connected by elementary moves, which starts at $\mathcal P$ and ends at $\mathcal Q$.
 \item A subset of vertices $\Lambda\subset\Xi$ is \emph{simply connected} if in $\mathcal{G}_\alpha|_\Lambda$ any two paths in $\Lambda$ are connected by a discrete homotopy.
\end{enumerate}
}
\end{definition}
Regions of $\mathcal{G}_\alpha$ which are away from $\partial \mathcal{G}_\alpha$ have a unique combinatorial embedding in the following sense:
\begin{proposition}[combinatorial version of Theorem \ref{thm:crystal_infinite} part (ii)]\label{prop:discremb}
 Let $\Lambda\subset \Xi$ be a path-connected subset such that for each $p\in \Lambda$ there holds $\mathcal N_\alpha(p)\cap\partial\mathcal{G}_\alpha=\emptyset$. Then:
 \begin{enumerate} 
 \item There exists a combinatorial embedding $\Phi$ of $\mathcal{G}_\alpha|_\Lambda$ into $\mathcal Z_\boxtimes$. 
 \item Such $\Phi$ is unique up to composition with a combinatorial embedding of $\mathcal Z_\boxtimes$ into itself.
 \end{enumerate}
\end{proposition}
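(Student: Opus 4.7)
My plan is to build $\Phi$ vertex-by-vertex, starting at some base vertex $p_0 \in \Lambda$ and propagating along paths, using Lemma \ref{lem:combintometric} as the local rigidity tool. Precisely, I would first apply Lemma \ref{lem:combintometric} at $p_0$: since $\mathcal{N}_\alpha(p_0)\cap\partial\mathcal{G}_\alpha=\emptyset$, there is a bijection $\phi_{p_0}:\mathcal{N}_\alpha(p_0)\to\{-1,0,1\}^2$ satisfying the prescriptions \eqref{deform}, and this map is a graph isomorphism of $\mathcal{G}_\alpha|_{\mathcal{N}_\alpha(p_0)}$ onto the corresponding subgraph of $\mathcal{Z}_\boxtimes$. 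Define $\Phi$ on $\mathcal{N}_\alpha(p_0)$ to coincide with $\phi_{p_0}$.

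Next I would extend $\Phi$ along edges. For $q\in\mathcal{N}_\alpha(p_0)\cap\Lambda$, Lemma \ref{lem:combintometric} (and Remark \ref{remun}) furnishes a local bijection $\phi_q:\mathcal{N}_\alpha(q)\to\{-1,0,1\}^2$, unique up to a graph automorphism of $\mathcal{Z}_\boxtimes$. Since $\mathcal{N}_\alpha(p_0)\cap\mathcal{N}_\alpha(q)$ already contains at least $p_0$, $q$ and their common neighbors with the correct combinatorial pattern (a 2x3 or 3x3 block determined by \eqref{deform}), this overlap rigidly fixes the automorphism ambiguity, so there is a unique automorphism $T$ of $\mathcal{Z}_\boxtimes$ with $T\circ\phi_q \equiv \Phi$ on the overlap. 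Setting $\Phi:=T\circ\phi_q$ on $\mathcal{N}_\alpha(q)$ yields a consistent extension. Iterating along paths in $\Lambda$ (possible by path-connectedness) produces a candidate map $\Phi$ on all of $\Lambda$.

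The main obstacle is well-definedness and global injectivity: different paths from $p_0$ to the same $q$ must yield the same image. I would argue this by reducing to elementary moves (Definition \ref{def:pathconn}(4)): two paths through the interior differ by elementary moves across common triangles, and since $\phi_{p_0}$ already pins $\Phi$ on every 3x3 neighborhood and these blocks overlap across any triangle interior to $\Lambda$, each elementary move preserves $\Phi$. For global injectivity one then argues by contradiction: if $\Phi(p)=\Phi(p')$ with $p\ne p'$, choose $p,p'$ minimizing graph-distance with this property; the previous local rigidity at an intermediate vertex forces $p=p'$. The cleanest route is to show that the hypothesis $\mathcal{N}_\alpha(p)\cap\partial\mathcal{G}_\alpha=\emptyset$ for every $p\in\Lambda$ makes the combinatorial extension locally trivial at every vertex, and to note that in the planar graph $\mathcal{Z}_\boxtimes$ any closed loop in $\Lambda$ bounds a region whose interior is contained in $\Lambda$ (the $2$-thick interior condition), so the monodromy is a product of elementary moves, each trivial.

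For part (2), if $\Phi,\Phi'$ are two combinatorial embeddings, then at the base vertex $p_0$ both restrict to local bijections on $\mathcal{N}_\alpha(p_0)$, hence by Remark \ref{remun} they differ there by a graph automorphism $T$ of $\mathcal{Z}_\boxtimes$. Applying the same propagation argument as above to $\Phi$ and to $T^{-1}\circ\Phi'$, which agree on $\mathcal{N}_\alpha(p_0)$, shows by induction along paths that they agree everywhere in $\Lambda$, giving $\Phi' = T\circ\Phi$. The delicate bookkeeping for the monodromy/injectivity step is where I expect the real work to lie; everything else is a direct unwinding of Lemma \ref{lem:combintometric} and Remark \ref{remun}.
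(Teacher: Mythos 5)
Your proposal follows essentially the same route as the paper's proof: define the local bijections $\phi_p$ from Lemma \ref{lem:combintometric}, propagate along paths using path-connectedness, and check consistency on overlaps (the paper does this via Remark \ref{nring}, which pins down $\phi_p$ on each $3\times 3$ block exactly as your overlap argument does). The monodromy and injectivity issues you single out as ``the real work'' are precisely the points the paper itself treats tersely, so your outline does not diverge from the published argument.
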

\begin{proof}
For every $p\in\Lambda$ we set $\phi_p:=\phi$, where $\phi$ is the bijection constructed in Lemma \ref{lem:combintometric}. 
If $\Lambda=\{p\}$, it is enough to consider $\Phi:=\phi_p$ which in view of Remark \ref{remun} is unique up to a composition with a combinatorial embedding of $\mathcal{Z}_{\boxtimes}$ with itself.
Otherwise, we construct $\Phi$ in the following way: Let $p\in\Lambda$, and let $\Phi(q)=\phi_p(q)$ for every $q\in\mathcal{N}_\alpha(p)\cap \Lambda$. For every $q\in\mathcal{N}_\alpha(p)\cap\Lambda$ we set $\Phi(r)=\phi_{q}(r)+\phi_p(q)$ and we proceed so forth. Since $\Lambda$ is path-connected such a procedure stops after a finite number ($\le \sharp\Lambda$) of steps.

 Using Remark \ref{nring} one can easily check that the function $\Phi$ is well-defined, i.e., $\phi_p(r)=\phi_q(r)+\phi_p(q)$ for every $r\in\mathcal{N}_\alpha(p)\cap \mathcal{N}_\alpha(q)\cap\Lambda$. Moreover, $\Phi$ is a combinatorial embedding from $\mathcal{G}_\alpha$ into $\mathcal{Z}_{\boxtimes}$. The uniqueness in (2) is again a consequence of the fact that $\Lambda$ is path-connected.
\end{proof}

The above proposition shows that under path-connectedness and neighborhood closure of $\Lambda$ we actually have an identification of the whole $\Lambda$ with a patch in $\mathcal Z_\boxtimes$. Thus we introduce the following concept:
\begin{definition}[discrete $\mathcal Z_\boxtimes$-charts]\label{def:charts}
{\rm If $\Lambda\subset \Xi$ is such that
 \begin{itemize}
  \item $\mathcal N_\alpha(p)\cap\partial\mathcal{G}_\alpha=\emptyset$ for every $p\in\Lambda$,
  \item there exists a combinatorial embedding $\Phi:\Lambda\to\mathbb Z^2$ of $\mathcal{G}_\alpha|_\Lambda$ whose image is simply connected in $\mathcal Z_\boxtimes$, in the sense of Definition \ref{def:pathconn},
 \end{itemize} then we call $(\Phi,\Lambda)$ a \emph{discrete $\mathcal Z_\boxtimes$-chart} of $\mathcal{G}_\alpha$ (or simply a \emph{discrete chart}). 
 
Moreover, we define a triangle in $\mathcal Z_\boxtimes$ as a triple of distinct vertices $\{z^1,z^2,z^3\}$ of $\mathcal Z_\boxtimes$ such that $\{z^i,z^j\}$ are bonds in $\mathcal Z_\boxtimes$
for every $i,j=1,2,3$ with $i\neq j$ .
 Let $\mathscr{T}'$ be the (non-planar) triangulation of $\Phi(\Lambda)$ induced by $\mathcal {Z}_{\boxtimes}$, i.e. $\mathscr{T}'$ is the set of all the triangles with vertices in $\Phi(\Lambda)$ that are half-unit-squares.
A planar triangulation $\mathscr{T}=\mathscr{T}_{(\Phi,\Lambda)}$ associated to $(\Phi,\Lambda)$  is obtained by removing one arbitrarily chosen diagonal from each (unit) square of $\Phi(\Lambda)\cap\mathcal{Z}_{\boxtimes}$. Furthermore, we set $\mathscr{T}^{-1}:=X^{-1}(\mathscr{T})$.
}
\end{definition}
Now we introduce some notation that will be useful in the results of this section.

\medskip

For every $d\in\N$ and for every $a,b\in\R^d$, we define the ellipsoid with foci $a$ and $b$ and ellipticity $\alpha>0$ as
\begin{equation}\label{includellips}
\mathrm{Ell}_\alpha(a,b):=\left\{x\in\mathbb R^d:\ |x-a|+|y-b|\le\frac{1+\alpha}{1-\alpha}|a-b|\right\}.
\end{equation}
 We denote by $\mathcal{D}$ the set of all (non-trivial) vectors in $\Z^2$, i.e.,
\begin{equation}\label{distz2}
\mathcal{D}:=\{|p|\,:\,p\in\Z^2\setminus\{0\}\}.
\end{equation}
For every $r\in\mathcal{D}$ we denote by $Q'_r$ each square of sidelength $r$ having vertices in $\Z^2$ and we denote by $\mathcal{Q}'_r$ the family of such squares $Q'_r$. Furthermore, we set
\begin{equation}\label{z2squares}
\begin{aligned}
\mathrm{Sides}(Q'_r)&:=\{\{p,q\}\,:\,p,q\in Q'_r,\,|p-q|=r\} \quad\textrm{for every }Q'_r\in\mathcal{Q}'_r,\\
\mathrm{Sides}(\mathcal{Q}'_r)&:=\bigcup_{Q'_r\in\mathcal {Q}'_r}\mathrm{Sides}(Q'_r),\\
\mathrm{Diag}(Q'_r)& :=\{\{p,q\}\,:\,p,q\in Q'_r,\,|p-q|=\sqrt 2r\} \quad\textrm{for every }Q'_r\in\mathcal{Q}'_r,\\
\mathrm{Diag}(\mathcal{Q}'_r)&:=\bigcup_{Q'_r\in\mathcal {Q}'_r}\mathrm{Diag}(Q'_r).
\end{aligned}
\end{equation}
\begin{definition}[squares of scale $r$]\label{distgmr}
{\rm For $r\in\mathcal D$ we say that $Q_r\subset \Xi$ is a \emph{square of scale $r$} if there exists a discrete $\mathcal Z_\boxtimes$-chart $(\Phi, \Lambda)$ such that
\begin{itemize}
\item $\Phi(Q_r)=:Q'_r\in \mathcal{Q}'_r$;
\item $\Z^2\cap\mathrm{Conv}(Q'_r)\subset\Phi(\Lambda)$;
\item $\mathrm{Ell}_\alpha(X(p),X(q))\subset\mathrm{Conv}(X(\Lambda))$ for every $p,q\in Q_r$ with $p\neq q$.
\end{itemize}
We denote by $\mathcal Q_r$ the families of the squares $Q_r$ of sidelength $r$ in $\mathcal{G}_\alpha$ 
and by $\mathcal{Q}$ the union of the families $\mathcal Q_r$, for $r$ varying in $\mathcal D$. 
In analogy with \eqref{z2squares} we also set
\begin{equation}\label{z2squaresdef}
\begin{aligned}
\mathrm{Sides}(Q_r)&:=\{\{\Phi^{-1}(a),\Phi^{-1}(b)\}\,:\,\{a,b\}\in  \mathrm{Sides}( \Phi(Q_r))\}\phantom{\quad\textrm{for every }Q_r\in\mathcal{Q}_r,}\\
& \phantom{:} =\{\{p,q\}\,:\,\{\Phi(p),\Phi(q)\}\in  \mathrm{Sides}(\Phi(Q_r))\} \quad\textrm{for every }Q_r\in\mathcal{Q}_r,\\
\mathrm{Sides}(\mathcal{Q}_r)&:=\bigcup_{Q_r\in\mathcal {Q}_r}\mathrm{Sides}(Q_r),\\
\mathrm{Diag}(Q_r)&:=\{\{\Phi^{-1}(a),\Phi^{-1}(b)\}\,:\,\{a,b\}\in \mathrm{Diag}(\Phi(Q_r))\} 
\phantom{\quad\textrm{for every }Q_r\in\mathcal{Q}_r,}\\
&\phantom{:} =\{\{p,q\}\,:\,\{\Phi(p),\Phi(q)\}\in  \mathrm{Diag}(\Phi(Q_r))\} \quad\textrm{for every }Q_r\in\mathcal{Q}_r\\
\mathrm{Diag}(\mathcal{Q}_r)& :=\bigcup_{Q_r\in\mathcal {Q}_r}\mathrm{Diag}(Q_r).
\end{aligned}
\end{equation}
In the following, the {\it $r$-neighborhood} of a square at scale $r$ is the set of all points in $\mathcal {G}_\alpha$ which can be connected to a point in $Q_r$ through a combinatorial path of length $\le r$.
 }
\end{definition}
Our next goal is to prove that $X$-images of squares $Q_r\in\mathcal Q_r$ are actually $L\alpha$-deformations of metric squares $Q_r\subset (\mathbb Z^2,\ell_2)$, with $L$ independent of $\alpha$.
The case $r=1$ already follows from Lemma \ref{lem:combintometric}. Next, we consider the usual isometric embedding $\Z^2\subset\R^2$, seen here as a \emph{labeling of a configuration}, with label set $\mathbb Z^2$:
\begin{equation}\label{labelz}
 \iota_{\mathbb Z^2}:\mathbb Z^2\to\mathbb R^2 ,\qquad a\mapsto \iota(a)=a_*.
\end{equation}
\begin{proposition}[discrete charts become metric charts]\label{prop:discrmetricchart}
There exists a constant $L\ge 1$ with the following properties. Let $\alpha\in [0,\alpha_0)$, with $\alpha_0$ given by Lemma \ref{lem:combintometric}. Let $(\Phi,\Lambda)$ be a discrete $\mathcal Z_\boxtimes$-chart of $\mathcal{G}_\alpha$ and let $\mathscr T$ be a planar triangulation associated to $(\Phi,\Lambda)$.  Assume that
\[
 \overline{\Phi(\Lambda)}:=\bigcup\left\{\overline{\mathrm{Conv}(\{a^1_*,a^2_*,a^3_*\})}:\ \{a^1,a^2,a^3\}\mbox{ triangle in }\mathscr{T}\right\}.
\]
Then, there exists a {Lipschitz continuous map $u:\overline{\Phi(\Lambda)}\to \mathbb R^2$} which satisfies the following  conditions:
\begin{enumerate}
 \item $u(\Phi(p))=X(p)$ for all $p\in\Lambda$;
 \item $u$ is piecewise affine on $\mathrm{Conv}(\{a^1_*,a^2_*,a^3_*\})$ for every  triangle $\{a^1,a^2,a^3\}\in\mathscr{T}$;
 \item $u$ satisfies
 \begin{equation}\label{disto2}\sup_{x\in\overline{\Phi(\Lambda)}}\mathrm{dist}(Du(x),SO(2))< L\alpha.
 \end{equation}
\end{enumerate}
\end{proposition}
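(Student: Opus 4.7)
The plan is to define $u$ as the piecewise affine interpolation of $X\circ\Phi^{-1}$ on the triangulation $\mathscr{T}$: on each triangle $\sigma=\{a^1,a^2,a^3\}\in\mathscr{T}$ let $u|_{\mathrm{Conv}(\{a^1_*,a^2_*,a^3_*\})}$ be the unique affine map sending $a^i_*$ to $X(\Phi^{-1}(a^i))$ for $i=1,2,3$. Any two adjacent triangles share an entire edge (two vertices), and the restrictions of both affine interpolants to that edge are linear maps matching at the endpoints, hence they coincide. Therefore $u$ is continuous, satisfies (1) and (2) by construction, and is Lipschitz once (3) is established.

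The heart of the proof is the estimate (3), and I would derive it one triangle at a time. By construction of $\mathscr{T}$ each triangle $\sigma$ is half of a unit square in $\mathcal{Z}_\boxtimes$, so after relabelling we may assume $a^1_*,a^2_*,a^3_*$ form a right triangle with legs of length one and hypotenuse $\sqrt 2$; in particular, the pairwise distances in $\mathbb Z^2$ are $1,1,\sqrt 2$. Setting $p_0:=\Phi^{-1}(a^1)$ (the right-angle vertex), the points $\Phi^{-1}(a^2)$ and $\Phi^{-1}(a^3)$ are both neighbors of $p_0$ in $\mathcal G_\alpha$ (because $\Phi$ is a combinatorial isomorphism and $a^2,a^3$ are adjacent to $a^1$ in $\mathcal Z_\boxtimes$), so $\{\Phi^{-1}(a^1),\Phi^{-1}(a^2),\Phi^{-1}(a^3)\}\subset \mathcal N_\alpha(p_0)$. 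Since $(\Phi,\Lambda)$ is a discrete chart, $\mathcal N_\alpha(p_0)\cap \partial\mathcal G_\alpha=\emptyset$, and Lemma \ref{lem:combintometric} gives $X(\mathcal N_\alpha(p_0))\sim_{C_3\alpha}\{-1,0,1\}^2$. Consequently, writing $v_1:=X(\Phi^{-1}(a^2))-X(\Phi^{-1}(a^1))$ and $v_2:=X(\Phi^{-1}(a^3))-X(\Phi^{-1}(a^1))$, we have $\bigl||v_i|-1\bigr|\le C_3\alpha$ for $i=1,2$ and $\bigl||v_1-v_2|-\sqrt 2\bigr|\le C_3\alpha$.

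Now $A:=Du|_\sigma$ is represented, in the orthonormal basis $(a^2_*-a^1_*,a^3_*-a^1_*)$, by the matrix with columns $v_1,v_2$. Using the polarization identity $\langle v_1,v_2\rangle=\tfrac12(|v_1|^2+|v_2|^2-|v_1-v_2|^2)$ together with the three length bounds above, we obtain $|\langle v_1,v_2\rangle|\le C'\alpha$ for a universal constant $C'$; combining with $\bigl||v_i|^2-1\bigr|\le C''\alpha$ yields $\|A^\top A - I\|\le C'''\alpha$. By polar decomposition this gives $\mathrm{dist}(A,O(2))\le L'\alpha$ with $L'$ universal. To upgrade this to a distance from $SO(2)$, I would note that the bijection $\phi$ in Lemma \ref{lem:combintometric} is built by ordering the neighbors of each vertex counterclockwise around $X(p)$ and mapping them to the counterclockwise-ordered lattice points of $\{-1,0,1\}^2$. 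Consequently $\Phi$ is orientation-preserving on every triangle, so $\det A>0$; combined with the $O(2)$-estimate this forces $A$ to lie within $L\alpha$ of $SO(2)$ for $\alpha$ smaller than a suitable absolute constant, giving (3) with $L$ independent of the chart and of $\alpha\in[0,\alpha_0)$.

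The main obstacle I anticipate is the bookkeeping: first, verifying that the local $\phi_{p_0}$ and the globally assembled $\Phi$ agree up to a rigid motion of $\mathbb Z^2$ on each triangle (otherwise orientation-preservation on $\sigma$ is not immediate from the counterclockwise convention in Lemma \ref{lem:combintometric}); and second, tracking constants so that a single $L$ works for all triangles of every admissible discrete chart simultaneously. If the orientation matching turns out to be subtle, an alternative is to work with $\mathrm{dist}(Du,O(2))$ at the level of individual triangles and then exploit connectedness of $\overline{\Phi(\Lambda)}$ together with continuity of $x\mapsto \det Du(x)$ on its interior (which is piecewise constant) to rule out orientation reversal globally.
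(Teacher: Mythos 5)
Your construction of $u$ and your per-triangle estimate follow essentially the same route as the paper: piecewise affine interpolation over $\mathscr T$, then a per-triangle bound on $\mathrm{dist}(Du|_\sigma,O(2))$ obtained from the metric control of Lemma \ref{lem:combintometric} applied at the right-angle vertex (the paper uses the cosine rule where you use polarization and polar decomposition; these are equivalent, and your version is if anything cleaner). The one step that is not closed is the upgrade from $O(2)$ to $SO(2)$, and it is exactly the point you flag as a possible obstacle. Your primary claim --- that $\Phi$ is automatically orientation-preserving on each triangle because the local bijections $\phi_p$ of Lemma \ref{lem:combintometric} are built counterclockwise --- is not justified: by Proposition \ref{prop:discremb} and Remark \ref{remun} the chart $\Phi$ is only determined up to composition with a graph automorphism of $\mathcal Z_\boxtimes$, and such automorphisms include reflections, so the given $\Phi$ may disagree with the counterclockwise convention on every triangle simultaneously. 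Your fallback (piecewise-constant $\det Du$ plus connectedness of $\overline{\Phi(\Lambda)}$) only establishes that the sign of $\det Du$ is \emph{globally consistent}; it does not exclude the case where that sign is negative everywhere, in which case $Du$ is uniformly $L\alpha$-close to $O(2)\setminus SO(2)$ and \eqref{disto2} fails for the $u$ you built.

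The paper closes this by precisely your consistency argument followed by one extra move: if all the gradients $F^T$ turn out to be orientation-reversing, it replaces $\Phi$ by its composition with the self-embedding $(a,b)\mapsto(-a,b)$ of $\mathcal Z_\boxtimes$, which is again a valid discrete chart for the same $\Lambda$ and flips every $F^T$ into an orientation-preserving map. So the statement is really about the chart up to this harmless renormalization. Adding that single sentence to your fallback argument would complete the proof; without it, the orientation step remains a genuine (if small) gap.
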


Note that in \cite{T} the map $u$ was going in the opposite direction than our map, but since $u$ is bijective and $Du$ is invertible, this actually makes not much difference.
\begin{figure}[h]
\includegraphics[width=10cm]{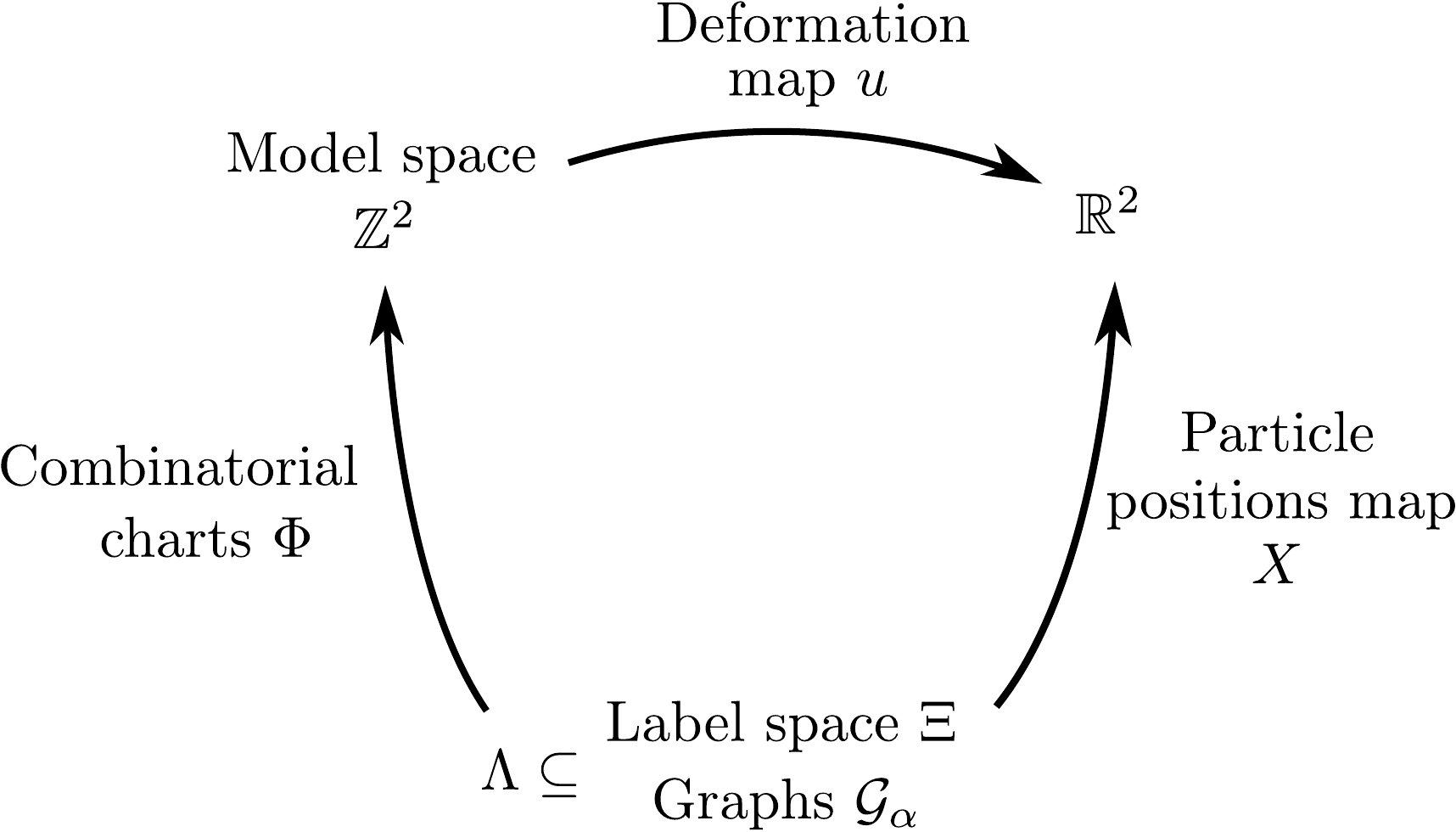}
\caption{Summary of the different maps and spaces used in this section.}\label{fig-summary}
\end{figure}

\begin{proof}
For every $p\in\Lambda$ we set $u(\Phi(p)):=X(p)$ and we extend $u$ affinely over each triangle in $\mathscr{T}$.
\medskip
Let $T^{-1}:=\{ p^1,p^2,p^3\}\in\mathscr{T}^{-1}$ and set $a^i:=\Phi(p^i)$ for $i=1,2,3$; set moreover $T:=\{a^1_*,a^2_*,a^3_*\}$. By construction, $T\in\mathscr{T}$.
By definition of $\mathcal{G}_\alpha$, the image $\{X(p^1),X(p^2),X(p^3)\}$ of $\{p^1,p^2,p^3\}$ through $X$ equals, up to a rotation, to a small deformation of $\{z^1,z^2,z^3\}$ with $z^1=(0,0), z^2=(0,1), z^3=(1,0)$, in the sense that up to reassigning the labels $p^1,p^2,p^3$, we have
\[
 \frac{|X(p^1)-X(p^2)|}{| z^1-z^2|},\ \frac{|X(p^1)-X(p^3)|}{| z^1-z^3|},\ \frac{|X(p^2)-X(p^3)|}{|z^2-z^3|}\in E^1_{\alpha}.
\]
Then due to Proposition \ref{prop:discremb} and to Definition \ref{def:charts} of discrete $\mathcal Z_{\boxtimes}$-chart, $\Phi$ sends $T^{-1}$ to a congruent copy of $\{z^1,z^2,z^3\}$. 
Therefore, we can define $u$ over $\mathrm{Conv}( T)$ as the affine map with gradient $Du(x)=:F^T$, where
\[
F^T(X(p^2)-X(p^1))=(0,1),\quad\textrm{and}\quad F^T(X(p^3)-X(p^1))=(1,0).
\]
By using the cosine rule (with details left to the reader), it follows that for every $T\in\mathscr{T}$ there exist two vectors $v_1^T,v_2^T\in B_1$ such that
\[
 F^T\in \left(e_1+\alpha v^T_1, e_2+\alpha v^T_2\right)O(2),
\]
thus for a value of $L$ independent of $\alpha$ (and of $T$) it holds 
\begin{equation}\label{almostright}
\mathrm{dist}(F^T, O(2))\le L\alpha\qquad\textrm{for all }T\in\mathscr{T}.
\end{equation}

By the first bullet in the  Definition \ref{def:charts}, we can apply Lemma \ref{lem:combintometric} to all $p\in\Lambda$, and we find that maps $F$ corresponding to neighboring triangles either all preserve orientation or all reverse orientation. By the connectedness of $\Phi(\Lambda)$, which follows from the second point in Definition \ref{def:charts}, we find inductively that this is also true for the collection of maps $F$ corresponding to all triangles in $\mathscr{T}$. Thus $Du(x)$ stays $L\alpha$-close either to $SO(2)$ or to $\{M\in O(n):\ \det(M)=-1\}$. In the latter case, we may compose the discrete chart $\Phi$ with the self-embedding of $\mathcal Z_\boxtimes$ given by the map $\Phi_-(a,b):=(-a,b)$, which has the effect of making all $F$ orientation-preserving. Thus we have from \eqref{almostright} that
\[
L\alpha\ge\mathrm{dist}(F^T,O(2))=\mathrm{dist}(F^T,SO(2))\qquad\textrm{for all }T\in\mathscr{T};
\]
whence \eqref{disto2} follows. This completes the proof.
\end{proof}
We will use different distortion bounds for treating linear and quadratic deformations. The first one is \cite[Lemma III]{john} which was slightly extended by \cite[Proposition 4.1]{T}, and gives the following result.

\begin{lemma}[John distortion {\cite{john}}]\label{johnlemma}
 For every $d\in\N$ there exists $\alpha_1=\alpha_1(d)>0$ such that for each $\alpha\in[0,\alpha_1)$ the following holds. 
Given $a,b\in\R^d$, if {$u:\mathrm{Ell}_\alpha(a,b)\to \mathbb R^d$ is Lipschitz continuous and} satisfies
 \begin{equation}\label{distdu}
  \sup_{x\in\mathrm{Ell}_\alpha(a,b)}\mathrm{dist}(Du(x),SO(d))<\alpha,
 \end{equation}
then, with notation of Definition \ref{def:deform}, we have
\begin{equation}\label{bound}
\delta_u(a,b)\le \alpha.
\end{equation}
\end{lemma}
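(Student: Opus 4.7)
The key geometric feature of the ellipsoid $\mathrm{Ell}_\alpha(a,b)$ that drives the proof is the following: any rectifiable curve joining $a$ to $b$ of length at most $\frac{1+\alpha}{1-\alpha}|a-b|$ must lie entirely inside $\mathrm{Ell}_\alpha(a,b)$, because every point $x$ on such a curve satisfies $|x-a|+|x-b|\le L_1+L_2= L$, where $L_j$ are the arclengths along the two sides of the curve and $L$ is its total arclength. This observation, together with the pointwise hypothesis $\mathrm{dist}(Du(x),SO(d))<\alpha$, is what couples the infinitesimal near-isometry information to a genuine metric statement. The hypothesis forces every singular value of $Du(x)$ to lie in $(1-\alpha,1+\alpha)$, so (in the operator norm) $|Du(x)|\le 1+\alpha$ and $Du(x)$ is invertible with $|Du(x)^{-1}|\le\frac{1}{1-\alpha}$.

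The upper bound is routine: since $[a,b]\subset \mathrm{Ell}_\alpha(a,b)$, integration of $Du$ along it gives
\[
|u(b)-u(a)|\le\int_0^1 |Du(a+t(b-a))(b-a)|\,dt\le (1+\alpha)|a-b|.
\]
For the matching lower bound I would use a path-lifting argument. Set $\sigma(s)=u(a)+s(u(b)-u(a))$, $s\in[0,1]$, and consider the ODE
\[
\gamma'(s)=Du(\gamma(s))^{-1}\bigl(u(b)-u(a)\bigr),\qquad \gamma(0)=a,
\]
which by the inverse function theorem has a local smooth solution with $u(\gamma(s))=\sigma(s)$. On any interval of existence $[0,s^\ast]$ the length of $\gamma$ is at most $\frac{|u(b)-u(a)|}{1-\alpha}$, and by the upper bound just obtained this is at most $\frac{1+\alpha}{1-\alpha}|a-b|$. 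The geometric observation above then keeps $\gamma|_{[0,s^\ast]}$ inside $\mathrm{Ell}_\alpha(a,b)$, so the solution continues by standard compactness to all of $[0,1]$. Injectivity of $u$ on the convex set $\mathrm{Ell}_\alpha(a,b)$ (valid for $\alpha_1$ small, see next paragraph) forces $\gamma(1)=b$. Consequently $|a-b|\le \ell(\gamma)\le \frac{|u(b)-u(a)|}{1-\alpha}$, which gives $|u(b)-u(a)|\ge (1-\alpha)|a-b|$.

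Combining the two inequalities yields $\delta_u(a,b)=\bigl||u(b)-u(a)|-|a-b|\bigr|\le\alpha|a-b|$, from which \eqref{bound} follows (after the harmless normalization built into the way $\alpha$ enters the statement). The main obstacle is showing that $u$ is globally injective on the ellipsoid so that the lift lands at $b$ and not at some other preimage of $u(b)$; in principle a near-isometry on a convex domain can fail to be injective if the domain is too elongated. This is where the specific aspect ratio of $\mathrm{Ell}_\alpha(a,b)$ (semi-major axis $\sim|a-b|$, semi-minor axis $\sim\sqrt{\alpha}|a-b|$) enters, and must be used to choose $\alpha_1=\alpha_1(d)$ small enough. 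An alternative, perhaps cleaner, route is to first establish John's rigidity on $\mathrm{Ell}_\alpha(a,b)$, producing a single rotation $R\in SO(d)$ such that $|Du(x)-R|\lesssim \alpha$ throughout the ellipsoid, and then to conclude directly via
\[
|u(b)-u(a)-R(b-a)|\le \int_0^1 |Du(a+t(b-a))-R|\,|a-b|\,dt\lesssim \alpha|a-b|,
\]
which makes both injectivity and the distance estimate transparent.
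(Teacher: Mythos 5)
The paper does not prove this lemma at all: it is imported as \cite[Lemma III]{john}, as extended in \cite[Proposition 4.1]{T}, so there is no internal proof to measure yours against. On its own terms, your proposal gets the architecture right: the upper bound $|u(b)-u(a)|\le(1+\alpha)|a-b|$ by integrating $Du$ along $[a,b]$ is correct (modulo the standard remark that for Lipschitz $u$ the differential exists only a.e., handled by Fubini over nearby parallel segments), and the observation that any curve from $a$ to $b$ of length at most $\frac{1+\alpha}{1-\alpha}|a-b|$ lies in $\mathrm{Ell}_\alpha(a,b)$ is exactly the right way to exploit the ellipsoid.

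The lower bound, however, has a genuine gap, and it sits precisely where you flag it but is worse than you suggest. First, the containment of the \emph{partial} lift $\gamma|_{[0,s^\ast]}$ in the ellipsoid does not follow from your geometric observation: that observation controls $|x-a|+|x-b|$ only for curves already known to join $a$ to $b$, whereas for a partial lift you can bound $|x-a|$ by arclength but have no a priori bound on $|x-b|$ --- producing one would require the inequality $|u(x)-u(b)|\ge(1-\alpha)|x-b|$, i.e.\ the very estimate being proved. Quantitatively, the ellipsoid has minor semi-axis $\sim\sqrt{\alpha}\,|a-b|$, so the lift stays inside only if the rotation part of $Du$ drifts by at most $O(\sqrt{\alpha})$ along the whole major axis; the pointwise hypothesis \eqref{distdu} gives no such control by itself. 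Second, even if the lift reaches $s=1$, concluding $\gamma(1)=b$ needs global injectivity of $u$ on the ellipsoid. Both obstructions reduce to the same quantitative rigidity statement --- a single $R\in SO(d)$ with $|Du(x)-R|\lesssim\alpha$ on the whole thin ellipsoid --- which \emph{is} John's theorem; your ``alternative, cleaner route'' therefore assumes the lemma rather than proving it. Closing the argument requires actually running a rigidity estimate (John's BMO-type bound, or Friesecke--James--M\"uller) on a chain of overlapping balls along the major axis and showing that the accumulated rotation drift is $O(\alpha)$ rather than the naive (number of balls)$\times O(\alpha)\sim O(\sqrt{\alpha})$; that is the nontrivial content. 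A final small point: \eqref{bound} asserts the absolute bound $\delta_u(a,b)\le\alpha$ while your argument targets the relative bound $\le\alpha|a-b|$; the relative form is what the paper actually uses later (e.g.\ in Lemma \ref{lem_squaredeform}), so this is a defect of the statement rather than of your plan, but you should say explicitly which version you are proving.
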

The next result allows to obtain good enough bounds for quadratic distortions. Although it holds in general dimension, we prove it only in the special $2$-dimensional case, because this is the version that we require in the remainder of the paper.
\begin{proposition}[Quadratic distortion estimate]\label{prop:distortion}
There exists a constant $C_6>0$ depending only on the dimension such that if $\alpha\in(0,\alpha_1)$, with $\alpha_1$ as in  Lemma \ref{johnlemma}, the following holds true. Let $(\Phi,\Lambda)$ is a discrete $\mathcal Z_{\boxtimes}$-chart of $\mathcal G_\alpha$ and let $u:\overline{\Phi(\Lambda)}\to \R^2$ be the map constructed in Propostion  \ref{prop:discrmetricchart}; then for every $r\in\mathcal D$ and for every $\{a,b\}\in \mathrm{Sides}(\mathcal Q_r) \cup\mathrm{Sides}(\mathcal Q_{\sqrt 2 r})$ it holds
 \begin{equation}\label{fjm}
 \delta^2_{u\circ \Phi}(a,b):=  \delta^2_{u}(\Phi(a),\Phi(b))\le C_6\ r\ \sum_{\substack{\{p,q\}\in\mathcal S_\alpha\\ \mathrm{dist}(X(\{p,q\}),[X(a),X(b)])<4}}\delta^2_{u\circ\Phi}(p,q).
 \end{equation}
\end{proposition}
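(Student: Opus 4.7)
I will treat only the case of sides $\{a,b\}\in\mathrm{Sides}(\mathcal Q_r)$; the diagonal case is analogous after replacing unit $e_1$-steps in $\mathcal Z_\boxtimes$ with $(1,1)$-steps. After composing $\Phi$ with a rigid motion of $\mathbb Z^2$, I assume $\Phi(a)=(0,0)$ and $\Phi(b)=(r,0)$. The three bullets of Definition \ref{distgmr}, combined with Proposition \ref{prop:discrmetricchart} and Lemma \ref{johnlemma} applied to $\mathrm{Ell}_\alpha(X(a),X(b))\subset\mathrm{Conv}(X(\Lambda))$, ensure that the strip $S:=[0,r]\times[-2,2]\subset\overline{\Phi(\Lambda)}$, that $u|_S$ is Lipschitz with $\sup_S\mathrm{dist}(Du,SO(2))<L\alpha$ and $\delta_{u\circ\Phi}(a,b)\le \alpha r$, and that $u(S)$ lies in the width-$4$ tube around $[X(a),X(b)]$. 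In particular, every $e\in\mathcal S_\alpha$ with both endpoints in $\Phi^{-1}(S\cap\mathbb Z^2)$ satisfies the distance condition in \eqref{fjm}.

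For $i=0,\ldots,r-1$ set $p_i:=\Phi^{-1}(i,0)$, $x_i:=X(p_i)$, $v_i:=x_{i+1}-x_i$ and $\eta_i:=|v_i|-1\in[-C_3\alpha,C_3\alpha]$. Starting from the algebraic identity
\begin{equation*}
 |x_r-x_0|^2-r^2\;=\;r\sum_{i=0}^{r-1}(|v_i|^2-1)\;-\;\sum_{0\le i<j\le r-1}|v_i-v_j|^2,
\end{equation*}
(obtained by expanding $|\sum v_i|^2$ and rewriting $\sum_{i<j}\langle v_i,v_j\rangle=\tfrac12(|\sum v_i|^2-\sum|v_i|^2)$), I estimate the two pieces separately. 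Cauchy--Schwarz on the first piece gives
\begin{equation*}
\bigl|r{\textstyle\sum_i}(|v_i|^2-1)\bigr|^2\le C r^3{\textstyle\sum_i}\eta_i^2\le C r^3\!\!\sum_{\substack{e\in\mathcal S_\alpha\\\mathrm{dist}(X(e),[X(a),X(b)])<4}}\!\!\delta_u(e)^2.
\end{equation*}
For the second piece I telescope $v_j-v_i=\sum_{k=i}^{j-1}(v_{k+1}-v_k)$ and invoke Lemma \ref{lem:combintometric} at each interior vertex $p_{k+1}$: its neighborhood is a $C_3\alpha$-deformation of $\{-1,0,1\}^2$, and a first-order Taylor expansion of the map sending the local vertex positions (modulo rigid motion) to the $\ge 8$ edge-lengths in $\mathcal N_\alpha(p_{k+1})$, whose Jacobian at the undeformed square is of maximal rank, yields $|v_{k+1}-v_k|^2\le C\sum_{e\in\mathcal N_\alpha(p_{k+1})\cap\mathcal S_\alpha}\delta_u(e)^2$. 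Feeding John's lemma back into the identity above gives the a priori control $\sum_{i<j}|v_i-v_j|^2\le C\alpha r^2$, which combines with the local bound (via a weighted Cauchy--Schwarz over the $O(r^2)$ pairs $(i,j)$) to keep the second piece absorbable against the first. Dividing the resulting estimate by $(|x_r-x_0|+r)^2=(2+O(\alpha))^2r^2$ yields \eqref{fjm} with a universal constant $C_6$.

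The main obstacle is the passage from the scalar edge-distortions $\delta_u(e)$ to the vector bending $v_{k+1}-v_k$: a single edge-length controls only the length of $v_i$, not its direction, so the contribution of angular accumulation along the path must be controlled separately. The resolution is the local rigidity of $\mathcal N_\alpha(p_{k+1})$ from Lemma \ref{lem:combintometric} (ultimately rooted in the strict minimality of the square in Lemma \ref{lem_minsquare}), which uses \emph{all} $\ge 8$ edges incident to $p_{k+1}$ simultaneously to pin down the local shape up to a rigid motion, and thus the bending $v_{k+1}-v_k$. John's lemma (Lemma \ref{johnlemma}) plays the auxiliary but crucial role of substituting for the Friesecke--James--M\"uller rigidity estimate employed in \cite{T}: here it provides the global bound $\sum_{i<j}|v_i-v_j|^2=O(\alpha r^2)$, which prevents the telescoping sum from producing a runaway $r^3$-factor that would otherwise spoil the quadratic estimate.
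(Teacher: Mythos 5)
Your plan has a genuine gap in the treatment of the second term of your algebraic identity, and the setup itself breaks down for most $r\in\mathcal D$. On the setup: a side of an $r$-square joins $\Phi(a)$ to a lattice point at distance $r$ or $\sqrt2 r$, and for non-integer $r$ (e.g.\ $r=\sqrt5$, side vector $(2,1)$) no automorphism of $\mathbb Z^2$ maps this to $(r,0)$; worse, if the side vector is primitive there are \emph{no} intermediate lattice points on the segment, so the chain $p_i=\Phi^{-1}(i,0)$ and the unit-step telescoping do not exist. On the main estimate: writing $\Sigma:=\sum_e\delta^2_{u\circ\Phi}(e)$ and $B:=\sum_{i<j}|v_i-v_j|^2$, the target \eqref{fjm} forces $B\le Cr^{3/2}\Sigma^{1/2}$ (since $\delta(a,b)\approx |A-B|/(2r)$). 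Your local bending bound gives $|v_{k+1}-v_k|^2\le C\Sigma_{\mathrm{loc}}$, and telescoping yields only $B\le Cr^{3}\Sigma$ (the factor $\sum_{i\le k<j}(j-i)$ is of order $r^3$); your a priori John bound gives $B\le C\alpha r^2$. Interpolating the two gives $B\le C\alpha^{1/2}r^{5/2}\Sigma^{1/2}$, which misses the target by a full factor of $r$: in the regime $r^{-3}\ll\Sigma\ll\alpha^2 r$ neither bound, nor their combination, is strong enough. The claim that the second piece is ``absorbable against the first'' is therefore unsubstantiated, and the angular-accumulation problem you correctly identify as the main obstacle is not actually resolved by the tools you invoke.

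The paper's proof avoids second-order identities entirely and is much softer: it works with the piecewise-affine triangulation of Proposition \ref{prop:discrmetricchart}, compares the lengths of the images of the straight segments $[X(a),X(b)]$ and $[\Phi(a),\Phi(b)]$ as they cross successive triangles, and obtains the \emph{first-power} bound $\delta_{u\circ\Phi}(a,b)\le 3\sum_e\delta_{u\circ\Phi}(e)$ over the $O(r)$ edges within distance $4$ of the segment (each short edge entering at most twice, once per adjacent triangle). A single Cauchy--Schwarz with the $O(r)$ edge count then yields \eqref{fjm}. If you want to salvage your route you would need a genuinely stronger control of the direction drift of the $v_i$ (this is exactly what the Friesecke--James--M\"uller estimate provides in \cite{T}, and what the authors deliberately replace by the cruder but sufficient segment-comparison argument); I recommend switching to the first-order comparison instead.
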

\begin{proof}
Throughout the proof all our constants are either explicit or they depend only on $L, \alpha_1$ above, which in turn depend only on the dimension.

\medskip

Let $\mathscr{T}$ be a triangulation associated to $(\Phi,\Lambda)$ according to the Definition \ref{def:charts}.

\medskip
 
By Definition \ref{def:charts}, and more precisely by Definition \ref{def:discreteemb}, we have that $\{p,q\}\in\mathcal S_\alpha$ for every $p,q\in\Lambda$ with $|\Phi(p)-\Phi(q)|\in\{1,\sqrt 2\}$.
Moreover, again by Definition \ref{def:charts}, the union of the triangles $T\in\mathscr{T}$ contains a neighborhood of the segment $[X(a),X(b)]=[u(\Phi(a)),u(\Phi(b))]$. Thus, all triangles $\tau=\{p_1,p_2,p_3\}\in\mathscr{T}^{-1}$ with $\mathrm{Conv}(X(\tau))\cap [X(a),X(b)]\neq\emptyset$ satisfy $\mathrm{dist}(X(p_i),[X(a),X(b)])<2$ because $2>\sqrt{2}+\alpha$ for $\alpha$ sufficiently small.
Next note that, since $u$ is affine, the maximum 
\[
\max_{x,y\in \mathrm{Conv}(X(\tau))}\delta_u(x,y)
\]
is achieved at the vertices of the simplex $\mathrm{Conv}(X(\tau))$, i.e. for $x=X(p_i),y=X(p_j)$, for some $i,j\in\{1,2,3\}$ with $i\neq j$, i.e., 
\begin{equation}\label{bounda}
\max_{x,y\in\mathrm{Conv}(X(\tau))}\delta_u(x,y)\le \max_i\delta_{u\circ \Phi}(p_i,p_{i+1})\le \sum_{i=1}^3\delta_{u\circ \Phi}(p_i,p_{i+1}),
\end{equation}
in which the indices $i$ are intended modulo $3$. 
Let now $\mathscr{T}^{-1}_{a,b}$ denote the set of the triangles $\tau\in\mathscr{T}^{-1}$ with $\mathrm{Conv}(X(\tau))\cap [X(a),X(b)]\neq\emptyset$ and let $[\xi^\tau,\eta^\tau]=\mathrm{Conv}(X(\tau))\cap [X(a),X(b)]$ where $\xi^\tau$ and $\eta^\tau$ are not necessarily distinct. Then $\mathscr{T}^{-1}_{a,b}=\{\tau^m\}_{m\in M}$ for some $M\subset\N$, where the indices $m$ are chosen in such a way that the subsegments $\sigma^m:=[\xi^{\tau^m},\eta^{\tau^m}]$ are concatenated.

\medskip

By \eqref{bounda} and by triangular inequality, it follows that
\begin{eqnarray}\label{bound1}
 |X(a)-X(b)|&=&\sum_{m=1}^M|u(\sigma^m)|\ge \sum_{m=1}^M\left(|\sigma^m|-\delta_u(\sigma^m)\right)\ge |\Phi(a)-\Phi(b)| -\sum_{m=1}^M\delta_u(\sigma^m)\nonumber\\
 &\ge&|\Phi(a)-\Phi(b)| -2\sum_{\substack{\{p,q\}\in\mathcal S_\alpha\\ \mathrm{dist}(X(\{p,q\}),[X(a),X(b)])<4}}\delta_{u\circ\Phi}(p,q),
\end{eqnarray}
where in the last inequality we have used that each edge used can appear at most $2$ times, i.e., at most one for each triangle of which it is an edge. 
Proceeding symmetrically and considering segments $\widetilde\sigma^k,\, k=1,\ldots,K$, which are intersections of $[\Phi(a),\Phi(b)]$ with successive triangles of $\mathscr{T}$, and arguing as in \eqref{bound1} we get
\begin{eqnarray}\label{bound2}
|\Phi(a)-\Phi(b)|&=&\sum_{k=1}^{K}|\widetilde\sigma^k|\ge \sum_{k=1}^{K}\left(|u(\widetilde\sigma^k)|-\delta_u(\widetilde\sigma^k)\right)\ge |X(a)-X(b)| -\sum_{k=1}^{K}\delta_u(\widetilde\sigma^k)\nonumber\\
 &\ge&|X(a)-X(b)| -3\sum_{\substack{\{p,q\}\in\mathcal S_\alpha\\ \mathrm{dist}(X(\{p,q\}),[X(a),X(b)])<4}}\delta_{u\circ\Phi}(p,q).
\end{eqnarray}
Combining \eqref{bound1} and \eqref{bound2} we find
\begin{equation}\label{bound3}
\begin{aligned}
\delta_{u\circ\Phi}(a,b)&=\left|\left|X(a)-X(b)\right|-\left|\Phi(a)-\Phi(b)\right|\right|\\
&\le 3\sum_{\substack{\{p,q\}\in\mathcal S_\alpha\\ \mathrm{dist}(X(\{p,q\}),[X(a),X(b)])<4}}\delta_{u\circ\Phi}(p,q).
\end{aligned}
\end{equation}
Using that $\min_{\{p,q\}\in\mathcal{S}_{\alpha}}|X(p)-X(q)|> 1-\alpha>1-\alpha_1$ and a packing bound, we find that the number of terms of the sum in \eqref{bound3} is at most $Cr$, with $C$ a constant depending only on $L, \alpha_1$, and thus only on the dimension. Thus, we may apply the Cauchy-Schwarz inequality to \eqref{bound3} in order to obtain \eqref{fjm}.

\end{proof}
\begin{rmk}{\rm The content of Proposition \ref{prop:distortion} is similar in spirit to \cite[Proposition 4.3]{T} but it presents some differences. 
On the one hand, the result in \cite{T} yields the scaling $\mathrm{log}(r)$ which is better than the scaling  $r$ we achieve. On the other hand, the proof of the upper bound in \cite{T} relies on the rigidity estimate by Friesecke-James-M\"uller \cite{fjm}. Here we preferred to include the ``worse'' upper bound in \eqref{fjm} - which however does not affect our final result - and to provide the more transparent proof above. }
\end{rmk}
The last result of this subsection deals with the  partitioning of the edge set of the complete graph generated by $\Z^2$ into edges coming from sides and diagonals of squares of sidelength $r$. We first need the following definition.
\begin{definition}[Rescaled copies of $\mathcal Z_\boxtimes$]\label{selfsim}
{\rm Let $\mathcal D$ be defined as in \eqref{distz2}. For every $r\in\mathcal D$ we define 
\begin{subequations}\label{distz2mr}
 \begin{equation}\label{calLr}
\mathcal{L}_r:=\{\mbox{sublattice $\Lambda\subset\mathbb Z^2$ such that }\Lambda\simeq r \mathbb Z^2\},\quad\mathcal{L}=\bigcup_{r\in\mathcal D}\mathcal L_r,
\end{equation}
\begin{equation}\label{mr}
\quad m(r):=\sharp\mathcal{L}_r=\frac{1}{4}\sharp \left\{x\in\Z^2:\ |x|=r\right\}.
\end{equation}
Let $\mathcal K_{\mathbb Z^2}$ be the complete graph associated to $\Z^2$, i.e.
\[
\mathcal K_{\mathbb Z^2}:=(\mathbb Z^2, \{\{a,b\}:\ a\neq b\in\mathbb Z^2\}).
\]
\end{subequations}}
\end{definition}
We note the following well-known number-theoretical result:
\begin{lemma}\label{lem:mrm2r}
 With the above definitions, for all $r\in\mathcal D$ we have $m(r)=m(\sqrt2 r)$.
\end{lemma}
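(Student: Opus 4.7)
The plan is to reduce the statement about sublattices to the elementary counting identity $r_2(n) = r_2(2n)$, where $r_2(k) := \sharp\{x \in \mathbb{Z}^2 : |x|^2 = k\}$, and then establish this via an explicit bijection. By definition \eqref{mr}, writing $n := r^2$, we have $m(r) = \frac{1}{4} r_2(n)$ and $m(\sqrt{2}\,r) = \frac{1}{4} r_2(2n)$, so it is enough to show $r_2(n) = r_2(2n)$.

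First I would introduce the linear map $T : \mathbb{R}^2 \to \mathbb{R}^2$ defined by $T(a,b) := (a-b, a+b)$, which is the composition of a counterclockwise rotation by $45^\circ$ and a dilation by $\sqrt{2}$. In particular $T$ preserves $\mathbb{Z}^2$ and satisfies $|T(v)|^2 = 2|v|^2$. Thus $T$ restricts to an injection of $\{x \in \mathbb{Z}^2 : |x|^2 = n\}$ into $\{x \in \mathbb{Z}^2 : |x|^2 = 2n\}$.

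Next I would establish surjectivity. If $(c,d) \in \mathbb{Z}^2$ satisfies $c^2 + d^2 = 2n$, then $c^2 + d^2$ is even, forcing $c \equiv d \pmod 2$. Consequently $a := (c+d)/2$ and $b := (d-c)/2$ lie in $\mathbb{Z}$, and a direct check gives $a^2 + b^2 = (c^2+d^2)/2 = n$ and $T(a,b) = (c,d)$. Hence $T$ is a bijection between $\{|x|^2 = n\} \cap \mathbb{Z}^2$ and $\{|x|^2 = 2n\} \cap \mathbb{Z}^2$, which proves $r_2(n) = r_2(2n)$ and thus $m(r) = m(\sqrt{2}\,r)$.

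There is no substantive obstacle here: the only point to be careful about is verifying that every lattice vector of squared length $2n$ arises in the image of $T$, which relies on the parity observation above. (Alternatively, one could invoke Jacobi's two-square formula $r_2(k) = 4 \sum_{d \mid k,\, d \text{ odd}} (-1)^{(d-1)/2}$ and observe that $n$ and $2n$ share the same set of odd divisors, but the direct bijection via $T$ is more transparent and self-contained.)
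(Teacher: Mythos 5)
Your proof is correct, but it takes a genuinely different route from the paper. The paper reduces the claim to the statement that $n$ and $2n$ have the same number of representations as a sum of two squares and then invokes the classical divisor formula for $r_2$ (Hardy--Wright, Thm.~278), observing that the count is independent of the power of $2$ in the factorization. You instead exhibit an explicit bijection: the map $T(a,b)=(a-b,a+b)$, a rotation by $45^\circ$ composed with dilation by $\sqrt2$, preserves $\mathbb Z^2$, doubles squared norms, and is surjective onto vectors of squared length $2n$ by the parity observation that $c^2+d^2$ even forces $c\equiv d\pmod 2$. Your argument is more elementary and self-contained (no appeal to Euler's theorem or the representation formula), and it is in the same spirit as the map $F$ that the paper constructs later in Lemma \ref{lem_splitgraph}, which sends a primitive vector $v$ to the shortest vector of $\Lambda[v]$ of length $\sqrt2|v|$ at $45^\circ$ to $v$ --- so your bijection is essentially the lattice-level incarnation of a device the paper already uses. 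What the paper's approach buys is brevity via a standard citation; what yours buys is a transparent, fully verifiable two-line computation. Both are valid.
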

\begin{proof}
 This amounts to prove that for each integer $n>1$ the number of ways to write $n$ as a sum of two squares equals the number of ways to write $2n$ as the sum of two squares. 
 
 \medskip 
 
 By a theorem of Euler, $n$ is the sum of two squares if and only if all its prime factors equal to $3$ modulo $4$ occur to an even power. If this condition is met, then the ways of writing $n$ as two squares are given (see \cite{hw}, Thm. 278) by 
 \[
  4\prod_{j=1}^s(b_j+1),\quad \mbox{ if }\quad n=2^{a_0}\prod_{i=1}^r p_i^{2a_i}\prod_{j=1}^s q_j^{b_j},
 \]
where $a_i,b_j$ are integers, $q_j$ are distinct primes all equal to $1$ modulo $4$ and $p_i$ are distinct primes equal to $3$ modulo $4$. In particular, the number of ways to write $n$ as a sum of two squares does not depend on $a_0$, as desired.
\end{proof}

Now we can give {the} splitting result announced earlier, which will be useful for organizing the values of $V$ taken on our configurations.
\begin{lemma}[Covering of $\mathcal K_{\mathbb Z^2}$ by lattices and squares]\label{lem_splitgraph}
There exists $\widetilde{\mathcal D}\subset\mathcal D$ such that
\begin{equation}\label{splitgraph}
\{\{0,a\}\,:\, a\in\Z^2\setminus\{0\}\}=\bigsqcup_{r\in\widetilde{\mathcal D}}\bigsqcup_{\Lambda\in\mathcal L_r}\left\{\{0,x\}: x\in\Lambda,\ |x|\in\{r,\sqrt2 r\}\right\},
\end{equation}
where the symbol $\bigsqcup$ denotes the pairwise disjoint union. Moreover, we necessarily have $1\in\widetilde{\mathcal D}$ and
\begin{equation}\label{rbigg1}r\in\widetilde{\mathcal D}\setminus\{1\}\quad\Rightarrow\quad r\ge 2,
\end{equation}
whereas for edge multiplicities we have 
\begin{equation}\label{splitgraph2}
 \sum_{\{a,b\}:\ a\neq b\in\Z^2}\delta_{\{a,b\}}=\frac12 \sum_{r\in\widetilde{\mathcal D}}\sum_{Q\in \mathcal Q_r'\cup\mathcal Q_{\sqrt2 r}'}\sum_{\{a,b\}\in\mathrm{Sides}(Q)}\delta_{\{a,b\}}.
\end{equation}

\end{lemma}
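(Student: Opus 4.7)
My plan is to define $\widetilde{\mathcal D}$ explicitly via the parity of the $2$-adic valuation $v_2(s^2)$, setting $\widetilde{\mathcal D}:=\{s\in\mathcal D:\,v_2(s^2)\text{ is even}\}$. Equivalently, $s\in\widetilde{\mathcal D}$ iff $s=2^k\sqrt m$ for some integer $k\ge 0$ and some odd $m\ge 1$ representable as a sum of two squares. The two elementary facts $1\in\widetilde{\mathcal D}$ and \eqref{rbigg1} are then immediate: $v_2(1)=0$ is even, and if $r\in\widetilde{\mathcal D}\setminus\{1\}$, either $k=0$ forces $r=\sqrt m$ with $m\ge 5$ (because $m=3$ is not a sum of two squares and $m=1$ is excluded here), so $r>2$; or $k\ge 1$ gives $r\ge 2\sqrt m\ge 2$.

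The first real step will be a complementarity lemma: for each $s\in\mathcal D$, exactly one of $s$ and $s/\sqrt 2$ lies in $\widetilde{\mathcal D}$, with the convention that $s/\sqrt 2\notin\mathcal D$ is treated as ``not in $\widetilde{\mathcal D}$''. This follows because $s/\sqrt 2\in\mathcal D$ iff $s^2$ is even iff $v_2(s^2)\ge 1$, in which case $v_2((s/\sqrt 2)^2)=v_2(s^2)-1$ has the opposite parity; and if $v_2(s^2)=0$, trivially $s\in\widetilde{\mathcal D}$ while $s/\sqrt 2\notin\mathcal D$.

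Next I will derive the partition \eqref{splitgraph} by assigning to each $a\in\Z^2\setminus\{0\}$ with $|a|=s$ a unique pair $(r,\Lambda)$ satisfying $r\in\widetilde{\mathcal D}$, $\Lambda\in\mathcal L_r$, and $|a|\in\{r,\sqrt 2 r\}$. By complementarity the scale $r$ is forced to be the unique element of $\{s,s/\sqrt 2\}\cap\widetilde{\mathcal D}$. If $r=s$, then $a$ appears as a nearest-neighbor vector of the unique sublattice $\Lambda=\Z a+\Z a^\perp\in\mathcal L_s$, where $a^\perp$ is the $90^\circ$ rotation of $a$. If $r=s/\sqrt 2$, solving $a=u+w$ with $u\cdot w=0$ and $|u|=|w|=r$ forces $\{u,w\}=\{(a\pm a^\perp)/2\}$; these vectors lie in $\Z^2$ precisely because $|a|^2=2r^2$ is even, so the coordinates of $a$ share a common parity. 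Swapping $u\leftrightarrow w$ produces the same sublattice $\Lambda=\Z u+\Z w$, establishing uniqueness.

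Finally, the multiplicity identity \eqref{splitgraph2} will be verified by a double counting. Each edge $\{a,b\}\subset\Z^2$ with $|a-b|=s$ is a side of exactly two squares in $\mathcal Q_s'$, one on each side of the segment (both with integer vertices, since $90^\circ$ rotation preserves $\Z^2$). Thus its coefficient on the right of \eqref{splitgraph2} is $\tfrac12\bigl(2\,\mathbf 1[s\in\widetilde{\mathcal D}]+2\,\mathbf 1[s/\sqrt 2\in\widetilde{\mathcal D}]\bigr)=1$, matching the coefficient on the left. I expect the main obstacle to be pinning down the correct definition of $\widetilde{\mathcal D}$ so that complementarity handles uniformly both the generic case and the boundary case $v_2(s^2)=0$ where $s/\sqrt 2$ simply fails to live in $\mathcal D$; the $2$-adic valuation criterion is what streamlines both cases into a single rule.
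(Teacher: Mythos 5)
Your proof is correct, but it takes a genuinely different route from the paper's. The paper builds $\widetilde{\mathcal D}$ dynamically: it identifies each sublattice $\Lambda\in\mathcal L$ with its shortest vector $v$ in the quadrant $Q_{++}=\{(a,b)\in\Z^2:a>0,\,b\ge0\}$, introduces the map $F$ sending $v$ to the shortest vector of $\Lambda[v]$ of length $\sqrt2\,|v|$ (equivalently, multiplication of the sublattice by $\left(\begin{smallmatrix}1&1\\-1&1\end{smallmatrix}\right)$), decomposes $Q_{++}$ into $F$-orbits, pairs consecutive elements $\{F^{2n}(v),F^{2n+1}(v)\}$ along each orbit, and then invokes the identity $m(r)=m(\sqrt2\,r)$ of Lemma \ref{lem:mrm2r} (proved via the sum-of-two-squares formula) to upgrade the resulting inclusions to the equality \eqref{splitgraph}. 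You instead write $\widetilde{\mathcal D}$ in closed form as $\{s\in\mathcal D:\ v_2(s^2)\ \text{even}\}$ and verify everything directly from the complementarity of $s$ and $s/\sqrt2$ together with the elementary uniqueness of the sublattice in $\mathcal L_r$ containing a given vector of length $r$ (as a shortest vector) or $\sqrt2\,r$ (as a diagonal, using the parity of the coordinates of $a$). The two constructions yield the same set --- the paper's orbit minima are precisely the $v$ with $|v|^2$ odd, so its $\widetilde{\mathcal D}$ is $\{2^n|v|:\ |v|^2\ \text{odd}\}$, which is your parity condition --- but your closed form makes \eqref{rbigg1} and the disjointness transparent and dispenses with Lemma \ref{lem:mrm2r} altogether, whereas the paper's version is more structural and explains where the pairing $\{r,\sqrt2\,r\}$ of scales originates. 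Your double-counting argument for \eqref{splitgraph2} (each edge of length $s$ is the side of exactly two integer squares in $\mathcal Q'_s$, and exactly one of $s$, $s/\sqrt2$ lies in $\widetilde{\mathcal D}$) coincides with the paper's.
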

\begin{proof}
We already know that each $\Lambda\in\mathcal{L}$ is isomorphic to $\mathbb Z^2$. Let 
\[
Q_{++}:=\{(a,b)\in\Z^2:\ a>0, b\ge 0\}.
\]
Let $B:\mathcal L\to Q_{++},\,\Lambda\mapsto B(\Lambda):=v_\Lambda$, where $|v_\Lambda|=\min\{|v|\,:\,v\in \Lambda\cap Q_{++}\}$. By the very definition of $Q_{++}$, $B$ is well-defined and bijective, so that we can write $\Lambda[v]:=B^{-1}(v)$ for $v\in Q_{++}$.
\medskip 

Define then a map $F:Q_{++}\to Q_{++}$ by taking $F(v)=v'$ to be the shortest vector in $\Lambda[v]\cap(Q_{++}\setminus\{0,v\})$. Note that  $v'$ is one of the $4$ vectors in $\Lambda[v]$ that have length $\sqrt2 |v|$, and furthermore it forms an angle of $45^\circ$ with $v$. We notice that
\[
v'=F(v)\quad\Leftrightarrow\quad\Lambda[v']=\left(\begin{array}{cc}
1&1\\-1&1\end{array}\right)\Lambda[v],
\]
which shows also that $F$ is injective, as $B$ is bijective and the above relation is one-to-one. 

\medskip 

Now note that $Q_{++}$ can be partitioned into maximal orbits of $F$, i.e., there exists a set $\mathcal{V}\subset \Z^2$ made of distinct vectors such that
$$
Q_{++}:=\bigcup_{v\in\mathcal V}\bigcup_{n\in\N} F^n(v),
$$
where the vectors $v\in\mathcal{V}$ are such that  $F(q)\neq v$ for every $q\in Q_{++}$. Indeed, any point in $Q_{++}$ belongs to an orbit $\{F^n(v)\,:\,n\in\N\}$ for some $v\in\mathcal V$ and if two orbits meet at $v$ then they coincide on all the ``positive direction'' $\{F^n(v):\ n\in\mathbb N\}$, because $F$ is well defined, and in the ``negative direction'', because $F$ is injective.

\medskip

Next, we can then split each maximal orbit 
\[
\{v,F(v),F^2(v),\ldots\}={\bigsqcup_{n\ge 0}\{F^{2n}(v),F^{2n+1}(v)}\},
\]
where we have set $F^0(v):=v$. Therefore, there exists $\tilde V\subset Q_{++}$ such that

\begin{equation}\label{partq++}
 Q_{++}=\bigsqcup_{v\in \widetilde V}\{v,F(v)\},\quad \Z^2\setminus\{0\}=\bigsqcup_{v\in \widetilde V}\left\{x\in \Lambda[v]:\ |x|\in\{|v|,\sqrt2 |v|\}\right\},
\end{equation}
where the second equality follows by covering $\Z^2\setminus\{0\}$ by four rotations of $Q_{++}$ by $\pi/2$ and observing that (the restriction of) this operation on each $\Lambda[v]$ translates. We now define 
\begin{equation}\label{defdtilde}
\widetilde{\mathcal D}:=\{|v|:\ v\in \widetilde V\},\quad \mbox{so that  }\sqrt2 \widetilde{\mathcal D}=\{|F(v)|:\ v\in\widetilde V\},
\end{equation}
where $\sqrt2 \widetilde{\mathcal D}:=\{\sqrt2r:\ r\in\widetilde{\mathcal D}\}$. Directly from \eqref{defdtilde} we have
\begin{equation}\label{inclusionsdv}
\bigsqcup_{r\in\widetilde{\mathcal D}}\mathcal L_r \supseteq \bigcup_{v\in\widetilde V}\Lambda[v]\quad\mbox{ and }\quad \bigsqcup_{r\in\sqrt2\widetilde{\mathcal D}}\mathcal L_r \supseteq \bigcup_{v\in\widetilde V}\Lambda[F(v)].
\end{equation}
Due to the bijectivity of the map $Q_{++}\ni v\mapsto \Lambda[v]\in \mathcal L$ and to \eqref{partq++}, we have 
\begin{equation}\label{splitv}\mathcal L=\{\Lambda[v]:\ v\in\widetilde V\}\sqcup\{\Lambda[F(v)]:\ v\in\widetilde V\}.
\end{equation}
Now consider the map $\Lambda[v]\mapsto \Lambda[F(v)]$, which as we saw is well defined over $\mathcal L$ and injective. By Lemma \ref{lem:mrm2r} we know $m(r)=m(\sqrt2 r)$, and in particular for each $r\in\widetilde{\mathcal D}$ the above map restricts to a bijection $\mathcal L_r\to \mathcal L_{\sqrt2 r}$, thus giving
\begin{equation}\label{splitd}
\mathcal D=\widetilde{\mathcal D}\bigsqcup(\sqrt2 \widetilde{\mathcal D}),
\end{equation}
which in combination with \eqref{splitv} implies that the inclusions \eqref{inclusionsdv} are equalities. Therefore we can rewrite the second formula from \eqref{partq++} by taking the union over $\mathcal L_r, r\in\widetilde{\mathcal D}$ instead of $\widetilde V$, and we get \eqref{splitgraph}.

\medskip

Now note that $r=1$ necessarily belongs to $\widetilde{\mathcal D}$, because $\Lambda=\mathbb Z^2$ is the only element of $\mathcal L$ that contains the edge $\{(0,0),(0,1)\}$. Then all edges of $\Z^2$ of length $\sqrt2$ from $\Z^2$ are covered by the choice $\Lambda=\Z^2$ in \eqref{splitgraph}, and thus because the union in \eqref{splitgraph} must be disjoint, the next $r\in\widetilde{\mathcal D}$ must be $r\ge 2$, as claimed in \eqref{rbigg1}.

\medskip

For \eqref{splitgraph2}, note that each edge $\{a,b\}$ from the left hand side of \eqref{splitgraph2} has length $r$ for some $r\in \mathcal D$. Due to \eqref{splitd} two mutually excluding cases can happen:
\begin{itemize} 
\item $r\in\widetilde{\mathcal D}$, in which case $\{a,b\}$ is the side of precisely two squares congruent to $\{0,r\}^2$, and is counted exactly twice in the sum from the right hand side of \eqref{splitgraph2}.
 \item $r\in\sqrt 2\widetilde{\mathcal D}$, in which case $\{a,b\}$ is the side of precisely two squares congruent to $\{0,\sqrt2 r'\}^2$ and $ r'=r/\sqrt2\in\widetilde{\mathcal D}$. Thus again $\{a,b\}$ is counted exactly twice in the sum from the right hand side of \eqref{splitgraph2}.
\end{itemize}
In both cases multiplicities on the two sides of \eqref{splitgraph2} coincide, and the equation is proved.
\end{proof}

We finally notice that, due to Proposition \ref{prop:discremb}, we are able to pass the combinatorial structure \eqref{distz2mr} from $\mathbb Z^2$ to $\mathcal{G}_\alpha$ in a robust way in the presence of sufficiently extended charts as in Definition \ref{def:charts}. Lemma \ref{johnlemma} allows to add to this a metric structure.
In particular, as a direct corollary of Lemma \ref{johnlemma} and Proposition \ref{prop:discrmetricchart} we then obtain the following result.
\begin{lemma}\label{lem_squaredeform}
With the constants $\alpha_1, L>0$ as in Lemma \ref{johnlemma} and Proposition \ref{prop:discrmetricchart} for all $ \alpha\in[0,\alpha_1)$ and whenever $Q_r$ is a square of scale $r$ in $\mathcal{G}_\alpha$ and $r\in\mathcal D$, it holds
\begin{equation}
X(Q_r)\sim_{L\alpha}\{0,r\}^2\subset \mathbb R^2. 
\end{equation}
\end{lemma}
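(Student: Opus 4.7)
The proof is a direct combination of Proposition \ref{prop:discrmetricchart} and Lemma \ref{johnlemma}, as foreshadowed in the paragraph preceding the statement. Let $(\Phi,\Lambda)$ be the discrete $\mathcal{Z}_\boxtimes$-chart provided by Definition \ref{distgmr} which realizes $Q_r$ as a square of scale $r$. Thus $Q'_r:=\Phi(Q_r)\in\mathcal Q'_r$ is a square of sidelength $r$ with vertices in $\mathbb Z^2$, the lattice points in $\mathrm{Conv}(Q'_r)$ all lie in $\Phi(\Lambda)$, and $\mathrm{Ell}_\alpha(X(p),X(q))\subset \mathrm{Conv}(X(\Lambda))$ for every $p\neq q\in Q_r$.

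First, I would invoke Proposition \ref{prop:discrmetricchart} to produce the piecewise affine map $u:\overline{\Phi(\Lambda)}\to\mathbb R^2$ with $u\circ\Phi=X$ on $\Lambda$ and $\mathrm{dist}(Du,SO(2))<L\alpha$ pointwise. Choosing $\alpha_1$ small enough so that $L\alpha_1<\tfrac{1}{2}$, the map $u$ is bi-Lipschitz onto its image and, piecewise affinely, $u^{-1}$ satisfies an analogous bound $\mathrm{dist}(D u^{-1},SO(2))\le c\,L\alpha$ for some absolute constant $c$. At this point the ellipsoid condition in Definition \ref{distgmr} is precisely tailored to let us apply Lemma \ref{johnlemma} to $u^{-1}$ at the pair $a=X(p)$, $b=X(q)$ for any distinct $p,q\in Q_r$: after relabelling $L$ (and accordingly shrinking $\alpha_1$) so as to absorb the mismatch between the ellipticity $\alpha$ that the definition delivers and the ellipticity $cL\alpha$ that John's estimate asks for, we obtain
\[
\bigl||X(p)-X(q)|-|\Phi(p)-\Phi(q)|\bigr|\ \le\ L\alpha\,|\Phi(p)-\Phi(q)|
\]
for every such pair, i.e. a relative distance-distortion bound with constant $L\alpha$.

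Finally, since $Q'_r=\Phi(Q_r)\in\mathcal Q'_r$ is by construction congruent in $\mathbb R^2$ to $\{0,r\}^2$, the pairwise distances $|\Phi(p)-\Phi(q)|$ with $p,q\in Q_r$ are exactly the four side-lengths $r$ and two diagonal-lengths $r\sqrt 2$ of $\{0,r\}^2$. Composing the bijection $X\mapsto \Phi$ with an isometry of $\mathbb R^2$ taking $Q'_r$ to $\{0,r\}^2$ therefore yields, in view of the previous display, the sought-for $L\alpha$-deformation $X(Q_r)\sim_{L\alpha}\{0,r\}^2$ in the sense of Definition \ref{def:deform}.

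The only real subtlety — and hence the step on which I would spend most care — is the reconciliation of the ellipticity parameters: Definition \ref{distgmr} provides ellipticity $\alpha$, whereas applying John's lemma to $u^{-1}$ naturally calls for ellipticity of the same order as $\mathrm{dist}(D u^{-1},SO(2))$, i.e.\ $\sim L\alpha$. This is however purely a matter of adjusting the constant $L$ and the threshold $\alpha_1$; no new ingredient beyond those already developed in this subsection is needed.
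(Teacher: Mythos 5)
Your proof is correct and is exactly the argument the paper intends: the lemma is stated as a direct corollary of Proposition \ref{prop:discrmetricchart} and Lemma \ref{johnlemma}, and your combination of the two (applying John's estimate to $u^{-1}$ on the ellipsoids guaranteed by Definition \ref{distgmr}, then using that $\Phi(Q_r)$ is congruent to $\{0,r\}^2$) is the intended route. The ellipticity-parameter mismatch you flag is a real constant-chasing point that the paper silently absorbs into $L$ and $\alpha_1$, so your handling of it is appropriate.
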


\subsection{Boundary error bounds}\label{bdryerrorbounds}

We are going to write the energy of our configuration as the sum of contributions coming from sides (i.e. edges different than diagonals) of squares $Q_r$ with $r\in\mathcal D$ and the remainder:
\begin{equation}\label{energysplit}
\begin{split}
\mathcal E[V](X_N)=&\frac12\sum_{r\in\mathcal D}\sum_{Q_r\in\mathcal Q_r}\sum_{\{a,b\}\in \mathrm{Sides}(Q_r)}V(|x_a-x_b|)\\ &+ \sum_{\{a,b\}\in\mathcal{NQ}^{(2)}}V(|x_a-x_b|)+\frac12\sum_{\{a,b\}\in\mathcal{NQ}^{(1)}}V(|x_a-x_b|),
\end{split}
\end{equation}
where $\mathcal{NQ}^{(j)}$ are the edges that are sides of $2-j$ squares, for $j=1,2$.

\medskip

The first result of this subsection is Proposition \ref{prop:card_bounds} which allows to compare, for every $r\in\mathcal D$, the cardinality and the area of the squares at scales $r$ and $1$.
We preliminarily introduce some notations.

\medskip

If $P\subset \mathbb R^2$ is a polygon, i.e. a finite set of points $P=\{p_j:\ j\in\mathbb Z/n\mathbb Z\}$ ordered in cyclical order such that the associated polygonal line $\gamma(P):=\cup_j [p_j,p_{j+1}]$ does not self-intersect), then we denote as usual by 
\[
 \mathrm{Area}(P):=\left|\left\{x:\ x \mbox{ belongs to the bounded connected component of $\mathbb R^2\setminus\gamma(P)$}\right\}\right|.
\]
If $Q\subset \R^2$ is a small deformation of a square then in order to define $\mathrm{Area}(Q)$, unless otherwise specified, we always consider it with the cyclic order along the perimeter of the square.

For every $r\in\mathcal D$ and for every $Q_1\in\mathcal{Q}_1$ we set 
\begin{equation}\label{mub}
\mathcal{Q}^{b}_{r}(Q_1):= \left\{Q_r\in \mathcal Q_r:\ \bigcup_{\{p,q\}\subset Q_r}\{z\in \mathrm{Conv}(X(Q_1)):\ \mathrm{dist}(z, [X(p),X(q)])<4\}\neq \emptyset, \,\,\, Q_r\supset Q_1\right\}.
\end{equation}
Finally, the symbol $\Delta$ denotes the symmetric difference between sets $A\Delta B:=(A\setminus B)\cup(B\setminus A)$.

\begin{proposition}\label{prop:card_bounds}
There exists $C_7>0$ such that, for all $\alpha\in (0,\min\{\alpha_0,\alpha_1\})$, where $\alpha_0$ is as in Lemma \ref{lem:combintometric} and $\alpha_1$ is as in Lemma \ref{johnlemma}, the following holds. If $X$ satisfies \eqref{mindistnew}, then for all $r\in \mathcal{D}$ we have:
\begin{subequations}\label{eq:cardbounds}
\begin{eqnarray}
0&\le&m(r)\sharp \mathcal Q_1 - \sharp \mathcal Q_r\le C_7r^2m(r)\sharp \partial \mathcal{G}_\alpha;\label{card_r_bd}\\
0&\le&r^2m(r)\sum_{Q_1\in \mathcal Q_1}\mathrm{Area}(X(Q_1))-\sum_{Q_r\in\mathcal Q_r}\mathrm{Area}(X(Q_r))\le C_7r^4m(r)\sharp \partial \mathcal{G}_\alpha;\label{area_r_bd}
\end{eqnarray}
\begin{equation}
\sharp \{\{p,q\}\in\mathrm{Sides}(\mathcal Q_1):\ [X(p),X(q)]\cap [X(a),X(b)]\neq \emptyset\}\le C_7 r\quad\forall\{a,b\}\in\mathrm{Sides}(\mathcal Q_r);\label{side_r_bd}
\end{equation}
\begin{equation}
\sharp\mathcal{Q}^{b}_{r}(Q_1)\le C_7 r\ m(r)\qquad\textrm{for all }Q_1\in \mathcal Q_1;\label{length_r_bd}
\end{equation}
\begin{equation}
\sharp({\mathrm{Sides}(\mathcal Q_{\sqrt2 r})\Delta \mathrm{Diag}(\mathcal Q_r)})\le C_7r^2\sharp\partial\mathcal{G}_\alpha\qquad \textrm{for all }r\in\widetilde{\mathcal D}.\label{badsides_r_bd}
\end{equation}
\end{subequations}
\end{proposition}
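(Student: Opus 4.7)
\textbf{Proof plan for Proposition \ref{prop:card_bounds}.} The overall strategy is to lift the combinatorial identities that hold exactly in the model $\mathcal Z_\boxtimes$ (identifying $r$--scale squares, their sublattices, and their interior unit cells) to the configuration $\mathcal G_\alpha$ through the discrete charts of Definition \ref{def:charts}, and to bound the error between the two sides of each inequality by the number of pairs whose chart \emph{fails to extend} across $\partial\mathcal G_\alpha$. The metric control needed to pass from combinatorial to Euclidean quantities is provided by Lemma \ref{lem:combintometric}, Lemma \ref{johnlemma} and, in particular, by Lemma \ref{lem_squaredeform}, which yields $X(Q_r)\sim_{L\alpha}\{0,r\}^2$ uniformly in $r$. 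Hence, for $\alpha$ small, every side or diagonal of $X(Q_r)$ has Euclidean length $r+O(\alpha r)$ and $\mathrm{Area}(X(Q_r))=r^2+O(\alpha r^2)$.

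For \eqref{card_r_bd}, the plan is to build an injection $\Psi\colon\mathcal Q_r\to\mathcal Q_1\times\mathcal L_r$, $Q_r\mapsto(Q_1(Q_r),\Lambda(Q_r))$, where $\Lambda(Q_r)$ is the unique sublattice of $\mathbb Z^2$ generated by $\Phi(Q_r)\in\mathcal Q'_r$, and $Q_1(Q_r)$ is a canonical unit sub--cell of $\Phi(Q_r)$ pulled back through $\Phi$ (well defined as an element of $\mathcal Q_1$ since the chart of $Q_r$ also serves as a chart for the sub--cell). This gives the lower bound $0\le m(r)\sharp\mathcal Q_1-\sharp\mathcal Q_r$ at once. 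For the upper bound, the pairs $(Q_1,\Lambda)\in\mathcal Q_1\times\mathcal L_r$ \emph{missed} by $\Psi$ are exactly those for which extending the chart of $Q_1$ in direction $\Lambda$ to scale $r$ meets an obstruction, which by Proposition \ref{prop:discremb} can only happen if a point $p\in\partial\mathcal G_\alpha$ lies in the would--be extended chart. For each fixed $p\in\partial\mathcal G_\alpha$ and each $\Lambda\in\mathcal L_r$, a straightforward packing count in $\mathbb Z^2$ shows that at most $O(r^2)$ pairs $(Q_1,\Lambda)$ are blocked by $p$, and summing over the $m(r)$ choices of $\Lambda$ and over $p\in\partial\mathcal G_\alpha$ yields the bound $O(r^2m(r)\sharp\partial\mathcal G_\alpha)$. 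Inequality \eqref{area_r_bd} then follows immediately by multiplying \eqref{card_r_bd} by $r^2+O(\alpha r^2)$ and using $\mathrm{Area}(X(Q_r))=r^2+O(\alpha r^2)$ and $\mathrm{Area}(X(Q_1))=1+O(\alpha)$ from Lemma \ref{lem_squaredeform}.

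The remaining estimates are direct packing arguments. For \eqref{side_r_bd}, a side $[X(a),X(b)]$ of $Q_r$ is a segment of length $r+O(\alpha r)$, while sides of unit squares have length $1+O(\alpha)$ and endpoints separated by at least $1-\alpha$; since the images of the unit squares are $L\alpha$--deformed unit cells by Lemma \ref{lem_squaredeform}, only $O(r)$ of them can have a side intersecting $[X(a),X(b)]$. For \eqref{length_r_bd}, one fixes $Q_1$ and counts squares $Q_r$ whose boundary passes within Euclidean distance $\lesssim 4$ of $\mathrm{Conv}(X(Q_1))$: reducing via $\Phi$ to $\mathbb Z^2$, for each $\Lambda\in\mathcal L_r$ there are $O(r)$ positions of an $r$--cell of $\Lambda$ containing (or grazing) a fixed unit cell, and summing over the $m(r)$ lattices gives $O(r\,m(r))$. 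Finally, for \eqref{badsides_r_bd}, the plan is to exploit that in $\mathcal Z_\boxtimes$ diagonals of $r$--squares coincide with sides of $\sqrt2 r$--squares of the rotated sublattice (which is the mechanism underlying Lemma \ref{lem:mrm2r}); through the charts this identification still holds for every edge whose full extension of chart does not touch $\partial\mathcal G_\alpha$, so the symmetric difference is absorbed in chart--extension failures, each $p\in\partial\mathcal G_\alpha$ accounting for $O(r^2)$ such edges, producing the bound $O(r^2\sharp\partial\mathcal G_\alpha)$.

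The main technical obstacle will be the careful bookkeeping in \eqref{card_r_bd} and \eqref{badsides_r_bd}: although intuitively clear, turning ``extension of chart blocked by a boundary point'' into a clean quantitative statement requires specifying the canonical sub--cell $Q_1(Q_r)$ in a way that is both well defined up to the ambiguity noted in Remark \ref{remun} and compatible with Definition \ref{distgmr}, and carefully handling the ellipsoid inclusion $\mathrm{Ell}_\alpha(X(p),X(q))\subset\mathrm{Conv}(X(\Lambda))$ to ensure that each pair counted as ``extendable'' indeed gives a valid element of $\mathcal Q_r$. Once this correspondence is pinned down, the remaining arguments are packing bounds and direct applications of Lemma \ref{lem_squaredeform}.
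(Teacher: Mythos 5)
Your overall strategy -- lift exact combinatorial identities from $\mathcal Z_\boxtimes$ through the discrete charts and charge every failure to a boundary point of $\partial\mathcal G_\alpha$ at cost $O(r^2)$ per point -- is the same as the paper's, and your treatments of \eqref{side_r_bd}, \eqref{length_r_bd} and \eqref{badsides_r_bd} coincide with the packing arguments given there. For \eqref{card_r_bd} the paper instead double-counts incidences $(x,Q_r)$ with $x$ a \emph{vertex} of $X(Q_r)$, using $s(x,r):=\sharp\{Q_r: x\in X(Q_r)\}\le 4m(r)=m(r)s(x,1)$; this is a purely local count inside a single chart around $x$, so it sidesteps the problem you correctly flag, namely that your map $Q_r\mapsto(Q_1(Q_r),\Lambda(Q_r))$ requires a canonical, chart-independent choice of sub-cell and of sublattice, while the chart is only defined up to the ambiguity of Remark \ref{remun} (and a unit cell is the ``corner cell'' of several $r$-squares per sublattice unless the convention is pinned down). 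Your injection can be made to work, but the vertex count is strictly easier and you should expect the bookkeeping you postpone to be the bulk of the argument.

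There is, however, a genuine gap in your derivation of \eqref{area_r_bd}. You propose to obtain it from \eqref{card_r_bd} by replacing $\mathrm{Area}(X(Q_r))$ with $r^2+O(\alpha r^2)$ and $\mathrm{Area}(X(Q_1))$ with $1+O(\alpha)$. Doing so yields
\[
r^2m(r)\sum_{Q_1}\mathrm{Area}(X(Q_1))-\sum_{Q_r}\mathrm{Area}(X(Q_r))
= r^2\bigl(m(r)\sharp\mathcal Q_1-\sharp\mathcal Q_r\bigr)+O\bigl(\alpha\, r^2 m(r)\,\sharp\mathcal Q_1\bigr),
\]
and the second term is of unsigned size up to $C\alpha r^2 m(r)N$. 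This destroys the lower bound $\ge 0$ (the first term may vanish while the error term is negative) and is not controlled by $C_7r^4m(r)\sharp\partial\mathcal G_\alpha$, since typically $\sharp\partial\mathcal G_\alpha\sim\sqrt N\ll \alpha N$. The point of \eqref{area_r_bd} is that it is an \emph{exact} covering identity, with no $O(\alpha)$ slack per square: setting $\mu(Q_1,Q_r):=|\mathrm{Conv}(X(Q_1))\cap\mathrm{Conv}(X(Q_r))|$, one has $\sum_{Q_1}\mu(Q_1,Q_r)=\mathrm{Area}(X(Q_r))$ because the deformed unit cells inside the chart of $Q_r$ tile $\mathrm{Conv}(X(Q_r))$ exactly (this uses the orientation-consistent piecewise affine map of Proposition \ref{prop:discrmetricchart}, not just the metric bound of Lemma \ref{lem_squaredeform}), while $\sum_{Q_r}\mu(Q_1,Q_r)\le r^2m(r)\,\mathrm{Area}(X(Q_1))$ because each point of the plane lies in at most $r^2$ convex hulls of $r$-squares per sublattice direction, with equality whenever no boundary point lies within distance $Cr$. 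Exchanging the order of summation then gives both inequalities in \eqref{area_r_bd}; the approximate-area shortcut does not.
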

\begin{proof}
Note that below by abuse of notation we denote by $C$ a constant that can possibly change at each step. For \eqref{card_r_bd} we proceed as in \cite[Proposition 2.9]{T}, and start with a double count of 
\[
 \{(x,Q_r)\,:\, Q_r\in\mathcal Q_r,\ x\in X(Q_r)\}.
\]
On the one hand, setting $s(x,r):=\sharp\{Q_r\in\mathcal Q_r:\ x\in X(Q_r)\}$ for every $r\in\mathcal D$ and for every $x\in X(\Xi)=:X$, we get that
\begin{equation}\label{cardrbd1}
 \sum_{x\in X}s(x,r)=4\sharp \mathcal Q_r.
\end{equation}
On the other hand, we will check that 
\begin{equation}\label{srmrs1}
s(x,r)\le m(r)s(x,1)\qquad\textrm{for every }x\in X,\, r\in\mathcal D,
\end{equation}
which together with \eqref{cardrbd1}, and summing over $x\in X$, implies the first inequality in \eqref{card_r_bd}.
To prove \eqref{srmrs1}, we preliminarily note that if
$s(x,r)\neq 0$ then for $p:=X^{-1}(x)$ there holds $\mathcal N_\alpha(p)\cap \partial \mathcal{G}_\alpha=\emptyset$, by definition of $\mathcal Q_r$, thus $s(x,1)=4$. 
Moreover, if $r>1$ only two facts can happen: either $s(x,r)=4m(r)=m(r)s(x,1)$, corresponding to the case that there are four squares at scale $r$ having $x$ as a vertex; or, $s(x,r)<4m(r)=m(r)s(x,1)$, corresponding to the case that  not all the four squares at scale $r$ having $x$ as a vertex are in the discrete $\mathcal{Z}_{\boxtimes}$-chart. This argument implies \eqref{srmrs1}.

In order to get the second inequality in \eqref{card_r_bd}, we notice that if  $s(x,r)<4m(r)=m(r)s(x,1)$, then there exists a point $y=X(q)$ with $q\in\partial \mathcal{G}_\alpha$ at distance at most $Cr$ from $x$. Due to Lemma \ref{lem:mindistance} and by a packing bound, this can happen for at most $Cr^2\sharp \partial \mathcal{G}_\alpha$ points $x$. By \eqref{cardrbd1}, summing over $X$, we get
$$
4\sharp\mathcal{Q}_1-4\frac{\sharp\mathcal{Q}_r}{m(r)}=\sum_{x\in X} \left(s(x,1)-\frac{s(x,r)}{m(r)}\right)\le C r^2\sharp\partial\mathcal{G_\alpha},
$$
namely the second inequality in \eqref{card_r_bd}.

\medskip
To prove \eqref{area_r_bd}, we first set
\[
 \mu(Q_1,Q_r):=\left|\mathrm{Conv}(X(Q_1))\cap \mathrm{Conv}(X(Q_r))\right|.
\]
We note that for every $Q_1\in\mathcal Q_1$
\begin{equation}\label{maybeuseful}
\sum_{\substack{Q_r\in\mathcal Q_r\\ \Phi(Q_1)\cap\mathrm{Conv}(\Phi(Q_r))\neq \emptyset}}\mu(Q_1,Q_r)\le r^2 m(r)\mathrm{Area}(X(Q_1)),
\end{equation}
from which, summing over $Q_r\in\mathcal Q_r$ we deduce
\begin{equation*}
\begin{aligned}
&&\sum_{Q_r\in\mathcal Q_r}\mathrm{Area}(X(Q_r))=\sum_{Q_r\in\mathcal Q_r}\sum_{\substack{Q_1\in\mathcal Q_1\\ \Phi(Q_1)\cap\mathrm{Conv}(\Phi(Q_r))\neq \emptyset}}\mu(Q_1,Q_r)\\
&=&\sum_{Q_1\in\mathcal{Q}_1}\sum_{\substack{Q_r\in\mathcal Q_r\\ \Phi(Q_1)\cap\mathrm{Conv}(\Phi(Q_r))\neq \emptyset}}\mu (Q_1,Q_r)\le r^2 m(r) \sum_{Q_1\in\mathcal{Q}_1}r^2 m(r)\mathrm{Area}(X(Q_1)),
\end{aligned}
\end{equation*}
thus yielding the first inequality in \eqref{area_r_bd}.
As for the second inequality  in \eqref{area_r_bd}, it is enough to notice that the inequality \eqref{maybeuseful} is in fact an equality if $\mathrm{dist}(X(Q_1),X(\partial \mathcal G_\alpha))>Cr$ for some universal constant $C>0$.

\medskip

We now show \eqref{side_r_bd}. Since $\{a,b\}\in\mathrm{Sides}(\mathcal{Q}_r)$,  the segment $[X(a),X(b)]$ is included in the image of a discrete chart, and then $\{p,q\}\in \mathcal{S}_\alpha$ for all $\{p,q\}\in\mathrm{Sides}(\mathcal{Q}_1)$.

\medskip

A direct packing bound implies that the number of edges in this neighborhood is of order $r$. Since their length is of order $1$ we get the desired upper bound \eqref{side_r_bd}.

\medskip

To prove \eqref{length_r_bd}, first note that it suffices to bound the number of squares $Q_r\in\mathcal Q_r$ such that $[X(a),X(b)]\cap \mathrm{Conv}(X(Q_1))$ meets the convex hull of $Q_1$ (see proof of Proposition \ref{prop:distortion} for further details).

\medskip

We further reduce the problem noting that to intersect the hull of $Q_1$ is equivalent to intersecting one of its sides, so it suffices to bound the number of squares of scale $r$ one of whose edges meets an edge of scale $1$. The desired bound \eqref{length_r_bd} then follows from \eqref{side_r_bd} via a double counting procedure similar to the proof of \eqref{card_r_bd}. The details are left to the reader.

\medskip

Finally, to prove \eqref{badsides_r_bd}, we note that {if $\{p,q\}\in \mathrm{Sides}(\mathcal Q_{\sqrt2 r})\Delta \mathrm{Diag}(\mathcal Q_r)$, then there exists a point of $\partial\mathcal{G}_\alpha$ which is mapped to the $4r$-neighborhood of $\{p,q\}$.} We then proceed by a packing argument as before, and we obtain \eqref{badsides_r_bd}.
\end{proof}

\subsection{Control of large-scale deformation errors by short-scale deformations}\label{sec_controlarea}
In this section we improve upon the finite-range result of Theorem \ref{thm_fr_crystal}. This is done by effectively controlling short-distance interactions via the {nearest-neighbors} interaction, and treating all the large-distance interactions effectively as perturbation terms.

\medskip

The following result is the ``square lattice version'' of \cite[Proposition 2.10]{T}. It uses as the Euclidean geometry basic tool again Heron's formula, but the reasoning is different. Combinatorially, if $\boxtimes$ is the complete graph on $4$ vertices $\{a,b,c,d\}$ there are $4$ distinct triangles in this graph, and each edge is covered by $2$ triangles. On the metric side, if $\widetilde\boxtimes=X(\{a,b,c,d\})$ is realized as a plane quadrilateral which is a small deformation of a square $\boxtimes$ congruent to $\{0,1\}^2\subset \mathbb R^2$, then pairs of triangles which have in common only a diagonal make a decomposition of $\widetilde\boxtimes$. Then we compute the area by using Heron's formula on all triangles $\widetilde\Delta \subset \widetilde\boxtimes$, where $s_{\widetilde\Delta}$ is the semiperimeter of $\widetilde\Delta$:
\begin{equation}\label{heron}
 \mathrm{Area}(\widetilde\boxtimes)=\frac12\sum_{\widetilde \Delta\subset\widetilde\boxtimes}\mathrm{Area}(\widetilde\Delta)=\frac12\sum_{\widetilde\Delta\subset \widetilde\boxtimes}\sqrt{s_{\widetilde\Delta}\prod_{\widetilde e \in\widetilde\Delta}(s_{\widetilde\Delta}-|\widetilde e|)}.
\end{equation}
We apply Taylor expansion around the sidelengths $|e|$ corresponding to $\boxtimes\simeq \{0,1\}^2$ and call $|\widetilde e|=|e|+\delta_e$ the perturbed sidelengths. Then for $\Delta$ a triangle of sidelengths $1,1,\sqrt2$ we get $\frac{d}{d|e|}\left.\mathrm{Area}(\widetilde\Delta)\right|_{\widetilde\Delta=\Delta}$ equal to $0$ if $|e|=\sqrt2$ and equal to $\mathrm{Area}(\Delta)$ if $|e|=1$, thus the diagonal contributions disappear (this is due to the fact that $\Delta$ has a right angle opposite to the sides of length $\sqrt2$). Thus Taylor expansion gives using \eqref{heron}, for a function $O(\cdot)$ which is bounded if $\alpha<\alpha_0$ with $\alpha_0$ small enough,
$$
 \mathrm{Area}(\widetilde\boxtimes)=1 + \sum_{e\in \boxtimes, |e|=1} \delta_e +O\left(\sum_{e\in\boxtimes}\delta_e^2\right).
$$
Given $\alpha>0$, introducing the scaling factor $r$, we denote by $r\widetilde\boxtimes=:\widetilde{Q}_r$ the deformations of the square with sidelength $r$, i.e., $\widetilde{Q}_r\sim_\alpha\{0,r\}^2\subset \mathbb R^2$. 
Moreover, denoting by $\phi_r:\{0,r\}^2\to \widetilde{Q}_r$ the $\alpha$-deformation map given by Definition \ref{def:deform}, whenever $\widetilde{Q}_r$ is clear from the context, we set $\delta:=\delta_{\phi_r}$ (with $\delta_{\phi_r}$ given in Definition \ref{def:deform}). Notice that, by construction,  $\delta(x,y)=||x-y|-r|$ for $\{x,y\}$ sides of $\widetilde{Q}_r$ and $\delta(x,y)=||x-y|-\sqrt 2r|$ for $\{x,y\}$ diagonals of $\widetilde{Q}_r$.

\medskip

With this notation (for $\widetilde{Q}_r=X(Q_r)$), by Proposition \ref{prop:distortion} and by \eqref{length_r_bd}, for every $r\in\mathcal D$, $Q_r\in\mathcal Q_r$, we have, {writing $Q_1:=\widetilde\boxtimes$},
 \begin{equation}\label{fjm2}
\sum_{\substack{x,y\in X(Q_r)\\ x\neq y}} \delta^2(x,y)\le \bar C m(r)\ r^2\ \sum_{\substack{\{x,y\}\in X(Q_1)\\ x\ne y}}\delta^2(x,y),
 \end{equation}
 for some universal constant $\bar C>0$.

\medskip

Then using the chain rule for derivatives like in Theil's \cite[Proposition 2.10]{T}, one can easily get the following result.
\begin{proposition}\label{prop:taylor}
 There exist $C_8,\alpha_2>0$ such that for all $\alpha\in [0,\alpha_2)$ for $v\in C^2([0,\infty))$ and $\lambda>0$, if $\widetilde{Q}_r\subset \mathbb R^2$ satisfies $\widetilde{Q}_r\sim_\alpha\{0,r\}^2\subset \mathbb R^2$ then we have
 \begin{equation}\label{tayloreq}
 \begin{split}
 \left|\frac{v'(r)}{r}\left(\mathrm{Area}(\widetilde{Q}_r) - r^2\right) + 4v(r) - \sum_{\substack{x,y\in \widetilde{Q}_r\\ \{x,y\}\sim_\alpha\{0,r\}}}v(|x-y|)\right|\\
 \le C_8\left(\frac{|v'(r)|}{r}+\|v''\|_{L^\infty(rE_\alpha^1)}\right)\sum_{\substack{x,y\in \widetilde{Q}_r\\ x\neq y}}\delta^2(x,y):=e(v,r)\sum_{\substack{x,y\in \widetilde{Q}_r\\ x\neq y}}\delta^2(x,y).
 \end{split}
 \end{equation}
\end{proposition}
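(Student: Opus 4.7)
The strategy is a two-term Taylor expansion in which the linear parts cancel. I will expand both $\mathrm{Area}(\widetilde Q_r)$ and $\sum_{\{x,y\}\sim_\alpha\{0,r\}} v(|x-y|)$ to second order in the signed sidelength deviations $|x-y|-r$; combining them so as to eliminate the common linear piece leaves precisely the combination appearing on the left of \eqref{tayloreq}, plus a quadratic remainder controlled by $\sum \delta^2(x,y)$.

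For the area, I apply \eqref{heron} to decompose $\widetilde Q_r$ as a symmetric sum over the four triangles of its complete graph, and Heron's formula to view each triangle's area as a smooth function of its three side lengths. Taylor expansion around the reference right triangle with sides $(r,r,\sqrt 2\,r)$ uses the key fact, already observed in the discussion preceding the statement and immediate from differentiating Heron's formula, that the partial derivative with respect to the diagonal length vanishes at this configuration, because the right angle lies opposite the diagonal. The first-order diagonal contributions therefore cancel, and one obtains
\[
\mathrm{Area}(\widetilde Q_r) - r^2 \;=\; r \sum_{\{x,y\}\sim_\alpha\{0,r\}}(|x-y|-r) + R_1, \qquad |R_1|\le C_1\sum_{\substack{x,y\in\widetilde Q_r\\ x\neq y}}\delta^2(x,y),
\]
for a universal constant $C_1$. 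The bound on $R_1$ uses that, after rescaling by $1/r$, the perturbed quadrilateral lies in a fixed compact neighborhood of $\{0,1\}^2$ on which the Heron functional is $C^2$ with bounded second derivatives; this determines the threshold $\alpha_2$, chosen small enough that the reference triangle remains strictly non-degenerate throughout.

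For the potential, Taylor-expanding $v$ about $r$ on each of the four sides yields
\[
\sum_{\{x,y\}\sim_\alpha\{0,r\}} v(|x-y|) = 4v(r) + v'(r)\sum_{\mathrm{sides}}(|x-y|-r) + R_2,
\]
with $|R_2|\le \tfrac12\|v''\|_{L^\infty(rE_\alpha^1)}\sum_{\mathrm{sides}}\delta^2(x,y)$, by the integral form of the remainder. Solving the area identity for $\sum_{\mathrm{sides}}(|x-y|-r)$ and substituting into the potential expansion, the linear terms combine to produce exactly $\tfrac{v'(r)}{r}(\mathrm{Area}(\widetilde Q_r)-r^2)$, leaving the net error $\tfrac{v'(r)}{r}R_1 - R_2$; the triangle inequality then gives the claimed bound with $C_8:=\max\{C_1,\tfrac12\}$.

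The only real obstacle is the uniform-in-$r$ control of $R_1$: the Taylor remainder for Heron's formula must be dominated by a universal constant times $\sum \delta^2(x,y)$, independent of the scale $r$. This is a scaling argument (areas scale as $r^2$, distances as $r$) which reduces to a $C^2$-boundedness estimate for the Heron functional on a compact neighborhood of the unit right triangle $\{1,1,\sqrt 2\}$, valid for $\alpha<\alpha_2$ with $\alpha_2$ sufficiently small.
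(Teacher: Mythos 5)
Your proposal follows exactly the route the paper itself takes (and only sketches): the Heron decomposition \eqref{heron} over the four triangles of the complete graph, the observation that the derivative of a triangle's area with respect to the hypotenuse vanishes at the right triangle so that diagonal first-order terms drop out, a rescaling to reduce the Heron remainder to a $C^2$ bound on a compact neighborhood of the unit right triangle, and a Taylor expansion of $v$ whose linear term is eliminated using the area identity. One caveat, which you inherit from the paper's own displayed expansion of $\mathrm{Area}(\widetilde\boxtimes)$: each side lies in two of the four triangles and contributes derivative $\mathrm{Area}(\Delta)=\tfrac12$ in each, so the prefactor $\tfrac12$ in \eqref{heron} yields
$\mathrm{Area}(\widetilde Q_r)-r^2=\tfrac r2\sum_{\mathrm{sides}}(|x-y|-r)+O\bigl(\sum\delta^2\bigr)$
rather than $r\sum_{\mathrm{sides}}(|x-y|-r)$ as you assert (check with a uniform dilation by $1+t$: the area changes by $2r^2t+O(t^2)$ while $\sum_{\mathrm{sides}}\delta_e=4rt$); with the correct coefficient the linear terms in \eqref{tayloreq} cancel only after replacing $\frac{v'(r)}{r}$ by $\frac{2v'(r)}{r}$, a constant-factor correction that is harmless for the later applications of the proposition but should be made explicit in your $R_1$ step.
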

In what follows, with an abuse of notation, we still write $e(v,r)$ for the error term as in \eqref{tayloreq}, even if the constant $C$ changes from line to line, as long as $C$ remains independent of $X$.

\medskip

We are now ready to state the long-range version of Theorem \ref{thm_fr_crystal}.
We recall that the constant $\overline{\alpha}$ is given by Lemma \ref{lem_lowerbound},
 the constants $\alpha_0>0$, $C_3>1$ are provided by Lemma \ref{lem:combintometric}, and $\alpha'_0>0$ is given by Lemma \ref{cor_squarerigid}.
 
Given $\alpha,\alpha'',\epsilon>0$, $p>4$, we denote by $K=K(\alpha,\alpha'',p)$ the constant given by Lemma \ref{lem:mindistance} for  $r_{min}:=1-\alpha$, $C_2:=1$, $C_1:=\epsilon$ and $r_0:=\sqrt{2}+{\alpha''}$.
\begin{theorem}[crystallization for smooth one-well potentials]\label{thm_lr_crystal}
Let $\alpha,\alpha',\alpha''>0$ be such that $\alpha'<\alpha<\frac{1}{C_3}\alpha''<\frac{1}{C_3}\min\{(2-\sqrt2)/4,\overline{\alpha}, \alpha_0,\alpha'_0\}$ and let $p>4$. 

Set $r_{min}:=1-\alpha$, $C_1:=\epsilon$, $r_0:=\sqrt 2+\alpha''$, $C_2:=1$ and let $K>0$ be as in Lemma \ref{lem:mindistance}.
There exist two constants $\overline{c},\overline{\epsilon}>0$ such that for every $c',c''\in(0,\overline {c}]$ and $\epsilon\in (0,\overline\epsilon]$ the following result holds true:
If  $V,W\in C^2_{pw}((0,\infty))$ are related by \eqref{WfromV} and satisfy
\begin{itemize}
\item[(0)] $\min_{s>0}W(s)=-1$,
\item[(1)] $V$ is convex in $E_{\alpha''}$ and $V$ satisfies $\inf_{r\in E_{\alpha''}\setminus [1,\sqrt 2]}V''_{\pm}(r)\ge c$,
\item[(2)] $\VV$ satisfies \eqref{cond_crit} and \eqref{conditions} with constant $c'$,
 \item[(3)] $\sup_{r\in E_{\alpha'}}V(r)<-\frac{15}{16}-c''$,
 \item[(4)] $V(r)>-\frac12$ if  $r\notin (1-\alpha,\sqrt2+\alpha)$,
 \item[(5)] $V(r)\ge K$ if $r\le 1-\alpha$,
\item[(6')] $V(r)\le 0$ for every $r\ge 1$ and $|V(r)|,\,r|V'(r)|,\,r^2|V''(r)|<\epsilon r^{-p}$ for $r\ge \sqrt{2}+{\alpha''}$,
\end{itemize}
then
\begin{equation}\label{energy_lr}
N\overline{\mathcal E}_{\mathrm{sq}}[V]\le \mathcal E[V](N)\le N\overline{\mathcal E}_{\mathrm{sq}}[V]+O(N^{1/2})\qquad\textrm{as }N\to +\infty.
\end{equation}
\end{theorem}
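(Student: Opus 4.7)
This proceeds exactly as in Theorem \ref{thm_fr_crystal}: let $\underline t>0$ realize the minimum in \eqref{min_dens_intro}, and test on a roughly disk-shaped subset $\widetilde X_N\subset\underline t\mathbb Z^2$ of cardinality $N$. Hypothesis (6') ensures that each of the $N-O(N^{1/2})$ interior points contributes $\overline{\mathcal E}_{\mathrm{sq}}[V]+o(1)$ to the per-point energy, while the $O(N^{1/2})$ boundary points contribute a uniformly bounded amount each.

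\textbf{Lower bound: reduction via a resummed potential.} Following the strategy of \cite{T}, I would construct an auxiliary ``square-lattice resummed'' potential $V_*\in C^2_{pw}((0,\infty))$ whose $4$-point energy reproduces the per-point energy of a $t$-rescaled square lattice, i.e.
$$\tfrac12\mathcal E_4[V_*](\{0,t\}^2)\;=\;\lim_{R\to\infty}\frac{\mathcal E[V](t\mathbb Z^2\cap B_R)}{\sharp(t\mathbb Z^2\cap B_R)}\qquad\text{for all }t>0,$$
so that $\overline{\mathcal E}_{\mathrm{sq}}[V]=\min\mathcal E_4[V_*]$. Using Lemma \ref{lem_splitgraph} to partition $\mathbb Z^2\setminus\{0\}$ into sides and diagonals of rescaled sublattices $\Lambda\in\mathcal L_r$, $r\in\widetilde{\mathcal D}$, this amounts to prescribing $V_*(t)$ and $V_*(\sqrt 2 t)$ as a weighted series in $V(rt),V(\sqrt 2 rt)$ over $r\in\widetilde{\mathcal D}$, which by hypothesis (6') with $p>4$ converges absolutely in $C^2$. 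For $\bar{\bar\epsilon}$ small this gives $\|V-V_*\|_{C^2((1-\alpha'',\infty))}=O(\epsilon)$, small enough that Proposition \ref{prop_square} applies to $V_*$, yielding a unique global minimum $\overline\boxtimes$ of $\mathcal E_4[V_*]$ at some sidelength $\underline t>1-\alpha$.

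\textbf{Lower bound: scale-by-scale decomposition.} Fix a minimizer $X_N$. Lemma \ref{lem:mindistance} (via (0),(5),(6')) gives $\min|x_p-x_q|>1-\alpha$. Applying Lemma \ref{lem_splitgraph} (to the complete graph on $\Xi_N$), the energy splits as
$$\mathcal E[V](X_N)=\tfrac12\sum_{r\in\widetilde{\mathcal D}}\sum_{Q_r\in\mathcal Q_r}\sum_{\{a,b\}\in\mathrm{Sides}(Q_r)\cup\mathrm{Diag}(Q_r)}V(|x_a-x_b|)+\mathcal E_{\partial}[V](X_N),$$
where $\mathcal E_{\partial}$ collects edges coming from incomplete $\mathcal Z_\boxtimes$-charts near $\partial\mathcal G_\alpha$, controlled via Proposition \ref{prop:card_bounds} by $\sharp\partial\mathcal G_\alpha$ times a summable series in $r$. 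For each interior square $Q_r$, Proposition \ref{prop:taylor} applied with $v=V$ gives
$$\sum_{\{x,y\}\in\mathrm{Sides}(Q_r)}V(|x-y|)\;\ge\;4V(r)+\frac{V'(r)}{r}\bigl(\mathrm{Area}(X(Q_r))-r^2\bigr)-e(V,r)\sum_{\substack{x,y\in X(Q_r)\\ x\neq y}}\delta^2(x,y),$$
and similarly for diagonals, which by Lemma \ref{lem:mrm2r} coincide up to boundary errors \eqref{badsides_r_bd} with sides of $\mathcal Q_{\sqrt 2 r}$. Summing the $4V(r)$-contributions weighted by $m(r)$ and invoking the definition of $V_*$ reconstructs $\mathcal E_4[V_*](\overline\boxtimes)\cdot\sharp\mathcal Q_1=\overline{\mathcal E}_{\mathrm{sq}}[V]\cdot\sharp\mathcal Q_1$; the area corrections telescope via \eqref{area_r_bd} into a scale-independent bulk term plus an $O(\sharp\partial\mathcal G_\alpha)$ boundary contribution.

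\textbf{Main obstacle: absorbing quadratic deformation errors across scales.} The key technical difficulty is handling the error terms $e(V,r)\sum\delta^2$ uniformly over $r$. Proposition \ref{prop:distortion} bounds the scale-$r$ deformations by $O(r\, m(r))$ times the short-scale deformations $\delta^2$ at scale $1$; combined with Proposition \ref{prop:card_bounds}, the total error takes the form $\bigl(\sum_{r\in\widetilde{\mathcal D}}C r^2 m(r) e(V,r)\bigr)\cdot\sum_{Q_1}\sum_{x\neq y\in X(Q_1)}\delta^2(x,y)$. Hypothesis (6') implies $e(V,r)=O(r^{-p-2})$, so the series over $r$ converges because $p>4$ and $m(r)$ grows subpolynomially. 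The remaining short-scale deformation sum is then absorbed by the coercive bulk contribution at scale $1$: condition (2), preserved for $V_*$ via Lemma \ref{lemmorobusto}, provides strict $c'/2$-monotonicity of $\nabla\mathcal E_4[V_*]$ near $\overline\boxtimes$, so that $\mathcal E_4[V_*](X(Q_1))-\mathcal E_4[V_*](\overline\boxtimes)\ge (c'/2)\sum\delta^2$ for all but $O(\sharp\partial\mathcal G_\alpha)=O(N^{1/2})$ scale-$1$ squares. Summing over $Q_1$ and assembling all estimates yields $\mathcal E[V](X_N)\ge N\overline{\mathcal E}_{\mathrm{sq}}[V]-O(N^{1/2})$, completing the proof.
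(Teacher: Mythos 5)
Your lower-bound architecture (resummation into $V_*$ via Lemma \ref{lem_splitgraph}, scale-by-scale Taylor expansion via Proposition \ref{prop:taylor}, reduction of scale-$r$ distortions to scale-$1$ ones via Proposition \ref{prop:distortion} and Proposition \ref{prop:card_bounds}, and absorption of the resulting $O(\epsilon)\sum\delta^2$ error by the coercivity of $\mathcal E_4[V_*]$ coming from condition (2)) is essentially the paper's; the paper merely packages the absorption as the statement that $\mathcal E_4[V_*-C_{\mathrm{pot}}\epsilon\overline{\delta^2}]$ still has a unique square minimum with value $\overline{\mathcal E}_{\mathrm{sq}}[V]$ (its Step 3, via Proposition \ref{prop_square}), which is the same mechanism you describe in your last paragraph. (A minor slip: the per-point lattice energy equals $\mathcal E_4[V_*](\{0,t\}^2)$, not half of it.)

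The genuine gap is your treatment of the boundary/defect terms. You collect them as ``$O(\sharp\partial\mathcal G_\alpha)$'', assert $\sharp\partial\mathcal G_\alpha=O(N^{1/2})$, and conclude $\mathcal E[V](X_N)\ge N\overline{\mathcal E}_{\mathrm{sq}}[V]-O(N^{1/2})$. This fails on two counts. First, for a minimizer there is no a priori bound $\sharp\partial\mathcal G_\alpha=O(N^{1/2})$: smallness of the defect set is a consequence of crystallization, not an input, so invoking it in the lower bound is circular. Second, even granting it, the inequality to be proved is the exact bound $\mathcal E[V](N)\ge N\overline{\mathcal E}_{\mathrm{sq}}[V]$, with no $-O(N^{1/2})$ slack. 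The missing argument --- and the only place where hypotheses (3) and (4) enter, which your proposal never uses --- is the weight-redistribution bookkeeping of Theorem \ref{thm_fr_crystal}, reproduced in Steps 4--5 of the paper's proof: after enriching $\mathcal G_\alpha$ with long and missing edges, one shows that every vertex $p$ with $\mathcal N_\alpha(p)\cap\partial\mathcal G_\alpha\neq\emptyset$ or $\mathcal N_{\alpha''}(p)\setminus\mathcal N_\alpha(p)\neq\emptyset$ carries a \emph{nonnegative} excess, namely $-4m_{\alpha',V}-4+\frac{1}{16}-(C+\widetilde C)\epsilon\ge 0$ thanks to $\sup_{r\in E_{\alpha'}}V(r)<-\frac{15}{16}-c''$ and $c''\ge(C+\widetilde C)\epsilon$, so that the defect contribution can be dropped with the favourable sign no matter how large $\sharp\partial\mathcal G_\alpha$ is. Without this step the proof does not close.
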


Before the proof we connect $\overline{\mathcal{E}}_{\mathrm{sq}}[V]$ to an $\mathcal E_4$-minimization problem, in a self-contained result.

\medskip

We first introduce some notations. Recalling the decomposition \eqref{splitgraph}, for every $t>0$ we set 
\begin{subequations}\label{defntildev}
\begin{equation}
\widetilde\VV(t^2):=\sum_{r\in\widetilde{\mathcal D}\setminus\{1\}}m(r)\VV(t^2r^2),\quad \VV_*(t^2):=\sum_{r\in\widetilde{\mathcal D}}m(r)\VV(t^2r^2)=\VV(t^2)+\widetilde \VV(t^2),
\end{equation}
and, as above,
\begin{equation}
\widetilde V(t):=\widetilde\VV(t^2),\quad V_*(t):=\VV_*(t^2)\quad\mbox{ for all }t>0.
\end{equation}
\end{subequations}

Finally, for every $\Lambda\in\mathcal{L}$ with $0\in\Lambda$, we denote by $\mu(\Lambda)$ the set of shortest vectors in $\Lambda\setminus\{0\}$, i.e.,
$$
\mu(\Lambda):=\{z\in\Lambda\setminus\{0\}\,:\,|z|\le |w|\qquad\textrm{for all }w\in\Lambda\setminus\{0\}\}.
$$
\begin{proposition}\label{squarelatticeasy}
Let $\alpha'',\epsilon>0$ and $p>4$.
Let $V\in C^2_{pw}((0,\infty))$ satisfy assumption (6') of Theorem \ref{thm_lr_crystal}. Assume that the minimum $\min \mathcal E_4[V_*]$ is achieved at a square $\overline{\boxtimes}$. Then we have
\begin{equation}\label{energy_lr2}
\mathcal E_4[V_*](\overline{\boxtimes})=\overline{\mathcal E}_{\mathrm{sq}}[V].
\end{equation}
\end{proposition}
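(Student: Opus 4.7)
The plan is to show that for every scale $t>0$ the energy per point of the infinite square lattice $t\mathbb Z^2$ coincides exactly with $\mathcal E_4[V_*]$ evaluated on a square of side $t$; the conclusion then follows by passing to the infimum and using the hypothesis that the global minimum of $\mathcal E_4[V_*]$ lives on a square.

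First I would use assumption (6') to justify that $V_*(t)=\sum_{r\in\widetilde{\mathcal D}}m(r)V(tr)$ is well defined: since $|V(\rho)|\le\epsilon\rho^{-p}$ for $\rho\ge\sqrt2+\alpha''$ with $p>4$, and since $\sharp\{a\in\mathbb Z^2:|a|\in[n,n+1]\}=O(n)$, the series $\sum_{a\in\mathbb Z^2\setminus\{0\}}|V(t|a|)|$ is bounded by a convergent series of the form $\sum n^{1-p}$. The same estimate controls boundary contributions in the thermodynamic limit, giving the standard identity
\begin{equation*}
\lim_{R\to\infty}\frac{\mathcal E[V](t\mathbb Z^2\cap B_R)}{\sharp(t\mathbb Z^2\cap B_R)}=\frac12\sum_{a\in\mathbb Z^2\setminus\{0\}}V(t|a|).
\end{equation*}

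Next I would invoke the partition Lemma \ref{lem_splitgraph}: for each $a\in\mathbb Z^2\setminus\{0\}$ the edge $\{0,a\}$ belongs to exactly one $\Lambda\in\mathcal L_r$ with $r\in\widetilde{\mathcal D}$ and $|a|\in\{r,\sqrt2\,r\}$. Since every sublattice $\Lambda\in\mathcal L_r$ contains the origin with exactly $4$ shortest vectors (of length $r$) and $4$ next-shortest vectors (of length $\sqrt2\,r$), summing over $\Lambda\in\mathcal L_r$ contributes $4m(r)[V(tr)+V(\sqrt2\,tr)]$. Rearranging the absolutely convergent series,
\begin{equation*}
\sum_{a\in\mathbb Z^2\setminus\{0\}}V(t|a|)=\sum_{r\in\widetilde{\mathcal D}}4m(r)\bigl[V(tr)+V(\sqrt2\,tr)\bigr]=4\bigl[V_*(t)+V_*(\sqrt2\,t)\bigr],
\end{equation*}
so the per-point energy of $t\mathbb Z^2$ equals $2V_*(t)+2V_*(\sqrt2\,t)$. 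A direct evaluation using definition \eqref{defEWintro} shows that $\mathcal E_4[V_*]$ applied to a square of side $t$ also equals $2V_*(t)+2V_*(\sqrt2\,t)$, so the two quantities coincide for every $t>0$.

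Taking the infimum over $t>0$ yields
\begin{equation*}
\overline{\mathcal E}_{\mathrm{sq}}[V]=\inf_{t>0}\mathcal E_4[V_*](\text{square of side }t)\ge \min\mathcal E_4[V_*],
\end{equation*}
while by the hypothesis that $\min\mathcal E_4[V_*]=\mathcal E_4[V_*](\overline\boxtimes)$ is attained at a square $\overline\boxtimes$ of some side $t^*$, the converse inequality $\min\mathcal E_4[V_*]\ge\inf_{t>0}\mathcal E_4[V_*](\text{square of side }t)=\overline{\mathcal E}_{\mathrm{sq}}[V]$ is immediate, and the two collapse to \eqref{energy_lr2}. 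The only real technical point—which I expect to be the main obstacle to write out cleanly rather than conceptually hard—is the careful bookkeeping of multiplicities in the rearrangement via Lemma \ref{lem_splitgraph} together with a rigorous handling of the thermodynamic limit (boundary annulus estimate plus tail control from (6')), since everything else is essentially algebraic.
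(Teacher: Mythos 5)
Your proposal is correct and follows essentially the same route as the paper: absolute convergence of the lattice sum from (6'), the partition of $\mathbb Z^2\setminus\{0\}$ via Lemma \ref{lem_splitgraph} into sublattices contributing $4m(r)[V(tr)+V(\sqrt2\,tr)]$, the identification of the per-point energy with $2V_*(t)+2V_*(\sqrt2\,t)=\mathcal E_4[V_*]$ on a side-$t$ square, and the two inequalities obtained from minimizing over $t$ and from the hypothesis that $\min\mathcal E_4[V_*]$ is attained at a square. The only cosmetic difference is that you carry a general scale $t$ throughout, whereas the paper normalizes $\underline t=1$ (and then $\tilde t=1$) and repeats the computation; the content is identical.
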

\begin{proof}
Let $\underline t>0$ be the value at which the minimum in the definition \eqref{eppz2_intro} is achieved. Up to scaling, we may, and will, assume that $\underline t=1$. Let $W\in C^2_{pw}((0,\infty))$ be defined by \eqref{WfromV}.

By \eqref{splitgraph} of Lemma \ref{lem_splitgraph} and by Lemma \ref{lem:mrm2r}, we have 
\begin{equation}\label{Vstar}
\begin{aligned}
\sum_{z\in \mathbb Z^2\setminus\{0\}}\VV(|z|^2)&=\sum_{\substack{\Lambda\in \mathcal L\\ 0\in\Lambda}}\sum_{z\in\mu(\Lambda)}(\VV(|z|^2)+\VV(2 |z|^2))\\
&=4\sum_{r\in\widetilde{\mathcal D}}(m(r)\VV(r^2)+m(\sqrt2 r)\VV(2r^2))= 4(\VV_*(1)+\VV_*(2)).
\end{aligned}
\end{equation}
For every $R>0$, by \eqref{Vstar} we have
\begin{equation}\label{sumcompare}
\begin{aligned}
&\sum_{ x\in\mathbb Z^2\cap B_R }\sum_{ y\in(\mathbb Z^2\cap B_R)\setminus\{x\} } \VV(|x-y|^2)\\ 
=&\sum_{x\in \mathbb Z^2\cap B_R}\sum_{y\in\mathbb Z^2\setminus\{x\}}\VV(|x-y|^2)-\sum_{x\in\mathbb Z^2\cap B_R}\sum_{y\in\mathbb Z^2\setminus B_R}\VV(|x-y|^2)\\
=&4\sharp(\mathbb Z^2\cap B_R)(\VV_*(1)+\VV_*(2))-S ,
\end{aligned}
\end{equation}
where we have set
$$
S:=\sum_{x\in\mathbb Z^2\cap B_R}\sum_{y\in\mathbb Z^2\setminus B_R}\VV(|x-y|^2).
$$
By assumption (6') we have that the series $\sum_{z\in \mathbb Z^2\setminus\{0\}}|\VV(|z|^2)|$ converges so that
\[
 \frac{|S|}{\sharp(\mathbb Z^2\cap B_R)}\le\sum_{z\in \mathbb Z^2\setminus B_{R}}|V(|z|)|\le \omega_R,
\]
where $\omega_R\to 0$ as $R\to +\infty$.
Thus from \eqref{sumcompare} we find
\begin{equation}\label{point1}
\begin{aligned}
\overline{\mathcal{E}}_{\mathrm{sq}}[V]=& \lim_{R\to\infty}\frac{1}{\sharp(\mathbb Z^2\cap B_R)}\sum_{\substack{x,y\in\mathbb Z^2\cap B_R\\x\neq y}}V(|x-y|)\\
=& \frac12\lim_{R\to\infty}\frac{1}{\sharp(\mathbb Z^2\cap B_R)}\sum_{x\in\mathbb Z^2\cap B_R}\sum_{y\in(\mathbb Z^2\cap B_R)\setminus\{x\}}V(|x-y|)\\
=&2(V_*(1)+V_*(\sqrt{2}))=\mathcal E_4[V_*](\boxtimes)\ge \mathcal E_4[V_*](\overline{\boxtimes}),
\end{aligned}
\end{equation}
where $\boxtimes$ is a unit square configuration, which shows the inequality ``$\le$'' in \eqref{energy_lr2}.

\medskip 

On the other hand, using the assumption that $\mathcal E_4[V_*]$ achieves its minimum at a square, say it achieves the minimum at $\{0,\tilde t\}^2=\overline{\boxtimes}$. Then we renormalize $\tilde t=1$ and we can repeat the above computations verbatim, getting the inequality ``$\ge$'' in \eqref{energy_lr2}, thus concluding the proof.
\end{proof}

\subsection{Proof of Theorem \ref{thm_lr_crystal}}
Let $X_N:=X(\Xi_N)$ be a minimizer of $\E[V]$ in $\mathcal{X}_N(\R^2)$. To ease the notations, we set $x_a:=X(a)$ for every $a\in\Xi_N$. Here and in the whole section an \emph{edge} $\{a,b\}$ is any pair of distinct points $a,b\in\Xi_N$.
We denote by 
{
\[
\overline{\mathcal{S}}=\overline{\mathcal{S}}(\Xi_N):=\{\{a,b\}:\ a\neq b\in \Xi_N\},
\]}
the set of all the edges associated to $\Xi_N$. In what follows, for every square $Q_r:=\{\xi_1,\xi_2,\xi_3,\xi_4:=\xi_0\}$ at scale $r$ {(also called $r$-square)}, the {\it sides} of $Q_r$ are $\{\xi_{i-1},\xi_i\}$ for $i=1,\ldots,4$, whereas the edges of $Q_r$ are given by the sides plus the diagonals $\{\xi_1,\xi_3\}$ and $\{\xi_2,\xi_4\}$.

\medskip 

With a little abuse of notations the (universal) constants appearing in the estimates may change from line to line.

\medskip

{\bf Step 1: Decomposition of $\E[V]$ into contributions of type $\E_4[V]$}

\medskip

Using assumption (6') and Lemma \ref{lem:mindistance}, we find that the minimal distance between points in $X(\Xi_N)$ is strictly larger than  $1-\alpha$. This and the fact that $V(r)$ is negative for $r\ge 1$ gives that, via \eqref{energysplit}, and denoting by $\mathcal{NQ}:=\mathcal{NQ}^{(1)}\cup\mathcal{NQ}^{(2)}$ with notation as in \eqref{energysplit}, there holds:
\begin{eqnarray}\label{energysplit2}
\sum_{\{a,b\}\in\mathcal{S}}V(|x_a-x_b|)
&\ge&\frac{1}{2}\sum_{{r\in\widetilde{\mathcal D}\setminus\{1\}}}\ \sum_{Q\in\mathcal Q_r\cup\mathcal Q_{\sqrt2 r}}\ \sum_{\{a,b\}\in \mathrm{Sides}(Q)}V(|x_a-x_b|)\nonumber\\
&&+\sum_{Q_1\in\mathcal Q_1}\E_4[V](X(Q_1))\ \ +\sum_{\{a,b\}\in\mathcal{NQ}}V(|x_a-x_b|).
\end{eqnarray}
To justify the above inequality, note that (i) as a consequence of Lemma \ref{lem_splitgraph}, all possible scales from $\mathcal D$ are partitioned into pairs $\{r,\sqrt 2 r\}$ for $r\in\widetilde{\mathcal D}$, (ii) each edge can be covered by at most $2$ squares, (iii) diagonals of $1$-squares include all sides of $\sqrt2$-squares. Then we can treat separately the multiplicity of edges which are either (a) the side of two $r$-squares with $r\ge 2$, (b) the side of exactly one $r$-square with $r\ge 2$, (c) the side of two $\sqrt 2$-squares, (d) the side of exactly one $\sqrt2$-square, (e) the diagonal of one $1$-square but not the side of any $\sqrt2$-square, (f) the side of two $1$-squares (g) the side of precisely one $1$-square, (h) not the side or diagonal of any $r$-square for $r\in \mathcal D$. The two sides of \eqref{energysplit2} account for multiplicity $1$ precisely, except for cases (d) and (e), in which we have multiplicity $2$. But in these cases, $V$ takes a negative sign for the corresponding edge lengths, and we get the desired inequality.

\medskip

We now treat the first sum on the right hand side of \eqref{energysplit2} for  $r\ge 2$.
Recalling the notations introduced before Proposition \ref{prop:taylor}, applying Proposition \ref{prop:taylor} with $\widetilde{Q}_r=X(Q_r)$ 
 and $v=V$, we get
\begin{eqnarray*}
\lefteqn{\sum_{Q_r\in \mathcal Q_r}\sum_{\{a,b\}\in\mathrm{Sides}(Q_r)}V(|x_a-x_b|)}\nonumber\\
&\ge&\sum_{Q_r\in\mathcal Q_r}\left(4V(r) + \frac{V'(r)}{r}\left(\mathrm{Area}(X(Q_r))-r^2\right)-e(V,r)\sum_{\substack{x,y\in{X(Q_r)}\\ x\neq y}}\delta^2(x,y)\right),
\end{eqnarray*}
which together with \eqref{card_r_bd} and \eqref{area_r_bd} of Proposition \ref{prop:card_bounds}, implies
\begin{eqnarray*}
 &&\sum_{Q_r\in \mathcal Q_r}\sum_{\{a,b\}\in\mathrm{Sides}(Q_r)}V(|x_a-x_b|)\ge m(r)\sum_{Q_1\in\mathcal Q_1}\left(4V(r)+\frac{V'(r)}{r}\left(r^2\mathrm{Area}(X(Q_1))-r^2\right)\right)\nonumber\\
 &&-\ C\left(|V(r)|+r{|V'(r)|}\right) m(r)r^2\sharp \partial\mathcal{G}_\alpha\quad -\quad e(V,r)\sum_{Q_r\in\mathcal Q_r}\sum_{\substack{ x,y\in X(Q_r)\\ x\neq y}}\delta^2(x,y).
 \end{eqnarray*}
Using again Proposition \ref{prop:taylor} with  $r=1$, $\widetilde Q_1=X(Q_1)$, and $v(x)=V_r(x):=V(rx)$, together with the $2$-homogeneity $r^2\mathrm{Area}(X(Q_1))=\mathrm{Area}(r X( Q_1))$, we get
\begin{equation}\label{pass3}
\begin{aligned}
&\sum_{Q_r\in \mathcal Q_r}\sum_{\{a,b\}\in\mathrm{Sides}(Q_r)}V(|x_a-x_b|)
\ge m(r)\sum_{\{a,b\}\in\mathrm{Sides}(\mathcal Q_1)}V\left(r|x_a-x_b|\right)\\
&\phantom{\sum_{Q_r\in \mathcal Q_r}\sum_{\{a,b\}\in\mathrm{Sides}(Q_r)}V(|x_a-x_b|)}- m(r)e(V_r,1)\sum_{Q_1\in\mathcal Q_1}\sum_{\substack{x,y\in X(Q_1)\\ x\neq y}}\delta^2(r x,r y) \\
 &-\ C\left(|V(r)|+r|V'(r)|\right)m(r)r^2\sharp \partial\mathcal{G}_\alpha\quad-\quad e(V,r)\sum_{Q_r\in\mathcal{Q}_r}\sum_{\substack{x,y\in X(Q_r)\\ x\neq y }}\delta^2(x,y).
\end{aligned}
\end{equation}
In view of \eqref{fjm2} the error terms from the last sum in \eqref{pass3} can also be re-interpreted as a scale-$1$ error term, i.e., 
\begin{equation}\label{pass3more}
\sum_{Q_r\in\mathcal Q_r}\sum_{\substack{x,y\in X(Q_r)\\ x\neq y}}\delta^2(x,y)\le \bar C m(r)r^2 \sum_{Q_1\in\mathcal Q_1}\sum_{\substack{x,y\in X( Q_1)\\ x\neq y}}\delta^2(x,y).
\end{equation}
Therefore, noting that $\delta^2(r x,r y)=r^2\delta^2(x,y)$, and setting 
\begin{equation}\label{errbdterms}
\begin{aligned}
\mathrm{err}_1(r,V):=&C\left(|V(r)| +r |V'(r)|\right)m(r)r^2,\\ 
\mathrm{err}_2(r,V):=&m(r)r^2 \left(e(V_r,1)+\bar Ce(V,r)\right),
\end{aligned}
\end{equation}
we can reorder terms in \eqref{pass3}{, and the final inequality we get is}
\begin{multline}\label{notationpass}
{\sum_{Q_r\in \mathcal Q_r}\sum_{\{a,b\}\in\mathrm{Sides}(Q_r)}V(|x_a-x_b|)\ge m(r)\sum_{\{a,b\}\in\mathrm{Sides}(\mathcal Q_1)}V\left(r|x_a-x_b|\right)}\\
-\mathrm{err}_1(r,V)\sharp\partial\mathcal{G}_\alpha - \mathrm{err}_2(r,V)\sum_{Q_1\in\mathcal Q_1}\sum_{\substack{x,y\in X( Q_1)\\ x\neq y}}\delta^2(x,y).
\end{multline}
Similarly, for $\sqrt2r$-square contributions in \eqref{energysplit2}, we get
\begin{multline}\label{notationpass2}
\sum_{Q_{\sqrt{2}r}\in \mathcal Q_{\sqrt{2}r}}\sum_{\{a,b\}\in\mathrm{Sides}( Q_{\sqrt{2}r})}V(|x_a-x_b|)\ge m(\sqrt2 r)\sum_{\{a,b\}\in\mathrm{Sides}(\mathcal Q_{\sqrt 2})}V\left(r|x_a-x_b|\right)\\
-\mathrm{err}_1(\sqrt 2 r,V)\sharp\partial\mathcal{G}_\alpha - \mathrm{err}_2(\sqrt 2 r,V)\sum_{Q_{\sqrt 2}\in\mathcal Q_{\sqrt 2}}\sum_{\substack{x,y\in X( Q_{\sqrt 2})\\ x\neq y}}\delta^2(x,y).
\end{multline}

We now estimate {the above} error terms. 

\medskip

Let $\rho\ge 2$.
Using that $m(\rho)\le C \rho$ and assumption (6'), we get that there exist two constants $ C,C_{\mathrm{pot}}> 0$ (depending only on the dimension) such that
\begin{subequations}\label{err23bd}
\begin{eqnarray}
\mathrm{err}_1(\rho,V)&\le&C\epsilon \rho^{3-p}\label{err2bd}\\
\mathrm{err}_2(\rho,V)&\le&C_{\mathrm{pot}}\epsilon \rho^{3-p}\label{err3bd}.
\end{eqnarray}
\end{subequations}

Now \eqref{notationpass} and \eqref{notationpass2}, together with \eqref{err23bd}, give that for every $r\in\mathcal D$ with $r\ge 2$
\begin{subequations}\label{notationpass2.0}
\begin{eqnarray}
\nonumber
&&\sum_{Q_r\in\mathcal Q_r}\sum_{\{a,b\}\in \mathrm{Sides}(Q_r)}V(|x_a-x_b|)+\sum_{Q_{\sqrt 2r}\in\mathcal Q_{\sqrt 2r}}\sum_{\{a,b\}\in \mathrm{Sides}(Q_{\sqrt 2r})}V(|x_a-x_b|)\\
\label{notationpass2.0a}
&\ge&m(r)\sum_{\{p,q\}\in\mathrm{Sides}(\mathcal Q_1)}V\left(r|x_p-x_q|\right)+m(\sqrt 2 r)\sum_{\{p,q\}\in\mathrm{Sides}(\mathcal Q_{\sqrt 2})}V\left(r|x_p-x_q|\right)\\
\label{notationpass2.0b}
&&-C_{\mathrm{pot}}\epsilon r^{3-p}\left[\sum_{Q_{1}\in\mathcal Q_{1}}\sum_{\substack{x,y\in X( Q_{1})\\ x\neq y}}\delta^2(x,y)+\sum_{Q_{\sqrt 2}\in\mathcal Q_{\sqrt 2}}\sum_{\substack{x,y\in X( Q_{\sqrt 2})\\ x\neq y}}\delta^2(x,y)\right]\\
\nonumber
&&-C\epsilon r^{3-p}\sharp\partial\mathcal{G}_\alpha.
\end{eqnarray}
\end{subequations}

As for the term in \eqref{notationpass2.0a},
since {$m(\sqrt 2 r)=m(r)$}, for every $r\in\mathcal{D}$ with $r\ge 2$ we have that 
\begin{equation}\label{notationpass2.1}
\begin{aligned}
&m(r)\sum_{\{p,q\}\in\mathrm{Sides}(\mathcal Q_1)}V\left(r|x_p-x_q|\right)+m(\sqrt 2 r){\sum_{\{p,q\}\in\mathrm{Sides}(\mathcal Q_{\sqrt 2})}V\left(r|x_p-x_q|\right)}\\
=&m(r)\left[\sum_{\{p,q\}\in\mathrm{Sides}(\mathcal Q_1)}V\left(r|x_p-x_q|\right)+{\sum_{\{p,q\}\in\mathrm{Sides}(\mathcal Q_{\sqrt 2})}V\left(r|x_p-x_q|\right)}\right]\\
=&2m(r)\sum_{Q_1\in\mathcal Q_1}\E_4[V_r](X(Q_1))
-{2 m(r)\sum_{\{p,q\}\in\mathrm{Diag}(\mathcal{Q}_1)}} V_r(|x_p-x_q|)\\
&\phantom{2m(r)\sum_{Q_1\in\mathcal Q_1}\E_4[V_r](X(Q_1))
}+{m(r)\sum_{\{p,q\}\in\mathrm{Sides}(\mathcal Q_{\sqrt 2})}} V_r(|x_p-x_q|)\\
 \ge & 2m(r)\sum_{Q_1\in\mathcal Q_1}\E_4[V_r](X(Q_1))
-{ m(r)\sum_{\{p,q\}\in\mathrm{Diag}(\mathcal{Q}_1)}} V_r(|x_p-x_q|)\\
&\phantom{2m(r)\sum_{Q_1\in\mathcal Q_1}\E_4[V_r](X(Q_1))
}+{m(r)\sum_{\{p,q\}\in\mathrm{Sides}(\mathcal Q_{\sqrt 2})}} V_r(|x_p-x_q|)\\
:=& 2m(r)\sum_{Q_1\in\mathcal Q_1}\E_4[V_r](X(Q_1))+\mathrm{err}_3(r,V),
\end{aligned}
\end{equation}
where  the inequality is a consequence of assumption (6') -- in particular the fact that $V(r)\leq 0$ for $r\geq 1$ -- and the last line is a definition of $\mathrm{err}_3(r,V)$.

\medskip

To justify the {second} equality in \eqref{notationpass2.1} note that, by definition  \eqref{defEW}, $\E_4[V_r](Q)$ has coefficient $1/2$ for terms coming from the sides of $Q$, and coefficient $1$ in front of diagonal terms.

\medskip

By using \eqref{badsides_r_bd}, the bound $m(r)\le Cr$, the fact that $r\ge 2$, and again assumption (6'), we have
\begin{equation}\label{estimerr1}
|\mathrm{err}_3(1,V_r)|\le C\epsilon r^{3-p}\sharp{\partial}\mathcal{G}_\alpha.
\end{equation}

We finally pass to the estimate of the two sums in \eqref{notationpass2.0b}.
By applying \eqref{fjm2} with $r=\sqrt 2$ we have
\begin{equation}\label{fjm2consequence}
\sum_{Q_{\sqrt 2}\in\mathcal Q_{\sqrt 2}}\sum_{\substack{x,y\in X( Q_{\sqrt 2})\\ x\neq y}}\delta^2(x,y)\le 
{\sum_{Q_{1}\in\mathcal Q_{1}}}\sum_{\substack{x,y\in X( Q_{1})\\ x\neq y}}\delta^2(x,y){\le 2}{\sum_{Q_{1}\in\mathcal Q_{1}}}\E_4[\overline{\delta^2}](X(Q_1)),
\end{equation}
where $\overline{\delta^2}:E_\alpha\to[0,\infty)$ is defined by $\overline{\delta^2}(t)=(1-t)^2$ for $t\in E^1_\alpha$ and $\overline{\delta^2}(t)=(\sqrt2-t)^2$ for $t\in E^2_\alpha$.

\medskip

By \eqref{notationpass2.0}, \eqref{notationpass2.1}, \eqref{estimerr1}, and \eqref{fjm2consequence} we can conclude that for every $r\in\widetilde{\mathcal{D}}$ with $r\ge 2$ we have
\begin{equation}\label{firstdecomp}
\begin{aligned}
&\sum_{Q_r\in \mathcal Q_r}\sum_{\{a,b\}\in\mathrm{Sides}(Q_r)}V(|x_a-x_b|)+\sum_{Q_{\sqrt{2}r}\in \mathcal Q_{\sqrt{2}r}}\sum_{\{a,b\}\in\mathrm{Sides}( Q_{\sqrt{2}r})}V(|x_a-x_b|)\\
\ge& 2m(r)\sum_{Q_1\in\mathcal Q_1}\E_4[V_r](X(Q_1))-C_{\mathrm{pot}}\epsilon r^{3-p}\sum_{Q_1\in\mathcal Q_1}\E_4[\overline{\delta^2}](X(Q_1))-C\epsilon r^{3-p}\sharp\partial\mathcal{G}_\alpha.
\end{aligned}
\end{equation}

\medskip

{\bf Step 2: The sum over scales.}

\medskip

By summing \eqref{firstdecomp} over all the scales $r\in\widetilde{\mathcal{D}}\setminus\{1\}$ {and} using the linearity of $\E_4[V]$ with respect to $V$, we get
\begin{multline}\label{sumscale}
\frac 1 2\sum_{r\in\widetilde{\mathcal D}\setminus\{1\}}\left[\sum_{Q_r\in \mathcal Q_r}\sum_{\{a,b\}\in\mathrm{Sides}(Q_r)}V(|x_a-x_b|)+\sum_{Q_{\sqrt{2}r}\in \mathcal Q_{\sqrt{2}r}}\sum_{\{a,b\}\in\mathrm{Sides}( Q_{\sqrt{2}r})}V(|x_a-x_b|)\right]\\
\ge \sum_{Q_1\in\mathcal Q_1}\E_4[\widetilde{V}-C_{\mathrm{pot}}\epsilon \overline{\delta^2}](X(Q_1))-C\epsilon \sharp\partial\mathcal{G}_\alpha,
\end{multline}
where we have used also the very definition of $\widetilde{V}$ in \eqref{defntildev}  and the fact that $p>4$.
Therefore, by \eqref{energysplit2} {and \eqref{defntildev}}, using again the linearity of $\E_4$ with respect to $V$, we deduce
\begin{equation}\label{sumscale2}
\frac 1 2\sum_{\{a,b\}\in\mathcal{S}}V(|x_a-x_b|)\ge \sum_{Q_1\in\mathcal Q_1}\E_4[{V}_*-C_{\mathrm{pot}}\epsilon \overline{\delta^2}](X(Q_1))-C\epsilon \sharp\partial\mathcal{G}_\alpha+\sum_{\{a,b\}\in\mathcal{NQ}}V(|x_a-x_b|).
\end{equation}

{\bf Step 3: Existence of a minimizer for $\E_4[V_*-C_{\mathrm{pot}}\ep\overline{\delta^2}]$.}

\medskip

Here we show that there exists a unique minimizer of $\E_4[V_*-C_{\mathrm{pot}}\ep\overline{\delta^2}]$ in $\mathcal{X}_4(\R^2)/\mathrm{Isom}(\R^2)$ and that such a minimizer is a square. To this purpose, we notice that the assumptions (0)-(4) and (6') allow us to profit of the results in Subsection \ref{sec_4ptmin}, and in particular of Proposition \ref{prop_square}.
Indeed, by \eqref{defntildev} and assumption (6'), choosing  $\epsilon>0$ small enough we have
\begin{equation}\label{pertbd1}
\begin{aligned}
\|V-V_*\|_{C^{2}(1-\alpha,\infty)}=&\|\widetilde{V}\|_{C^{2}(1-\alpha,\infty)}=\left\|\sum_{r\in\widetilde{\mathcal{D}}\setminus\{1\}} m(r)W(r^2\cdot)\right\|_{C^{2}(1-\alpha,\infty)}\le \bar C\epsilon< c''',
\end{aligned}
\end{equation}
where $c'''$ is the constant in Proposition \ref{prop_square}.

By the  second part of the statement of Proposition \ref{prop_square} we get that there exists a unique minimizer of $\E_4[V_*]$ in $\mathcal{X}_4(\R^2)/\mathrm{Isom}(\R^2)$ that is a square with sidelengths in
$(1-\alpha,+\infty)$.

On the other hand, by a direct calculation one can easily check that $\overline{\delta^2}$ satisfies the assumptions of  Lemma \ref{lem_minsquare} and Lemma \ref{lem:onlysquare} so that $\boxtimes$ is the unique global minimizer of $\E_4[\overline{\delta^2}]$ in $\overline{\mathscr{S}}_\alpha$. Moreover, again by a direct calculation (or using Lemma \ref{lem_minsquare}), it easily follows that
$\mathcal E_4[\overline{\delta^2}](\boxtimes)=0$ and $\nabla\mathcal E_4[\overline{\delta^2}](\boxtimes)=0$, and thus $\mathcal E_4[\overline{\delta^2}](\widetilde\boxtimes)=O(|\boxtimes -\widetilde \boxtimes|^2)$ for a small perturbation $\widetilde\boxtimes$ of $\boxtimes$. 

This implies that for $\epsilon>0$ small enough the function $\mathcal E_4[V_*-C_{\mathrm{pot}}\epsilon \overline{\delta^2}]$ still has a strict minimum at $\overline\boxtimes$. By Proposition \ref{prop_square}, which can then be applied to $V_*$ and $V_* -C_{\mathrm{pot}}\epsilon\overline{\delta^2}$ up to diminishing $\epsilon$, we find that $\overline\boxtimes$ is also the unique minimum of $\mathcal E_4[V_*-C_{\mathrm{pot}}\epsilon \overline{\delta^2}]$.

\medskip

{\bf Step 4: Estimate of the last term in \eqref{sumscale2}.}

\medskip

Now we bound the terms from the last sum in \eqref{sumscale2}, i.e. the contributions not attributable to squares. Setting $\mathcal{NQ}_{\alpha''}:=\mathcal{NQ}\cap\mathcal{S}_{\alpha''}$,
we have
\begin{eqnarray}\label{boundnq} 
\lefteqn{ \sum_{\{a,b\}\in \mathcal{NQ}}V(|x_a-x_b|)}\nonumber\\
 &=&\sum_{\{a,b\}\in \mathcal{NQ}_{\alpha''}}V(|x_a-x_b|) + \sum_{d\in\sqrt2 +\alpha'' +\frac12+\mathbb N}\sum_{\substack{\{a,b\}\in\mathcal{NQ}\\ \ |x_a-x_b|\in\left[d-\frac12,d+\frac12\right]}}V(|x_a-x_b|)\nonumber\\
 &=:&I+\mathrm{err}_4(V,X).
\end{eqnarray}
As for $\mathrm{err}_4(V,X)$, since $X$ is a minimizer, by the separation result of Lemma \ref{lem:mindistance} and a packing bound, there exists $C>0$ depending only on the dimension and on $\alpha_0>0$ such that the $r$-neighborhood of $X$ contains at most $Cr^2$ points from $X$. This implies that
\begin{equation}\label{boundsnq2}
\sharp \left\{\{a,b\}\in\mathcal{NQ}:\ |x_a-x_b|\in\left[d-\frac12,d+\frac12\right]\right\}\le Cd^3\sharp \partial \mathcal{G}_\alpha. 
\end{equation}
The consequence of \eqref{boundnq} and \eqref{boundsnq2} is the following bound, valid under hypothesis $(6')$ with $p>4$:
\begin{eqnarray}\label{finalnq}
\lefteqn{\mathrm{err}_4(V,X)\ge \sum_{d\in\sqrt2+\alpha+\frac12+\mathbb N}\sharp \left\{\{a,b\}\in\mathcal{NQ}:\ ||x_a-x_b| - d|\le \frac12\right\}\min_{r:|r-d|\le\frac12}V(r)}\nonumber\\
&\ge&C\sharp \partial\mathcal{G}_\alpha \sum_{d\in\sqrt2+\alpha+\frac12+\mathbb N}d^3\min_{r\in\left[d-\frac12,d+\frac12\right]}V(r)\ge-C\epsilon\sharp\partial\mathcal{G}_\alpha.
\end{eqnarray}
\textbf{Step 5: End of proof, following the strategy of Theorem \ref{thm_fr_crystal}}

\medskip

By \eqref{sumscale2}, \eqref{boundnq} and \eqref{finalnq}, we have

\begin{eqnarray}\label{bound_esplit4old}
\mathcal E[V](X_N)&\ge& \sum_{Q_1\in\mathcal Q_1}\mathcal E_4[V_*- C_{\mathrm{pot}}\epsilon\overline{\delta^2}](X(Q_1))+I- C\epsilon\sharp\partial\mathcal{G}_\alpha,
\end{eqnarray}
where both the constants do only depend on $p$ and on the dimension. Now, in order to continue precisely along the strategy used for Theorem \ref{thm_fr_crystal}, we re-express the sum \eqref{bound_esplit4old} via a potential which vanishes at distance larger that $\sqrt2+\alpha''$. We define
\begin{equation}\label{v**}
 V_{**}(r):=(V_*(r)-C_{\mathrm{pot}}\epsilon\overline{\delta^2}(r))\mathds{1}_{r<\sqrt2+\alpha''}(r).
\end{equation}
For estimating $I$ {from \eqref{boundnq}} we apply a similar setup as in \eqref{resumq1} from the proof of Theorem \ref{thm_fr_crystal}. Indeed, a similar decomposition as in \eqref{resumq1} can be used also in this case. 

\medskip

Then \eqref{bound_esplit4old} can be rewritten in a form very similar to \eqref{resumq1}:
\begin{eqnarray}\label{bound_esplit4}
\mathcal E[V](X_N)&\ge& \sum_{Q_1\in\mathcal Q_1}\mathcal E_4[V_{**}](X(Q_1))+\sum_{\{a,b\}\in{\mathcal{NQ}_{\alpha''}}}V(|x_a-x_b|) - C\epsilon\sharp\partial\mathcal{G}_\alpha.
\end{eqnarray}
Then we have, directly from the Step 3 of the proof, that $\min\mathcal E_4[V_*]=\min\mathcal E_4[V_{**}]$, and moreover we have from Proposition \ref{squarelatticeasy} that 
\begin{equation}\label{minVss}
\mathcal E_4[V_{**}](\overline\boxtimes)=\min\mathcal E_4[V_{**}]=\overline{\mathcal E}_{\mathrm{sq}}[V].
\end{equation}
We next proceed exactly as in the proof of Theorem \ref{thm_fr_crystal} with $V$ replaced by $V_{**}$. We define the energy contribution of each point 
\[
 \mathcal E^p[V_{**}](X_N):=\frac12\sum_{q:q\neq p}V_{**}(|x_p-x_q|)=\frac12\sum_{q\in\mathcal N_{\alpha''}(p)\setminus\{p\}}V(|x_p-x_q|),
\]
and after enriching the graph $\mathcal G_\alpha$ by adding long edges and missing edges, we obtain a graph still denoted by $\overline{\mathcal G}$ and we reach the following version of \eqref{boundsEp2}, in which we use the same notation $\overline{\mathcal N}(p)$ as for \eqref{boundsEp2}: 
\begin{equation}\label{boundsEp2-2}
 \mathcal E^p[V_{**}](X_N)\ge \mathcal E_4[V_{**}](\overline\boxtimes) + \sum_{q\in\overline{\mathcal N}(p)\setminus\{p\}}\overline{\overline w}(\{p,q\}), 
\end{equation}
where we have the following substitute for the bounds \eqref{defbarw}:
\begin{equation}\label{defbarw-2}
 \overline{\overline w}(\{p,q\}):=\left\{\begin{array}{ll}
 0&\mbox{ if }\{p,q\}\in\mathcal S_\alpha\setminus \mathcal{NQ},\\
 -\frac{\mathcal E_4[V_{**}](\overline\boxtimes)}{2}-\frac12&\mbox{ if }\{p,q\}\in \mathcal{NQ}\cap \mathcal S_\alpha,\\
  -\frac{\mathcal E_4[V_{**}](\overline\boxtimes)}{2}-\frac14&\mbox{ if }\{p,q\}\in\mathcal S_{\alpha'',\alpha},\\
 -\frac{\mathcal E_4[V_{**}](\overline\boxtimes)}{2}&\mbox{ if }\{p,q\}\mbox{ missing edge}.
 \end{array}\right.
\end{equation}
Now with the notation for $m_{\alpha',V}, m_{\alpha',V_{**}}$ as in \eqref{resumq1-3}, we get instead of \eqref{barwbound} the bound 
\begin{equation}\label{barwbound-2}
 -\frac{\mathcal E_4[V_{**}](\overline\boxtimes)}{2}-\frac12\ge -\frac{m_{\alpha',V_{**}}}{2}-\frac12 \ge -\frac{m_{\alpha',V}}{2}-\frac12 - \widetilde C\epsilon,
\end{equation}
where the first inequality uses again assumption (3) and the definition of $m_{\alpha',V_{**}}$, and for the last bound we can use the fact that $V_{**}=V_*$ over $E_{\alpha'}$, and then the bound in \eqref{pertbd1} for $\widetilde V=V_*-V$.

\medskip

Now we define the lost weight function exactly as in the proof of Theorem \ref{thm_fr_crystal}, namely $\mathrm{lw}(\{p,q\})$ is equal to $1/4$ for long edges and to $1/2$ for missing edges, and we obtain the bound \eqref{lb2}. The cardinality of the set appearing on the right in \eqref{lb2} will be denoted by
\[
 N_1:=\sharp\left\{p:\ \mathcal N_\alpha(p)\cap\partial\mathcal G_\alpha\neq\emptyset\mbox{ or }\mathcal N_{\alpha''}(p)\setminus\mathcal N_\alpha(p)\neq\emptyset\right\}.
\]
Now as in Theorem \ref{thm_fr_crystal}, we sum over $p$ equation \eqref{boundsEp2-2} in order to get all contributions $\mathcal E^p[V_{**}](X_N)$. We find from \eqref{bound_esplit4} the following analogue of \eqref{lb3} with further error terms coming from \eqref{barwbound-2} and \eqref{bound_esplit4}:
\begin{eqnarray}\label{lb3-2}
 \mathcal E[V](X_N)&\ge& N\mathcal E_4[V_{**}](\overline\boxtimes)-C\epsilon\ \sharp\partial\mathcal G_\alpha + N_1\left(-4m_{\alpha',V} - 4+\frac1{16}-\widetilde C\epsilon\right)\nonumber\\
 &\ge&N\mathcal E_4[V_{**}](\overline\boxtimes) +N_1\left(-4m_{\alpha',V} - 4+\frac1{16}-(C +\widetilde C)\epsilon\right)\\
 &\ge&N\mathcal E_4[V_{**}](\overline\boxtimes)\nonumber
\end{eqnarray}
where in the first passage we used the fact that $N_1\ge \sharp\partial \mathcal G_\alpha$ because $N_1$ measures the cardinality of a set containing $\partial\mathcal G_\alpha$, and in the last passage we used assumption (3) with $c''\ge (C +\widetilde C)\epsilon$.

\medskip

Now the property \eqref{minVss} and \eqref{lb3-2} conclude the proof of Theorem \ref{thm_lr_crystal}.

\appendix 

\section{Proof of Lemmas \ref{lem:combintometric} and \ref{cor_squarerigid}} \label{proof_lem:combintometric}
Since many constants are introduced throughout this section, for not confusing with the other constants above, we often replace $\alpha$ with $\ep$.
 For every $\ep>0$ and for every $\Xi$, $X:\Xi\to\R^2$, we recall that 
$$
\mathcal S_\ep=\{\{p,q\}\,:\,p,q\in\Xi,\,|x_p-x_q|\in(1-\ep,\sqrt 2+\ep)\},
$$
where $x_p:=X(p)$ for every $p\in\Xi$.
We first prove two preliminary Lemmas that will be useful in the proof of Lemma \ref{lem:combintometric}.
\begin{lemma}\label{trialemma}
There exists $\ep'>0$ such that for every $\ep\in (0,\ep')$ the following holds. Let $X$ satisfy \eqref{mindist} with $r_{min}=1-\ep$. Let $p_1,p_2,p_3\in \Xi$ and set $x_i:=X(p_i)$ for all $i\in \{1,2,3\}$. Then,
\begin{itemize}
\item[(i)] if $\{p_1,p_2\}\in \mathcal{S}_\ep$ and $\{p_1,p_3\}\in \mathcal{S}_\ep$ but $\{p_2,p_3\}\not\in \mathcal{S}_\ep$, then in the triangle $\{x_1,x_2,x_3\}$, the interior angles satisfy $\hat{x}_1 \ge 60^\circ$ and $\hat{x}_2,\hat{x}_3 \le\arccos\left(\frac{1}{2\sqrt 2}\right)+O(\ep)$;
\item[(ii)] if all pairs amongst $p_1,p_2,p_3$ are in $\mathcal{S}_\ep$, then the interior angles of the triangle $\{x_1,x_2,x_3\}$ are in the interval 
\[
{ \left[\arccos\left(\frac 3 4\right)-O(\ep),\quad 90^\circ+O(\ep) \right]};
\]
 \item[(iii)] if $\{p_1,p_2\},\{p_1,p_3\}\in\mathcal S_\ep$, $|x_1-x_2|=1+O(\ep)$ but $\{p_2,p_3\}\not\in \mathcal{S}_\ep$, then in the triangle $\{x_1,x_2,x_3\}$, we have $\hat{x}_1\ge \arccos\left(\frac{1}{2\sqrt 2}\right) +O(\ep)$;
\item[(iv)] if all pairs amongst $p_1,p_2,p_3$ are in $\mathcal{S}_\ep$ and $|x_1-x_2|= 1+O(\ep)$, then
\begin{equation}\label{minan}
\widehat{x_1x_2x_3},\widehat{x_3x_1x_2}\ge {45^\circ-O(\ep).}
\end{equation}
\end{itemize}
\end{lemma}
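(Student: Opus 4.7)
The plan is to treat each of the four items as an exercise in optimizing over the allowed range of side lengths using the law of cosines. In all four cases, the hypothesis $X$ satisfies the minimum distance condition $\min_{i\neq j}|x_i-x_j|>1-\varepsilon$ (this is the role of the hypothesis \eqref{mindist}), which, when combined with the hypotheses on membership/non-membership in $\mathcal{S}_\varepsilon$, pins each of the three side lengths of the triangle $\{x_1,x_2,x_3\}$ to an explicit sub-interval of $(1-\varepsilon,+\infty)$. One then writes, for the angle $\hat{x}_i$ opposite the side of length $c$,
\[
\cos(\hat{x}_i)=\frac{a^2+b^2-c^2}{2ab},
\]
where $a,b$ are the lengths of the other two sides, and optimizes the right-hand side over the admissible box of values to obtain the claimed bounds.

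For part (i), the non-edge $\{p_2,p_3\}$ forces $|x_2-x_3|\ge\sqrt2+\varepsilon$, while $|x_1-x_2|,|x_1-x_3|\in(1-\varepsilon,\sqrt2+\varepsilon)$. Maximizing $\cos(\hat{x}_1)$ (to minimize $\hat{x}_1$) is achieved when $|x_2-x_3|=\sqrt2+\varepsilon$ and $|x_1-x_2|=|x_1-x_3|=\sqrt2+\varepsilon$, giving $\cos(\hat x_1)= \tfrac12$ and hence $\hat{x}_1\ge 60^\circ$; minimizing $\cos(\hat x_2)$ (to maximize $\hat x_2$) is achieved when $|x_1-x_2|=1-\varepsilon$, $|x_1-x_3|=\sqrt2+\varepsilon$, $|x_2-x_3|=\sqrt2+\varepsilon$, which gives $\cos(\hat x_2)=\tfrac{1-\varepsilon}{2(\sqrt2+\varepsilon)}=\tfrac{1}{2\sqrt2}+O(\varepsilon)$. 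Part (ii) is the same kind of extremization with all three sides in $(1-\varepsilon,\sqrt2+\varepsilon)$: the minimum angle is achieved near $(a,b,c)=(\sqrt2,\sqrt2,1)$, yielding $\cos\approx 3/4$, while the maximum angle is achieved near $(1,1,\sqrt2)$, yielding $\cos\approx 0$ (i.e. $\approx 90^\circ$). Part (iii) repeats the calculation of the lower bound in (i) but now with $|x_1-x_2|=1+O(\varepsilon)$ fixed, which moves the extremal value of $\cos(\hat x_1)$ from $1/2$ to $\tfrac{1}{2\sqrt2}+O(\varepsilon)$. Part (iv) is the analogue of the lower bound in (ii) but with $|x_1-x_2|=1+O(\varepsilon)$ fixed, and the extremum is reached when the remaining two sides are $\sqrt2+\varepsilon$ (for the long side adjacent to $x_1$ or $x_2$) and $1-\varepsilon$ (for the side opposite to the angle), giving $\cos\approx 1/\sqrt2$ and hence the bound $45^\circ-O(\varepsilon)$.

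In each case, after identifying the extremal configuration, the $O(\varepsilon)$ control on the angle comes from continuity of $\arccos$ away from $\pm 1$ (equivalently, a first-order Taylor expansion of $\arccos$ at the limiting cosine value). The value of $\varepsilon'>0$ in the statement is simply chosen small enough that the extremal cosine remains bounded away from $\pm 1$ so that the Taylor expansion is legitimate, and so that the constraint intervals are nondegenerate. No deep obstacle is expected; the only bookkeeping task is to keep the direction of the inequalities consistent when passing from $\cos$-bounds to angle-bounds (using that $\arccos$ is decreasing), and to choose the extremal corners of the admissible box correctly for each monotonicity direction. The proof can therefore be written as four short calculations, each of which reduces to plugging the extremal side lengths into the cosine formula and expanding to first order in $\varepsilon$.
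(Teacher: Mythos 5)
Your proposal is correct and follows essentially the same route as the paper: the law of cosines plus optimization of $\frac{a^2+b^2-c^2}{2ab}$ over the box of admissible side lengths determined by \eqref{mindist} and the $\mathcal{S}_\ep$ hypotheses, with the same extremal corners ($(\sqrt2,\sqrt2,\sqrt2)$ and $(1,\sqrt2,\sqrt2)$ for (i), $(\sqrt2,\sqrt2,1)$ and $(1,1,\sqrt2)$ for (ii), and the analogous corners with one side pinned at $1$ for (iii)–(iv)), followed by a first-order expansion of $\arccos$. The only cosmetic difference is that for the $\hat x_1\ge 60^\circ$ bound in (i) the paper shortcuts via $\cos(\hat x_1)<a/(2b)\le 1/2$ after ordering $a\le b$, whereas you locate the corner of the box; both require only the routine monotonicity checks you allude to.
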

\begin{proof}
We set $a:=|x_1-x_2|,\, b:=|x_1-x_3|,\, c:=|x_2-x_3|$.
We may assume, up to relabelling the points, that 
\begin{equation}\label{order_tri2}
1-\ep <a\le b<\sqrt 2 -\ep.
\end{equation}

{\it Proof of (i).} 
The statement follows from the law of cosines, which states that 
\begin{equation}\label{lawcosines2}
\cos(\hat{x}_1)=\frac{a^2+b^2-c^2}{2ab}.
\end{equation}
If $\{p_2,p_3\}\not\in \mathcal{S}_\ep$, then due to \eqref{mindist} we need to have $c\ge \sqrt2+\ep>b$, and from \eqref{lawcosines2} {and \eqref{order_tri2}} we find $\cos(\hat{x}_1)< a/2b\le 1/2$ and thus $\hat{x}_1\ge 60^\circ$. For bounding $\hat x_2$, we observe that
\[
{ \inf}\left\{\frac{a^2+c^2-b^2}{2ac}:\ a,b\in(1-\ep,\sqrt2+\ep),\ c\ge \sqrt2 + \ep\right\}
\]
is reached as {$(a,b,c)\to(1-\ep,\sqrt{2}+\ep,\sqrt2+\ep)$}, and equals the value of the expression $(a^2+c^2-b^2)/(2ac)$ in that limit, giving the desired bound on $\hat x_2$. The bound for $\hat x_3$ works similarly, with the roles of $a,b$ interchanged.

{\it Proof of (ii).} If $\{p_2,p_3\}\in \mathcal{S}_\ep$ then $c\in(1-\ep,\sqrt2+\ep)$. Moreover \eqref{order_tri2} holds. In such a range, the sup of the right hand side of \eqref{lawcosines2} is realized by $c=1-\ep,\, a=b=\sqrt2 + \ep$, in which case
\[
\hat{x}_1=\arccos\left(\frac{3+(4\sqrt{2}+2)\ep +\ep^2}{2(\sqrt{2}+\ep)^2}  \right)=\arccos\left(\frac34+O(\ep)\right)=\arccos\left(\frac34\right)+O(\ep),
\]
whereas the inf is reached for $a=b=1-\ep,\, c=\sqrt2+\ep$, in which case 
\[\hat{x}_1=\arccos\left(\frac{-(4+2\sqrt{2})\ep+\ep^2}{2(1-\ep)^2}\right)=\arccos(-O(\ep))=90^\circ +O(\ep).
\]
 {\it Proof of (iii).}
By \eqref{lawcosines2} and by the hypothesis we have
$$
\cos(\hat{x}_1)=\frac{(1+O(\ep))^2+b^2-c^2}{2(1+O(\ep))b}\le \frac{b^2-1-O(\ep)}{2(1+O(\ep))b}.
$$
It is easy to see that the quantity on the right-hand-side is - for $\ep$ small enough - monotonically increasing with respect to $b$, so that it is maximized for $b=\sqrt 2+\ep$. From this, the claim follows.

{\it Proof of (iv).}  Again by \eqref{lawcosines2} and by the hypothesis we have
$$
\cos(\hat{x}_1)\le \frac{{(1+O(\ep))^2}+c^2-b^2}{2(1-\ep)c},
$$
where the sup of the right-hand-side is reached for $|x_2-x_3|=\sqrt{2}+\ep$ and $|x_1-x_3|=1-\ep$, thus yielding the claim.
\end{proof}

By applying verbatim the same reasoning of the proof of Lemma \ref{trialemma}(ii) one gets the following result.
\begin{corollary}\label{toprove(i)}
Let $X\subset\mathbb R^2$ and let $x,y,z\in X$ be such that $\{x,y\}, \{y,z\}\in\mathcal{S}_0(X)$. Then $\widehat{xyz}\ge \arccos\left(\frac{3}{4}\right)\sim 41.4^\circ$.
\end{corollary}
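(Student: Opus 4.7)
The plan is to observe that this corollary is nothing more than the $\ep = 0$ limit of Lemma \ref{trialemma}, so the proof reduces to a direct application of the law of cosines together with a short elementary optimization.

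First I would note that the hypothesis $\{x,y\}, \{y,z\} \in \mathcal{S}_0(X)$ is just the statement $|x-y|, |y-z| \in [1,\sqrt{2}]$ by the definition of $\mathcal{S}_0$. Furthermore, the notation $\mathcal{S}_0(X)$ is used only in settings where $X$ lies in a configuration with minimum interpoint distance at least $1$ (i.e. $X \in \mathcal{C}$), so we also have $|x-z| \geq 1$. Setting $a := |x-y|$, $c := |y-z|$, $b := |x-z|$, the law of cosines yields
\[
\cos(\widehat{xyz}) = \frac{a^2 + c^2 - b^2}{2ac},
\]
and it suffices to upper-bound this expression by $3/4$ under the constraints $a, c \in [1,\sqrt{2}]$ and $b \geq 1$.

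The expression is strictly decreasing in $b$, so the supremum over $b \geq 1$ is realized at $b = 1$. The remaining optimization of $(a^2 + c^2 - 1)/(2ac)$ over $[1,\sqrt{2}]^2$ has no interior critical point (since the partial derivative in $a$ vanishes on $c^2 = a^2 + 1$ while the partial derivative in $c$ vanishes on $a^2 = c^2 + 1$, two incompatible conditions), so the maximum lies on the boundary, and a short check of each edge shows it is attained at the corner $a = c = \sqrt{2}$, with value exactly $\tfrac{2+2-1}{4} = \tfrac{3}{4}$. Hence $\cos(\widehat{xyz}) \leq \tfrac{3}{4}$ and the bound follows.

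Alternatively, one can split into two cases depending on whether $\{x,z\} \in \mathcal{S}_0(X)$: if yes, Lemma \ref{trialemma}(ii) in the limit $\ep \to 0^+$ gives the bound directly, while if no, $b > \sqrt{2}$ and Lemma \ref{trialemma}(i) in the same limit gives the strictly stronger bound $\widehat{xyz} \geq 60^\circ$. There is no real obstacle in this proof; it is essentially a $\ep = 0$ specialization of already-proved material.
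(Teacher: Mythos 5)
Your proof is correct and follows essentially the same route as the paper, which disposes of the corollary by ``applying verbatim the same reasoning of the proof of Lemma~\ref{trialemma}(ii)'', i.e.\ the law of cosines plus the elementary observation that the cosine is maximized at $a=c=\sqrt2$, $b=1$, where it equals $3/4$. You are also right that the implicit hypothesis $|x-z|\ge 1$ (i.e.\ $X\in\mathcal C$, which holds in the corollary's only application and is the $\ep=0$ analogue of \eqref{mindist} assumed in Lemma~\ref{trialemma}) is genuinely needed, since without a lower bound on $b$ the angle can be arbitrarily small.
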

\begin{lemma}\label{qualemma}
There exists $\ep''\in(0,\ep']$ (with $\ep'$ given by Lemma \ref{trialemma}) such that for every $\ep\in (0,\ep'')$ the following holds.
Let $\Xi$ be a set of labels and let $p_1,p_2,p_3,p_4\in \Xi$ be such that $\{p_i,p_j\}\in \mathcal{S}_{\ep}$ for all $i\neq j$ and set $x_i:=X(p_i)$ for all $i=1,\ldots,4$; then, up to relabeling, for all $i\in \{1,2,3,4\}$ we have
\begin{itemize}
\item[(i)] $| x_i-x_{i+1}|= 1+ O(\ep)$,
\item[(ii)] $\widehat{x_i x_{i+1}x_{i+2}}=90^\circ+O(\ep)$,
\item[(iii)] $|x_i-x_{i+2}|\le \sqrt{2}+O(\ep)$,
\end{itemize}
where $x_{i+4}=x_{i}$ for every $i=1,\ldots,4$.
If $p_1,p_2,p_3,p_4$ are such that $\{p_i,p_j\}\in \mathcal{S}_{\ep}$ for all $i\neq j$,  then the quadrilateral $\{p_1,p_2,p_3,p_4\}$ is called an {\it $\ep$-square}.
\end{lemma}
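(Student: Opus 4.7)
The plan is to first establish that the four points $x_1,\ldots,x_4$ must be in convex position, then relabel them cyclically around the convex hull, and finally read off (i)--(iii) from Lemma \ref{trialemma}(ii) combined with the law of cosines. Property (iii) in fact follows immediately from the hypothesis $\{p_i,p_{i+2}\}\in \mathcal{S}_\ep$, which gives $|x_i-x_{i+2}|<\sqrt{2}+\ep=\sqrt{2}+O(\ep)$, so the real content of the lemma is (i) and (ii).

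To rule out non-convex configurations, I would suppose for contradiction that one of the four points, say $x_4$, lies in the interior of the triangle formed by $x_1,x_2,x_3$. Then the three sub-triangles $\{x_1,x_2,x_4\}$, $\{x_2,x_3,x_4\}$, $\{x_3,x_1,x_4\}$ partition the big triangle, and their three angles at $x_4$ sum to exactly $360^\circ$. Since all pairs among $p_1,\ldots,p_4$ lie in $\mathcal{S}_\ep$, each sub-triangle satisfies the hypothesis of Lemma \ref{trialemma}(ii), forcing each angle at $x_4$ to be at most $90^\circ+O(\ep)$. The total is therefore at most $270^\circ+O(\ep)<360^\circ$ for $\ep$ small enough, a contradiction. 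The degenerate cases in which $x_4$ lies on an edge of the triangle of the other three, or three of the four points are collinear, are ruled out because the separation constraint $|x_i-x_j|>1-\ep$ together with $|x_i-x_j|<\sqrt{2}+\ep$ forces any three collinear points to satisfy $2(1-\ep)<\sqrt{2}+\ep$, which fails for small $\ep$.

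Once convex position is established, I would relabel so that $x_1,x_2,x_3,x_4$ appear cyclically around the convex hull, indices taken modulo $4$. For each $i$, the interior angle of the quadrilateral at $x_{i+1}$ coincides with the angle at $x_{i+1}$ in the triangle $\{x_i,x_{i+1},x_{i+2}\}$, hence by Lemma \ref{trialemma}(ii) it is at most $90^\circ+O(\ep)$; since the four interior angles of a convex quadrilateral sum to $360^\circ$, each one must in fact equal $90^\circ+O(\ep)$, giving (ii). The law of cosines applied to this triangle, using $\cos(90^\circ+O(\ep))=O(\ep)$ and the boundedness of sidelengths, then yields
\[
|x_i-x_{i+2}|^2 = |x_i-x_{i+1}|^2 + |x_{i+1}-x_{i+2}|^2 + O(\ep).
\]
Combining with $|x_i-x_{i+2}|\le \sqrt{2}+\ep$ gives $|x_i-x_{i+1}|^2 + |x_{i+1}-x_{i+2}|^2 \le 2+O(\ep)$, and since each sidelength is at least $1-\ep$, both sidelengths individually satisfy $|x_i-x_{i+1}|^2\le 1+O(\ep)$, whence $|x_i-x_{i+1}|=1+O(\ep)$, which is (i).

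The main obstacle, though quite modest, is purely bookkeeping: one must choose $\ep''\le \ep'$ from Lemma \ref{trialemma} small enough that both the $270^\circ+O(\ep)<360^\circ$ contradiction in the convex-position step and the final chain of inequalities yielding the sidelength bounds are compatible with the constraint $1-\ep<|x_i-x_{i+1}|<\sqrt{2}+\ep$. No further geometric ideas beyond Lemma \ref{trialemma}(ii) and the law of cosines are required.
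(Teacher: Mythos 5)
Your proof is correct, and it takes a genuinely different route from the paper's. The paper simply declares (up to relabeling) that the points are in cyclic order on their convex hull, then proves (i) \emph{first} by an extremal argument (the longest side is maximized when the other three sides equal $1-\ep$ and both diagonals equal $\sqrt2+\ep$), states that (ii) ``follows directly'' from Lemma \ref{trialemma}, and finally derives the two-sided estimate $|x_i-x_{i+2}|=\sqrt2+O(\ep)$ in (iii) from (i)--(ii) via the law of cosines. You instead justify the convex position explicitly (the $270^\circ+O(\ep)<360^\circ$ contradiction via Lemma \ref{trialemma}(ii), plus the collinearity exclusion $2(1-\ep)<\sqrt2+\ep$), establish (ii) \emph{first} from the quadrilateral angle sum, and then deduce (i) from (ii) together with the diagonal upper bound $|x_i-x_{i+2}|<\sqrt2+\ep$. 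Your ordering is arguably cleaner: the paper's upper bound $90^\circ+O(\ep)$ from Lemma \ref{trialemma}(ii) alone does not give the matching lower bound on the angles, so the angle-sum step you make explicit is in fact needed there too; likewise your derivation of (i) replaces the paper's somewhat hand-waved identification of the extremal configuration with a transparent inequality chain ($a^2+b^2\le 2+O(\ep)$ with $a,b>1-\ep$ forces $a,b=1+O(\ep)$). The only thing you give up is the lower bound $|x_i-x_{i+2}|\ge\sqrt2+O(\ep)$, which the paper's proof of (iii) actually yields and which is used later (e.g.\ in the last display of Lemma \ref{lem:combintometric}'s conclusion); but since the statement of (iii) only asserts the upper bound, your observation that it is immediate from $\{p_i,p_{i+2}\}\in\mathcal S_\ep$ is legitimate, and the two-sided version would anyway follow from your (i)--(ii) by one more application of the law of cosines.
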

\begin{proof}
Up to relabeling we may suppose that the points $x_1,\ldots,x_4$ are in cyclic order along the boundary of the convex hull $\mathrm{Conv}(\{x_1,\ldots,x_4\})$.

{\it Proof of (i).}  Assume that $|x_1-x_2|\ge |x_i-x_{i+1}|$ for every $i=2,\ldots,4$. Then, under the constraints $|x_i-x_{i+1}|> 1-\ep$ and $|x_i-x_{i+2}|< \sqrt2 +\ep$, the sup of $|x_1-x_2|$ is realized by $|x_2-x_3|=|x_3-x_4|=|x_4-x_1|=1-\ep$ and $|x_1-x_3|=|x_2-x_4|=\sqrt2+\ep$, which gives the desired bound.\\ 
{\it Proof of (ii).}  It follows directly by Lemma \ref{trialemma}.

{\it Proof of (iii)}. By the law of cosines, (i) {and (ii),} we have 
\begin{align*}
|x_i-x_{i+2}|^2{= 2(1+O(\ep))^2-2(1+O(\ep))^2=2+O(\ep)},
\end{align*}
which gives the claim.
\end{proof}
We are now in a position to prove Lemma \ref{lem:combintometric}.
\begin{proof}[Proof of Lemma \ref{lem:combintometric}]
We assume that $p\in \Xi$ has the maximum number of $8$ neighbors $\mathcal{G}_\varepsilon$. We write $x=X(p)$ and we set $x_i=X(p_i)$ for every $i=1,\ldots,8$. Without loss of generality the $x_i$ are ordered in counterclockwise order around $x$. We recall that $\arccos\left(\frac{1}{2\sqrt2}\right)\sim 69.2^\circ$ and $\arccos\left(\frac 3 4\right)\sim 41.4^\circ$.
Let $\ep''$ be the constant given in Lemma \ref{qualemma}.

\medskip

\textit{Claim 1: There exists $\ep_0\in (0,\ep'')$ such that for all $\ep\in  (0,\ep_0)$ at least $7$ indices $i=1,\ldots,8$ are such that $\{p_i,p_{i+1}\}\in \mathcal{S}_{\ep}$}.
We first note that if for more than two indices $i$ there holds $|x_i-x_{i+1}|\ge\sqrt2+\ep$ then by Lemma \ref{trialemma}, $\widehat{x_ixx_{i+1}}\ge 60^\circ$, and thus at least one of the remaining $6$ angles is smaller than $(360^\circ-120^\circ)/6=40^\circ<\arccos\left(\frac 3 4\right)$\,. 
As a consequence, there exists $\ep_0>0$ such that for $\ep\in [0,\ep_0)$ we get a contradiction with Lemma \ref{trialemma} and hence $|x_i-x_{i+1}|\ge\sqrt2+\ep$ may hold for at most one index $i\in\Z/8\Z$. 

\begin{figure}
\includegraphics[width=7.5cm]{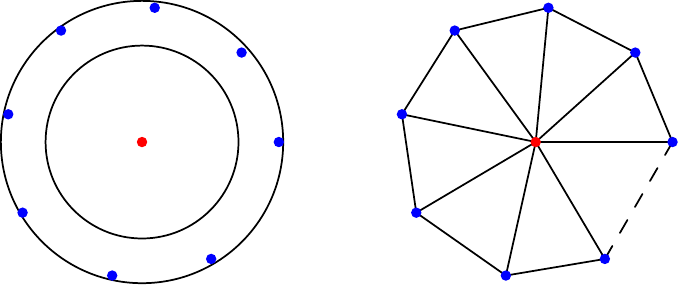}
 \caption{Illustration of Claim 1.}\label{fig-c1}
\end{figure}

\medskip

\textit{Claim 2: For all $\ep\in (0,\ep_0)$, the configuration ${X(\mathcal N_\ep(p))}=\{x,x_1,\ldots,x_8\}$ contains at least one $\ep$-square {as defined in Lemma \ref{qualemma}}.} 
Assume that this is not the case , namely that there exists $\ep\in (0,\ep_0)$ such that $X(\mathcal N_\ep(p))$ does not contain any $\ep$-square. Then, by Claim 1, each of the $p_i$'s ($i=1,\ldots,8$) has two or three neighbors in {$\mathcal N_\ep(p)$}. Note also that the sum of internal angles of the octagon $\{x_1,\ldots,x_8\}$ is $1080^\circ$, thus at least one angle is larger than $1080^\circ/8=135^\circ$, say it is the angle at $\widehat{x_1x_2x_3}$. Since $\mathcal{N}_\ep(p)\cap \partial\mathcal{G}_\varepsilon=\emptyset$, $x_2$ also has $8$ neighbors. By considering the successive angles around $x_2$ formed with the $8$ neighbors of $p_2$ in $\mathcal{G}_\varepsilon$, we have that {$6$ such angles are contained} outside the sector spanned by the angle $\widehat{x_1x_2x_3}$, therefore at least one of {these} angles is smaller than or equal to $(360^\circ-135^\circ)/6=37.5^\circ<\arccos\left(\frac 3 4\right){+O(\ep)}$ for $\ep\in[0,\ep_0)$ where $\ep_0$ is the one given Claim 1. But this fact contradicts Lemma \ref{trialemma}, and hence we get the claim.

\begin{figure}
\includegraphics[width=7.5cm]{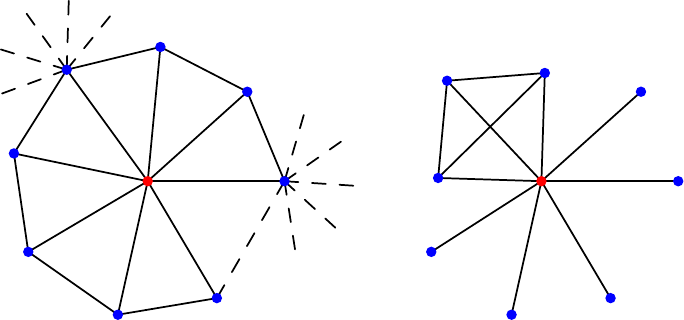}
\caption{Hypothesis and final situation reached by Claim 2.}\label{fig-c2}
\end{figure}

\medskip

\textit{Claim 3: There exists $\ep_1\in (0,\ep_0]$ such that for all $\ep\in (0,\ep_1)$, the configuration {$X(\mathcal N_\ep(p))$} contains at least two $\ep$-squares.} 
Assume that this is not the case , namely that there exists a sequence $\{\ep_n\}_{n\in\N}$ with $\ep_n\to 0^+$ as $n\to +\infty$ such that every $n\in\N$ there exists $q_n\in\Xi$ such that $\mathcal{N}_{\ep_n}(q_n)\cap\partial\mathcal{G}_{\ep_n}=\emptyset$ and $X(\mathcal N_{\ep_n}(X(q_n)))$ does not contain two $\ep_n$-squares. Fix $n\in\N$, and let $\ep=\ep_n$ and $p=q_n$ be as above. In view of Claim 2, this means that  $X(\mathcal{N}_{\ep}(p))$ contains only one $\ep$-square. Let $\{x,x_1,x_2,x_3\}$ be such $\ep$-square. 

\medskip

If $|x_1-x_8|\ge\sqrt 2+\ep$ then by Lemma \ref{trialemma}(iii), $\widehat{x_8xx_1} \ge 69^\circ$ for $\ep$ sufficiently small. But in this case, by Lemma \ref{qualemma}(ii) we conclude that
$$
\sum_{i=4}^{8}\widehat{x_{i-1}xx_{i}}\le 360^\circ-69^\circ-90^\circ{+O(\ep)}=201^\circ+{O(\ep)},
$$
which implies that the smallest angle between the $\widehat{x_{i-1}xx_i}$, {for $4\leq i\leq 8$}, is smaller than $40.2^\circ <\arccos\left(\frac 3 4\right)$, thus contradicting Lemma \ref{trialemma} for $\ep$ small enough. 

\medskip

This shows that for $\ep_1>0$ sufficiently small we have $\{p_8,p_1\}\in\mathcal S_\ep$. Similarly we find $\{p_3,p_4\}\in\mathcal S_\ep$.

\medskip

By Lemma \ref{trialemma}, we have $\widehat{xx_3x_4}, \widehat{xx_1x_8}\le 90^\circ+{O(\ep)}$, and as before, at least one of the $5$ {remaining} internal angles of the octagon $\{x_1,\ldots,x_8\}$ at vertices $x_4,x_5,x_6,x_7,x_8$ is larger than or equal to 
$$
\theta_\ep:=\frac{1}{5}\left(1080^\circ-450^\circ-{O(\ep)}\right)=126^\circ-{O(\ep)};
$$
say that $x_i$ is such a vertex. We are under the assumption that no $\ep$-square at $x$ contains $x_i$, thus by considering the possible allowed $\mathcal S_\ep$-edges between vertices in $\mathcal N_\ep(p)$ we find that $\sharp\left(\mathcal N_\ep(p_i)\cap \mathcal N_\ep(p)\setminus\{p_i\}\right)\le 3$. On the other hand, we are also under the assumption that $\mathcal{N}_\ep(p)\cap \partial \mathcal{G}_\varepsilon=\emptyset$, thus $\sharp(\mathcal N_\ep(p_i)\setminus\{p_i\})=8$. Thus there are $6$ angles at $x_i$ formed by successive neighbors of $x_i$ and not contained in $\widehat{x_{i-1}x_ix_{i+1}}$. At least one of these angles is smaller than or equal to 
$$
\beta_\ep:=(360^\circ-\theta_\ep)/6\le {39^\circ}+{O(\ep)}.
$$
{For $\ep_0$ small enough we find $\beta_\ep<\arccos(\frac 3 4) +O(\ep)$, contrary to} Lemma \ref{trialemma}, and our claim follows.

\medskip

\textit{Claim 4: There exists $\ep_2\in (0,\ep_1]$ such that for all $\ep\in [0,\ep_2)$  the configuration $X\cap B(x,\sqrt2+\ep)$ cannot contain only two $\ep$-squares with no common edges and two further successive edges {from $x$}.}
 We will call ``remaining vertices" the nearest neighbors of $x$ that do not belong to an $\ep$-square. From Claim 3, we know that there is at most 2 remaining vertices for $\ep<\ep_1$.
 Again we prove the claim by contradiction. Up to cyclic relabeling of the $x_i$ the two $\ep$-squares are $\{x,x_1,x_2,x_3\}$ and $\{x,x_4,x_5,x_6\}$. {By Lemma \ref{qualemma} and by the law of cosines we obtain 
 $$
\widehat {x_3 x x_{4}}\geq \arccos\left(\frac{2(1+O(\ep))^2-(1-\ep)^2}{2(1+O(\ep))^2}\right)=\arccos\left(\frac 1 2+O(\ep)\right)=60^\circ+O(\ep).
 $$
 }
 
Moreover, by using again Lemma \ref{qualemma}, at least one of the angles $\widehat{x_ixx_{i+1}}, i=6,7,8$ must be smaller than or equal to 
$$
\xi_\ep:=\frac13(360^\circ-2({90^\circ+O(\ep)})-60^\circ-O(\ep))=40^\circ+O(\ep)<\arccos\left(\frac{3}{4}\right)+O(\ep),
$$
for $\ep$ small enough. Therefore a contradiction to Lemma \ref{trialemma} follows.

\begin{figure}
\includegraphics[width=3cm]{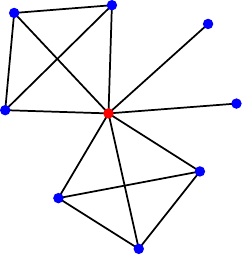}
\caption{What Claim 4 proves impossible.}
\end{figure}

\medskip

\textit{Claim 5: There exists $\ep_3\in (0,\ep_2]$ such that for all $\ep\in  (0,\ep_3)$ the following holds: if  the configuration $X\cap B(x,\sqrt2+\ep)$ contains two $\ep$-squares with no common edges and the two remaining vertices that are not {successive}, then it contains a further $\ep$-square, sharing one edge with each given $\ep$-squares.}

\begin{figure}
\includegraphics[width=7.5cm]{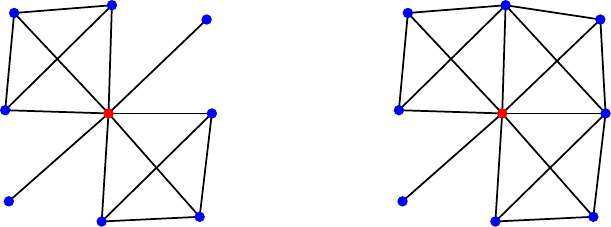}
\caption{Hypothesis and end result of Claim 5.}
\end{figure}

Up to cyclic relabeling of the $x_i$'s, the two $\ep$-squares are $\{x,x_1,x_2,x_3\}$ and $\{x,x_5,x_6,x_7\}$. We can assume without loss of generality $\beta_4:=\widehat{x_3 x x_5}\le \widehat{x_7 x x_1}=:\beta_8$, $\beta_4^-:=\widehat{x_3 x x_4}\le \widehat{x_4 x x_5}=:\beta_4^+$, and $\beta_8^-:=\widehat{x_7 x x_8}\le \widehat{x_8 x x_1}=:\beta_8^+$. By Lemma \ref{qualemma}, it follows that $\beta_4\le 90^\circ-O(\ep)$, $\beta_8\ge 90^\circ-O(\ep)$, and $\beta_4^-\le 45^\circ-O(\ep)$. By the assumption $|x_4-x|\le \sqrt 2 +\ep$\,.
By the law of cosines, we have
\begin{equation}\label{maybew}
\begin{aligned}
(1-\ep)^2\le&|x_3-x_4|^2=|x_3-x|^2+|x_4-x|^2-2 |x_3-x||x_4-x|\cos \beta_4^-\\
\le&(1+O(\ep))^2+|x_4-x|^2-2(1+O(\ep))|x_4-x|\cos\left(45^\circ+{O(\ep)}\right),
\end{aligned}
\end{equation}
whence, using  
$$
\cos\left(45^\circ+O(\ep)\right)=\frac {\sqrt{2}}{2}+O(\ep)\quad\textrm{ and }\quad(1+O(\ep))^2-(1-\ep)^2=O(\ep),
$$
we deduce the following inequality
\begin{equation*}\label{inL}
 |x_4-x|^2-({\sqrt 2}+O(\ep)) |x_4-x|+O(\ep)\ge 0;
\end{equation*}
it follows that $|x_4-x|=\sqrt 2+O(\ep)$.

Moreover, by \eqref{maybew}, it follows also that,  for $\ep$ small enough
\begin{equation*}
\begin{aligned}
\cos \beta_4^-=&\frac{|x_3-x|^2+|x_4-x|^2-|x_4-x_3|^2}{2 |x_3-x||x_4-x|}\\
\le&\frac{(1+O(\ep))^2+(\sqrt 2+O(\ep))^2-(1-\ep)^2}{2(1-\ep)(\sqrt 2+O(\ep))}=\frac {\sqrt{2}}{2}+O(\ep),
\end{aligned}
\end{equation*}
which together with the assumption on $\beta_4^-$ implies that $\beta_4^-=45^\circ+O(\ep)$.
Using again the law of cosines one can easily deduce that $|x_3-x_4|=1+O(\ep)$ and that $\widehat{x x_3 x_4}=90^\circ+O(\ep)$.
Analogously, one can see that $\beta_4^+=45^\circ+O(\ep)$ and that $|x_4-x_5|=1+O(\ep)$.
It follows that $|x_3-x_5|=\sqrt 2+O(\ep)$. 
Finally, since $p_3$ has $8$ neighbors, arguing by contradiction one can show that $|x_3-x_5|\le \sqrt{2}+\ep$.
In conclusion, $\{x,x_3,x_4,x_5\}$ is an $\ep$-square and then the Claim follows.

\medskip

\textit{Claim 6: There exists $\ep_4\in (0,\ep_3]$ such that for all $\ep\in  (0,\ep_4)$ the following holds:  $X(\mathcal N(p))$ contains at least 3  adjacent $\ep$-squares.} In view of Claims 3 and 5, the Claim needs to be proven only in the case that there are two $\ep$-squares sharing one edge.
Let $\{x,x_1,x_2,x_3\}$ and $\{x,x_3,x_4,x_5\}$ be two $\ep$-squares.
By Lemma \eqref{qualemma}, we have that 
\begin{equation}\label{q1}
\sum_{j=1}^3\widehat{x_{j}x_{j+1}x_{j+2}}\le 360^\circ+ O(\ep)\,,\qquad \widehat{xx_1x_2}+\widehat{x_4x_5x}\le 180^\circ+O(\ep),
\end{equation}
whereas by Lemma \ref{trialemma} we obtain
\begin{equation}\label{q2}
\widehat{x_8x_1x}+\widehat{xx_5x_6}\le 180^\circ+O(\ep).
\end{equation} 
Then, using again that the sum of the internal angles of the octagon is $1080^\circ$, we have
$$
\widehat{x_5x_6x_7}+\widehat{x_6x_7x_0}+\widehat{x_7x_0x_1}\ge 1080^\circ-720^\circ-O(\ep)=360^\circ - O(\ep)\,.
$$
Therefore, one of the above three angles, say $\widehat{x_{i-1}x_{i}x_{i+1}}$ is larger than 
$$
\vartheta_\ep=120^\circ-O(\ep).
$$
Since $p_i$ has eight neighbors in $\mathcal{G}_\varepsilon$ and since $x_i$ does not belong to an $\ep$-square, $p_i$ has exactly three neighbors in $\mathcal N_\ep(p)$ and their images through $X$ cover an angle at $x_i$ of at least $\vartheta_{\ep}$\,. Therefore amongst the remaining $6$ angles at $x_i$ spanned by successive neighbors of $x_i$, at least one is smaller than or equal to 
$$
\frac{360^\circ-\vartheta_{\ep}}{6}=40^\circ+O(\ep),
$$
contradicting Lemma \ref{trialemma}, and concluding the proof of Claim 6. 
\medskip

\textit{Claim 7: There exists $\ep_5\in (0,\ep_4]$ such that for all $\ep\in (0,\ep_5)$ $X(\mathcal{N}_\ep(p))$ contains four $\ep$-squares.}
 
By Claim 6, we can assume that there are three $\ep$-squares. Up to relabeling such $\ep$-squares are  $\{x,x_1,x_2,x_3\}$, $\{x,x_3,x_4,x_5\}$, and $\{x_5,x_6,x_7,x \}$.
By Lemma \ref{qualemma} we have
\begin{equation}\label{a901}
\widehat{x_7xx_1}=90^\circ+O(\ep),
\end{equation}
and hence, by the law of cosines,
$$
|x_7-x_1|=\sqrt{2}+O(\ep).
$$
Moreover, again by the law of cosines the remaining angles $\widehat{x_7 x x_0}, \widehat{x_0 x x_1}$ also are $O(\ep)$-close to $45^\circ$. By arguing as in Claim 5 one can easily get the claim.

\medskip

Set $\alpha_0:=\ep_5$.
In view of Claim 7 and of the very definition of $\ep$-square, \eqref{deform00} is satisfied for $\ep\in [0,\alpha_0)$. 
We therefore define $\phi:\mathcal{N}_\ep(p)\to \{-1,0,1\}^2$ as in \eqref{deform} and by all the Claims above,  it is easy to show that $\delta_\phi(x',x'')\leq C_3 \alpha|x'-x''|$ for all $x',x''\in \{x,x_1,...,x_8\}$ for some constant $C_3\in [1,\frac 1 {\alpha_0})$ (depending only on $\alpha_0$).
\end{proof}

 Notice that for $\ep=0$ the $\ep$-squares are nothing but the unit squares. Therefore, by the same proof as for Lemma \ref{lem:combintometric} with $\alpha=0$ we obtain the following result.

\begin{corollary}\label{toprovethm}
Let $X\in\mathcal{C}$ and let $x\in X$ have $8$ neighbors  in $\mathcal{G}_0(X)$, each of which has in turn $8$ neighbors in $\mathcal{G}_0(X)$. Let $x_1,\ldots,x_8,x_9\equiv x_1$ be the neighbors of $x$ ordered counterclockwise around $x$  and let $|x_1-x|=\min_{i=1,\ldots,8}|x-x_i|$. Then, the quadrilaterals $\{x,x_1,x_{2},x_{3}\}$, $\{x,x_3,x_{4},x_{5}\}$,  $\{x,x_5,x_{6},x_{7}\}$, $\{x,x_7,x_{8},x_{9}\}$ are all unit squares.
 \end{corollary}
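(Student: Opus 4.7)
The strategy is to run the seven-claim argument used in the proof of Lemma \ref{lem:combintometric} in the sharp regime $\varepsilon=0$, where every ``$+O(\varepsilon)$'' error collapses to an exact equality. Set $p := X^{-1}(x)$ and let $p_1,\ldots,p_8$ be the labels of the eight neighbors of $p$ in $\mathcal{G}_0(X)$, ordered counterclockwise. Since $X \in \mathcal{C}$ enforces $|x_i-x_j|\geq 1$ for $i \neq j$, and since the hypothesis demands a full set of eight neighbors both at $p$ and at every $p_i$, Lemma \ref{trialemma} applies at $\varepsilon=0$: adjacent pairs $\{p_i,p_{i+1}\} \in \mathcal{S}_0$ constrain the interior angles of the triangle $\{x,x_i,x_{i+1}\}$ (at $x$ between $\arccos(3/4)$ and $90^\circ$; at $x_i$, $x_{i+1}$ bounded above by $\arccos(1/(2\sqrt 2))$), while any triple $\{x,x_i,x_j\}$ of consecutive neighbors with the outer edge missing still forces an angle $\geq 60^\circ$ at $x$.

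First I would reproduce Claims 1--3 verbatim. A pigeonhole on the angles around $x$ (the missing-edge bound of $60^\circ$ combined with the lower bound $\arccos(3/4) \approx 41.4^\circ$ per consecutive pair) forces at most one pair $\{p_i,p_{i+1}\}$ to fail to lie in $\mathcal{S}_0$. A second pigeonhole on the $1080^\circ$ sum of interior angles of the octagon $\{x_1,\ldots,x_8\}$, together with the eight-neighbors assumption at each $x_i$ (which again yields an angle $<\arccos(3/4)$ at some $x_i$ whenever an interior angle exceeds $135^\circ$), shows that $X(\mathcal{N}_0(p))$ contains at least one, then at least two, $0$-squares based at $x$. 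Next Claims 4--6 eliminate in turn the cases of two $0$-squares sharing no edge but separated by two successive missing squares, of two $0$-squares with only one missing, and of exactly three adjacent $0$-squares: each is ruled out by exhibiting, either at a ``defective'' $x_i$ lying outside any $0$-square or at $x$ itself, two consecutive neighbors separated by less than $\arccos(3/4)$, contradicting Lemma \ref{trialemma}. Finally Claim 7 promotes three adjacent $0$-squares to four, covering the full $360^\circ$ around $x$.

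At $\varepsilon=0$, a $0$-square is by definition a quadruple $\{y,y',y'',y'''\} \subset X$ with all pairwise distances in $\{1,\sqrt 2\}$; Lemma \ref{qualemma} at $\varepsilon=0$ identifies such a quadruple with four sides of length $1$, interior angles equal to $90^\circ$, and both diagonals of length $\sqrt 2$, i.e.\ a unit square. The hypothesis $|x_1-x|=\min_i |x_i-x|$ forces $|x_1-x|=1$ rather than $\sqrt 2$, since in every $0$-square at $x$ two of the three neighbors involved are at distance $1$ and the middle one (the diagonal opposite to $x$) is at distance $\sqrt 2$. This pins the labeling so that odd-indexed $x_i$ are at distance $1$ and even-indexed at distance $\sqrt 2$, and the four $0$-squares extracted in Claim 7 are precisely $\{x,x_1,x_2,x_3\}$, $\{x,x_3,x_4,x_5\}$, $\{x,x_5,x_6,x_7\}$, $\{x,x_7,x_8,x_9\equiv x_1\}$.

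The main technical obstacle is Claims 4--6: at $\varepsilon>0$ they required careful error accounting across two separate angle budgets (one around $x$, one around a defective $x_i$), and one had to ensure that the slack was strictly larger than the cumulative $O(\varepsilon)$ terms. At $\varepsilon=0$ the slack disappears entirely and the arithmetic becomes clean, so the real task is merely to verify that no inequality in the original proof used $\varepsilon>0$ essentially (e.g.\ to ensure strict inequality in a degenerate geometric configuration). A brief inspection shows that every strict inequality in Claims 1--7 is already strict for purely combinatorial reasons (three-way pigeonholes over angles whose sum is strictly less than a multiple of $\arccos(3/4)$), so the argument transfers.
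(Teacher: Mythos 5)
Your proposal is correct and coincides with the paper's own argument: the paper proves this corollary precisely by rerunning the seven-claim proof of Lemma \ref{lem:combintometric} with $\alpha=0$, observing that $0$-squares (quadruples with all six pairwise distances in $[1,\sqrt2]$, which by Lemma \ref{qualemma} at $\varepsilon=0$ are exactly unit squares) then tile the neighborhood of $x$. Your additional check that no strict inequality in Claims 1--7 degenerates at $\varepsilon=0$ is exactly the verification the paper leaves implicit.
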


We next pass to proving Lemma \ref{cor_squarerigid}.

\begin{proof}[Proof of Lemma \ref{cor_squarerigid}]
Set $\alpha'_0:=\ep''$ where $\ep''$ is the one given in Lemma \ref{qualemma}.
Let $\alpha\in (0,\alpha_0']$ and let $\{p_1,p_2,p_3,p_4\}$ denote the set of vertices of $\mathcal G_\alpha$. By hypothesis, $\{p_i,p_j\}\in\mathcal{S}_\alpha$ for every $i,j=1,\ldots,4$ with $i\neq j$. Then, the assumptions of Lemma \ref{qualemma} are satisfied (with $\ep$ replaced by $\alpha$), so that setting $x_i:=X(p_i)$ for every $i=1,\ldots,4$ and $x_{i+4}\equiv x_i$ for every $i\in\Z$, we deduce that, up to a relabeling,
 (i),(ii), and (iii) hold true. 
In particular, for every $\alpha\in[0,\alpha'_0)$ we have
\begin{equation}\label{almostcorpro}
1-\ep\le |x_{i+1}-x_{i}|=1+O(\alpha),\qquad
\sqrt{2}+O(\alpha)=|x_{i}-x_{i+2}|\le \sqrt{2}+\alpha.
\end{equation}
Therefore, by \eqref{almostcorpro}, there exists a constant $C_3'\in [1,\frac{1}{\alpha'_0})$ (depending only on $\alpha'_0$) and a map $\phi:\{p_1,p_2,p_3,p_4\}\to\{0,1\}^2$ with
$$
\phi(p_1)=(0,0), \quad \phi(p_2)=(1,0), \quad \phi(p_3)=(1,1), \quad \phi(p_4)=(0,1),
$$
such that $\delta_{\phi}(x',x'')\le C_3\alpha |x'-x''|$ for all $x',x''\in \{x_1,\ldots,x_4\}$. As a consequence, \eqref{deform0bis} holds true.
\end{proof}


\section{List of Notations} 
Below we produce a list of those notations used at several points in the paper which we feel would help the reader orient, together with the main equations in the paper in which those notations are introduced:
\begin{longtable}{rll}
$\mathcal E[V](X_N)$ & - & energy defined in \eqref{energy}\\
$\overline{\mathcal E}_{\mathrm{sq}}[V]$ & - & minimal energy per point of a square lattice, see \eqref{eppz2_intro}\\
$\mathcal E_4[V](x_1,x_2,x_3,x_4)$ & - & energy as in \eqref{defEWintro} (see also Figure \ref{fig-resum}), and later also \eqref{defEW}\\
$E_\beta, E_\beta^1, E_\beta^2$ & - & intervals of distances, see \eqref{eq_Ealpha}\\
$ W(s)$ & - & \eqref{VforW}, and later also \eqref{WfromV}\\
$\mathcal Z_\boxtimes$ & - & combinatorial model-space from \eqref{zboxtimes}\\
$\mathcal S_{\alpha}, \mathcal G_{\alpha}, \mathcal N_\alpha(p), \partial \mathcal G_{\alpha}$& - & graph data from \eqref{not_graph}\\
$ X_1\sim_\alpha X_2$ & - & $\alpha$-deformation, see \eqref{alphadef}\\
$\mathcal X_4(\mathbb R^2)/\mathrm{Isom}(\mathbb R^2)$ & - & space of $4$-point configurations first appearing in \eqref{x4}\\
$\mathscr{Q}_\alpha$ & - & small deformations of squares modulo isometry, see \eqref{defsqu}\\
$\mathscr{S}_\alpha$ & - & small dilations of squares modulo isometry, see \eqref{defsqu2}\\
$E_\alpha^{sq}$ & - & square distances corresponding to $E_\alpha$, and defined in \eqref{ealphasq}\\
$\mathcal{D}$ & - & set of distances from $\mathbb Z^2$, defined in \eqref{distz2}\\
$\mathrm{Ell}_\alpha(a,b)$ & - & ellipse defined in \eqref{includellips}\\
$\mathrm{Sides}(Q'_r), \mathrm{Sides}(\mathcal{Q}'_r), \mathrm{Diag}(Q'_r) \mathrm{Diag}(\mathcal{Q}'_r)$ & - & see \eqref{z2squares}\\
$\mathrm{Sides}(Q_r), \mathrm{Sides}(\mathcal{Q}_r), \mathrm{Diag}(Q_r), \mathrm{Diag}(\mathcal{Q}_r)$ & - & see \eqref{z2squaresdef}\\
$\mathcal{L}_r$ & - & sublattices of $\mathbb Z^2$ of scale $r$, defined in \eqref{calLr}\\
$\quad m(r) $ & - & multiplicities of sublattices, defined in \eqref{mr}\\
$\widetilde{\mathcal D}\subset\mathcal D$ & - & the subset constructed in Lemma \ref{lem_splitgraph}\\
$\mathcal{NQ}, \mathcal{NQ}^{(1)}, \mathcal{NQ}^{(2)}$ & - & sets defined in \eqref{energysplit}\\
$\mathcal{Q}^{b}_{r}(Q_1)$ & - & squares of scale $r$ intersecting $Q_1$, see \eqref{mub}\\
$e(v,r)$ & - & error term as in \eqref{tayloreq}\\
$\widetilde\VV(t^2), \VV_*(t^2)$ & - & resummed interaction potentials defined in \eqref{defntildev}\\
$\mathrm{err}_1(r,V), \mathrm{err}_2(r,V)$ & - & error terms from \eqref{errbdterms}\\
$\mathrm{err}_3(r,V)$ & - & error term from \eqref{notationpass2.1}\\
$\mathrm{err}_4(V,X)$ & - & error term from \eqref{boundnq}\\
$V_{**}(r)$ & - & interaction potential defined in \eqref{v**}
\end{longtable}



\end{document}